\theoremstyle{plain}
\newtheorem{theorem}{Theorem}[section]
\newtheorem{proposition}[theorem]{Proposition}
\newtheorem{lemma}[theorem]{Lemma}
\newtheorem*{theorem*}{Theorem}
\newtheorem{mainthm}{Theorem}
\theoremstyle{definition}
\newtheorem{definition}[theorem]{Definition}
\theoremstyle{remark}
\newtheorem{remark}[theorem]{Remark}
\DeclareMathOperator{\GL}{GL}
\DeclareMathOperator{\Hom}{Hom}
\DeclareMathOperator{\End}{End}
\DeclareMathOperator{\id}{id}
\DeclareMathOperator{\Gal}{Gal}
\DeclareMathOperator{\Ind}{Ind}
\DeclareMathOperator{\cInd}{c-Ind}
\numberwithin{equation}{section}
\patchcmd{\thebibliography}{\chapter*}{\section*}{}{}
\begin{document}
\title{The Twisted Doubling Method in Algebraic Families}
\author{Johannes Girsch}
\address{Johannes Girsch, School of Mathematics and Statistics, University of Sheffield, Sheffield, S3 7RH, United Kingdom}
\email{j.girsch@sheffield.ac.uk}
\begin{abstract}
 We define the twisted doubling zeta integrals of Cai-Friedberg-Ginzburg-Kaplan in the setting of algebraic families. We then prove a rationality result and a functional equation for these zeta integrals. This allows us to define an unnormalized $\gamma$-factor associated to certain families of representation of a classical group times a general linear group.
\end{abstract}
\maketitle

\section{Introduction}
A fundamental topic in the study of smooth complex representations of reductive $p$-adic groups is the study of their associated $L$-, $\epsilon$- and $\gamma$-factors. Constructing these local constants is rather subtle and there are various different methods, for example the Rankin-Selberg, the Langlands-Shahidi or the doubling method. These constructions all have their different advantages and caveats. For example the Rankin-Selberg and Langlands-Shahidi method, although hugely successful for many problems, only work for representations with explicit models (e.g.\ generic representations) and for general reductive $p$-adic groups not all supercuspidal representations are generic. The doubling method of Piateski-Shapiro and Rallis can be used to associate local factors to any irreducible complex representation of a classical group $G$, in particular there are no genericity assumptions. However, by using this method, they were not able to construct local constants of representations of $G\times\GL_k$ ( for $k>1$), which is fundamental, for example to state the local Langlands correspondence for $G$. Recently, in a series of a papers (\cite{cai2021twisted},\cite{cai2019doubling},\cite{caidoubling},\cite{gourevitch2022multiplicity}), Cai-Ginzburg-Friedberg-Kaplan were able to achieve this by developing an extension of the doubling method, which we call the twisted doubling method.

In recent years there has been increased interest in representations of reductive $p$-adic groups with coefficients different than $\mathbb C$. Substantial work has been done on $\ell$-modular representations, i.e.\ representations with coefficients in an algebraically closed field of characteristic $\ell\not=p$, but more generally, one can study families of representations, i.e.\ representations with coefficients in some general $\mathbb Z[1/p]$-algebra $A$. It is then a natural question if analogues of local constants can be constructed for such representations. 

For $\ell$-modular representations of $\GL_n$ this is achieved in work of Minguez by using the Godement-Jacquet method (\cite{minguez2012fonctions}) and later by Kurinczuk-Matringe for $\GL_n\times\GL_m$ by using the Rankin-Selberg method (\cite{kurinczuk2017rankin}). For families of representations $L$- and $\epsilon$-factors do not seem to behave well, however Moss was able to construct $\gamma$-factors for ``co-Whittaker'' families of $\GL_n\times\GL_m$ (\cite{moss2015gamma},\cite{moss2016interpolating}). Moreover, he was able to prove a converse theorem which was pivotal in the proof of the local Langlands correspondence for families by Helm-Moss in \cite{helm2018converse}. 

Recently, the author has used the doubling method to construct $\gamma$-factors for families of representations of a classical group. The aim of this paper is to develop the twisted doubling method of Cai-Friedberg-Ginzburg-Kaplan in the setting of families. If one would hope that an approach to the local Langlands correspondence in families via a converse theorem (like in the proof of Helm-Moss for $\GL_n$) could work for classical groups, constructing the relevant $\gamma$-factors of pairs provides a step in that direction. 

We give a quick overview of the results of Cai-Friedberg-Ginzburg-Kaplan that are of relevance to us. Fix a finite extension $F$ of $\mathbb Q_p$ with residue cardinality $q$. In the following we assume for simplicity that $G=\operatorname{Sp}_{2n}(F)$. Let $\pi$ be an irreducible complex representation of $G$ and $\theta$ a generic complex $\GL_k(F)$-representation. Cai-Friedberg-Ginzburg-Kaplan consider the group $G^{\Box,k}=\operatorname{Sp}_{4kn}(F)$ and construct a certain embedding $\iota\colon G\times G\hookrightarrow G^{\Box,k}$. Moreover they associate to $\theta$ a generalized Speh representation $\rho_{2n}(\theta)$ of $\GL_{2kn}(F)$ that has special degeneracy properties. By using this representation $\rho_{2n}(\theta)$ they define for each $s\in\mathbb C$ a space of test functions $I(s,\theta)$ on $G^{\Box,k}$. In \cite{cai2021twisted},\cite{cai2019doubling},\cite{caidoubling},\cite{gourevitch2022multiplicity} the following statements are proven.
\begin{itemize}
    \item For any matrix coefficient $\varphi$ of $\pi$ and any $f\in I(s,\theta)$ the twisted doubling zeta integral
	$$Z(s,f,\varphi)=\int_G\int_{N^\circ}\varphi(g)f(u\iota(g,1))\psi^\bullet (u)dudg$$
	converges if the real part of $s$ is large enough. Here $N^\circ$ is a subgroup of a unipotent subgroup $N^\bullet$ of $G^{\Box,k}$ and $\psi^\bullet$ is a $\mathbb C^\times$-valued character of $N^\bullet$.
    \item Moreover, for any entire section $f$ of $I(s,\theta)$, i.e. a function $\mathbb C\times G^{\Box,k}\to\mathbb C$ such that $f(s,.)\in I(s,\theta)$ and $f(.,g)$ is entire for all $g\in G^{\Box,k}$, the integral $Z(s,f,\varphi)$ is a rational function in $q^{-s}$.
    \item The twisted doubling zeta integral satisfies a functional equation
	$$Z(s,M(s)f,\varphi)=\Gamma(s,\pi\times\theta,\psi)Z(s,f,\varphi)$$
	for a certain integral operator $M(s)$ and an appropriate scalar valued function $\Gamma(s,\pi\times\theta,\psi)$.
\end{itemize}  

Note that the intertwining operator $M(s)$ depends on a choice of measure of some unipotent subgroup and hence $\Gamma(s,\pi,\theta,\psi)$ is not uniquely defined. One can remedy this by constructing a normalizing factor for $M(s)$ (which depends on $\theta$ and $\psi$, but not on $\pi$) and then $\Gamma(s,\pi,\theta,\psi)$ divided by this normalizing factor yields the standard $\gamma$-factor associated to $\pi\times\theta$. However, this normalization factor does not depend on the representation of the classical group and hence is irrelevant for the formulation of a converse theorem. 

The proofs of the above results of Cai-Friedberg-Ginzburg-Kaplan is roughly along the following lines. First, one shows a certain multiplicity at most one statement, namely that 
\begin{equation}\label{eq:multone}
    \dim_{\mathbb C}\left(\Hom_{G\times G}(J_{N^\bullet,{\psi^\bullet}^{-1}}(I(s,\theta)),\widetilde{\pi}\otimes_\mathbb C\pi\right)\leq 1
\end{equation}
outside a discrete subset of $s\in\mathbb C$, where $J_{N^\bullet,{\psi^\bullet}^{-1}}$ is the twisted Jacquet functor. Then one proves that the twisted doubling zeta integral converges for $\operatorname{Re}(s)$ large enough, in which case it defines an element of the aforementioned $\Hom$-space. From this, Bernstein's continuation principle (\cite{banks1998corollary}) yields the rationality and the functional equation of the twisted doubling zeta integral.

In the setting of families we are able to prove the following results. We again assume for simplicity that $G=\operatorname{Sp}_{2n}(F)$ (see Section \ref{seclass} for a definition of the classical groups we prove our results for) and that the residue characteristic of $F$ is odd. Let $R$ be a commutative $\mathbb Z[1/p,\mu_{p^\infty}]$-algebra, where $\mathbb Z[1/p,\mu_{p^\infty}]$ is obtained from $\mathbb Z[1/p]$ by adjoining all $p$-th power roots of unity. Moreover, let $A$ and $B$ be commutative noetherian $R$-algebras. We fix a nontrivial additive character $\psi\colon F\to\mathbb Z[1/p,\mu_{p^\infty}]^\times$ and a smooth $A[G]$-module $\pi$. In the setting of families the theory of generalized Speh representations is not developed yet, so let $\theta$ be a smooth $B[\GL_{2kn}(F)]$-representation of type $(k,n)$ (see Definition \ref{defkn}), i.e.\ $\theta$ satisfies the analogues of the degeneracy conditions that are satisfied by the generalized Speh representations used by Cai-Friedberg-Ginzburg-Kaplan. Then we construct a space of test functions $I(X,\theta)$, which are a class of functions on $G^{\Box,k}$ valued in $\theta\otimes_BB[X^{\pm1}]$. Since we assume that $\theta$ has a certain degenerate Whittaker model we have an evaluation at the identity map $\operatorname{ev}_1\colon\theta\to B$. In Proposition \ref{compactness} we prove that for any $f\in I(X,\theta)$ the sets
$$\mathcal C_j=\{(g,u)\in G\times N^\circ\mid\operatorname{Coeff}_{X^j}(f(u\iota(g,1)))\not=0\}$$
are compact for all integers $j$ and empty for $j$ small enough. This allows us to define for $v\in\pi$, an element $\lambda$ of the contragredient $\widetilde{\pi}$ of $\pi$ and  and a test function $f\in I(X,\theta)$ a Laurent series
$$Z(X,\varphi,f)=\sum_{j=-\infty}^\infty X^j\int_{G\times N^\circ}
\lambda(\pi(g)v)\otimes\operatorname{Coeff}_{X^j}\left(\operatorname{ev}_1(f(u\iota(g,1)))\psi^\bullet (u)\right)d(g,u)$$
in $(A\otimes_R B)[[X]][X^{-1}]$, which we call the \emph{twisted doubling zeta integral} associated to $\varphi$ and $f$.

By using this definition of twisted zeta integrals we are in a position to prove the following rationality result. Let $S_{A\otimes_R B}$ be the multiplicative subset of $(A\otimes_R B)[X^{\pm1}]$ consisting of Laurent polynomials with leading and trailing coefficient a unit. 
\begin{mainthm}[Theorem \ref{rattheo}]
If the contragredient $\widetilde{\pi}$ of $\pi$ is admissible and finitely generated as an $A[G]$-module there is a polynomial $\mathcal P\in S_{A\otimes_R B}$ such that 
$$\mathcal P\cdot Z(X,v,\lambda,f)\in (A\otimes_R B)[X^{\pm1}]$$
for any $v\in\pi,\lambda\in\widetilde{\pi}$ and $f\in I(X,\theta)$. 
\end{mainthm}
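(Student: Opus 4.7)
The plan is to mimic the classical argument --- multiplicity one together with Bernstein's continuation principle --- by an $(A\otimes_R B)[X^{\pm 1}]$-module theoretic analogue. The image of the zeta integral, viewed as a function of $(v,\lambda,f)$, will lie in a finitely generated submodule of $(A\otimes_R B)[[X]][X^{-1}]$, and clearing its denominators simultaneously will produce the polynomial $\mathcal P$.

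\emph{Step 1 (Equivariance and Jacquet factorisation).} Proposition \ref{compactness} justifies Fubini and changes of variable coefficient-wise in $X$. From these one verifies that $Z(X,v,\lambda,f)$ is $(A\otimes_R B)$-trilinear, $(A\otimes_R B)[X^{\pm 1}]$-linear in $f$, and satisfies the equivariance
$$Z(X,\pi(g_1)v,\widetilde\pi(g_2)\lambda,f)=Z(X,v,\lambda,\iota(g_1,g_2)\cdot f),$$
while the twisting by $\psi^\bullet$ is absorbed into the integrand. Hence $Z$ factors through the twisted Jacquet module $J_{N^\bullet,{\psi^\bullet}^{-1}}(I(X,\theta))$ and becomes an $(A\otimes_R B)[X^{\pm 1}]$-linear map out of $\widetilde\pi\otimes_A\pi\otimes_{R}J_{N^\bullet,{\psi^\bullet}^{-1}}(I(X,\theta))$.

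\emph{Step 2 (Finite generation).} I would next combine admissibility and finite generation of $\widetilde\pi$ with a corresponding finiteness property of $I(X,\theta)$: for a sufficiently small compact open $K\le G$, both $\widetilde\pi^K$ and the $K\times K$-invariants of $J_{N^\bullet,{\psi^\bullet}^{-1}}(I(X,\theta))$ should be finitely generated, respectively over $A$ and over $(A\otimes_R B)[X^{\pm 1}]$, the latter via a Bernstein-centre/Hecke-algebra argument exploiting the $(k,n)$-degeneracy of $\theta$. This reduces the theorem to exhibiting simultaneous denominators for finitely many zeta integrals $Z(X,v_i,\lambda_j,f_l)$, each of which satisfies a Hecke-algebra polynomial relation whose characteristic polynomial one hopes to place in $S_{A\otimes_R B}$.

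\emph{Main obstacle.} The delicate point is forcing the leading and trailing coefficients of the denominator to be genuine units of $A\otimes_R B$, not merely non-zero-divisors. Over $\mathbb C$ this is automatic, but in families it demands a careful asymptotic analysis of the integrand as $X\to 0$ and $X\to\infty$, which corresponds to unipotent degenerations along the Siegel parabolic of $G^{\Box,k}$. This is where the $(k,n)$-degeneracy of $\theta$ --- giving an explicit Jacquet-module filtration with controlled graded pieces --- must be married to the Iwahori-level structure of $\pi$ coming from admissibility, in order to construct an annihilator of $Z$ whose extremal coefficients are visibly invertible in $A\otimes_R B$.
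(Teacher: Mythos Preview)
Your Step~1 is correct and matches the paper: the zeta integral descends to an $(A\otimes_R B)[X^{\pm1}]$-linear map out of $M(\pi)^\kappa\otimes_{B[G\times G]}J_{N^\bullet,{\psi^\bullet}^{-1}}(I(X,\theta))$.

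Step~2 and the ``main obstacle'' paragraph, however, are where the proposal loses traction. The paper does \emph{not} proceed via $K$-invariants, Hecke-algebra polynomial relations, or asymptotic analysis as $X\to 0,\infty$. The key structural device you are missing is the \emph{orbit filtration}: one filters $J_{N^\bullet,{\psi^\bullet}^{-1}}(I(X,\theta))$ by support on the $\iota(G\times G)N^\bullet$-orbits in $P\backslash G^{\Box,k}$. Two facts then do all the work. First, for $f$ supported on the open orbit $P\iota(G\times G)N^\bullet$, the restriction of $f$ to $N^\circ\iota(G\times 1)$ has compact support, so $Z(X,v,\lambda,f)$ is already a Laurent \emph{polynomial} --- no denominator needed there. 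Second, for each higher graded piece $J^{(i)}/J^{(i-1)}$, the geometric lemma identifies it with an induction from a proper parabolic, and one produces an annihilator in $S_{A\otimes_R B}$ of $M(\pi)^\kappa\otimes_{B[G\times G]}(J^{(i)}/J^{(i-1)})$ by picking a suitable central element $z_i$ in the Levi: on the $\theta$-side $z_i$ satisfies a polynomial relation $Q(z_i)=0$ on a Jacquet module of $\theta$ with unit leading and trailing coefficients (Proposition~\ref{endfin}(2)), and the twist by $\nu_X$ turns this into a polynomial in $X$; combining with the analogous relation on $J_{N_i}(\widetilde\pi)$ gives the desired element of $S_{A\otimes_R B}$. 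The product of these annihilators is your $\mathcal P$.

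The reason your approach would stall is precisely at the point you flagged: without the orbit filtration you have no mechanism to separate the ``already polynomial'' part from the parts needing annihilation, and without the central-element trick on Jacquet modules you have no way to force the extremal coefficients to be units. Note also that applying Proposition~\ref{endfin}(2) to the relevant Jacquet modules requires knowing they remain admissible and finitely generated, which is where the deep result of Dat--Helm--Kurinczuk--Moss (Theorem~\ref{dathelm}) enters; this is not visible from a pure Hecke-algebra or Iwahori-level argument.
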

Our proof of this result is motivated by the proof of the aforementioned multiplicity at most one theorem (Equation (\ref{eq:multone})). Namely, we use a certain filtration on $I(X,\theta)$ that arises, similarly to the geometric Lemma, from the action of $\iota(G\times G)N^\bullet$ on the flag variety $P\backslash G^{\Box,k}$ and then via support of elements in $I(X,\theta)$ on different orbits. We give a quick idea of the proof:

First note that the twisted doubling zeta integral descends to a map in
$$\Hom_{(A\otimes_R B)[X^{\pm1}]}\left(\mathcal M,(A\otimes_R B)[[X]][X^{-1}]\right),$$
where $$\mathcal M=(\pi\otimes_A\widetilde{\pi})\otimes_{R[G\times G]}J_{N^\bullet,{\psi^\bullet}^{-1}}(I(X,\theta)).$$
The above described filtration on $I(X,\theta)$ then gives rise to a filtration of $\mathcal M$. We show that on the bottom piece $\mathcal M_0$ of this filtration, the twisted doubling zeta integral has values in Laurent \emph{polynomials}. As a next step we prove that the successive quotients and hence also $\mathcal M/\mathcal M_0$ are annihilated by Laurent polynomials in $S_{A\otimes_R B}$, which by the $(A\otimes_R B)[X^{\pm1}]$-linearity of the twisted doubling zeta integral implies the result. To prove this we have to use the recent deep result of Dat-Helm-Kurinczuk-Moss (\cite{dat2022finiteness}) that the Jacquet functor preserves admissibility of representations over general noetherian rings.

Next we state the functional equation for the twisted doubling zeta integrals.
\begin{mainthm}[Theorem \ref{funcequ}]
Suppose that 
\begin{itemize}
    \item the contragredient $\widetilde{\pi}$ of $\pi$ is admissible, finitely generated as an $A[G]$-module and the canonical trace map $\pi\otimes\widetilde{\pi}\to A$ is surjective,
    \item $\theta$ is of type $(k,n)$ and the evaluation at the identity of a certain degenerate Whittaker model of $\theta$ is surjective, 
    \item $\Hom_{(A\otimes_R B)[G}(\pi\otimes_A\widetilde{\pi}\otimes_R B,A\otimes_R B)\cong A\otimes_R B,$
    where $G$ acts diagonally on $\pi\otimes_A\widetilde{\pi}$.
\end{itemize}
    Then there exists a unique element $\Gamma(X,\pi,\theta,\psi)\in S_{A\otimes_R B}^{-1}\cdot (A\otimes_R B)[X^{\pm1}]$ such that 
    $$\Gamma(X,\pi,\theta,\psi)Z(X,v,\lambda,f)=Z(X^{-1},v,\lambda,\mathbb M(f))$$
    for all $v\in \pi,\lambda\in\widetilde{\pi}$ and $f\in I(X,\theta)$.
\end{mainthm}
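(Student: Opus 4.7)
The plan is to realize both sides of the functional equation as elements of a single $\Hom$-space that, thanks to Theorem A and the hypotheses, is free of rank one over a suitable localization; the $\gamma$-factor then appears as the unique scalar expressing one in terms of the other. Write $R_{AB} = A \otimes_R B$ and $L = S_{R_{AB}}^{-1} R_{AB}[X^{\pm 1}]$. The intertwining operator $\mathbb M$ sends $I(X,\theta)$ to $I(X^{-1},\theta)$, so, applying Theorem A to both $X$ and $X^{-1}$, the two assignments
$$f \longmapsto Z(X,v,\lambda,f), \qquad f \longmapsto Z(X^{-1},v,\lambda,\mathbb M(f))$$
factor through the module $\mathcal M$ appearing in Theorem A and extend to $L$-linear maps $\mathcal M_L \to L$. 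Both therefore define elements of $H := \Hom_{L}(\mathcal M_L, L)$.

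The central step is to show that $H$ is a free $L$-module of rank one. I would use the filtration on $\mathcal M$ from the proof of Theorem A arising from the $\iota(G\times G)N^\bullet$-orbits on $P \backslash G^{\Box,k}$: the quotients above the bottom piece $\mathcal M_0$ (corresponding to the open orbit) are annihilated by elements of $S_{R_{AB}}$, so $\mathcal M_L = (\mathcal M_0)_L$. Via Mackey theory, Frobenius reciprocity, the degeneracy properties of $\theta$ (type $(k,n)$ together with the surjective evaluation-at-identity of its degenerate Whittaker model), and the surjectivity of the trace $\pi \otimes_A \widetilde\pi \to A$, this identifies $H$ with the base change to $L$ of
$$\Hom_{R_{AB}[G]}\!\left(\pi \otimes_A \widetilde\pi \otimes_R B,\; R_{AB}\right),$$
which by the third hypothesis is free of rank one over $R_{AB}$.

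Granted this, both functionals above are $L$-multiples of a common generator $Z_0 \in H$. A direct construction of a triple $(v,\lambda,f)$ for which $Z(X,v,\lambda,f)$ is a unit in $L$ (choosing $f$ supported in a small neighborhood of the open cell and invoking the surjectivity hypotheses) shows that $Z$ itself corresponds to a unit in $L$; dividing then yields the unique $\Gamma(X,\pi,\theta,\psi) \in L$ satisfying the claimed identity for all $(v,\lambda,f)$.

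The main obstacle is the freeness-of-rank-one identification of $H$. In the complex case one only needs an upper bound on a dimension over a field, but over the general noetherian ring $R_{AB}$ one has to obtain an honest identification rather than a bound. This requires combining the orbit-filtration analysis of Theorem A---which itself rests on the admissibility-preservation theorem of Dat--Helm--Kurinczuk--Moss---with the two surjectivity hypotheses and the diagonal multiplicity hypothesis in a controlled way. A secondary technical point is to verify carefully that $\mathbb M$ intertwines $I(X,\theta)$ and $I(X^{-1},\theta)$ as $(A\otimes_R B)[X^{\pm 1}]$-modules (with $X$ acting on the target via $X^{-1}$), so that both functionals genuinely live in the same $\Hom$-space before one can divide.
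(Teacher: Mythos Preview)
Your overall strategy is correct and matches the paper's: reduce to a rank-one $\Hom$-space, show $Z$ is a generator, and express the other side as a multiple of it. Two differences are worth flagging.

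First, the paper does \emph{not} localize the $\Hom$-space. It works with
\[
\Hom_{(A\otimes B)[X^{\pm1}]}\bigl(\mathcal M_0,\,(A\otimes B)[X^{\pm1}]\bigr)
\]
on the bottom piece $\mathcal M_0$, and shows this is already free of rank one over $(A\otimes B)[X^{\pm1}]$ (Proposition~\ref{homcomp}), with $Z$ a generator (Proposition~\ref{zetaconst}). The ``other side'' is then forced into this unlocalized $\Hom$-space by first multiplying $\mathbb M$ by the explicit $\mathcal Q\in S_B$ from Proposition~\ref{intertwining}, and then by a further $\mathcal P'\in S_{A\otimes B}$ from Proposition~\ref{annihipoly} applied to $I(X^{-1},\theta^{w_{kn}})$. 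This produces a scalar $c\in(A\otimes B)[X^{\pm1}]$ directly, and one sets $\Gamma=\mathcal P'^{-1}\mathcal Q^{-1}c$. Your localized version requires you to justify that $H=\Hom_L(\mathcal M_L,L)$ is free of rank one; the step ``identifies $H$ with the base change to $L$ of $\Hom_{R_{AB}[G]}(\dots)$'' is not automatic, since $\Hom$ does not commute with flat base change unless the source is finitely presented, which is not given here. (One can rescue it by observing that $L$ embeds in $(A\otimes B)((X))$ and hence has enough $A\otimes B$-linear functionals, but the paper sidesteps the issue entirely by its coefficient-by-coefficient argument against the unlocalized target.)

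Second, a small correction: $\mathbb M$ does not land in $I(X^{-1},\theta)$ but in $I(X^{-1},\theta^{w_{kn}})\otimes_{B[X^{\pm1}]}S_B^{-1}B[X^{\pm1}]$; one must both twist the inducing representation by $w_{kn}$ and clear the denominator $\mathcal Q$ before the rationality and filtration results for the target apply. This is exactly the ``secondary technical point'' you mention, and the paper handles it as indicated above.
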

 To prove this result we first construct by following ideas of Dat (\cite{dat2005nu}) a certain intertwining operator $\mathbb M$ (see Proposition \ref{intertwining}). Then we show that under our conditions 
 $$\Hom_{(A\otimes_R B)[X^{\pm1}]}(\mathcal M_0,(A\otimes_R B)[X^{\pm1}]),$$
 is free of rank one as an $(A\otimes_R B)[X^{\pm1}]$-module and the twisted doubling zeta integral is a generator. If one multiplies $Z\circ\mathbb M$ with a certain element of $S_{A\otimes_R B}$ one also gets an element of the above $\Hom$-space and the result follows.
 
When setting $R=A=B=\mathbb C$ and $G$ split our proofs basically reduce to the proofs of Cai-Friedberg-Ginzburg-Kaplan, although we do not use Bernstein's continuation principle (in particular our proofs do not rely on the corresponding statements over the complex numbers). Hence we also obtain new results over the complex numbers, i.e.\ we extend the multiplicity at most one statement of \cite{gourevitch2022multiplicity} to the setting of \cite{cai2021twisted}, see Remark \ref{resultoverc}.

The $\Gamma$-factor constructed above behaves nicely under base change and interpolates the $\Gamma$-factors of Cai-Friedberg-Ginzburg-Kaplan over $\mathbb C$. For example, suppose that $R=B=\mathbb C$ and that the residue field $\kappa(\mathfrak p)$ of a prime ideal $\mathfrak p\in\operatorname{Spec}(A)$ is isomorphic to the complex numbers. Then the reduction of the $\Gamma$-factor $\Gamma(X,\pi,\theta,\psi)\in S_A^{-1}\cdot A[X^{\pm1}]$ modulo $\mathfrak p$ equals the $\Gamma$-factor $\Gamma(X,\pi\otimes\kappa(\mathfrak p),\theta,\psi)$ constructed by Cai-Friedberg-Ginzburg-Kaplan (when fixing a $\mathbb Z[1/p,\mu_{p^\infty}]$-valued measure on a certain unipotent, which is used to define the intertwining operator $\mathbb M$). In future work we plan to study generalised Speh representations in families and construct a normalising factor for the intertwining operator $\mathbb M$, which, together with the results of this article, yields a $\gamma$-factor in families that interpolates the standard $\gamma$-factor of pairs of representations of $G\times\GL_{k}(F)$.


\subsection*{Acknowledgments}
This article is part of my PhD thesis which was written under the excellent supervision of David Helm. I am very grateful for his guidance and support. I would also like to thank my PhD examiners Toby Gee and Shaun Stevens for carefully reading my thesis and many suggestions which vastly improved this article. Moreover, I would like to thank Eyal Kaplan and Robert Kurinczuk for helpful discussions. This work was supported by the Engineering and Physical Sciences Research Council [EP/V001930/1] and [EP/L015234/1], the EPSRC Centre for Doctoral Training in Geometry and Number Theory (The London School of Geometry and Number Theory), University College London and Imperial College London.

\section{Notation and Conventions}
We fix a prime number $p$ and let $F$ be a finite extension of $\mathbb Q_p$. Moreover, $E$ will always be a finite extension of $F$, with ring of integers $\mathcal O_E$, uniformizer $\varpi_E$, and its residue field has cardinality $q_E$. We denote by $D$ a central division algebra over $E$. For a free right $D$-module $W$ let $\operatorname{Nrd}_W$ (respectively $\operatorname{Trd}_W$) be the reduced norm (respectively the reduced trace) of the central simple $E$-algebra $\End_D(W)$. Moreover, we assume that $D$ comes equipped with an involution $\rho\colon D\to D$. For any $m\times n$ matrix $Y\in M_{m\times n}(D)$ we will write $Y^*$ for the $n\times m$ matrix that is the transpose of $Y$ and has the involution $\rho$ applied to all of its entries. We will denote the identity matrix of $M_n(D)$ by $1_n$ and the zero matrix by $0_n$.

We write $\mathbb Z[1/p,\mu_{p^\infty}]$ for the ring that originates when adjoining all $p$-th power roots of unity to $\mathbb Z[1/p]$. In the following $R$ will always be a commutative $\mathbb Z[1/p,\mu_{p^\infty}]$-algebra. Usually when we take tensor products over $R$, we will drop the $R$ from $\otimes_R$. Moreover, we fix a nontrivial additive character $\psi_F\colon F\to\mathbb Z[1/p,\mu_{p^\infty}]^\times$ and we set $\psi=\psi_F\circ\operatorname{tr_{E/F}}$. 

Suppose that $G$ is a locally profinite group. Then a smooth representation of $G$ over a noetherian commutative ring $A$ is an $A[G]$-module, where every element is stabilized by an open compact subgroup of $G$. Sometimes we will denote such a representation by a pair $(\pi,V)$ (where $V$ is an $A$-module and $\pi\colon G\to\GL_A(V)$ is a group homomorphism) to distinguish the $A$-module and the group action. We say that an $A[G]$-module is $G$-finite if it is finitely generated as an $A[G]$-module. Moreover, an $A[G]$-module will be called admissible if for any open compact subgroup $K$ of $G$ the submodule of vectors which are fixed under the action of $K$ is finitely generated as an $A$-module. For a smooth representation $(\pi,V)$ over $A$ we denote the contragredient of $V$ by $(\widetilde{\pi},\widetilde{V})$, which consists of the smooth vectors in $\operatorname{Hom}_A(V,A)$ under the action $\widetilde{\pi}(g)(\lambda)(v)=\lambda(\pi(g^{-1})v)$ for $\lambda\in\widetilde{V},v\in V$.

Let $H$ be a closed subgroup of $G$ and $(\sigma,W)$ a smooth $A[H]$-module. We write $\operatorname{Ind}_H^G(\sigma)$ (respectively $\operatorname{c-Ind}_H^G(\sigma)$) for the space of functions (functions whose support modulo $H$ is compact) $f\colon G\to W$ satisfying $f(hg)=\sigma(h)f(g)$ for all $h\in H,g\in G$ and which are moreover smooth with respect to the action of $G$ via right translation.

For any representation $(\sigma,W)$ of $H$ we define for $g\in G$ the representation $(g\cdot\sigma,W)$ of $\prescript{g}{}{H}=gHg^{-1}$ on $W$ via $(g\cdot\sigma)(x)(w)=\sigma(g^{-1}xg)(w)$ for $x\in\prescript{g}{}{H},w\in W$. 
For a subgroup $H$ of $G$ and a character $\omega\colon H\to A^\times$ we will write $J_{H,\omega}(V)$ for the quotient module $V/V(H,\omega)$ where $V(H,\omega)$ is the $A$-submodule generated by all elements $\sigma(h)v-\omega(h)v$ for $h\in H,v\in V$. If $\omega$ is the trivial character of $H$, we will write $J_H(V)$ for $J_{H,\omega}(V)$. 

From now on we assume that $G$ is the $F$-points of a reductive algebraic group defined over $F$. Then Vigneras (\cite{vigneras1996representations}) constructs a non-zero $R$-valued left Haar measure on $G$, which is normalized over a compact open subgroup of pro-order invertible in $R$. In particular we get an $R$-valued measure on any closed subgroup $H$ of $G$ which we denote by $\mu_H$. For any such $H$ of $G$ we will denote the associated modulus character by $\delta_H$. One can show that $\delta_H$ has values in the units of $R$ (it is always a power of $p$) and is multiplicative.  

We assume that $R$ contains a square-root of $p$ and fix a choice of such. Then if $P=MN$ is a parabolic subgroup of $G$ and $\sigma$ a smooth representation of $M$ we will write $i_P^G(\sigma)$ for the normalized parabolic induction $\Ind_P^G(\delta_P^{1/2}\otimes_R\sigma)$. Moreover, for a  representation $(\pi,V)$ of $G$ we will denote the normalized Jacquet module $(\delta_P^{-1/2}\otimes_R\pi,J_N(V))$ by $r_P^G(\pi)$.

For any commutative ring $A$ let $S_A$ be the multiplicative subset of the ring of Laurent polynomials $A[X^{\pm1}]$ which consists of Laurent polynomials whose leading and trailing coefficients are units in $A$.

\section{Background in Representation Theory}	
In this section we review some standard results about the representations of reductive $p$-adic groups in the setting of families. 
	\subsection{Hecke Algebras}
	Recall that $G$ are the $F$-points of a reductive gorup defined over $F$ and let $H$ be any closed subgroup of $G$. Then the $A$-valued Hecke algebra $\mathcal H(H,A)$ is the $A$-module
	$$\{f\colon H\to A\mid f\text{ is locally constant and has compact support}\},$$
	where multiplication is given by the convolution product
	$$(\alpha *\beta)(h)=\int_H\alpha(x)\beta(x^{-1}h)dx,$$
	for $\alpha,\beta\in\mathcal H(H,A)$. 
 
	Recall that there is a standard correspondence between smooth left $A[G]$-modules and nondegenerate left $\mathcal H(G,A)$-modules. We can define a right $\mathcal H(G,A)$-module structure on a smooth left $A[G]$-module $(\pi,V)$ via 
	$$vf=\int_Gf(g)\pi(g^{-1})vdg,$$
	where $v\in V$ and $f\in\mathcal H(G,A)$. The following result is straightforward to see.
	\begin{proposition}
		Let $\pi$ be a smooth left $A[G]$-module, then
		$$\mathcal H(G,A)\otimes_{\mathcal H(G,A)} \pi\cong \pi\otimes_{\mathcal H(G,A)}\mathcal H(G,A)\cong\pi.$$
	\end{proposition}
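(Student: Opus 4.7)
The plan is to construct explicit mutually inverse maps. Consider the natural evaluation map
$$\phi\colon \mathcal H(G,A)\otimes_{\mathcal H(G,A)}\pi\to\pi,\quad f\otimes v\mapsto fv,$$
where for the left-hand isomorphism I give $\pi$ its left $\mathcal H(G,A)$-module structure $fv=\int_G f(g)\pi(g)v\,dg$ (and symmetrically on the right, using the right-action defined in the paper). The key technical input is the existence of an approximate identity: for every open compact subgroup $K$ of $G$ whose pro-order is invertible in $A$, the element $e_K=\mu_G(K)^{-1}\mathbf 1_K\in\mathcal H(G,A)$ is idempotent and satisfies $e_Kv=v$ whenever $v\in\pi^K$. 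By smoothness of $\pi$ together with Vigneras's normalization of the Haar measure, every $v\in\pi$ lies in $\pi^K$ for some such $K$, and every $f\in\mathcal H(G,A)$ is both left- and right-invariant under some such $K$.

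Surjectivity of $\phi$ is immediate: any $v\in\pi$ is the image of $e_K\otimes v$ for any $K$ fixing $v$. For injectivity, suppose $\phi\!\left(\sum_i f_i\otimes v_i\right)=\sum_i f_iv_i=0$. Each $f_i$ is left-invariant under some open compact subgroup $K_i$ of pro-order invertible in $A$; letting $K=\bigcap_iK_i$, one still has $e_K*f_i=f_i$ for all $i$. Then in the tensor product
$$\sum_i f_i\otimes v_i=\sum_i(e_K*f_i)\otimes v_i=e_K\otimes\sum_i f_iv_i=e_K\otimes 0=0,$$
so $\phi$ is injective, hence an isomorphism. The proof for $\pi\otimes_{\mathcal H(G,A)}\mathcal H(G,A)\cong\pi$ is the mirror image, using that any $f\in\mathcal H(G,A)$ satisfies $f*e_K=f$ for small enough $K$.

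There is no substantial obstacle; the result is a formal consequence of the approximate-identity structure on $\mathcal H(G,A)$. The one point that requires care is ensuring throughout that the open compact subgroups in play have pro-order invertible in $A$, so that the idempotents $e_K$ actually exist in $\mathcal H(G,A)$. This is precisely what the paper's convention on $R$ being a $\mathbb Z[1/p,\mu_{p^\infty}]$-algebra and Vigneras's normalization guarantees, and the definition of smoothness over $A$ is tailored so that stabilizers of vectors contain such subgroups.
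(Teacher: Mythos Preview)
Your argument is correct and is precisely the standard approximate-identity argument the paper has in mind when it says the result is ``straightforward to see'' (the paper does not actually write out a proof). One minor comment: you should note that $G$, being the $F$-points of a reductive group, admits a neighbourhood basis of the identity consisting of pro-$p$ open compact subgroups, so the required idempotents $e_K$ exist since $p\in A^\times$; this is implicit in your final paragraph but worth making explicit.
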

	If $H$ is a closed subgroup of $G$, the Hecke algebra $\mathcal H(H,A)$ is not necessarily a subalgebra of $\mathcal H(G,A)$. However, one can define a right $\mathcal H(H,A)$-module structure on $\mathcal H(G,A)$ via 
 \begin{equation*}
     (\alpha\cdot\eta)(g)\coloneqq\int_H\alpha\left(gh^{-1}\right)\eta(h)dh,
 \end{equation*}
 where $\alpha\in\mathcal H(G,A)$ and $\eta\in\mathcal H(H,A)$. Then $\mathcal H(G,A)$ becomes a $(\mathcal H(G,A),\mathcal H(H,A))$-bimodule. Again the following result is straightforward to see. 
	\begin{proposition}\label{assocheck}
		Let $\pi$ (respectively $\pi'$) be a smooth left $A[G]$-module (respectively $A[H]$-module). We have
		$$\pi\otimes_{\mathcal H(G,A)}(\mathcal H(G,A)\otimes_{\mathcal H(H,A)}\pi')\cong (\pi\otimes_{\mathcal H(G,A)}\mathcal H(G,A))\otimes_{\mathcal H(H,A)}\pi'\cong \pi\otimes_{\mathcal H(H,A)}\pi'.$$
	\end{proposition}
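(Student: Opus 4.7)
The plan is to reduce both isomorphisms to already established tools: the standard associativity of tensor products over bimodules, and the isomorphism $\pi\otimes_{\mathcal H(G,A)}\mathcal H(G,A)\cong\pi$ from the previous proposition. The only subtlety is that the second isomorphism must be tracked as a map of right $\mathcal H(H,A)$-modules, not just right $\mathcal H(G,A)$-modules.

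First, I would note that $\mathcal H(G,A)$ is an $(\mathcal H(G,A),\mathcal H(H,A))$-bimodule (the compatibility of the two actions being immediate from their definitions as convolutions), while $\pi$ is a right $\mathcal H(G,A)$-module and $\pi'$ is a left $\mathcal H(H,A)$-module. The first isomorphism in the statement is then pure abstract nonsense: it is the canonical associativity
$$(M\otimes_B N)\otimes_C P\cong M\otimes_B(N\otimes_C P)$$
applied to $M=\pi$, $N=\mathcal H(G,A)$, $P=\pi'$, $B=\mathcal H(G,A)$, $C=\mathcal H(H,A)$.

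For the second isomorphism, the previous proposition provides an $A$-linear isomorphism $\Phi\colon\pi\otimes_{\mathcal H(G,A)}\mathcal H(G,A)\to\pi$ sending $v\otimes\alpha$ to $v\cdot\alpha=\int_G\alpha(g)\pi(g^{-1})v\,dg$. The main verification is that $\Phi$ intertwines the right $\mathcal H(H,A)$-action on the left-hand side (coming from the bimodule structure on $\mathcal H(G,A)$) with the right $\mathcal H(H,A)$-action on $\pi$ (coming from the restriction of $\pi$ to an $A[H]$-module). Concretely, given $\eta\in\mathcal H(H,A)$, one computes
$$v\cdot(\alpha\cdot\eta)=\int_G\int_H\alpha(gh^{-1})\eta(h)\pi(g^{-1})v\,dh\,dg$$
and, after the substitution $g'=gh^{-1}$ and interchanging the integrals, this becomes $\int_H\eta(h)\pi(h^{-1})(v\cdot\alpha)\,dh=(v\cdot\alpha)\cdot\eta$, the standard action of $\eta$ on $v\cdot\alpha\in\pi$ restricted to $H$. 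This is the main (and essentially only) computational step; the Fubini/change-of-variables manipulation is justified because all integrands are compactly supported locally constant functions valued in $A$-modules, so everything reduces to finite sums.

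Once the $\mathcal H(H,A)$-equivariance of $\Phi$ is in hand, tensoring with $\pi'$ over $\mathcal H(H,A)$ produces the desired isomorphism $(\pi\otimes_{\mathcal H(G,A)}\mathcal H(G,A))\otimes_{\mathcal H(H,A)}\pi'\cong\pi\otimes_{\mathcal H(H,A)}\pi'$. The hardest part is really just keeping the left/right actions straight and carrying out the integral manipulation correctly; conceptually the statement is just associativity together with the unit property of $\mathcal H(G,A)$ on smooth modules.
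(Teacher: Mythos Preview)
Your proof is correct and matches the paper's approach: the paper states the result is ``straightforward to see'' and gives no argument, and what you have written is precisely the intended routine verification---associativity of the tensor product together with the unit isomorphism from the preceding proposition, checked to be $\mathcal H(H,A)$-equivariant via the expected Fubini/substitution computation.
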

	For a smooth left $A[H]$-module $(\pi,V)$ we can form the tensor product $\mathcal H(G,A)\otimes_{\mathcal H(H,A)}V$ and let
	$$\Phi\colon\mathcal H(G,A)\otimes_{\mathcal H(H,A)}V\to\operatorname{c-Ind}_H^G(V\otimes\delta_H)$$ 
	be the map defined by
	$$\Phi(f\otimes v)(g)=\int_Hf(g^{-1}h^{-1})\pi(h^{-1})vdh.$$
	We have the following result.
	\begin{proposition}\label{comphecke} For any closed subgroup $H$ of $G$ the above defined map $\Phi$ is well-defined and an isomorphism of $\mathcal H(G,A)$-modules. 

	\end{proposition}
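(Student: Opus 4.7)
The plan is to first establish the well-definedness of $\Phi$, then construct an explicit inverse via a double-coset decomposition, and finally verify $\mathcal H(G,A)$-equivariance.

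I would start by checking that $\Phi(f\otimes v)(g)$ is a well-defined element of $V$: the integrand is supported on the compact set $\{h\in H : g^{-1}h^{-1}\in\operatorname{supp}(f)\}$ and is locally constant in $h$ (since $v$ is stabilised by an open compact subgroup of $H$), so the integral reduces to a finite sum. Next I would verify the transformation law $\Phi(f\otimes v)(h_0g)=\delta_H(h_0)\pi(h_0)\Phi(f\otimes v)(g)$ by the substitution $h\mapsto hh_0^{-1}$ in the integral; the $\delta_H(h_0)$ factor arises precisely from the right-translation behaviour of the left Haar measure on $H$, and matches the twist $V\otimes\delta_H$ on the target. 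Smoothness and compact support modulo $H$ follow from the corresponding properties of $f$, since $\operatorname{supp}\Phi(f\otimes v)\subseteq H\cdot\operatorname{supp}(f)^{-1}$. The balance relation $\Phi(f\cdot\eta\otimes v)=\Phi(f\otimes\eta\cdot v)$ for $\eta\in\mathcal H(H,A)$ is obtained by inserting the definitions of the two Hecke actions, applying Fubini, and substituting in the $H$-variable; the $\mathcal H(G,A)$-linearity of $\Phi$ on the left is an analogous direct computation.

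For the inverse, given $F\in\operatorname{c-Ind}_H^G(V\otimes\delta_H)$ I would choose an open compact subgroup $K\subseteq G$ small enough that $F$ is right $K$-invariant and that, for a set of representatives $g_i$ of the double cosets in $H\backslash\operatorname{supp}(F)/K$, the value $F(g_i)$ is fixed by the open compact subgroup $H\cap g_iKg_i^{-1}$ of $H$. Such a $K$ exists by smoothness, and the set of $g_i$ is finite by compact support modulo $H$. I would then define
$$\Psi(F)=\sum_i c_i\,\mathbf 1_{Kg_i^{-1}}\otimes F(g_i),$$
with constants $c_i$ built from $\mu_H(H\cap g_iKg_i^{-1})$ and the Haar normalisation fixed by Vign\'eras so as to cancel the volume factor produced by $\Phi$. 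The crucial check is that $\Psi(F)$ is independent of the choices of $K$ and of the $g_i$: refining to a smaller $K'\subseteq K$ replaces $\mathbf 1_{Kg_i^{-1}}$ by a sum $\sum_j\mathbf 1_{K'k_jg_i^{-1}}$ over coset representatives of $K/K'$, and changes of representative within a fixed double coset are absorbed by the balance relation in $\mathcal H(G,A)\otimes_{\mathcal H(H,A)}V$, once the $\delta_H$ factor is matched by the normalisation.

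Finally, I would verify $\Phi\circ\Psi=\operatorname{id}$ by evaluating both sides on an arbitrary $g\in Hg_iK$, using the transformation law of $F$ and the volume computation above; and $\Psi\circ\Phi=\operatorname{id}$ on a generating family $\mathbf 1_{Kg_0^{-1}}\otimes v$, for which $\Phi(\mathbf 1_{Kg_0^{-1}}\otimes v)$ is explicitly supported in a single double coset and can be inverted by inspection. The main obstacle will be the bookkeeping of $\delta_H$ and the Haar-volume factors produced by the change of variables $h\mapsto h^{-1}$ and by restricting $\mu_G$ to cosets; pinning down the constants $c_i$ consistently with Vign\'eras' normalisation and with the right $\mathcal H(H,A)$-action on $\mathcal H(G,A)$ defined in the excerpt is the delicate part, but none of it requires input beyond elementary manipulations with Haar measures.
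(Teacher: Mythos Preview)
Your approach is essentially the paper's: both arguments hinge on the double-coset decomposition $H\backslash G/K$ and on showing that the functions $\varphi_{\gamma,v}$ supported on a single coset $H\gamma K$ arise as $\Phi(\mu_H(H\cap\gamma K\gamma^{-1})^{-1}\,\mathbf 1_{K\gamma^{-1}}\otimes v)$. The paper packages this as separate surjectivity and injectivity arguments rather than building a two-sided inverse $\Psi$, but that is cosmetic; its injectivity proof rewrites an arbitrary tensor as a sum $\sum_l \mathbf 1_{K\gamma_l^{-1}}\otimes v_l'$ with distinct $\gamma_l$ and $v_l'\in V^{\gamma_l K\gamma_l^{-1}\cap H}$ using exactly the balance relation you invoke, then evaluates $\Phi$ at $\gamma_l$.

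There is one point you elide that is the only genuinely non-formal step: the constants $c_i$ you need are $\mu_H(H\cap g_iKg_i^{-1})^{-1}$, and for this to make sense in $A$ those volumes must be units. Over $\mathbb C$ this is automatic, but over a general $\mathbb Z[1/p]$-algebra it is not, and this is where the hypothesis that $G$ is (the $F$-points of) a reductive $p$-adic group is actually used. The paper makes this explicit by choosing the neighbourhood basis $\{K_i\}$ to consist of pro-$p$ subgroups, so that $H\cap g K_ig^{-1}$ is pro-$p$ for every $g$ and its Haar volume is a power of $p$, hence invertible in $A$. You should insert this choice at the outset; once it is in place, the remaining volume bookkeeping is indeed routine.
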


	\begin{proof}
		It is straightforward to check that the map is well-defined and an $\mathcal H(G,A)$-module homomorphism. Hence it remains to show that $\Phi$ is bijective. Since $G$ are the $F$-points of a reductive $p$-adic group we can choose a neighbourhood basis $\{K_i\}_{i\in I}$ of compact open subgroups of the identity consisting of pro-$p$ subgroups. In particular this implies that the volume $\mu_H(gK_ig^{-1}\cap H)$ is invertible in $A$ for all $g\in G,i\in I$. The bijectivity of $\Phi$ follows if we show that for all small enough open compact subgroups $K$ in the neighbourhood basis $\{K_i\}_{i\in I}$, the restriction of $\Phi$ to the $K$-invariant elements is bijective. 
        
		In the following $K$ is always an element of $\{K_i\}_{i\in I}$. We choose a set of representatives $\Gamma$ for the double quotient $H\backslash G/K$, which allows us to write
		$$\operatorname{c-Ind}_H^G(V\otimes\delta_H)^K\cong\bigoplus_{\gamma\in\Gamma}(V\otimes\delta_H)^{H\cap\gamma K\gamma^{-1}}.$$
		We first deal with surjectivity. Fix $\gamma\in\Gamma$ and $v\in(V\otimes\delta_H)^{H\cap\gamma K\gamma^{-1}}$ and consider at the function $\varphi_{\gamma,v}\in\operatorname{c-Ind}_H^G(V\otimes\delta_H)^K$ which is defined via
		$$\varphi_{\gamma,v}(g)=\begin{cases*}\delta_H(h)\pi(h)v&\text{ if }$g=h\gamma k\in H\gamma K$,\\
			0&\text{else.}
		\end{cases*}$$
		Then it is enough to show that all of these functions are in the image of $\Phi$. For that, let $f_{\gamma}$ be the characteristic function of $K\gamma^{-1}$ times $\mu_H(H\cap\gamma K\gamma^{-1})^{-1}$. We claim that 
		$$\Phi(f_\gamma\otimes v)=\varphi_{\gamma,v}.$$
		Note that
		$$\Phi(f_\gamma\otimes v)(g)=\int_Hf_\gamma(g^{-1}h^{-1})\pi(h^{-1})vdh$$
		and this integral can only be nonzero if $g^{-1}h^{-1}$ is an element of $K\gamma^{-1}$ for at least one $h\in H$, which happens if and only if $g\in H\gamma K$. If $g\in H\gamma K$ there are $h'\in H$ and $k\in K$ such that $g=h'\gamma k$. Then we have
		$$\Phi(f_\gamma\otimes v)(h'\gamma k)=\delta_H(h')\pi(h')\int_Hf_\gamma(k^{-1}\gamma^{-1}h^{-1})\pi(h^{-1})vdh$$
		and $k^{-1}\gamma^{-1}h^{-1}\in K\gamma^{-1}$ if and only if $h\in\gamma K\gamma^{-1}$. Since $v\in(V\otimes\delta_H)^{H\cap\gamma K\gamma^{-1}}$ the integral equals $v$ and we obtain $\Phi(f_\gamma\otimes v)=\varphi_{\gamma,v}$.
  
		We now prove that $\Phi$ is injective. Firstly, note that if $\alpha\in\mathcal H(G,A)$ and  $K'$ is small enough open compact subgroup of $G$ such that $\alpha(gk)=\alpha(g)$ for all $g\in G,k\in K'$, then we have for all $h\in H$ that
		\begin{equation}\label{eq:tens}
			(\alpha\cdot e_{K'h\cap H})(g)=\int_H\alpha(gx^{-1})e_{K'h\cap H}(x)dx=\alpha(gh^{-1}).
		\end{equation}
		Let $\sum_{j=1}^n\alpha_j\otimes v_j$ be a nonzero element of $\mathcal H(G,A)\otimes_{\mathcal H(H,A)}V$. Choose an open compact subgroup $K$ of $G$ small enough such that $\alpha_j(kg)=\alpha_j(g)$ for all $g\in G,k\in K$ and $j=1,\dotsc,n$. Each $\alpha_j$ is a finite linear combination of functions $g\mapsto e_{K\gamma^{-1}}(gh)$ where $h\in H$ and $\gamma\in\Gamma$. Hence by Equation (\ref{eq:tens}) we can write 
		$$\sum_{j=1}^n\alpha_j\otimes v_j=\sum_{l=1}^me_{K\gamma_l^{-1}}\otimes v_l',$$
		where $\{\gamma_1,\dotsc,\gamma_m\}$ is a finite subset of $\Gamma$ and at least one $v_l'$ is nonzero. We write $\beta_l$ for $e_{K\gamma_l^{-1}}$. The support of $\Phi(\beta_l\otimes v_l')$ is contained in $H\gamma_lK$ and hence it is enough to show that at least one $\Phi(\beta_l\otimes v_l')$ is nonzero.
		Note that $\beta_l$ is right invariant under $\gamma_lK\gamma_l^{-1}$, in particular $\beta_l\cdot e_{\gamma_lK\gamma_l^{-1}\cap H}=\beta_l$, which implies that $\beta_l\otimes v_l'=\beta_l\otimes e_{\gamma_lK\gamma_l^{-1}\cap H}v_l'$. Hence we can assume that $v_l'\in V^{\gamma_lK\gamma_l^{-1}\cap H}$. We obtain
		$$\Phi(\beta_l\otimes v_l')(\gamma_l)=\int_Hf_l(\gamma_l^{-1}h^{-1})\pi(h^{-1})v_l'dh=\frac{\mu_H(\gamma_lK\gamma_l^{-1}\cap H)}{\mu_G(K)}v_l',$$
		which shows injectivity and concludes the proof.

	\end{proof}
        Suppose we have two smooth (left) $A[H]$-modules $\pi_1,\pi_2$. Then we have a diagonal action of $H$ on $\pi_1\otimes_A\pi_2$. The coinvariants under this action
	$$\pi_1\otimes_A \pi_2/\langle v\otimes v'-hv\otimes hv'\mid v\in \pi_1,v'\in \pi_2,h\in H\rangle$$
	are isomorphic to the tensor product $\pi_1\otimes_{A[H]}\pi_2$, where $\pi_1$ becomes a right $A[H]$-module via $v\cdot h=h^{-1}v$ for $h\in H,v\in \pi_1$. We will need the following result.
	\begin{lemma}\label{tensprod} The $A$-submodule 
		$$\langle h^{-1}v_1\otimes v_2-v_1\otimes hv_2\mid v_1\in \pi_1,v_2\in \pi_2,h\in H\rangle$$ of $\pi_1\otimes_A \pi_2$ equals
  $$\langle v_1f\otimes v_2-v_1\otimes fv_2\mid v_1\in \pi_1,v_2\in \pi_2,f\in\mathcal H(H,A)\rangle.$$
 In particular we can identify $\pi_1\otimes_{A[H]}\pi_2$ with $\pi_1\otimes_{\mathcal H(H,A)}\pi_2$.
	\end{lemma}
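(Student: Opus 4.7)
The plan is to prove the equality of the two submodules by establishing each containment using compatible conventions: $\pi_1$ is a right $A[H]$-module via $v_1 \cdot h = h^{-1}v_1$, a right $\mathcal H(H,A)$-module via $v_1 f = \int_H f(h)\pi_1(h^{-1})v_1\,dh$; and $\pi_2$ is a left $\mathcal H(H,A)$-module via $f v_2 = \int_H f(h)h v_2\, dh$. Call the first submodule $M_1$ (the $A[H]$ relations) and the second $M_2$ (the $\mathcal H(H,A)$ relations).

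For the containment $M_2 \subseteq M_1$, I would fix a generator $v_1 f \otimes v_2 - v_1 \otimes f v_2$ and unfold both Hecke actions as integrals. Choosing an open compact pro-$p$ subgroup $K \le H$ that fixes $v_1$ and $v_2$ and such that $f$ is bi-$K$-invariant (possible since $f$ is locally constant of compact support and $\pi_1, \pi_2$ are smooth), the integrals reduce to finite sums
\begin{equation*}
v_1 f \otimes v_2 - v_1 \otimes fv_2 = \sum_{h \in S} \mu_H(hK)f(h)\bigl(h^{-1}v_1\otimes v_2 - v_1 \otimes h v_2\bigr),
\end{equation*}
where $S \subseteq H$ is a finite set of coset representatives in the support of $f$. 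Each summand is a generator of $M_1$, so the whole element lies in $M_1$.

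For the reverse containment $M_1 \subseteq M_2$, the key is to realise the elementary relation $h^{-1}v_1 \otimes v_2 - v_1 \otimes hv_2$ using a single Hecke function. Given $h \in H$, I would choose a pro-$p$ open compact subgroup $K \le H$ fixing both $h^{-1}v_1$ and $v_2$, so that $\mu_H(K)$ is invertible in $R$ and hence in $A$, and set $f = \mu_H(K)^{-1}\mathbf 1_{hK}$. A direct computation, using the left invariance of $\mu_H$ and the fact that $K$ fixes $h^{-1}v_1$ and $v_2$, then gives $v_1 f = h^{-1}v_1$ and $f v_2 = h v_2$, so $v_1 f \otimes v_2 - v_1 \otimes fv_2$ is precisely the desired element of $M_1$.

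The ``in particular'' statement is then a formal consequence: the right $A[H]$-action on $\pi_1$ extends to the Hecke action, so $\pi_1 \otimes_{A[H]}\pi_2$ and $\pi_1 \otimes_{\mathcal H(H,A)}\pi_2$ are both quotients of $\pi_1 \otimes_A \pi_2$ by identical submodules. I expect the only real subtlety to be the measure-theoretic point that one must stick to pro-$p$ open compact subgroups to keep $\mu_H(K)$ invertible in $A$ — the rest is a routine smoothing argument exploiting local constancy and compact support of Hecke functions.
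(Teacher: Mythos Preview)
Your approach matches the paper's: both directions pass between group elements and Hecke functions via characteristic functions of cosets. The $M_1\subseteq M_2$ direction is identical to the paper's (the paper uses the idempotent $e_{gK}=\mu_H(K)^{-1}\mathbf 1_{gK}$, which is your $f$).

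There is, however, a small imprecision in your $M_2\subseteq M_1$ formula. Decomposing $\operatorname{supp}(f)$ into right cosets $hK$ gives $fv_2=\sum_{h\in S}\mu_H(hK)f(h)\,hv_2$ correctly (since $K$ fixes $v_2$), but $v_1f=\int_H f(g)\,g^{-1}v_1\,dg$ does \emph{not} reduce to $\sum_{h\in S}\mu_H(hK)f(h)\,h^{-1}v_1$: for $g=hk$ one gets $g^{-1}v_1=k^{-1}h^{-1}v_1$, and nothing forces $K$ to fix $h^{-1}v_1$. So the two sides of your displayed equation need not agree as written. The paper repairs this by covering $\operatorname{supp}(f)$ with finitely many open sets of the form $g_jK\cap Kg_j$ (each $g$ lies in $gK\cap Kg$, and compactness gives a finite subcover), then refining to a disjoint open compact partition $\{V_j\}$. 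On each $V_j\subseteq g_jK\cap Kg_j$ the function $f$ is constant by bi-$K$-invariance, and for $g\in V_j$ both $g^{-1}v_1=g_j^{-1}v_1$ (from $g\in Kg_j$) and $gv_2=g_jv_2$ (from $g\in g_jK$) hold, so one obtains
\[
v_1\alpha\otimes v_2-v_1\otimes\alpha v_2=\mu_H(V_j)\,\alpha(g_j)\bigl(g_j^{-1}v_1\otimes v_2-v_1\otimes g_jv_2\bigr)
\]
for each summand $\alpha=f\cdot\mathbf 1_{V_j}$. This is the only adjustment needed; the rest of your outline is correct.
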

\begin{proof}
	Firstly, let $g\in H,v_1\in \pi_1,v_2\in \pi_2$. Choose $K$ to be a small enough open compact subgroup of $H$ such that $g^{-1}v_1\in \pi_1^K$ and $v_2\in \pi_2^K$. Then the function $e_{gK}$ satisfies $v_1e_{gK}=g^{-1}v_1$ and $e_{gK}v_2=gv_2$ and hence $$g^{-1}v_1\otimes v_2-v_1\otimes gv_2=v_1e_{gK}\otimes v_2-v_1\otimes e_{gK}v_2.$$
 
    Now, let $f\in\mathcal H(H,A),v_1\in \pi_1,v_2\in \pi_2$. Choose $K$ to be a small enough open compact subgroup such that $f\in e_K*\mathcal H(H,A)*e_K$, $v_1\in \pi_1^K$ and $v_2\in \pi_2^K$. Then we can find finitely many elements $g_1,\dotsc,g_n$ of $H$ such that $\operatorname{supp}(f)\subseteq\bigcup_{i=1}^n(g_iK\cap Kg_i)$. Moreover, there is a finite refinement $V_j$ of this covering consisting of disjoint open compact sets. Then $f$ is a finite sum of functions each supported in one $V_j$. For such a summand $\alpha$ we have 
	$$v_1\alpha\otimes v_2-v_1\otimes\alpha v_2=\mu_H(\operatorname{supp}(\alpha))\alpha(g_j)(g_j^{-1}v_1\otimes v_2-v_1\otimes g_jv_2)$$
	for some $j$ and the result follows. 
\end{proof}
	
	\subsection{Finiteness Results}
	To prove properties of the zeta integrals we define later we need certain finiteness results on families of representations which we discuss now.
	\begin{proposition}\label{endfin}
		Let $(\pi,V)$ be an admissible $G$-finite representation over a noetherian ring $A$. Then we have
    \begin{enumerate}
        \item The endomorphism algebra $\operatorname{End}_{A[G]}(V)$ is a finitely generated $A$-module.
        \item If $z$ is an element of the center of $G$ there is a polynomial $Q(X)\in S_A$ such that $Q(\pi(z))$ annihilates $V$.
    \end{enumerate}
        \end{proposition}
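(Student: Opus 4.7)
My plan for part (1) is to use the standard observation that any $A[G]$-endomorphism of $V$ is determined by its values on a generating set. Since $V$ is $G$-finite, one can fix finitely many $A[G]$-generators $v_1,\ldots,v_n$ of $V$, and smoothness supplies a compact open subgroup $K\subseteq G$ with $v_i\in V^K$ for all $i$. For any $\phi\in\operatorname{End}_{A[G]}(V)$ the $G$-equivariance of $\phi$ forces $\phi(v_i)\in V^K$, giving an injection $\operatorname{End}_{A[G]}(V)\hookrightarrow (V^K)^n$. Admissibility then says $V^K$ is finitely generated over $A$, and because $A$ is noetherian the submodule on the left must also be finitely generated.

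For part (2) I would use (1) to conclude that the commutative $A$-subalgebra $A[\pi(z),\pi(z)^{-1}]\subseteq\operatorname{End}_{A[G]}(V)$ — note that $\pi(z)^{-1}=\pi(z^{-1})$ exists since $z$ is a group element — is itself a finitely generated $A$-module, so that $\pi(z)$ and $\pi(z)^{-1}$ are both integral over $A$. Integrality of $\pi(z)$ supplies a monic polynomial $Q(X)=X^n+a_{n-1}X^{n-1}+\cdots+a_0\in A[X]$ with $Q(\pi(z))=0$. Applying the same reasoning to $\pi(z)^{-1}$ and multiplying the resulting monic relation through by $\pi(z)^m$ gives a polynomial $S(X)=c_0 X^m+\cdots+c_1 X+1\in A[X]$ with constant term $1$ satisfying $S(\pi(z))=0$.

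Neither polynomial lies in $S_A$ on its own, since the constant term $a_0$ of $Q$ and the leading coefficient $c_0$ of $S$ need not be units. The trick I would use is to combine them additively. First replace $Q$ by $(1+X^N)Q$ for some $N$ large enough that $N+n>m$; this is still monic, still has constant term $a_0$, and still annihilates $\pi(z)$. Then set $P(X):=Q(X)+(1-a_0)S(X)$. Because $\deg Q>\deg S$, the leading coefficient of $P$ equals that of $Q$, namely $1$, while the constant term of $P$ is $a_0+(1-a_0)\cdot 1=1$; both are units in $A$, so $P\in S_A$, and by construction $P(\pi(z))=0$ annihilates $V$.

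The main conceptual obstacle is realising that a single characteristic-polynomial-style argument does not suffice: without a freeness hypothesis on $A[\pi(z)]$ over $A$, the constant term coming from a Cayley--Hamilton relation for $\pi(z)$ need not be a unit, and dually the reversed relation coming from $\pi(z)^{-1}$ need not have unit leading coefficient. It is the additive combination of the two integrality relations, after the mild degree adjustment on $Q$, that cleanly produces a single annihilating polynomial in $S_A$.
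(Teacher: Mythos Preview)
Your proof is correct and follows essentially the same approach as the paper: part (1) is identical in spirit (the paper embeds into $\operatorname{End}_A(V^K)$ rather than $(V^K)^n$, but this is the same idea), and for part (2) both arguments exploit integrality of $\pi(z)$ and of $\pi(z^{-1})$ and then additively combine the two relations. The paper's combination is a touch simpler: with monic $Q_1,Q_2$ annihilating $\pi(z)$ and $\pi(z^{-1})$ respectively, the Laurent polynomial $Q(X)=Q_1(X)+Q_2(X^{-1})$ already has leading and trailing coefficient $1$, so lies in $S_A$ without any degree adjustment or rescaling; your $(1+X^N)$ and $(1-a_0)$ manoeuvres achieve the same end but are not needed.
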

	\begin{proof}
	ad (1):	By assumption there exist $v_1,\dotsc,v_n$ which generate $V$ as an $A[G]$-module. Let $K$ be a compact open subgroup of $G$ such that $v_i\in V^K$ for all $i=1,\dotsc,n$. Since an element of $\operatorname{End}_{A[G]}(V)$ is completely determined by its values on a generating set and it maps $V^K$ to itself, we have by restriction an embedding of $A$-modules $\operatorname{End}_{A[G]}(V)\hookrightarrow\operatorname{End}_A(V^K)$. By admissibility, $V^K$ is finitely generated as an $A$-module and since $A$ is noetherian we obtain that $\operatorname{End}_A(V^K)$ is a finitely generated $A$-module, which implies the result.
 
 ad (2): Since $z$ is in the center of $G$ we have that $\pi(z)\in\operatorname{End}_{A[G]}(V)$. By part 1) and noetherianity of $A$, we see that $A[\pi(z)]\subseteq\operatorname{End}_{A[G]}(V)$ is a finitely generated $A$-module and hence there is a monic polynomial $Q_1(X)$ such that $Q_1(\pi(z))=0$. Analogously we obtain a monic polynomial $Q_2(X)$ such that $Q_2(\pi(z^{-1}))=0$. Then we have that $Q(X)=Q_1(X)+Q_2(X^{-1})\in S_A$ and $Q(\pi(z))=0$. By multiplying with a large enough power of $X$ we can assume that $Q(X)\in A[X]$.
	\end{proof}
        The next two results imply that the conditions in the above proposition are preserved under Jacquet functors. The following is proven in  II.2.1 of \cite{vigneras1996representations}.
	\begin{proposition}\label{jacfin}
		Suppose that $\pi$ is finitely generated as an $A[G]$-module and let $P=MN$ be a parabolic subgroup of $G$. Then $r_P^G(\pi)$ is finitely generated as an $A[M]$-module.
	\end{proposition}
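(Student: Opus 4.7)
The plan is to exploit the Iwasawa decomposition $G=PK$ for a suitable good open compact subgroup $K$ of $G$ and reduce finite generation of the Jacquet module over $A[M]$ to a finite combinatorial bookkeeping of $K$-translates of a fixed generating set.

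First, I would choose generators $v_1,\dotsc,v_r$ of $V$ as an $A[G]$-module and pick a common open compact subgroup $K_0\subseteq G$ which fixes all of them (using smoothness). Then I would fix a special maximal compact subgroup $K$ of $G$ for which the Iwasawa decomposition $G=PK=MNK$ holds. Since $K\cap K_0$ has finite index in $K$, each orbit $\pi(K)v_i$ is a \emph{finite} set, so the union
\[
\Sigma=\{\pi(k)v_i : k\in K,\ i=1,\dotsc,r\}
\]
is finite, and the $A$-submodule $W\subseteq V$ generated by $\Sigma$ is finitely generated over $A$.

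Next, I would show that the images of the elements of $\Sigma$ in $J_N(V)=V/V(N)$ generate $J_N(V)$ as an $A[M]$-module. Given any $v\in V$, write $v=\sum_j a_j\,\pi(g_j)v_{i_j}$ with $a_j\in A$ and $g_j\in G$; using the Iwasawa decomposition, write $g_j=m_j n_j k_j$ with $m_j\in M,\ n_j\in N,\ k_j\in K$. Since the image of $\pi(n_j)w-w$ in $J_N(V)$ vanishes for any $w\in V$, we have
\[
\overline{\pi(g_j)v_{i_j}}=\overline{\pi(m_j)\pi(n_j)\pi(k_j)v_{i_j}}=\pi(m_j)\cdot\overline{\pi(k_j)v_{i_j}}
\]
in $J_N(V)$. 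Hence $\overline{v}$ is an $A[M]$-linear combination of elements of $\overline{\Sigma}$, which proves the claim.

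Finally, since $\Sigma$ is finite, $J_N(V)$ is finitely generated as an $A[M]$-module. Twisting by the character $\delta_P^{-1/2}$ does not affect $M$-finite generation, so $r_P^G(\pi)$ is finitely generated over $A[M]$. The only nontrivial ingredient is the Iwasawa decomposition $G=PK$ for a $p$-adic reductive group, which is standard (Bruhat--Tits theory); all other steps are elementary once a common open compact stabilizer of the generating set has been fixed, so I expect no substantive obstacle, matching the fact that the result is attributed to \cite{vigneras1996representations}, II.2.1.
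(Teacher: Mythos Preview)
Your argument is correct and is precisely the standard proof given in Vign\'eras, II.2.1, which the paper cites in lieu of a proof: use the Iwasawa decomposition $G=PK$ (equivalently, compactness of $P\backslash G$) to replace an $A[G]$-generating set by the finite set of its $K$-translates, whose images then generate $J_N(V)$ over $A[M]$. There is nothing to add.
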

We will need the following recent deep result of Dat-Helm-Kurinczuk-Moss which was previously proven by Dat in special cases for classical groups (see \cite[Prop 6.7]{dat2005nu} and \cite[Cor 1.6]{dat2009finitude}).
	\begin{theorem}[Cor 1.5 in \cite{dat2022finiteness}]\label{dathelm}
		Let $G$ be the $F$-points of a connected reductive group and $A$ a noetherian ring. Then the Jacquet functor preserves admissibility for all parabolic subgroups.
	\end{theorem}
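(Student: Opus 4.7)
The plan is first to reduce the statement to showing, for each compact open subgroup $K^M \subset M$, that the $A$-module $r_P^G(\pi)^{K^M}$ is finitely generated. Because $\delta_P$ takes values in units of $A$, the normalized Jacquet module differs from the unnormalized one $J_N(\pi)$ by a unit twist, so it suffices to work with $J_N(\pi)$. Fix a compact open subgroup $K$ of $G$ admitting an Iwahori factorization $K = K_{\bar N}\,K^M\,K_N$ with respect to $P$ and its opposite $\bar P = M\bar N$, chosen so that the $M$-part is exactly $K^M$. By admissibility of $\pi$ and noetherianity of $A$, the space $\pi^K$ is finitely generated over $A$.

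Next, the canonical projection $\mathrm{pr}\colon\pi\twoheadrightarrow J_N(\pi)$ sends $\pi^K$ into $J_N(\pi)^{K^M}$, yielding an $A$-linear map $\mathrm{pr}_K\colon \pi^K \to J_N(\pi)^{K^M}$. The naive obstruction to surjectivity is that an element of $J_N(\pi)^{K^M}$ a priori only admits a lift to some $\pi^{K_{\bar N}K^M K_N'}$ with $K_N' \subseteq K_N$ possibly much smaller, and such a space need not be $K$-fixed. To control this, I would invoke Casselman's trick: choose a strongly positive element $z \in Z(M)$ such that conjugation by $z$ contracts $K_N'$ into $K_N$ and expands $K_{\bar N}$. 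Then any element of $J_N(\pi)^{K^M}$ can be represented, after acting by a sufficiently large power $\pi(z)^{-n}$, by the image of some vector in $\pi^K$.

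Over $\mathbb C$ one would simply localize at $z$, but over the general noetherian ring $A$ the operator $\pi(z)$ need not be invertible. Instead, I would apply Proposition \ref{endfin}(2) to finitely generated admissible subquotients of $\pi$: for any $v \in \pi^K$, the cyclic $A[\pi(z)]$-submodule generated by $v$ is annihilated by some $Q(X) \in S_A$, a polynomial whose leading \emph{and} trailing coefficients are units. Since the trailing coefficient is a unit, this identity lets one solve for $\pi(z)^{-n}v$ as an $A$-linear combination of $\pi(z)^i v$ for $i$ in a bounded range. Combined with the Casselman contraction, this expresses every deep lift of a $K^M$-fixed vector modulo $N$ as an $A$-linear combination of images of finitely many explicit vectors in $\pi^K$, giving finite generation of $J_N(\pi)^{K^M}$.

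The hard part, and the genuine content of \cite{dat2022finiteness}, is applying Proposition \ref{endfin} \emph{uniformly}: a priori we do not know that $\pi$ (or even the cyclic $A[G]$-submodule generated by a given element) is $G$-finite and admissible in the right sense to produce a single $Q(X) \in S_A$ controlling all the lifts one needs. The proof in \cite{dat2022finiteness} circumvents this by first establishing Bernstein-type decompositions of the smooth category of $A[G]$-modules into blocks whose Bernstein centers are noetherian over $A$, and then combining Proposition \ref{jacfin} with noetherian descent within each block to patch the polynomial identities into a single uniform bound. This block-theoretic finiteness is the main obstacle: once granted, the Casselman-style argument sketched above runs through integrally.
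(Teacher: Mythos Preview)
The paper does not give a proof of this theorem: it is quoted as an external deep result of Dat--Helm--Kurinczuk--Moss (Corollary~1.5 of \cite{dat2022finiteness}), so there is no argument in the paper to compare against. Your proposal is therefore not a variant of the paper's proof but an independent attempt to sketch how one might prove the cited result.

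Your outline has the right Casselman shape, and you are right that the substance lies in \cite{dat2022finiteness}. However, the step where you invoke Proposition~\ref{endfin}(2) is circular as written. That proposition requires $z$ to lie in the center of the acting group and the module to be admissible and finite over that group. Here $z\in Z(M)$, not $Z(G)$, so $\pi(z)$ is not an $A[G]$-endomorphism of $\pi$; to apply Proposition~\ref{endfin}(2) with the group $M$ you would have to work with the Jacquet module $r_P^G(\pi)$ (or $r_P^G$ of a cyclic submodule), and by Proposition~\ref{jacfin} that is $M$-finite, but its admissibility is exactly the statement you are trying to prove. Your claim that ``the cyclic $A[\pi(z)]$-submodule generated by $v$ is annihilated by some $Q(X)\in S_A$'' thus has no justification: there is no a~priori reason this $A$-module is finitely generated, and Cayley--Hamilton for the Hecke operator on $\pi^K$ only yields a monic polynomial, not one whose trailing coefficient is a unit. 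Your final paragraph correctly flags that the missing ingredient is a global finiteness result from \cite{dat2022finiteness}, but the mechanism there is not ``patching polynomial identities'': it goes through finiteness of Hecke algebras at integral level (via depth/type theory and reduction to $\mathbb Z_\ell$), which in particular makes the whole category locally noetherian and forces the needed control on the $z$-action from the outside rather than bootstrapping from Proposition~\ref{endfin}.
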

	
	\begin{remark}
 \begin{enumerate}
     \item Later we will also have to consider the $F$-points of certain disconnected reductive groups (for example orthogonal groups). However, if $G$ is not connected, but the above statement is known for $F$-points of the neutral component $G^0$ of $G$, it also holds for $G$. Namely, if $P=MN$ is a parabolic subgroup of $G$, then $P^0=M^0N$, where $M^0=M\cap G^0$, is a parabolic subgroup of $G^0$. Hence the parabolic restrictions $r^G_P(\pi)$ and $r^{G^0}_{P^0}(\pi|_{G^0})$ have the same underlying set of elements $J_N(\pi)$. Since $G^0$ is open in $G$ we have that $M^0$ is open in $M$. For any compact open subgroup $K$ of $M$ we can find an open compact subgroup $K^0$ of $M^0$ that is contained in $K$. The invariants $J_N(\pi)^K$ are then contained in $J_N(\pi)^{K^0}$ which is finitely generated as an $A$-module. 
     \item Note that $G$-finiteness and admissibility are preserved under twisting by smooth characters. In particular the above results also hold for the unnormalized Jacquet functor. 
 \end{enumerate}
 
	\end{remark}

 \subsection{$\ell$-Sheaves and Cosmooth Modules} We briefly recall the theory of $\ell$-sheaves and cosmooth modules and indicate that the relevant results apply to families of representations. A good introduction is Section 4.3 of \cite{bump1998automorphic}, whose proofs carry over to the setting of general coefficient rings.
	In this section let $\mathbb X$ be a locally compact totally disconnected topological space and $R$ a commutative ring. We denote by $\mathcal I_c$ the collection of compact open subsets of $\mathbb X$, which, by assumption, is a basis of the topology of $\mathbb X$. Let $\mathcal C_c^\infty$ be the sheaf of rings on $\mathbb X$, where for any open subset $U$ of $\mathbb X$, the sections $\mathcal C_c^\infty(U)$ are the locally constant compactly supported $R$-valued functions on $U$ with the obvious restriction maps. The multiplication structure is given by pointwise multiplication of functions. Note that the ring $C_c^\infty(U)$ is unital if and only if $U\in\mathcal I_c$.
 
	Let $\mathcal F$ be a sheaf of $\mathcal C_c^\infty$-modules on $\mathbb X$, where we assume that for $U\in\mathcal I_c$ the $\mathcal C_c^\infty(U)$-module $\mathcal F(U)$ is unital. For any compact open subsets $V\subseteq U$ we can define a map $\iota_{V,U}\colon \mathcal F(V)\to\mathcal F(U)$ by setting $\iota_{V,U}(x)$ to be the unique element in $\mathcal F(U)$ that satisfies $\iota_{V,U}(x)|_V=x$ and $\iota_{V,U}(x)|_{U-V}=0$. We obtain a direct system $(\mathcal F(V),\iota_{V,U})$ and set $$\mathcal F_c=\lim_{\longrightarrow}\mathcal F(V).$$
	Note that one obtains a $\mathcal C_c^\infty(\mathbb X)$-module structure on $\mathcal F_c$. A $\mathcal C_c^\infty(\mathbb X)$-module $M$ is called cosmooth if for every element $x\in M$ there is a compact open subset $U\subseteq\mathbb X$ such that $\mathbf 1_Ux=x$. It is straightforward to see that $\mathcal F_c$ is a cosmooth $\mathcal C_c^\infty(\mathbb X)$-module. We have the following result. 
	\begin{proposition}
		There is an equivalence of categories
		\begin{align*}
			\left\{\text{sheaves of }\mathcal C_c^\infty\text{-modules on }\mathbb X\right\}
			&\Leftrightarrow
			\left\{\text{cosmooth }\mathcal C_c^\infty(\mathbb X)\text{-modules}\right\}\\
			\mathcal F&\mapsto\mathcal F_c\\
			\mathcal M &\mapsfrom M,
		\end{align*}
		where $\mathcal M$ is the sheaf on $\mathbb X$ defined by setting for compact open $V\subseteq U$ that $\mathcal M(U)=\mathbf 1_UM$ and restriction map $x|_V=\mathbf 1_Vx$ for $x\in\mathcal M(U)$. 
	\end{proposition}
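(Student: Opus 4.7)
The plan is to verify that the two assignments are mutually quasi-inverse functors between the two categories. The easier direction is $\mathcal F \mapsto \mathcal F_c$: functoriality is immediate from the universal property of the colimit, and cosmoothness of $\mathcal F_c$ follows because any element is represented by some $x \in \mathcal F(V)$ with $V \in \mathcal I_c$, hence is fixed by multiplication by $\mathbf 1_V$. For $M \mapsto \mathcal M$ I would first argue that $\mathcal M$ is a well-defined presheaf on the basis $\mathcal I_c$: given compact opens $V \subseteq U$ and $x = \mathbf 1_U x \in \mathbf 1_U M$, the restriction $\mathbf 1_V x$ lies in $\mathbf 1_V M$ since $\mathbf 1_V \cdot \mathbf 1_U = \mathbf 1_V$, and transitivity of restriction is automatic. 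Since the compact opens form a basis closed under finite intersections, it suffices to check the sheaf axioms on $\mathcal I_c$ and then invoke the standard extension of a basis-sheaf to a sheaf on $\mathbb X$.

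Next I verify separation and gluing on $\mathcal I_c$. Any cover of a compact open $U$ by compact opens admits a finite subcover by compactness, and using total disconnectedness of $\mathbb X$ this can be refined to a finite disjoint partition $U = V_1 \sqcup \cdots \sqcup V_n$ by compact opens. Separation is then immediate: if $x \in \mathbf 1_U M$ restricts to zero on each $V_i$, then
\[
x = \mathbf 1_U x = \sum_{i=1}^n \mathbf 1_{V_i} x = 0.
\]
For gluing on the disjoint refinement, a compatible family $(x_i)_i \in \prod_i \mathbf 1_{V_i} M$ glues to $x = \sum_i x_i$, and one checks $\mathbf 1_U x = x$ and $\mathbf 1_{V_j} x = x_j$. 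Compatibility on a general (non-disjoint) cover reduces to this case by first restricting sections to a common disjoint refinement.

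Finally I construct the two natural isomorphisms. For cosmooth $M$ the map $(\mathcal M)_c \to M$ sending the class of $x \in \mathbf 1_V M$ to $x \in M$ is well-defined (inclusion $V \subseteq U$ sends $x$ to $\mathbf 1_U x = x$), $\mathcal C_c^\infty(\mathbb X)$-linear, injective by construction of the colimit, and surjective by cosmoothness. For a sheaf $\mathcal F$ the map $\mathcal F(U) \to \mathbf 1_U \mathcal F_c = (\mathcal F_c)^{\sim}(U)$, for $U \in \mathcal I_c$, sends $z$ to its image $[z] \in \mathcal F_c$, which satisfies $\mathbf 1_U [z] = [z]$; conversely an element of $\mathbf 1_U \mathcal F_c$ is $\mathbf 1_U [y]$ for some $y \in \mathcal F(V)$, and using the sheaf axiom for $\mathcal F$ one obtains a unique section on $U$ that agrees with $y|_{U \cap V}$ on $U \cap V$ and vanishes on $U \setminus V$.

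The main obstacle will be the careful verification of the sheaf axioms for $\mathcal M$, where the cosmoothness of $M$, the total disconnectedness of $\mathbb X$, and the $\mathcal C_c^\infty$-module structure all have to be combined; once that step is in place, both natural isomorphisms are a direct unraveling of the constructions.
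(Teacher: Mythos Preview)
Your proof is correct and follows the standard direct verification; the paper itself does not give an argument but simply refers the reader to Section~4.10 of Vign\'eras and Section~4.3 of Bump, where essentially the same construction is carried out. Your write-up is thus more detailed than what the paper provides, and the key steps you identify (refining covers of compact opens to finite disjoint partitions via total disconnectedness, and using cosmoothness for surjectivity of $(\mathcal M)_c \to M$) are exactly the ingredients used in those references.
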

	\begin{proof}
		See Section 4.10 of \cite{vigneras1996representations} and Section 4.3 of \cite{bump1998automorphic}.
	\end{proof}
	By using this correspondence the stalks of $\mathcal C_c^\infty$-sheaves can be described in the following way. See Proposition 4.3.13 in \cite{bump1998automorphic} for the result over $\mathbb C$.
	\begin{proposition}\label{stalksheaf}
		Let $M$ be a cosmooth $\mathcal C_c^\infty(\mathbb X)$-module and let $\mathcal M$ be the corresponding $\mathcal C_c^\infty$-sheaf. For any $x\in\mathbb X$ let $M(x)$ be the $R$-submodule of $M$ consisting of elements $m\in M$, for which there is a compact open subset $U\subseteq\mathbb X$ containing $x$ such that $\mathbf 1_Um=0$. Then there is an $R$-module isomorphism between the stalk $\mathcal M_x$ and $M/M(x)$.
		
	\end{proposition}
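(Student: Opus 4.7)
The plan is to follow the complex-coefficient proof of \cite[Prop 4.3.13]{bump1998automorphic} and verify that each step goes through with values in an arbitrary ring $R$. Concretely, I would construct an $R$-linear map
$$\Phi \colon \mathcal M_x \longrightarrow M/M(x)$$
and check both directions of the bijection by short chases using the idempotents $\mathbf 1_U$.

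To build $\Phi$: since $\mathcal I_c$ is a basis of the topology of $\mathbb X$, the stalk at $x$ can be computed as
$$\mathcal M_x \;=\; \varinjlim_{x \in U,\, U \in \mathcal I_c} \mathcal M(U) \;=\; \varinjlim_{x \in U,\, U \in \mathcal I_c} \mathbf 1_U M.$$
I would send $\mathbf 1_U m \in \mathbf 1_U M$ to the class $[\mathbf 1_U m]$ in $M/M(x)$. For compatibility with the restriction maps in this direct system, if $V \subseteq U$ are both compact opens containing $x$, then $\mathbf 1_U m - \mathbf 1_V m = \mathbf 1_{U \setminus V} m$, and this element lies in $M(x)$ because the compact open neighborhood $V$ of $x$ annihilates it. The universal property of the direct limit then yields $\Phi$.

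For surjectivity: given $[m] \in M/M(x)$, cosmoothness of $M$ supplies a compact open $U \in \mathcal I_c$ with $\mathbf 1_U m = m$. If $x \in U$ then $m$, viewed as an element of $\mathcal M(U)$, lifts $[m]$; otherwise I can choose a compact open $W \ni x$ disjoint from $U$ (such a $W$ exists because $\mathcal I_c$ is a basis), and then $\mathbf 1_W m = 0$ gives $m \in M(x)$, so $[m] = 0$ is trivially in the image. For injectivity: any element of $\mathcal M_x$ is represented by some $\mathbf 1_U m$ with $x \in U \in \mathcal I_c$; if $[\mathbf 1_U m] = 0$ in $M/M(x)$ there is a compact open $W \ni x$ with $\mathbf 1_W \mathbf 1_U m = 0$, and after replacing $W$ by $W \cap U$ one sees that $\mathbf 1_U m$ already restricts to $0$ in $\mathbf 1_{W \cap U} M$, hence vanishes in the direct limit.

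The only subtlety to keep track of is that $\mathcal C_c^\infty(\mathbb X)$ has no global unit, so both the stalk $\mathcal M_x$ and the submodule $M(x)$ can only be probed through the idempotents $\mathbf 1_U$; once this bookkeeping is in place the argument uses no special feature of $R$, and in particular the classical proof goes through unchanged.
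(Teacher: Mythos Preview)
Your proposal is correct and follows essentially the same approach as the paper's proof: both construct the map $\mathcal M_x\to M/M(x)$ via the universal property of the direct limit over compact opens containing $x$, verify compatibility using the identity $\mathbf 1_Um-\mathbf 1_Vm\in M(x)$, and then check bijectivity by short idempotent chases. The only cosmetic difference is in surjectivity, where you dispose of the case $x\notin U$ by showing $m\in M(x)$, whereas the paper simply asserts one can always arrange $x\in U$; both arguments are immediate.
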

	\begin{proof}
		For any $U\in\mathcal I_c$ we consider the map $\pi_U\colon\mathcal M(U)\to M/M(x)$, obtained by composing the inclusion $\mathcal M(U)\to M$ with the natural projection  $M\to M/M(x)$. Let $V\subseteq U$ be elements of $\mathcal I_c$ with $x\in V$. Since for any $m\in\mathcal M(U)$ we have that $\mathbf 1_V(\mathbf 1_Vm-m)=0$ and $m|_V=\mathbf 1_Vm$ we see that $\pi_V(m|_V)=\pi_U(m)$ for all $m\in\mathcal M(U)$. By the universal property of the direct limit we obtain an $R$-linear morphism $\sigma\colon\mathcal M_x\to M/M(x)$, such that for any $U\in\mathcal I_c$ with $x\in U$, the canonical map $\mathcal M(U)\to\mathcal M_x$ composed with $\sigma$ equals $\pi_U$.
  
		It is straightforward to see that any $m\in M$ lies in the image of an inclusion $\mathcal M(U)\to M$ where $U\in\mathcal I_c$ and $x\in U$. This implies the surjectivity of $\sigma$.
  
		Now if $\overline{m}$ lies in the kernel of $\sigma$, there is an $U\in\mathcal I_c$ with $x\in U$ such that $m\in\mathcal M(U)$ maps to $\overline{m}$ and $\pi_U(m)=0$. However, then $m$ is an element of $M(x)$ and by definition there is an open compact subset $V$ with $x\in V$ such that $\mathbf 1_Vm=0$. This immediately implies that $\overline{m}=0$.

	\end{proof}
	If $Z$ is a closed subset of $\mathbb X$ and $\mathcal F$ a $\mathcal C_c^\infty$-sheaf on $\mathbb X$ we can consider the inverse image sheaf $\mathcal F_Z$ on $Z$ with respect to the inclusion $Z\hookrightarrow\mathbb X$. Then $\mathcal F_Z$ is a $\mathcal C_c^\infty$-sheaf on $Z$. Moreover, one can construct an $R$-linear map $\mathcal F_c\to(\mathcal F_Z)_c$ in the following way. Suppose $f\in\mathcal F_c$ is represented by an element $f\in\mathcal F(U)$ for some compact open subset $U$ of $\mathbb X$. Note that we have a chain of $R$-linear maps 
	$$\mathcal F(U)\to\varinjlim_{V\supseteq U\cap Z}\mathcal{F}(V)\to\mathcal F_Z(Z\cap U)\to (\mathcal F_Z)_c$$ which defines the image of $f$ in $(\mathcal F_Z)_c$. By Proposition 4.3.14 of \cite{bump1998automorphic} this map is surjective.

	Suppose we have a continuous action of $G$ on $\mathbb X$. Let $\mathcal F$ be a sheaf of $R$-modules on $\mathbb X$ and we say that $G$ acts on the pair $(\mathbb X,\mathcal F)$, if for all $g\in G$ and open subsets $U\subseteq\mathbb X$ we have an $R$-linear isomorphism $$\chi_{U}^g\colon\mathcal F(U)\to\mathcal F(gU)$$ such that for all open $V\subseteq U$ the diagram
	$$\begin{tikzcd}
		\mathcal F(U)\ar[r,"\chi_U^g"]\ar[d,"(-)|_V"]&\mathcal F(gU)\ar[d,"(-)|_{gV}"]\\
		\mathcal F(V)\ar[r,"\chi_V^g"]&\mathcal F(gV)
	\end{tikzcd}$$
	commutes. For any element $f\in\mathcal F(U)$ we will write $g\cdot f$ for $\chi_U^g(f)$. Clearly, $G$ acts on $(\mathbb X,\mathcal C_c^\infty)$ by defining $g\cdot \varphi\in\mathcal C_c^\infty(gU)$ via $g\cdot \varphi(x)=\varphi(g^{-1}x)$ for $x\in\mathbb X$. If $\mathcal F$ is a $\mathcal C_c^\infty$-sheaf and $G$ acts on $(\mathbb X,\mathcal F)$ we moreover assume that $g\cdot(\varphi f)=(g\cdot\varphi)(g\cdot f)$ where $U\subseteq\mathbb X$ is open and $g\in G,\varphi\in\mathcal C_c^\infty(U),f\in\mathcal F(U)$.

	If $\mathcal F$ is a sheaf of $\mathcal C_c^\infty$-modules the action of $G$ gives rise to an action of $G$ on $\mathcal F_c$. The following result over $\mathbb C$ is Proposition 4.3.15 in\cite{bump1998automorphic}. 
	\begin{proposition}\label{propbump}
		Let $\mathbb X$ and $\mathbb Y$ be totally disconnected locally compact spaces and let $\varrho\colon \mathbb X\to\mathbb Y$ be a continuous map. Let $\mathcal F$ be a $\mathcal C_c^\infty$-sheaf on $\mathbb X$. Suppose that $G$ acts on $(\mathbb X,\mathcal F)$ and assume the action satisfies that $\varrho(g\cdot x)=\varrho(x)$ for $g\in G,x\in\mathbb X$. Let $\xi$ be an $R^\times$-valued character of $G$. Suppose that $y\in\mathbb Y$ and let $Z$ be the preimage $\varrho^{-1}(y)$. Then $(\mathcal F_c)_{G,\xi}$ is a cosmooth $\mathcal C^\infty_c(\mathbb Y)$-module. Let $\mathcal G$ be the corresponding sheaf of $\mathcal C_c^\infty$-modules on $\mathbb Y$. For any $y\in\mathbb Y$ the stalk $\mathcal G_y$ is isomorphic to $((\mathcal F_Z)_c)_{G,\xi}$.

		Let $\mathcal F_c(\xi)$ (resp. ($(\mathcal F_Z)_c(\xi)$))) be the submodule of $\mathcal F_c$ (resp. $(\mathcal F_Z)_c$) generated by elements of the form $g\cdot f-\xi(g)^{-1}f$ for $f\in \mathcal F_c$ (resp. $f\in (\mathcal F_Z)_c$) and $g\in G$. \end{proposition}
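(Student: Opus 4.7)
The plan is to first put a $\mathcal C_c^\infty(\mathbb Y)$-module structure on $\mathcal F_c$ via pullback along $\varrho$, verify that it descends to $(\mathcal F_c)_{G,\xi}$, and then identify the stalks using Proposition \ref{stalksheaf}. For the module structure, given $f\in\mathcal F_c$ supported on some compact open $U\subseteq\mathbb X$ and $\varphi\in\mathcal C_c^\infty(\mathbb Y)$, the pullback $\varphi\circ\varrho|_U$ lies in $\mathcal C_c^\infty(U)$ since $U$ is compact open, so one sets $\varphi\cdot f\coloneqq(\varphi\circ\varrho|_U)\cdot f\in\mathcal F(U)\subseteq\mathcal F_c$, and checks this is independent of $U$. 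The hypothesis $\varrho(g\cdot x)=\varrho(x)$ gives $g\cdot(\varphi\circ\varrho|_U)=\varphi\circ\varrho|_{gU}$, so combined with $g\cdot(\varphi f)=(g\cdot\varphi)(g\cdot f)$ this shows the $G$-action on $\mathcal F_c$ is $\mathcal C_c^\infty(\mathbb Y)$-linear, hence $\mathcal F_c(\xi)$ is a $\mathcal C_c^\infty(\mathbb Y)$-submodule and $(\mathcal F_c)_{G,\xi}$ inherits the structure. Cosmoothness is immediate: any $f\in\mathcal F_c$ is supported on some compact open $U$, and $\varrho(U)$ is compact, hence contained in some compact open $V\subseteq\mathbb Y$, and then $\mathbf 1_V\cdot f=f$.

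By Proposition \ref{stalksheaf}, the stalk $\mathcal G_y$ is the quotient of $(\mathcal F_c)_{G,\xi}$ by the submodule $N$ of classes $\overline{f}$ for which there exists a compact open $V\subseteq\mathbb Y$ with $y\in V$ and $\mathbf 1_V\cdot \overline{f}=0$. I would build the comparison map as follows. The restriction map $\mathcal F_c\to(\mathcal F_Z)_c$ (which is surjective by Proposition 4.3.14 of \cite{bump1998automorphic}) is $G$-equivariant since $Z=\varrho^{-1}(y)$ is $G$-stable, and it clearly sends $\mathcal F_c(\xi)$ into $(\mathcal F_Z)_c(\xi)$. So it induces a surjective map $\alpha\colon(\mathcal F_c)_{G,\xi}\twoheadrightarrow((\mathcal F_Z)_c)_{G,\xi}$. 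It remains to show $\ker\alpha=N$.

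The inclusion $N\subseteq\ker\alpha$ is easy: if $\mathbf 1_V\cdot\overline{f}=0$ for some compact open $V$ containing $y$, then the image of $\mathbf 1_V\cdot f$ in $((\mathcal F_Z)_c)_{G,\xi}$ equals the image of $f$ (since on $Z$ we have $\mathbf 1_V\circ\varrho\equiv 1$), and this image is zero. For the harder inclusion $\ker\alpha\subseteq N$, suppose $f\in\mathcal F_c$ with $\alpha(\overline{f})=0$, so that $f|_Z=\sum_i(g_i\cdot h_i-\xi(g_i)^{-1}h_i)$ in $(\mathcal F_Z)_c$ for finitely many $h_i\in(\mathcal F_Z)_c$ and $g_i\in G$. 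Using surjectivity of $\mathcal F_c\to(\mathcal F_Z)_c$, lift each $h_i$ to $\tilde h_i\in\mathcal F_c$; then $f'\coloneqq f-\sum_i(g_i\cdot\tilde h_i-\xi(g_i)^{-1}\tilde h_i)$ represents the same class as $f$ in $(\mathcal F_c)_{G,\xi}$ and satisfies $f'|_Z=0$ in $(\mathcal F_Z)_c$.

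The main obstacle is then the following local-to-global step: show that any $f'\in\mathcal F_c$ with $f'|_Z=0$ is killed by $\mathbf 1_V$ for some compact open $V\ni y$. Choosing a compact open $U\subseteq\mathbb X$ on which $f'$ is supported, the hypothesis $f'|_Z=0$ together with the direct-limit description of $(\mathcal F_Z)_c$ gives a compact open $W\subseteq U$ with $U\cap Z\subseteq W$ and $f'|_W=0$; since $U\setminus W$ is compact and disjoint from $Z=\varrho^{-1}(y)$, its image $\varrho(U\setminus W)$ is a compact subset of $\mathbb Y$ not containing $y$, and total disconnectedness yields a compact open $V\ni y$ disjoint from $\varrho(U\setminus W)$. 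Then $\mathbf 1_V\cdot f'$ is supported in $\varrho^{-1}(V)\cap U\subseteq W$ and hence vanishes, showing $\overline{f}\in N$. Combining the two directions gives $\mathcal G_y\cong((\mathcal F_Z)_c)_{G,\xi}$. The argument is essentially Bump's, with care taken to avoid division and use only the pointwise partition-of-unity structure of $\mathcal C_c^\infty$, so it transposes to arbitrary $R$.
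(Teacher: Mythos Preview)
Your proof is correct and follows essentially the same route as the paper's. Both establish the $\mathcal C_c^\infty(\mathbb Y)$-module structure via pullback, check cosmoothness by noting $\varrho(U)$ is compact, and reduce the stalk computation to showing that the kernel of $\mathcal F_c\to(\mathcal F_Z)_c$ consists exactly of those sections killed (modulo $\mathcal F_c(G,\xi)$) by some $\mathbf 1_V$ with $y\in V$; the paper packages this kernel as the submodule $L$ generated by $(\Phi\circ\varrho)f$ with $\Phi(y)=0$, while you run the same compactness/total-disconnectedness argument directly after passing to coinvariants, but the substance is identical.
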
  
	
	\begin{proof} 
		Let $f\in\mathcal F_c$ be represented by $f\in\mathcal F(U)$ for some compact open $U\subseteq\mathbb X$. For $\alpha\in\mathcal C_c^\infty(\mathbb Y)$ we then define $\alpha\cdot f\coloneqq ((\alpha\circ \varrho)\mathbf 1_U )f$. It is not hard to see that this action is well-defined. Now for cosmoothness, note that $\varrho(U)$ is a compact subset of $\mathbb Y$ and hence there is a compact open subset $\overline{U}\subseteq\mathbb Y$ such that $\varrho(U)\subseteq\overline{U}$. Then clearly $$\mathbf 1_{\overline{U}}\cdot f=((\mathbf 1_{\overline{U}}\circ \varrho)\mathbf 1_U )f=\mathbf 1_U f=f.$$
		Since for any $\alpha\in\mathcal C_c^\infty(\mathbb Y)$ and $g\in G$ we have that $g\cdot(\alpha\circ \varrho)=\alpha\circ \varrho$, this gives rise to a cosmooth action of $\mathcal C_c^\infty(\mathbb Y)$ on $(\mathcal F_c)_{G,\xi}$.
  
		Let $L$ be the $A$-submodule of $\mathcal F_c$ generated by elements of the form $(\Phi\circ \varrho)f$ where $\Phi$ is a function in $\mathcal C_c^\infty(\mathbb Y)$ that vanishes at $y$ and $f\in\mathcal F_c$.
		We already mentioned that the canonical map $\mathcal F_c\to (\mathcal F_Z)_c$ is surjective. We will now show that the kernel of this map is precisely $L$. It is clear that $L$ is contained in the kernel.  If $f\in L$ then $\operatorname{supp}(f)$ is compact and disjoint from $Z$. Hence its image under $\varrho$ is a compact subset of $\mathbb Y$ not containing $y$, which implies that there is an open compact subset $U$ of $\mathbb Y$ such that $U\cap \varrho(\operatorname{supp}(f))=\emptyset$. It follows that $f=(\mathbf 1_U\circ \varrho)f$.
  
		Since $(\mathcal F_Z)_c\cong\mathcal F_c/L$ we obtain that 
		$$((\mathcal F_Z)_c)_{G,\xi}\cong\mathcal F_c/(L,\mathcal F_c(G,\xi)).$$
		However, by Proposition \ref{stalksheaf} we have that $\mathcal G_y\cong\mathcal F_c/(L,\mathcal F_c(G,\xi))$, which implies the result.
	\end{proof}
	
	\subsection{$\ell$-Sheaves on $D^r$}

In the following we set $R=\mathbb Z[1/p,\mu_{p^\infty}]$ and let $\psi\colon F\to\mathbb Z[1/p,\mu_{p^\infty}]^\times$ be a nontrivial additive character. Then $\psi_D\coloneqq\psi\circ\operatorname{Trd}$ is a nontrivial additive character of $D$. It is a well known fact that sending $a=(a_1,\dotsc\,a_r)\in D^r$ to the additive character 
\begin{align*}
    \psi_D^a\colon D^r&\to\mathbb Z[1/p,\mu_{p^\infty}]^\times\\
    (x_1,\dotsc,x_r)&\mapsto\psi_D(a_1x_1+\dotsc+a_rx_r)
\end{align*}
 yields an isomorphism between $D^r$ and its group of smooth additive characters.
 \begin{proposition}[Section 3.10 in \cite{vigneras1996representations}]
     The map $\Psi\colon\mathcal H(D^r,\mathbb Z[1/p,\mu_{p^\infty}])\to\mathcal C_c^\infty(D^r)$, given by $$\Psi(f)(a)\coloneqq\int_{D^r}\psi_D^a(x)f(x)\mathrm dx$$ for $f\in\mathcal H(D^r,\mathbb Z[1/p,\mu_{p^\infty}])$ is an $\mathbb Z[1/p,\mu_{p^\infty}]$-algebra isomorphism.
 \end{proposition}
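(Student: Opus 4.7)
The plan is to interpret $\Psi$ as the Fourier transform on the locally compact abelian group $(D^r,+)$, valued in $R=\mathbb Z[1/p,\mu_{p^\infty}]$, and to adapt the classical proof. The key inputs are that $a\mapsto\psi_D^a$ identifies $D^r$ with its smooth Pontryagin dual and that $p\in R^\times$, so that Haar volumes of compact open pro-$p$ subgroups and cyclotomic differences $1-\zeta$ (for $\zeta$ a nontrivial $p^n$-th root of unity) are units in $R$.

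First I would check well-definedness. Given $f\in\mathcal H(D^r,R)$, choose compact open subgroups $K\subseteq K'$ of $D^r$ such that $f$ is $K$-invariant under translation and supported in $K'$. Setting $K^\perp=\{a\in D^r:\psi_D^a|_K\equiv 1\}$, which is compact open, for $a\notin K^\perp$ pick $k\in K$ with $\psi_D^a(k)\neq 1$; the substitution $x\mapsto x+k$ in the defining integral yields $(1-\psi_D^a(k))\Psi(f)(a)=0$, and the cyclotomic identity $\prod(1-\zeta)=p$ (over primitive $p^n$-th roots of unity) shows $1-\psi_D^a(k)\in R^\times$, so $\Psi(f)(a)=0$ and $\operatorname{supp}\Psi(f)\subseteq K^\perp$. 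A dual computation shows $\Psi(f)$ is $(K')^\perp$-invariant, proving $\Psi(f)\in\mathcal C_c^\infty(D^r)$. Multiplicativity is immediate from Fubini and the identity $\psi_D^a(y+z)=\psi_D^a(y)\psi_D^a(z)$ after substituting $z=x-y$ in $(f_1*f_2)(x)=\int f_1(y)f_2(x-y)\,dy$.

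For bijectivity I would argue by finite-rank reduction. For each pair of compact open pro-$p$ subgroups $K\subseteq K'$, the subspace $\mathcal H_{K,K'}\subseteq\mathcal H(D^r,R)$ of $K$-invariant functions supported in $K'$ is free of finite rank over $R$ with basis $\{\mathbf 1_{x+K}\}_{x\in K'/K}$, and the whole Hecke algebra is the direct limit of such subspaces as $K$ shrinks and $K'$ grows. A direct calculation gives $\Psi(\mathbf 1_{x+K})(a)=\mu_{D^r}(K)\,\psi_D^a(x)\,\mathbf 1_{K^\perp}(a)$, so $\Psi$ maps $\mathcal H_{K,K'}$ into the analogous subspace $\mathcal C_{K,K'}\subseteq\mathcal C_c^\infty(D^r)$ of $(K')^\perp$-invariant functions supported in $K^\perp$, also free of the same rank (by finite Pontryagin duality, $K'/K$ and $K^\perp/(K')^\perp$ are dual finite abelian $p$-groups). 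The change-of-basis matrix is $\mu_{D^r}(K)$ times the character table of the finite abelian $p$-group $K'/K$; this is invertible over $R$ since Fourier inversion on $K'/K$ involves dividing by $|K'/K|$, a power of $p$, hence a unit in $R$. Taking direct limits establishes bijectivity of $\Psi$.

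The main obstacle is purely ring-theoretic: ensuring that every scalar appearing in denominators — Haar volumes $\mu_{D^r}(K)$, the indices $|K'/K|$ appearing in inverse Fourier matrices, and cyclotomic units $1-\zeta$ — lies in $R^\times$. All three reduce to $p\in R^\times$, which is precisely built into the definition of the coefficient ring $\mathbb Z[1/p,\mu_{p^\infty}]$; otherwise the argument is a line-by-line transcription of the classical case over $\mathbb C$.
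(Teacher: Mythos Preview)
Your argument is correct. The paper's proof is considerably shorter: after noting that $\Psi$ is an algebra homomorphism, it simply writes down the explicit inverse
\[
\Phi(f)(a)=\int_{D^r}\psi_D^x(-a)f(x)\,\mathrm dx
\]
and asserts that $\Phi$ is, up to a scalar, inverse to $\Psi$. Your finite-rank reduction via the pairs $K\subseteq K'$ and the invertibility of the character-table matrix of $K'/K$ is exactly what underlies this Fourier inversion, so the two proofs are the same at heart; you have just unpacked the mechanism that makes the global inverse work over $R$. The advantage of your presentation is that it makes transparent precisely where the hypothesis $p\in R^\times$ enters (volumes, group orders, cyclotomic units), whereas the paper leaves the verification that $\Phi\circ\Psi$ is a scalar multiple of the identity, and that this scalar is a unit in $R$, to the reader. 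The paper's approach buys brevity and gives a formula one can use directly; yours buys explicit control over the coefficient ring.
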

 \begin{proof}
      It is straightforward to check that $\Psi$ is a $\mathbb Z[1/p,\mu_{p^\infty}]$-algebra homomorphism. The map $\Phi\colon\mathcal C_c^\infty(D^r)\to\mathcal H(D^r,\mathbb Z[1/p,\mu_{p^\infty}])$ given by $$\Phi(f)(a)\coloneqq\int_{D^r}\psi_D^x(-a)f(x)\mathrm dx$$ for $f\in\mathcal C_c^\infty(D^r)$ is, up to a scalar, an inverse.
 \end{proof}
	
	This allows us to identify smooth representations of $D^r$ (i.e.\ nondegenerate $\mathcal H(D^r,\mathbb Z[1/p,\mu_{p^\infty}])$-modules) with cosmooth $\mathcal C_c^\infty(D^r)$-modules and subsequently with sheaves of $\mathcal C_c^\infty$-modules on $D^r$.
	\begin{proposition}\label{trivialnil}
		Let $V$ be a smooth $\mathbb Z[1/p,\mu_{p^\infty}][D^r]$-module and $\mathcal V$ the associated sheaf of $\mathcal C_c^\infty$-modules on $D^r$. Then for all $a\in D^r$ the stalk $\mathcal V_a$ is isomorphic to $J_{D^r,\psi_D^{a}}(V)$ as a $\mathbb Z[1/p,\mu_{p^\infty}]$-module.
		Moreover, if $J_{D^r,\psi_D^{a}}(V)=0$ for all $a\not=0$ then $D^r$ acts trivially on $V$.
	\end{proposition}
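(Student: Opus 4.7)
My plan is to apply Proposition \ref{propbump} with $\mathbb X=\mathbb Y=D^r$ (identified with its Pontryagin dual via $\psi_D$), $\varrho=\id$, $G=D^r$ acting trivially on $\mathbb X$, and $\xi=\psi_D^a$. First I transport the $D^r$-action on $V$ to an action on $\mathcal V$: by direct calculation in the Hecke algebra, translation by $h$ satisfies $\Psi(L_h f)(b)=\psi_D^b(h)\Psi(f)(b)$, so under the cosmooth identification $\pi(h)$ acts on any section $\phi\cdot v\in\mathcal V(U)$ by $\phi\cdot v\mapsto(\chi_h|_U\cdot\phi)\cdot v$, where $\chi_h\colon b\mapsto\psi_D^b(h)$ is a locally constant function on $D^r$. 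Since multiplication by a function commutes with restriction of sections, this yields an action of $D^r$ on $(D^r,\mathcal V)$ of the kind required by Proposition \ref{propbump}.

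The proposition then identifies the stalk at $a$ of the sheaf $\mathcal G$ corresponding to the cosmooth $\mathcal C_c^\infty(D^r)$-module $(\mathcal V_c)_{D^r,\psi_D^a}=J_{D^r,\psi_D^a}(V)$ with $((\mathcal V_{\{a\}})_c)_{D^r,\psi_D^a}$. Since $\{a\}$ is a single point, $\mathcal V_{\{a\}}$ is just the stalk $\mathcal V_a$, and on this stalk $\pi(h)$ acts by the scalar $\chi_h(a)=\psi_D^a(h)$, so the action of $D^r$ factors through $\psi_D^a$ and $(\mathcal V_a)_{D^r,\psi_D^a}=\mathcal V_a$. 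Conversely, for any $\phi\in\mathcal C_c^\infty(D^r)$ and $\overline v\in J_{D^r,\psi_D^a}(V)$ the relation $\pi(y)\overline v=\psi_D^a(y)\overline v$ combined with Fourier inversion gives
\[
\phi\cdot\overline v=\Phi(\phi)\cdot\overline v=\int_{D^r}\Phi(\phi)(y)\psi_D^a(y)\,dy\cdot\overline v=\Psi(\Phi(\phi))(a)\cdot\overline v=\phi(a)\overline v,
\]
so the cosmooth action on $J_{D^r,\psi_D^a}(V)$ is evaluation at $a$; hence the associated sheaf $\mathcal G$ is concentrated at $\{a\}$ with $\mathcal G_a=J_{D^r,\psi_D^a}(V)$. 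Comparing the two descriptions of $\mathcal G_a$ yields the first assertion.

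For the second claim, suppose $J_{D^r,\psi_D^a}(V)=0$ for every $a\ne 0$; by the first part $\mathcal V_a=0$ for all such $a$. Fix $v\in V$ and a compact open $W\subseteq D^r$ with $\mathbf 1_W v=v$. For any compact open subgroup $L\subseteq W$ containing $0$, the complement $W\setminus L$ is compact and avoids $0$, so by the vanishing of stalks on $W\setminus L$ each of its points has a compact open neighborhood on which $v$ is killed by the corresponding characteristic function; passing to a finite disjoint refinement I conclude $\mathbf 1_{W\setminus L}v=0$, so $v=\mathbf 1_L v$. Given $h\in D^r$, I shrink $L$ further so that $\chi_h\equiv 1$ on $L$ (possible because $\chi_h$ is continuous and $\chi_h(0)=1$), which gives $\pi(h)v=(\chi_h\mathbf 1_L)\cdot v=\mathbf 1_L v=v$. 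Hence $D^r$ acts trivially on $V$.

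The main obstacle I expect is the bookkeeping between the Hecke, cosmooth, and sheaf pictures of $V$ and verifying that the $D^r$-action by multiplication by $\chi_h$ is compatible with the $\mathcal C_c^\infty$-module structure in the precise sense required by Proposition \ref{propbump}; once this translation is set up, the proposition does the substantive work.
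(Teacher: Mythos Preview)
Your argument is correct, but it takes a more circuitous path than the paper's. The paper proves the first assertion by a direct appeal to Proposition~\ref{stalksheaf}: one checks that $v\in V(D^r,\psi_D^a)$ if and only if $({\psi_D^a}^{-1}\cdot e_K)*v=0$ for some compact open $K$, then computes $\Psi({\psi_D^a}^{-1}\cdot e_K)=\mathbf 1_{\hat K}$ for a compact open neighbourhood $\hat K$ of $a$, so $V(D^r,\psi_D^a)=V(a)$ and $\mathcal V_a\cong V/V(a)=J_{D^r,\psi_D^a}(V)$. Your route instead invokes Proposition~\ref{propbump} (itself built on Proposition~\ref{stalksheaf}), and to make the comparison you still need the Fourier computation showing that the cosmooth action on $J_{D^r,\psi_D^a}(V)$ is evaluation at $a$; so you end up doing essentially the paper's calculation \emph{plus} the extra bookkeeping of Proposition~\ref{propbump}. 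Note also a sign slip: with the paper's convention $(\mathcal F_c)_{G,\xi}=\mathcal F_c/\langle g\cdot f-\xi(g)^{-1}f\rangle$, you should take $\xi=(\psi_D^a)^{-1}$ rather than $\psi_D^a$ to land on $J_{D^r,\psi_D^a}(V)$; this is cosmetic and does not affect your argument.

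For the second assertion both proofs use the same idea (vanishing of stalks away from $0$ plus compactness), organized slightly differently: the paper shows $V(0)=0$ by covering a compact support set with neighbourhoods on which $\mathbf 1_{U_a}v_0=0$, whereas you show $v=\mathbf 1_L v$ for arbitrarily small $L\ni 0$ and then use the explicit description $\pi(h)=\chi_h\cdot(-)$ to conclude $\pi(h)v=v$. Your version has the mild advantage of making the triviality of the action visible without passing through the equivalence ``$D^r$ acts trivially $\Leftrightarrow V(0)=0$''.
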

	\begin{proof}
		An element $v\in V$ lies in $(V,\psi_D^{a})$ if and only if there is a compact open subgroup $K$ of $D^r$ such that $({\psi_D^{a}}^{-1}\cdot e_K)*v=0$. Note that $\Psi({\psi_D^a}^{-1}\cdot e_K)=e_{\hat{K}}$, where $\hat{K}$ is the subset of $D^r$ consisting of elements $y$ such that $\psi_{y-a}$ is trivial on $K$. Now $\hat{K}$ is open and $a\in\hat{K}$. Hence $v\in (V,\psi_D^{a})$ if and only if there is a compact open subgroup $K\subset D^r$ such that $e_{\hat{K}}\cdot v=0$. However, by Proposition \ref{stalksheaf}, we see that $\mathcal V_a\cong V/V(a)$ which implies the result.
  
		Note that $D^r$ acts trivially on $V$ if and only if $J_{D^r,\psi_D^0}(V)=V$. By what we just saw this is equivalent to showing that $V(0)$ is trivial. Suppose now that $v_0\in V(0)$. Since we assume that $J_{D^r,\psi_D^{a}}(V)=0$ for all nonzero $a\in D^r$ we obtain that $v_0\in V(a)$ for all $a\in D^r$ and hence for each $a\in D^r$ there is an open neighbourhood $U_a$ of $a$ such that $\mathbf 1_{U_a}v_0=0$. However, since $V$ is a cosmooth $\mathcal C_c^\infty(D^r)$-module there is an open compact subset $U\subset D^r$ such that $\mathbf 1_Uv_0=v_0$. This immediately implies that $v_0=0$.
	\end{proof}

	\section{Degenerate Whittaker Models}\label{secdegwhit}        In this section we introduce certain degenerate Whittaker models which are crucial for the twisted doubling method. We follow the exposition of Section 2.2.1 in \cite{cai2021twisted}. Recall that we fixed a finite extension $E$ of $\mathbb Q_p$ and a division algebra $D$ over $E$. Moreover, we have an additive character $\psi\colon E\to\mathbb Z[1/p,\mu_{p^\infty}]^\times$ and a noetherian $R$-algebra $B$. In the following, $\theta$ will always be a smooth admissible $B[\GL_r(D)]$-module for some positive integer $r$.
 
	Let $$\mathcal Y\colon Y_0=0\subset Y_1\subset Y_2\subset\dotsc\subset Y_m\subset Y_{m+1}=D^r$$ be a flag of $D$-submodules of $D^r$ and $P(\mathcal Y)$ its stabiliser. We choose a Levi decomposition $P(\mathcal Y)=M(\mathcal Y)N(\mathcal Y)$. For an element $u\in N(\mathcal Y)$ and $0\leq i\leq m$, the endomorphism $u-\id$ maps $Y_{i+1}$ into $Y_i$ and we denote the induced element of $\Hom_D(Y_{i+1}/Y_i,Y_i/Y_{i-1})$ by $u_i$. Then the map
	\begin{align*}
		N(\mathcal Y)^{\text{ab}}&\to\prod_{i=1}^m\Hom_D(Y_{i+1}/Y_i,Y_i/Y_{i-1})\\u&\mapsto (u_1,\dotsc,u_m),
	\end{align*}
	is an isomorphism.
	To define a character of $N(\mathcal Y)$ we choose elements 
	$$\mathcal A=(A_1,\dotsc,A_m)\in\prod_{i=1}^m\Hom_D(Y_i/Y_{i-1},Y_{i+1}/Y_i)$$
	and set $$\psi_\mathcal A(u)\coloneqq\psi\left(\sum_{i=1}^m\operatorname{Trd}(u_i\circ A_i)\right).$$ Whenever we write that $\psi_{\mathcal A}$ is a character of $N(\mathcal Y)$ we refer to this construction for a tuple $\mathcal A=(A_1,\dotsc,A_m)$ as above.
 
	Assume from now on that $r=kn$ for some positive integers $k$ and $n$.
	\begin{definition}\label{orbdef}Let $\lambda_{k,n}=(k,\dotsc,k)$ be the integer partition of $kn$ which consists of $n$-times the summand $k$. For a flag $$\mathcal Y\colon 0\subset Y_1\subset Y_2\subset\dotsc\subset Y_m\subset D^{kn}$$ of $D^{kn}$ and a character $\psi_{\mathcal A}$ of $N(\mathcal Y)$ we say that:
		\begin{enumerate}
			\item 	the pair $(N(\mathcal Y),\psi_\mathcal A)$ lies in the orbit $\lambda_{k,n}$, if 
       \begin{itemize}
       \item $m=k-1$,
       \item the rank of $Y_i$ as a $D$-module is $ni$ for $i=1,\dotsc,k-1$,
       \item $A_i$ is an isomorphism for $i=1,\dotsc,k-1$.
       \end{itemize}
			\item 
			the pair $(N(\mathcal Y),\psi_\mathcal A)$ lies in an orbit higher than $\lambda_{k,n}$ if
    \begin{itemize}
        \item $m\geq k$,
        \item for some $1\leq i\leq m$ we have that 
			$$A_{i+k}\circ\dotsc\circ A_i\not=0.$$
    \end{itemize}
    
		\end{enumerate}
		
	\end{definition}
	\begin{definition}\label{defkn}
		Let $\theta$ be an admissible $\GL_{kn}(D)$-finite $B[\GL_{kn}(D)]$-module that satisfies Schur's lemma. Then $\theta$ is of type $(k,n)$ if 
		\begin{enumerate}
			\item For any pair $(N(\mathcal Y), \psi_\mathcal A)$ that lies in the orbit $\lambda_{k,n}$, the twisted Jacquet module
			$J_{N(\mathcal Y),\psi_\mathcal A}(\theta)$ is free of rank one as a $B$-module.
			\item For any pair $(N(\mathcal Y), \psi_\mathcal A)$ that lies in an orbit higher than $\lambda_{k,n}$, the twisted Jacquet module $J_{N(\mathcal Y),\psi_\mathcal A}(\theta)$ is zero.
		\end{enumerate}
	\end{definition}
	
	If $\theta$ is of type $(k,n)$ and $(N(\mathcal Y),\psi_\mathcal A)$ lies in the orbit $\lambda_{k,n}$ then we have by Frobenius reciprocity that
	$$B\cong\Hom_B(J_{N(\mathcal Y),\psi_\mathcal A}(\theta),B)\cong\Hom_{N(\mathcal Y)}(\theta,\psi_\mathcal A)\cong\Hom_{\GL_{kn}(D)}(\theta,\Ind_{N(\mathcal Y)}^{\GL_{kn}(D)}(\psi_\mathcal A)).$$
    We fix a generator of $\Hom_{\GL_{kn}(D)}(\theta,\Ind_{N(\mathcal Y)}^{\GL_{kn}(D)}(\psi_\mathcal A))$ and denote it by $\operatorname{Wh}_{N(\mathcal Y),\psi_{\mathcal A}}$.

	\subsection{Invariance under stabiliser}
	
	For a pair $(N(\mathcal Y),\psi_\mathcal A)$ let $\operatorname{St}_{(N(\mathcal Y),\psi_\mathcal A)}$ be its stabiliser, i.e.\ elements $g$ in the normalizer $N_{\GL_{kn}(D)}(N(\mathcal Y))$ of $N(\mathcal Y)$ such that $\psi_{\mathcal A}(u)=\psi_{\mathcal A}(gug^{-1})$ for all $u\in N(\mathcal Y)$. If $(N(\mathcal Y),\psi_\mathcal A)$ is a pair in the orbit $\lambda_{k,n}$ then it is straightforward to see that $\operatorname{St}_{(N(\mathcal Y),\psi_\mathcal A)}/N(\mathcal Y)\cong\GL_n(D)$.
	\begin{proposition}[Lemma 2.16 in \cite{cai2021twisted}]\label{invstab}	Let $(N(\mathcal Y),\psi_\mathcal A)$ be a pair in the orbit $\lambda_{k,n}$ and  $\theta$ be a $B[\operatorname{GL}_{kn}(D)]$-module of type $(k,n)$. The stabiliser $\operatorname{St}_{(N(\mathcal Y),\psi_\mathcal A)}/N(\mathcal Y)\cong\GL_n(D)$ acts on $\operatorname{Hom}_{N(\mathcal Y)}(\theta,\psi_\mathcal A)$ via a character $\chi_\theta\colon E^\times\to B^\times$. For any element $f\in\operatorname{Wh}_{N(\mathcal Y),\psi_{\mathcal A}}(\theta)$ we have that 
		$$f(gh)=\chi_\theta(\operatorname{Nrd}(g))f(h)$$ for all $g\in\GL_n(D)\cong\operatorname{St}_{(N(\mathcal Y),\psi_\mathcal A)}/N(\mathcal Y)$ and $h\in\GL_{kn}(D)$.
	\end{proposition}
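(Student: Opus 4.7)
The plan is to exhibit $\Hom_{N(\mathcal Y)}(\theta,\psi_{\mathcal A})$ as a free rank-one $B$-module via the type $(k,n)$ hypothesis, promote right-translation by elements of the stabiliser into a group homomorphism $\chi\colon\operatorname{St}_{(N(\mathcal Y),\psi_{\mathcal A})}\to B^\times$, and finally descend $\chi$ through the reduced norm using the abelianisation of $\GL_n(D)$. To begin, since $(N(\mathcal Y),\psi_{\mathcal A})$ lies in the orbit $\lambda_{k,n}$ and $\theta$ is of type $(k,n)$, the twisted Jacquet module $J_{N(\mathcal Y),\psi_{\mathcal A}}(\theta)$ is free of rank one over $B$, so Frobenius reciprocity identifies
$$\Hom_{N(\mathcal Y)}(\theta,\psi_{\mathcal A})\cong\Hom_B(J_{N(\mathcal Y),\psi_{\mathcal A}}(\theta),B)\cong\Hom_{\GL_{kn}(D)}(\theta,\Ind_{N(\mathcal Y)}^{\GL_{kn}(D)}(\psi_{\mathcal A}))$$
with $B$, distinguished generator corresponding to $\operatorname{Wh}_{N(\mathcal Y),\psi_{\mathcal A}}$.

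To construct the character, for each $g\in\operatorname{St}_{(N(\mathcal Y),\psi_{\mathcal A})}$ I would consider the map $\Psi_g\colon v\mapsto\bigl[h\mapsto\operatorname{Wh}_{N(\mathcal Y),\psi_{\mathcal A}}(v)(gh)\bigr]$. The stabiliser conditions $gN(\mathcal Y)g^{-1}=N(\mathcal Y)$ and $\psi_{\mathcal A}(gug^{-1})=\psi_{\mathcal A}(u)$ for $u\in N(\mathcal Y)$ guarantee that $\Psi_g$ lands in $\Hom_{\GL_{kn}(D)}(\theta,\Ind_{N(\mathcal Y)}^{\GL_{kn}(D)}(\psi_{\mathcal A}))$, while $\GL_{kn}(D)$-equivariance is immediate from the right action. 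The rank-one property then forces $\Psi_g=\chi(g)\operatorname{Wh}_{N(\mathcal Y),\psi_{\mathcal A}}$ for a unique $\chi(g)\in B^\times$, and composing right translations makes $\chi\colon\operatorname{St}_{(N(\mathcal Y),\psi_{\mathcal A})}\to B^\times$ a group homomorphism satisfying $f(gh)=\chi(g)f(h)$ for every $f\in\operatorname{Wh}_{N(\mathcal Y),\psi_{\mathcal A}}(\theta)$ and every $h\in\GL_{kn}(D)$.

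Next I would pin down $\chi$ on a Levi section. Writing $M(\mathcal Y)\cong\prod_{i=0}^{k-1}\GL_D(Y_{i+1}/Y_i)\cong\GL_n(D)^k$, a short computation using $\psi_{\mathcal A}(mum^{-1})=\psi_{\mathcal A}(u)$, the invariance property of the reduced trace, and the fact that each $A_i$ is an isomorphism, shows that $M(\mathcal Y)\cap\operatorname{St}_{(N(\mathcal Y),\psi_{\mathcal A})}$ is a diagonally embedded copy of $\GL_n(D)$, with each factor determined by the first via conjugation by successive $A_i$. This realises the isomorphism $\operatorname{St}_{(N(\mathcal Y),\psi_{\mathcal A})}/N(\mathcal Y)\cong\GL_n(D)$, and restricting $\chi$ to this diagonal copy yields a character $\chi'\colon\GL_n(D)\to B^\times$. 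To conclude, I would invoke the classical theorem of Nakayama--Matsushima that $\operatorname{SK}_1(D)=1$ for a central division algebra over a non-archimedean local field, which together with the standard elementary-matrix argument gives $[\GL_n(D),\GL_n(D)]=\ker(\operatorname{Nrd})$ for all $n\geq 1$. Since $B^\times$ is abelian, $\chi'$ therefore factors as $\chi_\theta\circ\operatorname{Nrd}$ for a unique character $\chi_\theta\colon E^\times\to B^\times$, and substitution into the formula from the previous step yields $f(gh)=\chi_\theta(\operatorname{Nrd}(g))f(h)$.

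The main obstacle is the appeal to the commutator identity $[\GL_n(D),\GL_n(D)]=\ker(\operatorname{Nrd})$; everything else is a fairly direct unraveling of the rank-one consequence of the type $(k,n)$ hypothesis combined with the explicit stabiliser structure forced by the $A_i$'s being isomorphisms.
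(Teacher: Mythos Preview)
Your proof is correct and follows essentially the same approach as the paper: both exploit that $\Hom_{N(\mathcal Y)}(\theta,\psi_{\mathcal A})$ is free of rank one over $B$ to extract a $B^\times$-valued character of the stabiliser, and both reduce to a character of $E^\times$ via the fact that the commutator subgroup of $\GL_n(D)$ equals $\ker(\operatorname{Nrd})$ (the paper cites Draxl for the vanishing of the reduced Whitehead group, which is the same input you attribute to Nakayama--Matsushima). The only cosmetic difference is that the paper works directly with the action $(g\cdot\varphi)(x)=\varphi(gxg^{-1})$ on $\Hom_{N(\mathcal Y)}(\theta,\psi_{\mathcal A})$ and then invokes Frobenius reciprocity for the transformation law, whereas you unpack this on the Whittaker-model side via left translation; these are equivalent formulations.
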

	\begin{proof} Note that $\operatorname{St}_{(N(\mathcal Y),\psi_\mathcal A)}/N(\mathcal Y)\cong\GL_n(D)$ acts on $\operatorname{Hom}_{N(\mathcal Y)}(\theta,\psi_\mathcal A)$ via $(g\cdot \varphi)(x)=\varphi(gxg^{-1})$, where $x\in\theta, g\in\operatorname{St}_{(N(\mathcal Y),\psi_\mathcal A)}$ and $\varphi\in\operatorname{Hom}_{N(\mathcal Y)}(\theta,\psi_\mathcal A)$. Since $\operatorname{Hom}_{N(\mathcal Y)}(\theta,\psi_\mathcal A)$ is free of rank one as a $B$-module this action is trivial on the commutator subgroup of $\GL_n(D)$. However, the commutator subgroup of $\GL_n(D)$ agrees with the kernel of the reduced norm (since the reduced Whitehead group of $D$ is trivial, see Section 23 of \cite{draxl1983skew}) and hence this action factors through the reduced norm. We obtain that $(g\cdot\varphi)=\chi_\theta(\operatorname{Nrd}(g))\varphi$ for some character $\chi_\theta\colon E^\times\to B^\times$. The second part follows by Frobenius reciprocity.  
	\end{proof}

	\section{Classical Groups}\label{seclass}
 
	In this section we describe the classical groups that will be considered in the rest of this article. Recall that $E$ is a finite extension of $F$ and $D$ a central division algebra over $E$ with an involution $\rho\colon D\to D$. We assume that $F$ is the fixed field when $\rho$ is restricted to $E$.
 
 Let $W$ be a free right $D$-module of rank $n$ and let $\epsilon\in E$ be either $1$ or $-1$. Suppose we have a $F$-bilinear map $h\colon W\times W\to D$ that satisfies
	$$\rho(h(w,w'))=\epsilon h(w',w)$$
	and $$h(wa,w'b)=\rho(a)h(w,w')b$$
	for all $w,w'\in W$ and $a,b\in D$.
        Moreover, we will always assume that $h$ is nondegenerate, i.e.\ that $W^\perp=\{w \in W\mid h(w,w')=0\text{ for all }w'\in W\}$ is trivial. We call $h$ hermitian if $\epsilon=1$ and skew-hermitian if $\epsilon=-1$.
        
	We will consider the following cases for $D,E,W$ and $\rho$:
	\begin{enumerate}
		\item[(I1)] $D=E=F$ and $\rho$ is the identity and $\operatorname{dim}_F(W)$ is even,
		\item[(I2)] $D$ is a nonsplit quaternion algebra over $E=F$ and $\rho$ is the canonical involution of $D$,
		\item[(I3)] $D=E$ is a quadratic extension of $F$ and $\rho$ is the nontrivial element of $\Gal(E/F)$.
	\end{enumerate}
	Let $\End(W,D)$ be the $D$-linear endomorphisms of $W$. The groups we will work with are the isometry groups of a pair $(W,h)$, i.e.\ 
	$$G=\operatorname{Isom}(W,h)=\{g\in\End_D(W)^\times\mid h(gv,gw)=h(v,w)\text{ for all }v,w\in W\}.$$
        In the case (I1) with $\epsilon=-1$ and the cases (I2),(I3), note that $G$ are the $F$-points of a connected reductive group. In the case (I1) with $\epsilon=1$, we have that $G$ are the $F$-points of a disconnected reductive group.
        \begin{remark}
We exclude the odd orthogonal case as in \cite{cai2021twisted}. The results of this article should also hold in this case, however, one would need slightly different constructions, for example in the definition of the character $\psi^\bullet$ defined in Section \ref{psibullet}.
\end{remark}
	 \begin{proposition}\label{nochar} 
	     In all three cases (I1),(I2),(I3) there are no nontrivial continuous $\mathbb R_{>0}^\times$-valued characters of $G$.
	 \end{proposition}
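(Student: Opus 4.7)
The plan is to exploit that $\mathbb{R}_{>0}^\times$ is a torsion-free topological group with no nontrivial compact subgroups, so any continuous character $\chi\colon G\to\mathbb{R}_{>0}^\times$ must vanish on every compact subgroup of $G$ and on every torsion element. Combined with standard structure theory for classical groups, this will force $\chi=1$ in all three cases.

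The first key step is to show that $\chi$ vanishes on every $F$-unipotent element. Any such element lies in the unipotent radical $U$ of some $F$-parabolic subgroup of $G$, and $U$ admits an $F$-defined filtration whose successive quotients are finite dimensional $F$-vector spaces. Each such $F^m$ is the ascending union of the compact open subgroups $\varpi^{-n}\mathcal{O}_F^m$ for $n\geq 0$, on all of which $\chi$ is trivial; hence $\chi|_U=1$.

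I then argue case by case. In case (I1) with $\epsilon=-1$, $G=\operatorname{Isom}(W,h)$ is a symplectic group; it is the $F$-points of a semisimple simply connected group that is always $F$-isotropic (every symplectic space is split), so by the Bruhat decomposition $G$ is generated by two opposite unipotent radicals together with a maximal torus, and the relations coming from the Weyl group and from simple connectedness force $\chi=1$. In cases (I2) and (I3), $G$ is the $F$-points of a connected semisimple group: if $G$ is $F$-anisotropic then $G$ is compact and $\chi=1$ immediately; otherwise, by Kneser--Tits for simply connected classical groups, the image of $G^{\mathrm{sc}}(F)\to G(F)$ equals the normal subgroup generated by $F$-unipotent elements, so $\chi$ vanishes on that image, while the cokernel is captured by the reduced norm $\operatorname{Nrd}_W$ and lies in $\{x\in E^\times:x\rho(x)=1\}$. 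This set equals $\{\pm 1\}$ in case (I2) and the compact norm-one torus $E^1=\ker N_{E/F}$ in case (I3), and in both situations $\chi$ is trivial on the cokernel. Finally, in case (I1) with $\epsilon=1$, $G$ is disconnected with $G/G^0\cong\{\pm 1\}$ a $2$-torsion group; the previous argument applied to $G^0=\operatorname{SO}(W,h)$ (whose cokernel under $\operatorname{Spin}\to\operatorname{SO}$ is the $2$-torsion spinor norm) handles the connected part.

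The main obstacle is invoking the structural inputs Kneser--Tits and the explicit identification of the cokernel of $G^{\mathrm{sc}}(F)\to G(F)$ for each classical type; these are standard but require case-by-case bookkeeping. A more hands-on alternative is to fix a maximal $F$-split torus $T\subseteq G$ and an Iwasawa decomposition $G=KTU$, which reduces $\chi$ to a character of $T\cong (F^\times)^r$; the conjugation relations by elements of the normalizer of $T$ inside $G^{\mathrm{sc}}(F)$, combined with $\chi|_U=\chi|_K=1$, then cut the image of $\chi|_T$ down to the same cokernel analyzed above.
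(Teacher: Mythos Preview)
Your approach differs substantially from the paper's and, while mostly sound, has a gap in one low-dimensional case.

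The paper's proof is much shorter. It observes that for any topological group $H$, if the subgroup $\prescript{0}{}{H}$ generated by all compact subgroups equals $H$, then there are no nontrivial continuous characters $H\to\mathbb{R}_{>0}^\times$ (since $\mathbb{R}_{>0}^\times$ has no nontrivial compact subgroups). It then cites the standard fact that $\prescript{0}{}{H}=H$ whenever $H$ is the $F$-points of a connected classical group, with the single exception $H\cong\operatorname{SO}(1,1)(F)\cong F^\times$. The disconnected orthogonal case follows since $[G:G^0]$ is finite and $\mathbb{R}_{>0}^\times$ is torsion-free, and $O(1,1)(F)$ is handled by a direct check. This avoids Kneser--Tits and any cokernel bookkeeping entirely; your route is more explicit about the underlying group structure but pays for it in length and in the number of structural inputs invoked.

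Your Kneser--Tits argument does not cover the case $G^0=\operatorname{SO}(1,1)(F)$ arising in (I1) with $\epsilon=1$ and $\dim_F W=2$. Here $G^0\cong F^\times$ is a split one-dimensional torus; its simply connected cover $\operatorname{Spin}_2$ is again a torus, not an absolutely almost simple group, so Kneser--Tits does not apply and there are no unipotent elements to exploit. As written, you give no reason for $\chi$ to be trivial on $F^\times\cong\operatorname{SO}(1,1)(F)$. The fix is easy---the reflection in $O(1,1)(F)$ conjugates $x$ to $x^{-1}$, so $\chi(x)^2=1$---but the paper singles out exactly this case for separate treatment, and you should too.
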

        \begin{proof}
            For any topological group $H$ let $\prescript{0}{}{H}$ be the subgroup that is generated by all compact subgroups of $H$. Since $\mathbb R_{>0}^\times$ has no nontrivial compact subgroups, if $\prescript{0}{}{H}=H$, there are no nontrivial continuous characters from $H$ to $\mathbb R_{>0}^\times$.
            
            It is well known that if $H$ are the $F$-points of a connected classical reductive group then $\prescript{0}{}{H}=H$, unless $H\cong\operatorname{SO}(1,1)(F)\cong F^\times$. Suppose for now that $G\not\cong\operatorname{O}(1,1)(F)$. The $F$-points of the identity component of $G$ have finite index in $G$ and since $\mathbb R_{>0}^\times$ has no nontrivial finite subgroups the result follows. If $G\cong\operatorname{O}(1,1)(F)$ it is straightforward to see that any smooth character $\operatorname{O}(1,1)(F)\to \mathbb R_{>0}^\times$ is trivial.
        \end{proof}
	\subsection{The reduced adjugate matrix}
		We will need the following fact which is very likely well known, but we were unable to find a reference. Let $\mathbb D$ be a central simple division algebra over a finite field extension $F$ of $\mathbb Q_p$ and let $\mathcal O_{\mathbb D}$ be the unique maximal $\mathcal O_F$-order in $\mathbb D$. For an element $\alpha\in M_r(\mathbb D)$ we have the reduced characteristic polynomial
        $$\operatorname{Prd}_{\alpha}(X)=X^{t}-\operatorname{Trd}(\alpha)X^{t-1}+\dotsc+(-1)^t\operatorname{Nrd}(\alpha)\in F[X].$$
        Note that by the Cayley-Hamilton theorem, $\operatorname{Prd}_\alpha(\alpha)=0$.
        We can define a polynomial $Q_\alpha(X)\in F[X]$ via the relation
        $$XQ_\alpha(X)=(-1)^{t-1}\operatorname{Prd}_\alpha(X)+\operatorname{Nrd}(\alpha)$$
        and then the \emph{reduced adjugate} of $\alpha$ as
        $$\operatorname{adjrd}(\alpha)\coloneqq Q_\alpha(\alpha)\in M_r(\mathbb D).$$
        Note that $\alpha\operatorname{adjrd}(\alpha)=\operatorname{Nrd}(\alpha)1_r$ by the Cayley-Hamilton theorem. In particular, for an element $\alpha$ of $\GL_r(\mathbb D)=\{g\in M_r(\mathbb D)\mid\operatorname{Nrd}(g)\not=0\}$ we have $\operatorname{adjrd}(\alpha)=\operatorname{Nrd}(\alpha)\alpha^{-1}$. 
	\begin{proposition}\label{intredadj}
		 If $\alpha\in M_r(\mathcal O_{\mathbb D})$, then $\operatorname{adjrd}(\alpha)\in M_r(\mathcal O_{\mathbb D})$.
	\end{proposition}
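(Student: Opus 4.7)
The plan is to show that the polynomial $Q_\alpha(X) \in F[X]$ already has coefficients in $\mathcal O_F$; once this is established, $\operatorname{adjrd}(\alpha)=Q_\alpha(\alpha)$ is an $\mathcal O_F$-linear combination of nonnegative powers of $\alpha\in M_r(\mathcal O_{\mathbb D})$, and hence lies in $M_r(\mathcal O_{\mathbb D})$ since $\mathcal O_F$ is contained in the center of $\mathcal O_{\mathbb D}$.

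Comparing constant terms in the defining relation
\begin{equation*}
    XQ_\alpha(X)=(-1)^{t-1}\operatorname{Prd}_\alpha(X)+\operatorname{Nrd}(\alpha),
\end{equation*}
the term $(-1)^{t-1}\cdot(-1)^t\operatorname{Nrd}(\alpha)=-\operatorname{Nrd}(\alpha)$ on the right cancels with the summand $\operatorname{Nrd}(\alpha)$, so the right-hand side is divisible by $X$, and after dividing by $X$ we see that the coefficients of $Q_\alpha$ are, up to an overall sign, exactly the non-constant coefficients of $\operatorname{Prd}_\alpha$. Thus it suffices to prove the classical fact that $\operatorname{Prd}_\alpha(X)\in\mathcal O_F[X]$ for every $\alpha\in M_r(\mathcal O_{\mathbb D})$.

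To establish this, I would choose a finite unramified extension $L/F$ splitting $\mathbb D$, so that there is an $L$-algebra isomorphism $\mathbb D\otimes_F L\cong M_d(L)$ with $d^2=\dim_F\mathbb D$. Because $L/F$ is unramified, $\mathcal O_{\mathbb D}\otimes_{\mathcal O_F}\mathcal O_L$ is a maximal $\mathcal O_L$-order in $M_d(L)$, and since any two maximal orders in $M_d(L)$ are conjugate we may adjust the splitting isomorphism so that it identifies $\mathcal O_{\mathbb D}\otimes_{\mathcal O_F}\mathcal O_L$ with $M_d(\mathcal O_L)$. Passing to $r\times r$ matrices gives an embedding
\begin{equation*}
    M_r(\mathcal O_{\mathbb D})\hookrightarrow M_r(\mathbb D)\otimes_F L\cong M_{rd}(L)
\end{equation*}
whose image lies in $M_{rd}(\mathcal O_L)$. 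By the very definition of the reduced characteristic polynomial, $\operatorname{Prd}_\alpha(X)$ coincides with the ordinary characteristic polynomial of the image of $\alpha$ in $M_{rd}(L)$, which lies in $\mathcal O_L[X]$. Since $\operatorname{Prd}_\alpha(X)\in F[X]$ a priori, we obtain $\operatorname{Prd}_\alpha(X)\in\mathcal O_L[X]\cap F[X]=\mathcal O_F[X]$, completing the argument.

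The main obstacle is the structural assertion that $\mathcal O_{\mathbb D}\otimes_{\mathcal O_F}\mathcal O_L$ is a maximal order in $M_d(L)$ (and hence conjugate to $M_d(\mathcal O_L)$) for a suitable unramified splitting field; this is part of the standard structure theory of maximal orders over complete discretely valued fields and is the one nontrivial input. If one prefers to avoid quoting it, an alternative is to argue directly: use the explicit description of $\mathcal O_{\mathbb D}$ as the valuation ring attached to the unique extension of the valuation of $F$ to $\mathbb D$, reduce to $r=1$ via Morita equivalence, and control the coefficients of $\operatorname{Prd}_\alpha$ using the integrality of $\alpha$ together with Cayley--Hamilton. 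Either way, once the integrality of the coefficients of $\operatorname{Prd}_\alpha$ is in hand, the conclusion follows from the elementary manipulation with $Q_\alpha$ described above.
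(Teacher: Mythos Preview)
Your proof is correct and follows essentially the same approach as the paper: both reduce to showing $\operatorname{Prd}_\alpha(X)\in\mathcal O_F[X]$, deduce $Q_\alpha(X)\in\mathcal O_F[X]$, and conclude that $\operatorname{adjrd}(\alpha)=Q_\alpha(\alpha)\in M_r(\mathcal O_{\mathbb D})$. The only difference is that the paper obtains the integrality of $\operatorname{Prd}_\alpha$ by citing Reiner (Theorems 17.3 and 10.1 of \emph{Maximal Orders}), whereas you unpack this by passing to an unramified splitting field---in effect reproving the cited result.
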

    \begin{proof}
    Theorem 17.3 of \cite{reiner1975maximal} implies that $M_r(\mathcal O_{\mathbb D})$ is a maximal $\mathcal O_F$-order of $M_r(\mathbb D)$. By Theorem 10.1 of \cite{reiner1975maximal} the reduced characteristic polynomial $\operatorname{Prd}_{\alpha}(X)$ has coefficients in $\mathcal O_F$, which implies that $Q_\alpha(X)$ also has coefficients in $\mathcal O_F$. Hence $\operatorname{adjrd}(\alpha)=Q_\alpha(\alpha)$ is an element of $M_r(\mathcal O_{\mathbb D})$.
    \end{proof}

	\section{Setup of the Twisted Doubling Method}\label{setup}

	We will now develop the generalization of the doubling method due to Cai-Friedberg-Ginzburg-Kaplan, called twisted doubling, in the setting of algebraic families. We follow Sections 3 and 5 of \cite{cai2021twisted}. Let $G$ be one of the groups defined in the previous section, i.e.\ $G=\operatorname{Isom}(W,h)$, where $W$ is a free right $D$-module of rank $n$. Moreover, let $\theta$ be a smooth $B[\GL_{kn}(D)]$-module for some positive integer $k$.
 
	Set $W^{\Box,k}=W^{\oplus 2k}$ and write 
	$$W^{\Box,k}=W_{1,+}\oplus W_{2,+}\oplus\dotsc\oplus W_{k,+}\oplus W_{k,-}\oplus\dotsc\oplus W_{2,-}\oplus W_{1,-}$$
	to distinguish the copies of $W$ in $W^{\Box,k}$. We set$$(\mathbf x;\mathbf y)=(x_1,\dotsc,x_k,y_k,\dotsc,y_1)\in W^{\Box,k},$$ where $x_i\in W_{i,+}$ and $y_i\in W_{i,-}$. For $1\leq i\leq k$ and $w\in W$ let $w^{i,\pm}$ be the element $(0,\dotsc,0,w,0,\dotsc,0)\in W_{i,\pm}$ and we denote the subspace $W_{i,+}\oplus W_{i,-}$ of $W^{\Box,k}$ by $W_i^\Box$. Moreover, for $w_1,w_2\in W$ we write $(w_1,w_2)_i$ for the element $w_1^{i,+}+w_2^{i,-}\in W_i^\Box$. For subspaces $U,U'$ of $W$ let $(U,U')_i$ be the subspace $$\{(u,u')_i\mid u\in U,u'\in U'\}$$
 of $W^{\Box,k}$.

 One can define a nondegenerate form $h^{\Box,k}$ on $W^{\Box,k}$ by setting for $(\mathbf x;\mathbf y),(\mathbf x';\mathbf y')\in W^{\Box,k}$ that
	$$h^{\Box,k}((\mathbf x;\mathbf y),(\mathbf x';\mathbf y'))=\sum_{i=1}^k\left(h(x_i,x'_i)-h(y_i,y'_i)\right).$$
 Let $G^{\Box,k}$ be the group of isometries of the pair $(W^{\Box,k},h^{\Box,k})$. We can define an action of $G\times G$ on $W^{\Box,k}$ by setting for $(g_1,g_2)\in G\times G$ and $(\mathbf x;\mathbf y)\in W^{\Box,k}$ that
	$$(g_1,g_2)(\mathbf x;\mathbf y)=(g_1x_1,g_1x_2,\dotsc,g_1x_k,g_1y_k,\dotsc,g_1y_2,g_2y_1).$$
	Then $G\times G$ acts isometrically and we obtain an embedding $\iota\colon G\times G\hookrightarrow G^{\Box,k}$.
 
 Later we need to do some explicit computations so we fix a basis $\overline{w}_1,\dotsc,\overline{w}_n$ of $W$ and we set $S$ to be the Gram matrix $(h(\overline{w}_i,\overline{w}_j))_{i,j}$ of $(W,h)$. We obtain a basis $\mathbf e=(e_1,\dotsc,e_{2kn})$ of $W^{\Box,k}$ by setting
		$$e_{nl+j}=(\overline{w}_j,\overline{w}_j)_{k-l}\in W_{k-l}^\Box,$$
		where $0\leq l\leq k-1$ and $1\leq j\leq n$,
		and 
		$$e_{n(k+l)+j}=(\overline{w}_j,-\overline{w}_j)_l\in W_{l}^\Box,$$
		where $1\leq l\leq k$ and $1\leq j\leq n$. Then, with respect to this basis, the Gram matrix of $h^{\Box,k}$ has the form
				

$$\begin{pNiceArray}{ccc|ccc}[margin]
    \Block{3-3}<\large>{}&&&&&2S\\
    &&&&\Iddots&\\
    &&&2S&&\\
    \hline
    &&2S&\Block{3-3}<\Large>{}&&\\
    &\Iddots&&&&\\
    2S&&&&&
\end{pNiceArray}\in M_{2kn}(D).$$
We will write $J$ for the matrix
$$\begin{pNiceArray}{ccc}
&&2S\\
&\Iddots&\\
2S&&
\end{pNiceArray}\in M_{kn}(D).$$

 \subsection{Parabolic Subgroups}
 For a flag of totally isotropic subspaces $\mathcal F$ of $W^{\Box,k}$ we denote the parabolic subgroup of $G^{\Box,k}$ that stabilizes $\mathcal F$ by $P(\mathcal F)$. Let $N(\mathcal F)$ be the unipotent radical of $P(\mathcal F)$ and we will denote a Levi subgroup of $P(\mathcal F)$ by $M(\mathcal F)$.
 
 Let $W_i^\bigtriangleup$ be the subspace $\{(w,w)_i\in W_i^\Box\mid w\in W\}$ and $W_i^\bigtriangledown$ the subspace $\{(w,-w)_i\mid w\in W\}$ of $W_i^\Box$. We write $W^{\bigtriangleup,k}$ for 
	$$W_1^\bigtriangleup\oplus W_2^\bigtriangleup\oplus\dotsc\oplus W_k^\bigtriangleup$$
	and analogously $W^{\bigtriangledown,k}$ for
	$$W_1^\bigtriangledown\oplus W_2^\bigtriangledown\oplus\dotsc\oplus W_k^\bigtriangledown,$$
	which are both maximal totally isotropic subspaces of $W^{\Box,k}$.
       Of importance to us is  the stabiliser of the flag $0\subset W^{\bigtriangleup,k}\subset W^{\Box,k}$ and we will write $P=MN$ for this parabolic subgroup of $G^{\Box,k}$. 
       We choose $M$ to consist of the elements of $G^{\Box,k}$ that stabilize both $W^{\bigtriangleup,k}$ and $W^{\bigtriangledown,k}$. Note that $x\mapsto x|_{W^{\bigtriangleup,k}}$ yields an isomorphism between $M$ and $\GL_{kn}(D)$.
       
       Let the character $\nu\colon P\to E^\times$ be given by $\nu(x)=\operatorname{Nrd}(x|_{W^{\bigtriangleup,k}})$. Moreover, we define a character $\nu_X$ on $P$, valued in $R[X^{\pm1}]^\times$, by setting
	$$\nu_X(g)=X^{\operatorname{val}_E(\nu(g))},$$
	for $g\in P$.
	For the smooth $B[\GL_{kn}(D)]$-module $\theta$ we obtain a smooth $B[X^{\pm}][G^{\Box,k}]$-module by setting $$I(X,\theta)\coloneqq i_{P}^{G^{\Box,k}}(\theta\otimes_R\nu_X).$$
    \begin{remark}\label{evatqs} Suppose that $R=B=\mathbb C$. We discuss the relationship between the space $I(X,\theta)$ above and the spaces $I(q_E^{-s},\theta)$ (where $s\in\mathbb C$) that are used by Cai-Friedberg-Ginzburg-Kaplan. Note that for any complex number $s\in\mathbb C$ we have a complex character $\nu_s\colon P\to\mathbb C^\times$, by setting $$\nu_s(g)=q_E^{-s\operatorname{val}_E(\nu(g))}$$ 
    for $g\in P$ and note that $I(q_E^{-s},\theta)=i_P^{G^{\Box,k}}(\theta\otimes_{\mathbb C}\nu_s)$. The $\mathbb C[X^{\pm1}]$-module $$\mathbb C_s\coloneqq\mathbb C[X^{\pm1}]/(X-q_E^{-s})$$ has dimension one as a $\mathbb C$-vector space. Clearly, we have a $\mathbb C[P]$-isomorphism $$\nu_X\otimes_{\mathbb C[X^{\pm1}]}\mathbb C_s\cong\nu_s.$$
    Since parabolic induction commutes with base change, we obtain that 
    $$I(X,\theta)\otimes_{\mathbb C[X^{\pm1}]}\mathbb C_s\cong I(q_E^{-s},\theta)$$
    as $\mathbb C[G^{\Box,k}]$-modules.

    \end{remark}

 \subsection{The Character $\psi^\bullet$}\label{psibullet}
 For $i=1,\dotsc,k-1$ we define totally isotropic subspaces $$\mathcal F_{i}^\bullet=\bigoplus_{j=1}^iW_{k+1-j}^\bigtriangledown\subseteq W^{\Box,k}$$
	and consider the flag $$\mathcal F^\bullet\colon 0\subseteq\mathcal F^\bullet_1\subseteq \mathcal F^\bullet_2\subseteq\dotsc\subseteq\mathcal F^\bullet_{k-1}.$$
	We denote the stabiliser of this flag by $P^\bullet$ and the unipotent radical of this parabolic subgroup by $N^\bullet$. Note that any element of $P^\bullet$ automatically stabilizes the flag
	$$0\subseteq\mathcal F^\bullet_1\subseteq \mathcal F^\bullet_2\subseteq\dotsc\subseteq\mathcal F^\bullet_{k-1}\subseteq(\mathcal F^\bullet_{k-1})^\perp\subseteq\dotsc\subseteq (\mathcal F^\bullet_1)^\perp\subseteq W^{\Box,k}.$$
 Let $P(W^{\bigtriangledown,k})$ be the stabiliser of the flag $0\subset W^{\bigtriangledown,k}\subset W^{\Box,k}$, with unipotent radical $N(W^{\bigtriangledown,k})$. Note that $P(W^{\bigtriangledown,k})$ is an opposite parabolic of $P$. Let $N^\circ=N^\bullet\cap N(W^{\bigtriangledown,k})$.
 
In the following we set $\mathcal F_k^\bullet=(\mathcal F_{k-1}^\bullet)^\perp$. For any $i=1,\dotsc, k-1$ and $u\in N^\bullet$, the endomorphism $u-\id\in\End_D(W^{\Box,k})$ maps $\mathcal F_{i+1}^\bullet$ to $\mathcal F_{i}^\bullet$ and hence gives rise to an element of $\Hom_D(\mathcal F_{i+1}^\bullet/\mathcal F_{i}^\bullet,\mathcal F_{i}^\bullet/\mathcal F_{i-1}^\bullet)$, which we denote by $u_i$. By Section 2.2 of \cite{cai2021twisted}, to define a character of $N^\bullet$, one has to choose elements
	$$A_i\in\Hom_D(\mathcal F^\bullet_i/\mathcal F^\bullet_{i-1},\mathcal F^\bullet_{i+1}/\mathcal F^\bullet_i)\cong\operatorname{End}_D(W_{k+1-i}^\bigtriangledown,W_{k-i}^\bigtriangledown)$$
	for $i=1,\dotsc,k-2$ and 
	$$A_{k-1}\in\Hom_D(\mathcal F^\bullet_{k-1}/\mathcal F^\bullet_{k-2},\mathcal (\mathcal F^\bullet_{k-1})^\perp/\mathcal F^\bullet_{k-1})\cong\operatorname{Hom}_D(W_2^\bigtriangledown,W_1^\Box).$$
	If $1\leq i\leq k-2$ let $A_i$ be the map that sends $(w,-w)_{k+1-i}$ to $(w,-w)_{k-i}$ and let $A_{k-1}$ be the map that sends $(w,-w)_2$ to $(2w,0)_1$. These choices of $A_i$ yield a character $\psi^\bullet\colon N^\bullet\to\mathbb Z[1/p,\mu_{p^\infty}]^\times$ by setting for $u\in N^\bullet$ that
 $$\psi^\bullet(u)=\psi\left(\sum_{i=1}^{k-1}\operatorname{Trd}(u_i\circ A_i)\right).$$
Note that $\iota(G\times G)$ lies in the stabiliser of $(N^\bullet,\psi^\bullet)$, i.e.\ $\iota(G\times G)$ normalizes $N^\bullet$ and $\psi^{\bullet}(xux^{-1})=\psi^\bullet(u)$ for all $x\in\iota(G\times G),u\in N^\bullet$.

	\section{The Twisted Doubling Zeta Integrals}\label{secintegrals}
	In the setting over the complex numbers, as in \cite{cai2021twisted},\cite{cai2019doubling},\cite{caidoubling} and \cite{gourevitch2022multiplicity}, the twisted doubling zeta integrals are functions in a complex variable $s$, defined via an integral that in general only converges if the real part of $s$ is large enough. Since in our algebraic setting we have no notion of convergence we will remedy this by showing that the zeta integrals in question can be interpreted as a Laurent series. To do so we introduce the notion of coefficient compact functions.
	\subsection{Coefficient Compact Functions}
	\begin{definition} Let $\mathbb G$ be a locally profinite group and assume there exists a nonzero $A$-valued Haar measure on $\mathbb G$ that is normalized on an open compact subgroup of $\mathbb G$.
		\begin{enumerate}
			\item For any $i\in\mathbb Z$ and any locally constant function $f\colon\mathbb G\to A[X^{\pm1}]$ let $\mathcal S_i(f)$ be the support of the $A$-valued function $\operatorname{
				Coeff}_{X^i}(f)$, i.e.
			$$\mathcal S_i(f)\coloneqq\{g\in\mathbb G\mid\operatorname{Coeff}_{X^i}f(g)\not=0\}.$$
			\item Let $\mathcal{CC}_A(\mathbb G)$ be the space of $A$-valued \emph{coefficient compact} functions on $\mathbb G$, i.e.\ the $A[X^{\pm1}]$-module consisting of locally constant functions $f\colon\mathbb G\to A[X^{\pm1}]$ that satisfy that
			\begin{itemize}
				\item there is an integer $M_f$ such that $\mathcal S_i(f)=\emptyset$ for all $i\leq M_f$,
				\item for any $i\in\mathbb Z$ the set $\mathcal S_i(f)$ is compact.
			\end{itemize}
			
			\item For any  function $f\in\mathcal{CC}_A(\mathbb G)$ and integer $i\in\mathbb Z$ the locally constant function $$\operatorname{Coeff}_{X^{i}}(f(-))\colon\mathbb G\to A$$ has compact support and we can define
			$$a_i(f)=\int_{\mathbb G}\operatorname{Coeff}_{X^{i}}(f(g))dg=\operatorname{Coeff}_{X^i}\left(\int_{\mathcal S_i(f)}f(g)dg\right)$$ and a Laurent Series
			$$\int_{\mathbb G}f(g)dg\coloneqq\sum_{i=-\infty}^\infty a_i(f)X^i\in A[[X]][X^{\pm1}].$$
		\end{enumerate}
		
	\end{definition}
Note that the properties of the Haar measure extend readily to the above notion, i.e.\ associating a Laurent series to an element of $\mathcal{CC}_A(\mathbb G)$ is $A[X^{\pm1}]$-linear.

\subsection{Twisted Doubling Zeta Integrals}	
	
	Let $G$ be any of the classical groups defined in Section \ref{secdegwhit} and $(\pi,V)$ a smooth $A[G]$-module. Moreover, assume that $\theta$ is a $B[\GL_{kn}(D)]$-module of type $(k,n)$. We fix a pair $(N(\mathcal Y), \psi_\mathcal A)$ that lies in the orbit $\lambda_{k,n}$ and a generator $$\operatorname{Wh}_{N(\mathcal Y),\psi_{\mathcal A}}\in\Hom_{\GL_{kn}(D)}(\theta,\Ind_{N(\mathcal Y)}^{\GL_{kn}(D)}(\psi_\mathcal A)).$$ In particular by composing with $\operatorname{Wh}_{N(\mathcal Y),\psi_{\mathcal A}}$ we can view $I(X,\theta)=i_{P}^{G^{\Box,k}}(\theta\otimes\nu_X)$ as functions on $G^{\Box,k}$ with values in $\operatorname{Wh}_{N(\mathcal Y),\psi_{\mathcal A}}(\theta)\otimes_B B[X^{\pm1}]$. For an element in $\operatorname{Wh}_{N(\mathcal Y),\psi_{\mathcal A}}(\theta)$, we will write $\operatorname{ev}_1$ for the evaluation on the identity of $\GL_{kn}(D)$. This clearly gives rise to a $B[X^{\pm1}]$-linear map $$\operatorname{ev}_1\colon \operatorname{Wh}_{N(\mathcal Y),\psi_{\mathcal A}}(\theta)\otimes_B B[X^{\pm1}]\to B[X^{\pm1}].$$ For the remainder of this section we assume that the residue characteristic of $F$ is odd. To define the twisted doubling zeta integral if the residue characteristic of $F$ is even we refer to Remark \ref{defeven}. 
 
 Let $\mathcal L$ be the free $\mathcal O_D$-module in $W^{\Box,k}$ that is spanned by the basis $\mathbf e$ defined in Section \ref{setup}. We set $\mathcal L'$ to be the image of $\mathcal L$ under 
            $$\begin{pmatrix}
                1_{kn}&\\&J^{-1}
            \end{pmatrix}.$$
            Let $K$ be the compact open subgroup of $G^{\Box,k}$ that stabilizes $\mathcal L'$ and note that by Proposition \ref{iwasawadec} we have that $P\cdot K=G^{\Box,k}$. Since $\operatorname{val}_E\circ\nu$ is trivial on $P\cap K$ we can use this decomposition to extend $\operatorname{val}_E\circ\nu$ to a right $K$-invariant function on $G^{\Box,k}$ for which we have the following result.

	\begin{proposition}\label{compactness} Suppose that the residue characteristic of $F$ is odd. 
		\begin{enumerate}
			\item The integer-valued map $(g,u)\mapsto\operatorname{val}_E(\nu(u\iota(g,1)))$ on $G\times N^\circ$ is locally constant, has compact fibers and image in the non-negative integers.
			\item For any $f\in I(X,\theta)$ the function $(g,u)\mapsto\operatorname{ev}_1f(u\iota(g,1))$ on $G\times N^\circ$ is coefficient compact, i.e. lies in $\mathcal{CC}_B(G\times N^\circ)$.
		\end{enumerate}
		
	\end{proposition}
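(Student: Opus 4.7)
The approach is to prove (1) via an explicit Iwasawa computation and then deduce (2) from (1) using the $P$-equivariance of sections of $I(X,\theta)$. For part (1), local constancy is immediate from smoothness. Writing $u\iota(g,1)=p(u,g)\cdot k(u,g)$ with $p\in P$ and $k\in K$ (via Proposition~\ref{iwasawadec}), we have $\operatorname{val}_E(\nu(u\iota(g,1)))=\operatorname{val}_E(\nu(p(u,g)))$. With respect to the splitting $W^{\Box,k}=W^{\bigtriangleup,k}\oplus W^{\bigtriangledown,k}$, the map $\iota(g,1)$ is block-diagonal on each $W_i^\Box$ for $i\geq 2$ (acting by $g$ on each summand), and on $W_1^\Box$ acts by $\tfrac12\begin{pmatrix}g+1 & g-1\\ g-1 & g+1\end{pmatrix}$, where $\tfrac12\in\mathcal O_D^\times$ by the odd residue characteristic assumption; meanwhile $u\in N^\circ\subseteq N(W^{\bigtriangledown,k})$ is lower unitriangular in the same block form. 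A direct matrix computation then shows the $W^{\bigtriangleup,k}$-block of the Iwasawa factor $p(u,g)$ has entries in $\mathcal O_D$ after the rescaling built into the lattice $\mathcal L'$, so its reduced norm lies in $\mathcal O_E$ by Proposition~\ref{intredadj}, proving non-negativity. For compactness of fibers, the map $(g,u)\mapsto P\cdot u\iota(g,1)$ lands in a single open $P$-orbit on the flag variety $P\backslash G^{\Box,k}$; as $(g,u)$ leaves every compact subset of $G\times N^\circ$, its image leaves every compact subset of this orbit, forcing $\operatorname{val}_E(\nu(u\iota(g,1)))\to\infty$ and making each fiber compact.

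For part (2), the $P$-equivariance of $f\in I(X,\theta)$ yields
\[
f(u\iota(g,1))=\delta_P^{1/2}(p(u,g))\,X^{\operatorname{val}_E(\nu(u\iota(g,1)))}\,\theta(p(u,g))\,f(k(u,g)).
\]
Using the identity $\operatorname{ev}_1(\theta(m)v)=\operatorname{Wh}_{N(\mathcal Y),\psi_{\mathcal A}}(v)(m)$, this rewrites as $X^{\operatorname{val}_E(\nu(u\iota(g,1)))}\cdot\phi(u,g)$ for a locally constant $\phi\colon G\times N^\circ\to B[X^{\pm1}]$. Since $f$ is smooth and $K$ compact, $f|_K$ takes only finitely many values in $\theta\otimes_B B[X^{\pm1}]$, each a Laurent polynomial of bounded $X$-degree; hence the $X$-degrees of $\phi$ are uniformly bounded in some interval $[-D,D]$. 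Combined with (1), the set $\mathcal S_i$ on which $\operatorname{Coeff}_{X^i}(\operatorname{ev}_1 f(u\iota(g,1)))$ is nonzero lies in the compact union $\bigcup_{\max(0,i-D)\leq j\leq i+D}\{(g,u):\operatorname{val}_E(\nu(u\iota(g,1)))=j\}$, and is empty for $i<-D$; this yields coefficient compactness.

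The main technical obstacle is the explicit lattice-theoretic bookkeeping in part (1): verifying the $\mathcal O_D$-integrality of the $W^{\bigtriangleup,k}$-block of the Iwasawa factor through the matrix reduction against $\mathcal L'$ and Proposition~\ref{intredadj}, and showing that $\operatorname{val}_E\circ\nu$ diverges along every end of $G\times N^\circ$. Once (1) is in hand, part (2) is a clean consequence of the Iwasawa decomposition, the Whittaker realization, and the smoothness of $f$.
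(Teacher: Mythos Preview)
Your argument for part (2) is essentially correct and matches the paper: the $P$-equivariance of $f$ together with the finiteness of the values of $f|_K$ gives a uniform bound on the $X$-degrees appearing, and then (1) supplies the compactness of the coefficient supports.

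However, part (1) has two genuine gaps.

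\emph{Non-negativity.} Your claim that ``the $W^{\bigtriangleup,k}$-block of the Iwasawa factor $p(u,g)$ has entries in $\mathcal O_D$'' is not true in general, and Proposition~\ref{intredadj} is not the relevant input (it concerns the reduced adjugate, not integrality of the reduced norm). Writing $u\iota(g,1)=\begin{pmatrix}a&\\&J^{-1}{a^*}^{-1}J\end{pmatrix}\begin{pmatrix}1&X\\&1\end{pmatrix}k_1$, the paper does \emph{not} show $a\in M_{kn}(\mathcal O_D)$. Instead, a lattice computation comparing $(\Psi\circ\phi)(\mathcal L)={a^*}^{-1}(\mathcal L^\bigtriangledown)$ with the explicit formula for $\phi$ yields $a^*m\in M_{kn}(\mathcal O_D)$ for a certain matrix $m$ built from $g$ and $u$; the point is that $\operatorname{val}_E(\operatorname{Nrd}(m))=-k\operatorname{val}_E(\operatorname{Nrd}(g^*))=0$ by Proposition~\ref{nochar} (the absence of nontrivial positive-real characters on $G$), whence $\operatorname{val}_E(\operatorname{Nrd}(a))=\operatorname{val}_E(\operatorname{Nrd}(a^*m))\geq 0$. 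The use of Proposition~\ref{nochar} is essential and absent from your sketch.

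\emph{Compactness of fibers.} The topological argument you propose does not work: the function $\operatorname{val}_E\circ\nu$ is right $K$-invariant but does \emph{not} descend to $P\backslash G^{\Box,k}$, so ``leaving every compact subset of the open orbit'' carries no a priori information about $\operatorname{val}_E\circ\nu$. (Already for $\GL_2$ with $\nu=\det$ on the Borel, $\operatorname{val}_E\circ\nu$ is identically zero on $\overline N$, which is non-compact.) The paper's proof is an explicit bound: from $\operatorname{val}_E(\nu(u\iota(g,1)))\leq j$ one gets $m^{-1}{a^*}^{-1}\in M_{kn}(\varpi^{-j}\mathcal O_D)$ via Proposition~\ref{intredadj}, and then reads off successively that $(1-g^*)S^{-1}$, then $y_1$, then $y_2$, then $y_3$ lie in explicit compact sets depending on $j$. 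This explicit cascade is the substance of the proof and cannot be replaced by the soft orbit argument you outline.
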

	
	\begin{proof} ad (1):
 For $k=1$ this is Lemma 8.4 in \cite{kakuhama2019local}. We write $\mathcal O_D$ for the ring of integers of $D$. Recall that we fixed a basis $w_1,\dotsc,w_n$ of $W$, where, by scaling, we may assume that the Gram matrix $S=(h(w_i,w_j))_{i,j}$ satisfies that $(2S)^{-1}\in M_n(\mathcal O_D)$. 
    With respect to the basis $\mathbf e=(e_1,\dotsc, e_{2kn})$ of $W^{\Box,k}$ which was chosen in Section \ref{setup}, elements of $M$ are of the form
		$$\begin{pmatrix}
			a&\\&J^{-1}{a^*}^{-1}J
		\end{pmatrix},$$
		where $a$ is any element of $\GL_{kn}(D)$ and matrices in $N$ have the form
		$$\begin{pmatrix}
			1_{kn}&X\\&1_{kn}
		\end{pmatrix},$$
		where $X\in M_{kn}(D)$ satisfies $JX+X^*J=0$. Note that $K$ equals
		$$\left\{g\in G^{\Box,k}\mid \begin{pmatrix}
			1_{kn}&\\&J 
		\end{pmatrix}g\begin{pmatrix}
			1_{kn}&\\&J^{-1} 
		\end{pmatrix}\in\GL_{2kn}(\mathcal O_D)\right\}.$$
  
  By Proposition \ref{iwasawadec} we have that $P\cdot K=G^{\Box,k}$. Hence for $g\in G,u\in N^\circ$ we can find $a\in\
GL_{kn}(D), X\in M_{kn}(D)$ and $k_1\in K$ such that 
		\begin{equation}\label{eqlongproof1}u\iota(g,1)=\begin{pmatrix}
				a&\\&J^{-1}{a^*}^{-1}J
			\end{pmatrix}\begin{pmatrix}
				1_{kn}&X\\&1_{kn}
			\end{pmatrix}k_1.\end{equation}
It is then straightforward to see that $\operatorname{val}_E(\nu(u\iota(g,1))=\operatorname{val}_E(\operatorname{Nrd}(a))$. In the following we view endomorphisms of $W$ as endomorphisms of $W_i^\bigtriangleup$ (respectively $W_i^\bigtriangledown$) via the identification $(w,w)_i\mapsto w$ (respectively $(w,-w)_i\mapsto w$). With respect to $\mathbf e$ we have that
		
		
		$$
            \iota(g,1)=\begin{pNiceArray}{cccc|cccc}[margin]
                g&&&&&&&\\
                &\Ddots&&&&&&\\
                &&g&&&&&\\
                &&&\frac{g+1}{2}&\frac{g-1}{2}&&&\\
                \hline
                &&&\frac{g-1}{2}&\frac{g+1}{2}&&&\\
                &&&&&g&&\\
               &&&&&&\Ddots&\\
                &&&&&&&g
            \end{pNiceArray}
            $$
        and 
	$$u=\begin{pNiceArray}{cc|ccc}[margin]
	    \Block{2-2}<\Large>{1_{kn}}&&\Block{2-3}<\Large>{0_{kn}}&&\\
            &&&&\\
            \hline
            y_1&0_n& \Block{2-3}<\Large>{1_{kn}}&&\\
            y_3&y_2&&&
	\end{pNiceArray},$$
        where $y_1\in M_{n\times (k-1)n}(D),y_2\in M_{(k-1)n\times n}(D)$ and $y_3\in M_{(k-1)n}(D)$ and they satisfy certain conditions that ensure that $u$ is an element of $G^{\Box,k}$. Note that we may view
        $y_1$ as an element of $\Hom_D(W_2^\bigtriangleup\oplus\dotsc\oplus W_k^\bigtriangleup,W_1^\bigtriangledown)$ and similarly
        \begin{align*}
            y_2&\in\Hom_D(W_1^\bigtriangleup, W_2^\bigtriangledown\oplus\dotsc\oplus W_k^\bigtriangledown),\\
            y_3&\in\Hom_D(W_2^\bigtriangleup\oplus\dotsc\oplus W_k^\bigtriangleup, W_2^\bigtriangledown\oplus\dotsc\oplus W_k^\bigtriangledown).
        \end{align*}

		We set
		$$\phi=\begin{pmatrix}
			1_{kn}&\\&J
		\end{pmatrix}u\iota(g,1)\begin{pmatrix}
			1_{kn}&\\&J^{-1}
		\end{pmatrix}.$$
  Let $J'\in M_{(k-1)n}(D)$ be the matrix such that $$J=\begin{pmatrix}
      &J'\\
      2S&
  \end{pmatrix}.$$
We can view $J'$ as the bijective linear map $W_2^\bigtriangledown\oplus\dotsc\oplus W_k^\bigtriangledown\to W_1^\bigtriangledown\oplus\dotsc\oplus W_{k-1}^\bigtriangledown$ that sends $(w,-w)_i\mapsto (2S)^{-1}(w,-w)_{k-i+1}$ for $2\leq i\leq k$. By using the explicit matrix representatives for $\iota(g,1)$ and $u$ we can compute 
  \begin{equation}\label{explcomp}
      \phi=\begin{pNiceArray}{cc|cc}[margin]
\Block{2-2}{*} & &\Block{2-2}{*} &  \\
 & &&  \\
 \hline
J'y_3g& J'y_2(g+1)/2&  J'gJ'^{-1}& J'y_2(g-1)/2(2S)^{-1}\\

 (2S)y_1g& S(g-1)&0 &  S(g+1)/2S^{-1}
 \end{pNiceArray}.
  \end{equation}
Equation (\ref{eqlongproof1}) implies that
		\begin{equation}\label{eqlongproof2}\phi=\begin{pmatrix}
				a&\\
				&{a^*}^{-1}
			\end{pmatrix}\begin{pmatrix}
				1_{kn}&XJ^{-1}\\&1_{kn}
			\end{pmatrix}k',
		\end{equation}
		where $k'\in\GL_{2kn}(\mathcal O_D)$, i.e.\ $k'$ stabilizes $\mathcal L$.
  
		We will consider the image of $\mathcal L$ under $\phi$ composed with the projection map $$\Psi\colon W^{\Box,k}\twoheadrightarrow W^{\bigtriangledown,k}.$$
  Moreover, let $\Psi'\colon W^{\bigtriangledown,k}\twoheadrightarrow W_1^\bigtriangledown\oplus\dotsc\oplus W_{k-1}^\bigtriangledown$ and $\Psi_k\colon W^{\bigtriangledown,k}\twoheadrightarrow W_k^\bigtriangledown$ be the canonical projections. 
For $1\leq i\leq k$ we define $\mathcal O_D$-lattices
          \begin{align*}
            \mathcal L_{i}^\bigtriangleup&=\langle e_{(k-i)n+1},\dotsc,e_{(k-i+1)n}\rangle\subseteq W_{i}^\bigtriangleup,\\
            \mathcal L_i^\bigtriangledown&=\langle e_{(k+i-1)n+1},\dotsc,e_{(k+i)n}\rangle\subseteq W_i^\bigtriangledown.
        \end{align*}
 We set $\mathcal L^\bigtriangledown=\mathcal L_1^\bigtriangledown\oplus\dotsc\oplus\mathcal L_{k}^\bigtriangledown$ and clearly have that $\Psi(\mathcal L)=\mathcal L^\bigtriangledown$. Since $k'(\mathcal L)=\mathcal L$ we obtain by Equation (\ref{eqlongproof2}) that \begin{equation}\label{easylcomp}
     (\Psi\circ\phi)(\mathcal L)={a^*}^{-1}(\mathcal L^\bigtriangledown).
 \end{equation}
Equation (\ref{explcomp}) implies for $1\leq i\leq k-1$ and $w\in\mathcal L_{i}^\bigtriangledown$ that
$$(\Psi\circ\phi)(w)=SgS^{-1}(w)={g^*}^{-1}(w)\in\mathcal L_i^\bigtriangledown.$$
For $(k-1)n+1\leq j\leq kn$ we have that $e_j\in\mathcal L_1^\bigtriangleup$, and again by Equation (\ref{explcomp}) we see that  
$$(\Psi\circ\phi)(e_j)=(J'y_2(g+1)/2)(e_j)+S(g-1)(e_{j+kn}).$$
Moreover, $e_{j+kn}\in\mathcal L_{k}^\bigtriangledown$ and one obtains that 
$$(\Psi\circ\phi)(e_{j+kn})=J'y_2(g-1)/2(2S)^{-1}(e_j)+S(g+1)/2S^{-1}e_{j+kn}$$
Since we assume that $(2S)^{-1}\in M_{n}(\mathcal O_D)$ we have for $w\in\mathcal L_k^\bigtriangleup$ that $(2S)^{-1}w\in\mathcal L_k^\bigtriangleup$.
In particular this implies that
\begin{equation}\label{multim}
    (\Psi\circ\phi)((2S)^{-1}e_{j}+e_{j+kn})=J'y_2g(2S)^{-1}(e_j)+SgS^{-1}e_{j+kn}\in (\Psi\circ\phi)(\mathcal L).
\end{equation}
Let
$$m=\begin{pNiceArray}{ccc|c}[margin]
{g^*}^{-1} & & & \Block{3-1}{J'y_2(2S)^{-1}t} \\
 & \Ddots&&  \\
& &{g^*}^{-1} & \\
\hline
 & &  & {g^*}^{-1}
 \end{pNiceArray}$$ 
 and note that by Proposition \ref{nochar} we have that $\operatorname{val}_E(\operatorname{Nrd}(m))=-k\operatorname{val}_E(\operatorname{Nrd}({g^*}))=0$. Now Equations (\ref{easylcomp}) and (\ref{multim}) imply that
$${a^*}^{-1}(\mathcal L^\bigtriangledown)\supseteq m(\mathcal L^\bigtriangledown),$$
and hence that $a^*m\in M_{kn}(\mathcal O_D)$ and in particular $\operatorname{val}_E(\operatorname{Nrd}(a^*m))\geq 0$. We obtain
$$\operatorname{val}_E(\nu(u\iota(g,1)))=\operatorname{val}_E(\operatorname{Nrd}(a))=\operatorname{val}_E(\operatorname{Nrd}(a^*m))\geq 0.$$

Suppose now that $\operatorname{val}_E(\nu(u\iota(g,1)))= j$, which implies that
$$\varpi^{-j}(\mathcal L^\bigtriangledown)\supseteq\operatorname{adjrd}(m^{-1}{a^*}^{-1})(\mathcal L^\bigtriangledown)$$
and hence $\varpi^{j}\operatorname{adjrd}(m^{-1}{a^*}^{-1})\in M_n(\mathcal O_D)$. By Proposition \ref{intredadj} we obtain that 
$$\operatorname{adjrd}(\varpi^{j}\operatorname{adjrd}(m^{-1}{a^*}^{-1}))=\varpi^jm^{-1}{a^*}^{-1}\in M_{kn}(\mathcal O_D)$$
and hence $m^{-1}{a^*}^{-1}\in M_{kn}(\varpi^{-j}\mathcal O_D)$. By Equation (\ref{explcomp}) we have for $w\in\mathcal L_k^\bigtriangledown$ that $$(1-g^{*})(S^{-1}w)=(\Psi_k\circ m^{-1}\circ\Psi\circ\phi)(S^{-1}w)=\Psi_k(m^{-1}{a^*}^{-1})(S^{-1}w)\in\varpi^{-j}(\mathcal L_k^{\bigtriangledown}).$$
Hence we obtain that $(1-g^*)S^{-1}\in M_{n}(\varpi^{-j}\mathcal O_D)$. Note that $G$ is a closed subset of $M_n(D)$ and hence $G\cap M_n(\varpi^\alpha\mathcal O_D)$ is a compact subset of $G$ for any $\alpha\in\mathbb Z$. .

Let $C_j=\{(g,u)\in G\times N^\circ\mid\operatorname{val}_E(\nu(u\iota(g,1))\leq j)\}$, which by continuity is open and closed. By what we just showed, the image of the projection $C_j\to G$ is compact and hence
there are integers $l$ and $l'$ such that $g\mathcal L_i^\bigtriangledown\supseteq\varpi^l\mathcal L_i^\bigtriangledown$ and $g^*\mathcal L_i^\bigtriangledown\subseteq\varpi^{l'}\mathcal L_i^\bigtriangledown$ for any $(g,u)\in C_j$ and any $1\leq i\leq k$.
For $w\in\mathcal L_i^\bigtriangleup$, where $2\leq i\leq k$, we have by Equation (\ref{explcomp}) that 
$$(\Psi_k\circ m^{-1}\circ\Psi\circ\phi)(w)=({g^*}^{-1}t2Sy_1g)(w)\in\varpi^{-j}\mathcal (\mathcal L^\bigtriangledown_k),$$
where we set $t\colon W_1^\bigtriangledown\to W_k^\bigtriangledown$ to be the map defined by $t((x,-x)_1)=(x,-x)_k$. This implies that $2Sy_1(w)\in\varpi^{-l-l'-j}(\mathcal L_1^\bigtriangledown)$ and hence $y_1$ lies in the compact set $(2S)^{-1}M_{n\times (k-1)n}(\varpi^{-l-l'-j}\mathcal O_D)$. Since $u\in G^{\Box,k}$ we have that $J'y_2+y_1^*(2S)=0$, which shows that $y_2$ is also contained in a compact set. Now Equation (\ref{explcomp}) implies for $w\in\mathcal L_2^\bigtriangleup\oplus\dotsc\oplus\mathcal L_k^\bigtriangleup$ that
$$(\Psi'\circ m^{-1}\circ\Psi\circ\phi)(w)=J'y_3g(w)+(J'y_2(2S)^{-1}t'(2S)y_1g)(w)\in\varpi^{-j}(\mathcal L_1^\bigtriangledown\oplus\dotsc\oplus\mathcal L_{k-1}^\bigtriangledown),$$
where we set $t'\colon W_1^\bigtriangledown\to W_1^\bigtriangleup$ such that $t'((x,-x)_1)=(x,x)_1$. By what we already showed there is some $\alpha\in\mathbb Z$ such that for all $w\in\mathcal L_2^\bigtriangleup\oplus\dotsc\oplus\mathcal L_k^\bigtriangleup$ we have that $$(J'y_2(2S)^{-1}t'(2S)y_1g)(w)\in\varpi^{-\alpha}(\mathcal L_1^\bigtriangledown\oplus\dotsc\oplus\mathcal L_{k-1}^\bigtriangledown).$$ 
Hence $J'y_3g(w)\in\varpi^{-\max{(\alpha,j)}}(\mathcal L_1^\bigtriangledown\oplus\dotsc\oplus\mathcal L_{k-1}^\bigtriangledown)$, which implies that 
$$y_3\in J'^{-1}M_{(k-1)n}(\varpi^{-l-\max{(\alpha,j)}}\mathcal O_D)$$ and the result follows.

		ad (2):
		We will write $\xi\colon G\times N^\circ\to B[X^{\pm1}]$ for the function $(g,u)\mapsto\operatorname{ev}_1f(u\iota(g,1))$. For $g\in G,u\in N^\circ$ let $f(u\iota(g,1))=sk_1$ where $s\in P$ and $k_1\in K$. We have \begin{equation}\label{eq:parcomp}
			f(u\iota(g,1))=\delta_P(s)^{1/2}X^{\operatorname{val}_E(\nu(u\iota(g,1)))}\theta(s)f(k_1).
		\end{equation} Since $f$ is locally constant, its restriction to $K$ admits only finitely many different values in $\operatorname{Wh}_{N(\mathcal Y),\psi_{\mathcal A}}(\theta)\otimes B[X^{\pm1}]$. However, since by part (1) the integer $\operatorname{val}_E(\nu(u\iota(g,1)))$ is nonnegative, we immediately obtain that there is an $m\in\mathbb Z$ such that for any integer $i\leq m$ we have that $\operatorname{Coeff}_{X^i}(\xi(g,u))=0$ for all $g\in G, u\in N^\circ$.
  
		By part (1), for any $j\in\mathbb Z$, the set
$$C_j=\{(g,u)\in G\times N^\circ\mid\operatorname{val}_E(\nu(u\iota(g,1)))\leq j \},$$ 
is compact. Now for any $i\in\mathbb Z$, by Equation (\ref{eq:parcomp}), if $(g,u)\notin C_{i-m}$ we obtain that
		$$\operatorname{Coeff}_{X_i}\xi(g,u)=0.$$
		Hence $$\{(g,u)\in G\times N^\circ\mid\operatorname{Coeff}_{X^i}\xi(g,u)\not=0\}$$
  is contained in $C_{i-m}$ and as a closed subset of a compact set, compact.
	\end{proof}
	We just showed that for any $f\in I(X,\theta)$, the function $(g,u)\mapsto \operatorname{ev_1}f(u\iota(g,1))$ lies in $\mathcal{CC}_B(G\times N^\circ)$. Then clearly for any $v\in V,\lambda\in\widetilde{V}$ the function 
	$$(g,u)\mapsto \lambda(\pi(g)v)\otimes\operatorname{ev_1}f(u\iota(g,1))\psi^\bullet(u),$$
	is an element of $\mathcal{CC}_{A\otimes B}(G\times N^\circ)$. Hence we can consider for any $i\in\mathbb Z$ the integral
	$$a_i(v,\lambda,f)=\int_{G\times N^\circ}\operatorname{Coeff}_{X^i}[\lambda(\pi(g)v)\otimes\operatorname{ev_1}f(u\iota(g,1))\psi^\bullet(u)]d(g,u),$$
	which yields an element of $A\otimes B$ and satisfies that $a_j(v,\lambda,f)=0$ for $j$ small enough. This allows us to define a Laurent series
	$$Z(X,v,\lambda,f)\coloneqq\sum_{i=-\infty}^\infty a_i(v,\lambda,f)X^i\in (A\otimes B)[[X]][X^{-1}].$$
	\begin{definition}
		For $v\in V,\lambda\in\widetilde{V}$ and $f\in I(X,\theta)$ we call the Laurent series $Z(X,v,\lambda,f)\in (A\otimes B)[[X]][X^{-1}]$ the \emph{twisted doubling zeta integral} associated to $v,\lambda$ and $f$.
	\end{definition}
	It is clear that the twisted doubling zeta integral is $(A\otimes B)[X^{\pm1}]$-linear and hence gives rise to an element in 
	$$\Hom_{(A\otimes B)[X^{\pm 1}]}\left((V\otimes_{A}\widetilde{V})\otimes_R I(X,\theta),(A\otimes B)[[X]][X^{-1}])\right).$$
	
	As in the classical doubling method the twisted doubling zeta integral behaves nicely under certain group actions on the input data.
	
	\begin{proposition}\label{inttransf} Let $v\in V,\lambda\in\widetilde{V}$ and $f\in I(X,\theta)$. 
		\begin{enumerate}
			\item For all $(g_1,g_2)\in G\times G$ we have that
			$$Z(X,\pi(g_1)v,\widetilde{\pi}(g_2)\lambda,\iota(g_1,g_2)f)=\chi_\theta(\operatorname{Nrd}(g_2))Z(X,v,\lambda,f),$$
			where we view $g_2$ as an element of $\End_{D}(W)$ and define $\operatorname{Nrd}(g_2)$ that way.
			\item 	The equation
			$$Z(X,v,\lambda,x\cdot f)=\psi^\bullet(x)^{-1}Z(X,v,\lambda,f)$$ holds for all $x\in N^\bullet$.
		\end{enumerate}
	\end{proposition}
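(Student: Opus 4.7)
Both identities record how the integrand of the twisted doubling zeta integral transforms under the actions on its three arguments; both will follow from direct changes of variables in $G$ and $N^\circ$, combined with the left $P$-equivariance of $f$ and the Whittaker-type invariance of $\operatorname{ev}_1$ furnished by Proposition~\ref{invstab}. By Proposition~\ref{compactness} every integrand in sight lies in $\mathcal{CC}_{A\otimes B}(G\times N^\circ)$, so the manipulations descend coefficient-by-coefficient to genuine compactly supported integrals, where standard changes of variables are permissible.

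\textbf{Part (1).} Unfolding the three actions (the right regular action $(\iota(g_1,g_2)\cdot f)(h)=f(h\iota(g_1,g_2))$ and the contragredient action $(\widetilde{\pi}(g_2)\lambda)(\pi(g)v)=\lambda(\pi(g_2^{-1}g)v)$), the integrand of $Z(X, \pi(g_1)v, \widetilde{\pi}(g_2)\lambda, \iota(g_1, g_2)f)$ becomes
\[
\lambda(\pi(g_2^{-1}gg_1)v) \otimes \operatorname{ev}_1 f(u \iota(gg_1, g_2)) \psi^\bullet(u).
\]
I would first substitute $g \mapsto g_2 g g_1^{-1}$, which is measure-preserving since $G$ is unimodular by Proposition~\ref{nochar}; using $\iota(g_2 g, g_2) = \iota(g_2, g_2) \iota(g, 1)$, the argument of $f$ becomes $u\iota(g_2, g_2)\iota(g, 1)$. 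Next I would substitute $u \mapsto \iota(g_2, g_2) u \iota(g_2, g_2)^{-1}$, which is valid because $\iota(g_2, g_2) \in M$ normalises $N^\circ$ (it stabilises both $W^{\bigtriangleup, k}$ and $W^{\bigtriangledown, k}$) and has trivial conjugation Jacobian by Proposition~\ref{nochar}. Since $\iota(G\times G)$ stabilises $\psi^\bullet$, the character factor is untouched. Applying the induction formula $f(\iota(g_2, g_2) h) = \delta_P^{1/2}(\iota(g_2, g_2)) \nu_X(\iota(g_2, g_2)) \theta(\iota(g_2, g_2)) f(h)$, together with the triviality of $\delta_P^{1/2}$ and $\nu_X$ on $\iota(g_2, g_2)$ (again Proposition~\ref{nochar}), reduces the question to computing $\operatorname{ev}_1(\theta(\iota(g_2, g_2)) \xi)$. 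Under the identification $M \cong \GL_{kn}(D)$, $\iota(g_2, g_2)$ corresponds to $\operatorname{diag}(g_2, \dots, g_2)$, which represents $g_2 \in \operatorname{St}_{(N(\mathcal Y), \psi_{\mathcal A})}/N(\mathcal Y) \cong \GL_n(D)$; Proposition~\ref{invstab} then gives $\operatorname{ev}_1(\theta(\iota(g_2, g_2))\xi) = \chi_\theta(\operatorname{Nrd}(g_2)) \operatorname{ev}_1(\xi)$, and extracting this scalar out of the integral yields the identity.

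\textbf{Part (2).} The starting point is $u\iota(g, 1)x = u x_g \iota(g, 1)$ with $x_g := \iota(g, 1) x \iota(g, 1)^{-1} \in N^\bullet$ and $\psi^\bullet(x_g) = \psi^\bullet(x)$, both by the invariance of $(N^\bullet, \psi^\bullet)$ under $\iota(G\times G)$. The plan is to substitute $u \mapsto u x_g^{-1}$ and extract the scalar $\psi^\bullet(x)^{-1}$ via the character identity $\psi^\bullet(u x_g^{-1}) = \psi^\bullet(u) \psi^\bullet(x)^{-1}$. The main obstacle is that $x_g$ need not lie in $N^\circ$, so the naive substitution shifts the integration domain from $N^\circ$ to the coset $N^\circ x_g^{-1}$. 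I plan to resolve this by decomposing $x_g$ according to a product decomposition of $N^\bullet$ with respect to $N^\circ$ and a complementary subgroup lying in $P$: the $N^\circ$-factor is absorbed by a genuine Haar-preserving translation within $N^\circ$, while the complementary $P$-part is pushed past $f$ using the left $P$-equivariance; the resulting action of $\theta$ evaluated via $\operatorname{ev}_1$ is then absorbed by the Whittaker invariance of $\operatorname{ev}_1$, whose character matches the restriction of $\psi^\bullet$ via the identification of $N^\bullet \cap M$ with $N(\mathcal Y)$ and of $\psi^\bullet|_{N^\bullet \cap M}$ with $\psi_{\mathcal A}$. After recombining and using that $\psi^\bullet$ is trivial on $[N^\bullet, N^\bullet]$, the accumulated character contributions collapse to the desired factor $\psi^\bullet(x)^{-1}$.
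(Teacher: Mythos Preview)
Your proposal is correct and follows essentially the same route as the paper's proof: in part~(1) the same two changes of variable (in $g$ via unimodularity of $G$, in $u$ via conjugation by $\iota(g_2,g_2)$) followed by the $P$-equivariance of $f$ and Proposition~\ref{invstab}, and in part~(2) the same decomposition $N^\bullet=(M\cap N^\bullet)\,N^\circ$ with the $N^\circ$-part absorbed by translation and the $M\cap N^\bullet$-part pushed through $f$ and handled by the Whittaker transformation law. The only cosmetic point is that under the identification of $(M\cap N^\bullet,\psi_{\mathcal A})$ with the pair used for the Whittaker model one has $\psi_{\mathcal A}={\psi^\bullet}^{-1}$ on $M\cap N^\bullet$ (cf.\ Proposition~\ref{higherorb}), so the sign bookkeeping in your last paragraph should be checked, but this does not affect the argument.
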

	\begin{proof} ad (1): We show that the Laurent series on each side have the same coefficient. For any $i\in\mathbb Z$ let 
		$a_i$ be the $i$-th coefficient of $Z(X,\pi(g_1)v,\widetilde{\pi}(g_2)\lambda,\iota(g_1,g_2)f)$. By definition,

			$$a_i=\int_{G\times N^\circ}\operatorname{Coeff}_{X^i}[\lambda(\pi(g_2^{-1}gg_1)v)\otimes\operatorname{ev}_1f(u\iota(gg_1,g_2))\psi^\bullet(u)]d(g,u)$$
  and hence $a_i$ also equals
  $$\int_{G\times N^\circ}\operatorname{Coeff}_{X^i}[\lambda(\pi(g_2^{-1}gg_1)v)\otimes\operatorname{ev}_1f(\iota(g_2,g_2)\iota(g_2^{-1},g_2^{-1})u\iota(g_2,g_2)\iota(g_2^{-1}gg_1,1))\psi^\bullet(u)]d(g,u).$$
		Since $G\times G$ normalizes $N^\circ$ with trivial associated unimodular character (see Lemma 5.2 of \cite{cai2021twisted}) and $G\times G$ is in the stabiliser of $\psi^\bullet$ we obtain
		$$
		a_i=\int_{G\times N^\circ}\operatorname{Coeff}_{X^i}[\lambda\left(\pi(g_2^{-1}gg_1)v\right)\otimes\operatorname{ev}_1f\left(\iota(g_2,g_2)u\iota(g_2^{-1}gg_1,1)\right)\psi^\bullet(u)]d(g,u).$$
		The unimodularity of $G$ implies that 
		$$
		a_i=\int_{G\times N^\circ}\operatorname{Coeff}_{X^i}[\lambda\left(\pi(g)v\right)\otimes\operatorname{ev}_1f\left(\iota(g_2,g_2)u\iota(g,1)\right)\psi^\bullet(u)]d(g,u).$$
		Now $\iota(g_2,g_2)$ is an element of $P$, but moreover lies in $\operatorname{St}_{N(\mathcal Y),\psi_{\mathcal A}}$ and hence by Lemma \ref{invstab} acts via the character $\chi_\theta\circ\operatorname{Nrd}$. We obtain that
		$$
		a_i=\chi_\theta(\operatorname{Nrd}(g_2))\int_{G\times N^\circ}\operatorname{Coeff}_{X^i}[\lambda\left(\pi(g)v\right)\otimes\operatorname{ev}_1f\left(u\iota(g,1)\right)\psi^\bullet(u)]d(g,u).$$
		Since $$\operatorname{Coeff}_{X^i}[Z(X,v,\lambda,f)]=\int_{G\times N^\circ}\operatorname{Coeff}_{X^i}[\lambda\left(\pi(g)v\right)\otimes\operatorname{ev}_1f\left(u\iota(g,1)\right)\psi^\bullet(u)]d(g,u),$$
		we see that 
		$$Z(X,\pi(g_1)v,\widetilde{\pi}(g_2)\lambda,\iota(g_1,g_2)f)=\chi_\theta(\operatorname{Nrd}(g_2))Z(X,v,\lambda,f).$$
		
		ad (2): Let $b_i$ be the coefficient of $Z(X,v,\lambda,x\cdot f)$. Then by definition we have
		$$b_i=\int_{G\times N^\circ}\operatorname{Coeff}_{X^i}\left[\lambda(\pi(g)v)\otimes\operatorname{ev_1}(x\cdot f)(u\iota(g,1))\psi^\bullet(u)\right]d(g,u).$$
		Fubini's theorem yields that
		\begin{align*}
			b_i&=\int_{G}\lambda(\pi(g)v)\otimes\left(\int_{N^\circ}\operatorname{Coeff}_{X^i}[\operatorname{ev}_1f(u\iota(g,1)x)\psi^\bullet(u)]du\right)dg\\
			&=\int_{G}\lambda(\pi(g)v)\otimes\left(\int_{N^\circ}\operatorname{Coeff}_{X^i}[\operatorname{ev}_1f(u\iota(g,1)x\iota(g^{-1},1)\iota(g,1))\psi^\bullet(u)]du\right)dg.
		\end{align*}
		Note that $N^\bullet=(M\cap N^\bullet)N^\circ$ and since $\iota(G\times G)$ normalizes $N^\bullet$ we can write $\iota(g,1)x\iota(g^{-1},1)=x_1^gx_0^g$ where $x_1^g\in M
		\cap N^\bullet$ and $x_0^g\in N^\circ$. Now as $M\cap N^\bullet$ normalizes $N^\circ$ and as the associated modulus character is trivial we see that 
		$$b_i=\int_{G}\lambda(\pi(g)v)\otimes\left(\int_{N^\circ}\operatorname{Coeff}_{X^i}[\operatorname{ev}_1f(x_1^gux_0^g\iota(g,1))\psi^\bullet(u)]du\right)dg.$$
		After the variable change $u\mapsto u{x_0^g}^{-1}$ we obtain
		$$b_i=\int_{G}\lambda(\pi(g)v)\otimes\left(\int_{N^\circ}\operatorname{Coeff}_{X^i}[\operatorname{ev}_1f(x_1^gu\iota(g,1))\psi^\bullet(u{n_0^g}^{-1})]du\right)dg.$$
		Since $x_1^g\in M$ and $\nu_X(x_1^g)=1$ (as $x_1^g$ is a compact element), a short computation gives
		$$\operatorname{ev}_1f(x_1^gu\iota(g,1))=\operatorname{ev}_{x_1^g}f(u\iota(g,1))=\psi^\bullet(x_1^g)^{-1}\operatorname{ev}_1f(u\iota(g,1)).$$
		As $\iota(G\times G)$ lies in the stabiliser of $N^\bullet$ we have that  $\psi^\bullet(x^g_1x_0^g)=\psi^\bullet(x)$ which implies
		$$b_i=\psi^\bullet(x)^{-1}\int_{G}\lambda(\pi(g)v)\otimes\left(\int_{N^\circ}\operatorname{Coeff}_{X^i}[\operatorname{ev}_1f(u\iota(g,1))\psi^\bullet(u)]du\right)dg.$$
		However, note by Fubini's theorem that
		$$\operatorname{Coeff}_{X^i}Z(X,v,\lambda,f)=\int_{G}\lambda(\pi(g)v\otimes\left(\int_{N^\circ}\operatorname{Coeff}_{X^i}[\operatorname{ev}_1f(u\iota(g,1))\psi^\bullet(u)]du\right)dg$$ and we obtain
		$$Z(X,v,\lambda,x\cdot f)=\psi^\bullet(x)^{-1}Z(X,v,\lambda,f).$$
		
	\end{proof} 
	Let $\kappa\colon G\times G\to B^\times$ be given by $\kappa(g_1,g_2)=\chi_{\theta}(\operatorname{Nrd}(g_2))$. We will write $M(\pi)^\kappa$ for the smooth $(A\otimes B)[G\times G]$-module $(V\otimes_{A}\widetilde{V})\otimes_R\kappa^{-1}$. Then the above proposition shows that the twisted doubling zeta integral can be viewed as an element of
	$$\Hom_{(A\otimes B)[X^{\pm1}]}\left(M(\pi)^\kappa\otimes_{B[G\times G]}J_{N^\bullet,{\psi^\bullet}^{-1}}(I(X,\theta)),(A\otimes B)[[X]][X^{-1}]\right)$$
	where $J_{N^\bullet,{\psi^\bullet}^{-1}}$ is the unnormalized twisted Jacquet functor.

	\section{Rationality of the twisted doubling zeta integral}\label{secratio}
	In this section we prove that the twisted doubling zeta integral is rational in the following sense. Recall that $S_{A\otimes B}$ is the mutliplicative subset of $(A\otimes B)[X^{\pm1}]$ consisting of Laurent polynomials with leading and trailing coefficient a unit in $A\otimes B$. Then we prove that for any input data the twisted doubling zeta integral lies in the localization $S_{A\otimes B}^{-1}\cdot (A\otimes B)[X^{\pm1}]$. More concretely, we show the following result.
    \begin{theorem}\label{rattheo}
        Let $(\pi,V)$ be an $A[G]$-module and assume that the contragredient $(\widetilde{\pi},\widetilde{V})$ is admissible and $G$-finite. Let $\theta$ be a $B[\GL_{kn}(D)]$-module of type $(k,n)$. Then there is a polynomial in $\mathcal P\in S_{A\otimes B}$ such that
        $$\mathcal P\cdot Z(X,v,\lambda,f)\in (A\otimes B)[X^{\pm1}]$$
           for all $v\in V,\lambda\in\widetilde{V}$ and $f\in I(X,\theta)$. 
        \end{theorem}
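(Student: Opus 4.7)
The plan is to interpret $Z(X,v,\lambda,f)$ as an $(A\otimes B)[X^{\pm1}]$-linear map
$$Z\colon \mathcal M \to (A\otimes B)[[X]][X^{-1}], \qquad \mathcal M \coloneqq M(\pi)^\kappa\otimes_{B[G\times G]} J_{N^\bullet,{\psi^\bullet}^{-1}}(I(X,\theta)),$$
as guaranteed by Proposition \ref{inttransf}. The goal is then to produce $\mathcal P\in S_{A\otimes B}$ with $\mathcal P\cdot Z(\mathcal M)\subseteq (A\otimes B)[X^{\pm1}]$. I would filter $\mathcal M$ by using a geometric-lemma type decomposition coming from the orbits of $\iota(G\times G)\cdot N^\bullet$ on $P\backslash G^{\Box,k}$, and handle the bottom piece separately from the remaining subquotients.

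First, I fix a system of representatives $w_0,w_1,\dotsc,w_r$ for these finitely many orbits, ordered so that $w_0$ represents the orbit on which the function $(g,u)\mapsto\operatorname{val}_E(\nu(u\iota(g,1)))$ of Proposition \ref{compactness} is bounded both below \emph{and} above (the ``big-cell'' orbit hitting $P\cdot\iota(G\times G)$). This ordering induces a filtration $0=\mathcal M_{-1}\subseteq \mathcal M_0\subseteq\dotsb\subseteq\mathcal M_r=\mathcal M$ on $\mathcal M$ by support on the closed union of the first few orbits, with the bottom layer $\mathcal M_0$ corresponding to sections supported on the big cell. For $f\in I(X,\theta)$ whose image in $\mathcal M$ lies in $\mathcal M_0$, Proposition \ref{compactness} and the argument in its proof (part 2) force the sets $\mathcal S_i(\operatorname{ev}_1 f(u\iota(g,1)))$ to be empty for all but finitely many $i$, so $Z(\mathcal M_0)\subseteq (A\otimes B)[X^{\pm1}]$, i.e.\ the bottom piece needs no correcting polynomial.

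For the remaining successive quotients $\mathcal M_j/\mathcal M_{j-1}$ with $j\geq 1$, I would use a Mackey/geometric-lemma isomorphism identifying the corresponding quotient of $J_{N^\bullet,{\psi^\bullet}^{-1}}(I(X,\theta))$ with a compact induction from the stabilizer in $\iota(G\times G)N^\bullet$ of $w_j$ of a twist (by $\nu_X$ and $\psi^\bullet$) of a suitable Jacquet module of $\theta$. Since $\theta$ is of type $(k,n)$, hence in particular admissible and $\GL_{kn}(D)$-finite, Theorem \ref{dathelm} together with Proposition \ref{jacfin} implies that these Jacquet modules of $\theta$ remain admissible and finitely generated over the relevant Levi. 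Applying Proposition \ref{endfin}(2) to a carefully chosen split central element $z_j$ in the Levi of the stabilizer of $w_j$ yields a polynomial $Q_j\in S_B$ such that $Q_j$ evaluated at the action of $z_j$ annihilates this Jacquet module. The element $z_j$ acts on the whole tensor factor through a product of $\chi_\theta\circ\operatorname{Nrd}$, the trivial action on $V\otimes\widetilde V$ after passing to $G\times G$-coinvariants (because $z_j$ is chosen diagonal and $M(\pi)^\kappa$ twists by $\kappa$), and the character $\nu_X(z_j)=X^{n_j}$ for some nonzero integer $n_j$. Hence on $\mathcal M_j/\mathcal M_{j-1}$ the action of $z_j$ equals multiplication by a Laurent monomial in $X$ times a unit, so $Q_j$ translates into an element $\mathcal P_j\in S_{A\otimes B}$ annihilating this subquotient. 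The required polynomial is then $\mathcal P = \prod_{j=1}^r \mathcal P_j\in S_{A\otimes B}$, and the linearity of $Z$ gives $\mathcal P\cdot Z(\mathcal M)\subseteq Z(\mathcal M_0)\subseteq(A\otimes B)[X^{\pm1}]$.

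The main obstacle will be the second step: constructing the orbit stratification together with explicit representatives and, most delicately, exhibiting for each non-bottom orbit a \emph{single} central element $z_j$ of the stabilizer Levi whose $\nu_X$-value is a nontrivial power of $X$ so that $Q_j$ produces an element of $S_{A\otimes B}$ rather than merely of $(A\otimes B)[X^{\pm1}]$. Verifying compatibility with the twisted Jacquet functor $J_{N^\bullet,{\psi^\bullet}^{-1}}$, which kills many orbits via the non-trivial character $\psi^\bullet$ and thus reduces the relevant orbit combinatorics, will require translating the orbit analysis of Cai--Friedberg--Ginzburg--Kaplan into the algebraic-family setting without invoking Bernstein's continuation principle; the finiteness input of Theorem \ref{dathelm} is precisely what replaces the complex-analytic arguments used there.
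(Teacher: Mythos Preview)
Your overall architecture matches the paper's: interpret $Z$ as an $(A\otimes B)[X^{\pm1}]$-linear functional on $\mathcal M$, filter by orbits, observe that on the bottom piece the zeta integral is already a Laurent polynomial, and annihilate the higher subquotients by elements of $S_{A\otimes B}$. The paper carries this out via Propositions \ref{firstorbred}, \ref{bottompoly1} and \ref{annihipoly}, and the final deduction is exactly your last sentence.

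However, there is a genuine gap in your treatment of the higher subquotients, and it is precisely the place where the hypothesis on $\widetilde\pi$ enters. You write that $z_j$ can be ``chosen diagonal'' so that it acts trivially on $V\otimes_A\widetilde V$ after passing to coinvariants, while $\nu_X(z_j)=X^{n_j}$ with $n_j\neq 0$. These two requirements are incompatible: by Proposition \ref{nochar} any $g\in G$ satisfies $\operatorname{val}_E(\operatorname{Nrd}(g))=0$, so every diagonal element $\iota(g,g)\in\iota(G^\Delta)$ has $\nu_X(\iota(g,g))=1$. Hence a diagonal choice of $z_j$ would produce a polynomial constant in $X$, not an element of $S_{A\otimes B}$. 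The paper instead takes $z_i=\iota(1,m(z_i'))$ with $z_i'$ the central uniformizer in $\GL_D(U_i)$; this has $\nu_X(\varepsilon_i z_i\varepsilon_i^{-1})=X^{ri}$ with $ri>0$, but it does \emph{not} act trivially on $M(\pi)^\kappa$. Its action on the $\pi$-side factors through the Jacquet module $J_{N_i}(\widetilde V)$, and this is exactly where the admissibility and $G$-finiteness of $\widetilde\pi$ are used: via Theorem \ref{dathelm}, Proposition \ref{jacfin} and Proposition \ref{endfin}(2) one obtains a second polynomial $T\in S_A$ with $T(z_i^{-1})$ annihilating $J_{1\times N_i}(M(\pi)^\kappa)$.

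The annihilating element of $S_{A\otimes B}$ is then built from \emph{both} $T$ and the polynomial $Q'$ coming from $\theta$: after a faithfully flat base change to split $Q'$, one shows inductively that $\prod_j T(\beta_j X^{ri})$ (with $\beta_j$ the roots of $Q'$) kills the tensor product, and then observes that this product has coefficients symmetric in the $\beta_j$ and hence already lies in $S_{A\otimes B}$. This two-polynomial interplay is the heart of Proposition \ref{annihipoly}; without it you have not used the hypothesis on $\widetilde\pi$ at all, and indeed the argument as you sketched it would not go through. A secondary point: the paper first uses Proposition \ref{firstorbred} (relying on the type $(k,n)$ vanishing for orbits higher than $\lambda_{k,n}$) to reduce to functions supported on $P\cdot P^\bullet$, before running the $\iota(G\times G)N^\bullet$-orbit filtration; you should make explicit how that first reduction works, since the relevant $\iota(G\times G)N^\bullet$-orbits on all of $P\backslash G^{\Box,k}$ are not what you ultimately need.
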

This result will follow from Propositions \ref{firstorbred}, \ref{bottompoly1} and \ref{annihipoly} below (whose proofs will appear at the end of this section). To state these results we need to introduce more notation.

Recall that $P$ is the stabiliser of the maximal isotropic subspace $W^{\bigtriangleup,k}$ and hence by Witt's theorem (Section 7.3 in \cite{garrett1997buildings}) we can identify $P\backslash G^{\Box,k}$ with the set of maximal isotropic subspaces in $W^{\Box,k}$. The results in Section 6.2 of \cite{cai2021twisted} imply that $P\cdot P^\bullet$ corresponds exactly to those maximal isotropic subspaces $L$ of $W^{\Box,k}$ that satisfy 
	$$L\cap (W_2^{\bigtriangledown}\oplus W_3^\bigtriangledown\oplus\dotsc\oplus W_k^\bigtriangledown)=\{0\}.$$ 
 We will denote the collection of these subspaces with $\tilde{\Omega}$. We fix a maximal split torus $A_0$ of $G^{\Box,k}$ that is contained in $M$. Moreover, let $\prescript{P}{}{W}^{P^\bullet}\subseteq W_{G^{\Box,k}}$ be a set of representatives for $W_M\backslash W_{G^{\Box,k}}/W_M$, where $W_H=\operatorname{Norm}_H(A_0)/Z_H(A_0)$ for any subgroup $H$ of $G^{\Box,k}$. The Bruhat decomposition $G^{\Box,k}=\bigcup_{w\in \prescript{P}{}{W}^{P^\bullet}} PwP^\bullet$ induces a filtration by support
 \begin{equation}\label{eq:fil}
     	I^{(0)}(X,\theta)\subseteq \dotsc \subseteq I^{(d)}(X,\theta)=I(X,\theta),
 \end{equation}
where
	$$I^{(m)}(X,\theta)=\{f\in I(X,\theta)\mid\operatorname{supp}(f)\subseteq\bigcup_{\mathclap{\substack{w\in \prescript{P}{}{W}^{P^\bullet}\\\dim (P\backslash PwP^\bullet)\geq m}}}PwP^\bullet\}.$$ 
	The bottom element $I^{(0)}(X,\theta)$ consists of functions whose support is contained in the open orbit $P\cdot P^\bullet$. Note that $\iota(G\times G)$ lies in the stabiliser of $\psi^\bullet$ and hence acts on $J_{N^\bullet,{\psi^\bullet}^{-1}}(I(X,\theta))$. We have the following result whose proof appears at the end of this section.

	\begin{proposition}[c.f. Section 7.1 in \cite{cai2021twisted}]\label{firstorbred}
        Suppose that $\theta$ is of type $(k,n)$. Then the natural inclusion $I^{(0)}(X,\theta)\hookrightarrow I(X,\theta)$ induces an isomorphism of $B[X^{\pm1}][\iota(G\times G)N^\bullet]$-modules
		\begin{equation}\label{firstiso}
			J_{N^\bullet,{\psi^\bullet}^{-1}}(I(X,\theta))\cong J_{N^\bullet,{\psi^\bullet}^{-1}}(I^{(0)}(X,\theta)).
		\end{equation}
	\end{proposition}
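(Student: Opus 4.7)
The plan is to follow the geometric lemma argument of Section 7.1 of \cite{cai2021twisted}, adapting it to the family setting. Since $\iota(G\times G)$ stabilises the flag $\mathcal F^\bullet$, we have $\iota(G\times G)N^\bullet\subseteq P^\bullet$, and so the support filtration (\ref{eq:fil}) is a filtration by $B[X^{\pm1}][\iota(G\times G)N^\bullet]$-submodules. Because $N^\bullet$ is a union of its compact open subgroups, each of which is pro-$p$, and $R$ is a $\mathbb Z[1/p]$-algebra, the twisted Jacquet functor $J_{N^\bullet,{\psi^\bullet}^{-1}}$ is exact on smooth representations. It therefore suffices to show that $J_{N^\bullet,{\psi^\bullet}^{-1}}$ annihilates each successive quotient $I^{(m)}(X,\theta)/I^{(m-1)}(X,\theta)$ for $m\geq 1$.

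By the usual Bernstein-Zelevinsky description, each such quotient decomposes as a direct sum, indexed by the cells $PwP^\bullet$ of top dimension in $I^{(m)}$, of $B[X^{\pm1}][P^\bullet]$-modules of the form $\operatorname{c-Ind}_{P^\bullet\cap w^{-1}Pw}^{P^\bullet}\sigma_w$, where $\sigma_w$ is the restriction of the twisted conjugate $w^{-1}\cdot(\theta\otimes_B\nu_X\delta_P^{1/2})$ to $P^\bullet\cap w^{-1}Pw$. The twisted coinvariants of such a compact induction can be analysed through the $\ell$-sheaf formalism of the previous subsection: Proposition \ref{propbump}, applied to the quotient map $\varrho\colon PwP^\bullet\to N^\bullet\backslash PwP^\bullet$, identifies the associated $\mathcal C_c^\infty(N^\bullet\backslash PwP^\bullet)$-module with a sheaf whose stalks are twisted Jacquet modules $J_{U_w,\chi_w}(\theta)$, for a unipotent subgroup $U_w$ of $\GL_{kn}(D)\cong M$ and a character $\chi_w$ of $U_w$ read off from the conjugate of $\psi^\bullet$ by $w$.

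The crucial geometric input, supplied by Section 7.1 of \cite{cai2021twisted}, is that whenever $w\notin PP^\bullet$ the pair $(U_w,\chi_w)$ so constructed corresponds to a pair $(N(\mathcal Y),\psi_{\mathcal A})$ lying in an orbit strictly higher than $\lambda_{k,n}$ in the sense of Definition \ref{orbdef}. This statement is purely combinatorial and does not involve the coefficient ring, so it transfers without change. Since $\theta$ is of type $(k,n)$, Definition \ref{defkn}(2) gives $J_{U_w,\chi_w}(\theta)=0$; all stalks vanish, the associated sheaf is the zero sheaf, and hence $J_{N^\bullet,{\psi^\bullet}^{-1}}$ annihilates the quotient, as desired.

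The main obstacle is precisely the explicit identification of the pairs $(U_w,\chi_w)$ arising from each non-open Bruhat cell. One must work through the orbit structure of $\iota(G\times G)N^\bullet$ on $P\backslash G^{\Box,k}$ away from the open orbit and verify that conjugating $\psi^\bullet$ by any representative of a non-open cell produces a character whose orbit dominates $\lambda_{k,n}$. This is a purely geometric computation carried out in detail by Cai-Friedberg-Ginzburg-Kaplan, and its conclusion imports directly into our setting.
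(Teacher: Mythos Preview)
Your overall strategy matches the paper's, but there is a genuine gap in the stalk computation. You assert that each stalk is $J_{U_w,\chi_w}(\theta)$ for a unipotent subgroup $U_w\subseteq M\cong\GL_{kn}(D)$, and that the pair $(U_w,\chi_w)$ always lies in an orbit higher than $\lambda_{k,n}$. This is not what actually happens. The stalk at a point $\overline{y}$ is (after the Hecke-algebra manipulations of Propositions \ref{assocheck}, \ref{comphecke} and Lemma \ref{tensprod}, which you do not mention but which are where the ``family'' work lives)
\[
(\psi^\bullet)^\dagger\otimes_{R[N^\bullet\cap (wy)^{-1}Pwy]}(wy)^{-1}(\theta),
\]
and the subgroup $N^\bullet\cap (wy)^{-1}Pwy$ has both an $M$-part $N^\bullet\cap (wy)^{-1}Mwy$ and an $N$-part $N^\bullet\cap (wy)^{-1}Nwy$. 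The latter acts trivially on $\theta$ but $\psi^\bullet$ need \emph{not} be trivial on it.

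The paper therefore splits into two cases. If $\psi^\bullet$ is nontrivial on $N^\bullet\cap (wy)^{-1}Nwy$, one picks $x$ with $\psi^\bullet(x)=\zeta\neq 1$ a $p$-power root of unity and uses that $1-\zeta\in\mathbb Z[1/p,\mu_{p^\infty}]^\times$ to kill the tensor product directly; no appeal to the type $(k,n)$ hypothesis is made here, and this step is genuinely sensitive to the coefficient ring. Only when $\psi^\bullet$ is trivial on the $N$-part does the stalk reduce to a twisted Jacquet module of $\theta$ along a unipotent of $M$, and only then does Lemma 6.1 of \cite{cai2021twisted} apply to place the resulting pair in an orbit higher than $\lambda_{k,n}$. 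Your sketch collapses these two mechanisms into one and thereby misstates both the form of the stalk and the source of its vanishing.
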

	Note that $\iota(G\times G)N^\bullet$ acts on $P\backslash P\cdot P^\bullet$ on the right and we have the following description of the corresponding orbits. For $L\in\tilde{\Omega}$ set $L^+=L\cap((W,0)_1\oplus W_2^\bigtriangledown\oplus\dotsc\oplus W_k^\bigtriangledown)$ and consider the image of the map
  $$L^+\rightarrow((W,0)_1\oplus W_2^\bigtriangledown\oplus\dotsc\oplus W_k^\bigtriangledown)/(W_2^\bigtriangledown\oplus\dotsc\oplus W_k^\bigtriangledown)\rightarrow (W,0)_1.$$
  This yields a totally isotropic subspace of $(W,0)_1$ and we set $\varkappa(L)$ to be its rank as a $D$-module. We have the following result. 
	\begin{lemma}[\cite{cai2021twisted}, Lemma 6.4]
		Two subspaces $L,L'\in\tilde{\Omega}$ lie in the same $\iota(G\times G)N^\bullet$-orbit if and only if $\varkappa(L)=\varkappa(L')$.
	\end{lemma}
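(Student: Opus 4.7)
The plan is to introduce the auxiliary maximal isotropic subspace
\[
V_1 := (W,0)_1 \oplus \mathcal F_{k-1}^\bullet \subseteq W^{\Box,k}
\]
of rank $kn$ over $D$, and to observe that, because $L\cap\mathcal F_{k-1}^\bullet=0$ for $L\in\tilde\Omega$, the projection $L^+=L\cap V_1\to (W,0)_1$ is injective, so that $\varkappa(L)=\dim_D(L\cap V_1)$.

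For the forward direction I would verify that both $V_1$ and $\mathcal F_{k-1}^\bullet$ are preserved by $\iota(G\times G)N^\bullet$. The subgroup $\iota(G\times G)$ stabilises $(W,0)_1$ and each $W_i^\bigtriangledown$ for $i\ge 2$ by the defining formulas, hence stabilises $V_1$. For $u\in N^\bullet$, the inclusion $V_1\subseteq (\mathcal F_{k-1}^\bullet)^\perp$ combined with the triviality of the $N^\bullet$-action on the Levi quotient $(\mathcal F_{k-1}^\bullet)^\perp/\mathcal F_{k-1}^\bullet$ gives $(u-\mathrm{id})V_1\subseteq\mathcal F_{k-1}^\bullet\subseteq V_1$, so $uV_1=V_1$. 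On the quotient $V_1/\mathcal F_{k-1}^\bullet\cong (W,0)_1$, the action of $N^\bullet$ is trivial and that of $\iota(g_1,g_2)$ is by $g_1\in G$. Consequently the image of $L^+$ in $(W,0)_1$ is only transformed by an element of $G$, so its $D$-rank $\varkappa(L)$ is invariant.

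For the converse, given $L,L'\in\tilde\Omega$ with $\varkappa(L)=\varkappa(L')=r$, I would carry out three reductions. First, by Witt's theorem (Section 7.3 of \cite{garrett1997buildings}) applied to $(W,h)$, some $g_1\in G$ takes the image of $L^+$ in $(W,0)_1$ to that of $(L')^+$; replacing $L$ by $\iota(g_1,1)L$ we may assume the two images agree, say equal $\bar U$. Second, both $L\cap V_1$ and $L'\cap V_1$ are then graphs of $D$-linear sections $\bar U\to\mathcal F_{k-1}^\bullet$, and an explicit calculation in the block-matrix description of $N^\bullet$ from Section \ref{setup} shows that $u\mapsto (u-\mathrm{id})|_{\bar U}$ defines a surjection $N^\bullet\to\operatorname{Hom}_D(\bar U,\mathcal F_{k-1}^\bullet)$; translating by a suitable $u\in N^\bullet$ arranges $L\cap V_1=L'\cap V_1=:U$. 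Third, both $L/U$ and $L'/U$ are maximal isotropic subspaces of the nondegenerate quotient $U^\perp/U$, and a final Witt-type argument, realised via the subgroup of $\iota(G\times G)N^\bullet$ pointwise fixing $U$, identifies them.

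The main obstacle will be the last reduction: one must show that the pointwise stabiliser of $U$ in $\iota(G\times G)N^\bullet$ surjects onto a subgroup of $\operatorname{Isom}(U^\perp/U)$ large enough to act transitively on the possible completions of $U$ to an element of $\tilde\Omega$. This is where the detailed structure of the embedding $\iota$ and of $N^\bullet$ relative to the flag $\mathcal F^\bullet$ really enters, and it is the computational core of the argument; the residual freedom comes from $\iota(\{1\}\times G)$ acting on $(0,W)_1$ together with the subgroup of $N^\bullet$ fixing $V_1$ pointwise.
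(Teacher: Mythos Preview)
The paper does not supply its own proof of this lemma; it simply quotes the statement from \cite[Lemma~6.4]{cai2021twisted}. So there is no in-paper argument to compare against.

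On the merits: your forward direction is correct. The subspace $V_1=(W,0)_1\oplus\mathcal F_{k-1}^\bullet$ is indeed $\iota(G\times G)N^\bullet$-stable for the reasons you give, and since $L\cap\mathcal F_{k-1}^\bullet=0$ for $L\in\tilde\Omega$ one has $\varkappa(L)=\dim_D(L\cap V_1)$, whence the invariance of $\varkappa$.

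For the converse, your Steps~1 and~2 are fine. For Step~2 it is cleanest to use $u\in N(\mathcal F_{k-1}^\bullet)\subseteq N^\bullet$, which acts trivially on $\mathcal F_{k-1}^\bullet$; then the equation to solve is just $(u-\mathrm{id})|_{\bar U}=\sigma'-\sigma$, and the map $u\mapsto (u-\mathrm{id})|_{(\mathcal F_{k-1}^\bullet)^\perp/\mathcal F_{k-1}^\bullet}$ from $N(\mathcal F_{k-1}^\bullet)$ surjects onto $\Hom_D(W_1^\Box,\mathcal F_{k-1}^\bullet)$, so this is immediate.

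Step~3, however, is not a proof as written---and you say so yourself. Knowing that $L/U$ and $L'/U$ are both maximal isotropic in $U^\perp/U$ is not enough: you must exhibit an element of $\iota(G\times G)N^\bullet$ stabilising $U$ that carries one to the other, and the pointwise stabiliser of $U$ in $\iota(G\times G)N^\bullet$ maps to a \emph{proper} subgroup of $\operatorname{Isom}(U^\perp/U)$, so a bare Witt argument does not apply. What is actually needed is a further normal-form reduction: after Steps~1--2 one shows, using $\iota(1\times G)$ together with the part of $N^\bullet$ moving $(0,W)_1$ into $\mathcal F_{k-1}^\bullet$, that $L$ can be brought to the explicit representative $L_r=\tilde L_r\oplus W_2^\bigtriangleup\oplus\cdots\oplus W_k^\bigtriangleup$ described just after the lemma in the paper. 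This is the content of \cite[Lemma~6.4]{cai2021twisted}, and it is a genuine (if elementary) computation that your sketch does not supply.
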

    Let $r_0$ be the Witt index of $(W,h)$, i.e.\ the rank of a maximal totally isotropic subspace in $W$, and choose elements $\varepsilon_j\in P\cdot P^\bullet$ such that the corresponding totally isotropic subspace $\varepsilon_j^{-1}(W^{\bigtriangleup,k})$ satisfies $\varkappa(\varepsilon_j^{-1}(W^{\bigtriangleup,k}))=j$, for $0\leq j\leq r_0$. Note that we can choose $\varepsilon_0$ to be the identity. Then the above result implies an orbit decomposition $$P\cdot P^\bullet=\bigcup_{j=0}^{r_0}P\varepsilon_j\iota(G\times G)N^\bullet,$$ 
    where for all $0\leq i\leq r_0$ we have that $\bigcup_{j=0}^{i}P\varepsilon_j\iota(G\times G)N^\bullet$
    is open in $G^{\Box,k}$. In particular the orbit of the identity $P\iota(G\times G)N^\bullet$ is open in $G^{\Box,k}$. We obtain a filtration by support on $J_{N^\bullet,{\psi^\bullet}^{-1}}(I^{(0)}(X,\theta))$:
	$$0\subseteq J^{(0)}(X,\theta)\subseteq J^{(1)}(X,\theta)\subseteq\dotsc\subseteq J^{(r_0)}(X,\theta)=J_{N^\bullet,{\psi^\bullet}^{-1}}(I^{(0)}(X,\theta)),$$
	where $J^{(i)}(X,\theta)$ consists of the image under the twisted Jacquet functor of those functions in $I^{(0)}(X,\theta)$, whose support is contained in $\bigcup_{0\leq j\leq i}P\varepsilon_j\iota(G\times G)N^\bullet$. We have the following two results whose proofs appear at the end of this section.
    \begin{proposition}\label{bottompoly1}
		Suppose that $f\in I(X,\theta)$ is supported in $P\iota(G\times G)N^\bullet$.
  \begin{enumerate}
      \item For any integer $i\in\mathbb Z$ the set $$\mathcal C_i=\{(g,u)\in G\times N^\circ\mid\operatorname{Coeff}_{X^i}(f(u\iota(g,1)))\not=0\}$$
is compact. Moreover, there are only finitely many integers $i$ such that $\mathcal C_i$ is nonempty.
\item If $F$ has odd residue characteristic we have that
  $$Z(X,v,\lambda,f)\in (A\otimes B)[X^{\pm1}]$$ for all $v\in V,\lambda\in\widetilde{V}$.
  \end{enumerate}

	\end{proposition}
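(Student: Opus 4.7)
The plan is to prove part (1) via a direct parameterization of the open orbit coset space $P\backslash P\iota(G\times G)N^\bullet$ by $G\times N^\circ$, and then to deduce part (2) from part (1) together with Proposition \ref{compactness}.

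First, I would show that the map
\[
\Phi\colon G\times N^\circ\longrightarrow P\backslash P\iota(G\times G)N^\bullet,\qquad (g,u)\mapsto Pu\iota(g,1),
\]
is a homeomorphism. The group-theoretic inputs are: $\iota(G\times G)\cap P=\iota(\Delta G)$, since $\iota(g_1,g_2)$ preserves $W^{\bigtriangleup,k}$ precisely when $g_1=g_2$ on the last summand $W_1^{\bigtriangleup}$; the decomposition $N^\bullet=(M\cap N^\bullet)N^\circ$ with $M\cap N^\bullet\subseteq P$; and $N^\circ\cap P=\{1\}$, as $N^\circ\subseteq N(W^{\bigtriangledown,k})$ lies in the unipotent radical opposite to $N$. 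Using $\iota(g_1,g_2)=\iota(g_2,g_2)\iota(g_2^{-1}g_1,1)$, every element of $P\iota(G\times G)N^\bullet$ admits a representative of the form $Pu\iota(g,1)$ with $(g,u)\in G\times N^\circ$, and uniqueness follows from a Bruhat-type analysis exploiting the opposite-parabolic position of $N^\circ$. Continuity of $\Phi$ is immediate, and continuity of the inverse follows from the standard open-orbit argument.

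Second, by smoothness of $f$ and the Iwasawa decomposition $G^{\Box,k}=PK$, the support of $f$ modulo $P$ is a compact open subset of $P\backslash G^{\Box,k}$. Combined with the hypothesis $\operatorname{supp}(f)\subseteq P\iota(G\times G)N^\bullet$ and the homeomorphism $\Phi$, this support pulls back to a compact subset $\mathcal C\subseteq G\times N^\circ$. Each $\mathcal C_i$ is closed in $\mathcal C$ and hence compact. For the finiteness claim: $f|_K$ attains only finitely many values in $\theta\otimes_B B[X^{\pm 1}]$, each a Laurent polynomial in $X$ with exponents contained in a common interval $[D_-,D_+]$. Writing $u\iota(g,1)=sk$ with $s\in P,k\in K$ we have
\[
\operatorname{ev}_1 f(u\iota(g,1))=\delta_P^{1/2}(s)X^{\operatorname{val}_E(\nu(s))}\operatorname{ev}_1\bigl(\theta(s)f(k)\bigr),
\]
so the nonzero $X$-exponents lie in $[\operatorname{val}_E(\nu(s))+D_-,\operatorname{val}_E(\nu(s))+D_+]$. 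As $\operatorname{val}_E\circ\nu$ is continuous and integer-valued on $\mathcal C$, it is bounded there, yielding the finiteness.

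Part (2) is then immediate: under the odd residue characteristic assumption, Proposition \ref{compactness} makes $Z(X,v,\lambda,f)$ well-defined as a formal Laurent series, and by part (1) each coefficient $a_i$ is an integral over the compact set $\mathcal C_i$, with only finitely many nonzero. Hence $Z(X,v,\lambda,f)\in(A\otimes B)[X^{\pm 1}]$. The main obstacle is the rigorous verification that $\Phi$ is a homeomorphism onto the open orbit coset space; this requires careful book-keeping of the relative positions of $\iota(G\times G)$, $N^\bullet$ and $P$ inside $G^{\Box,k}$, but all other steps are either standard or direct consequences once this parameterization has been established.
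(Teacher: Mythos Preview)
Your approach is the same as the paper's: both rely on the parameterization $N^\circ\iota(G\times 1)\cong P\backslash P\iota(G\times G)N^\bullet$ (which the paper records separately as Lemma~\ref{homeobottom}), pull back the compact support of $f$ modulo $P$, and deduce the claims from there.

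There is, however, one place where your argument takes an unnecessary detour that introduces a gap. For the finiteness of the nonempty $\mathcal C_i$, you invoke the Iwasawa decomposition $G^{\Box,k}=PK$, write $u\iota(g,1)=sk$, and use boundedness of $\operatorname{val}_E\circ\nu$ on the compact set $\mathcal C$. In this paper the decomposition $G^{\Box,k}=PK$ is Proposition~\ref{iwasawadec}, which is only proven under the odd residue characteristic hypothesis, and the very definition of $\operatorname{val}_E\circ\nu$ as a function on all of $G^{\Box,k}$ relies on that decomposition. So as written, your proof of part~(1) tacitly assumes odd residue characteristic, whereas the statement of part~(1) carries no such hypothesis. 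This is not cosmetic: Remark~\ref{defeven} uses part~(1) in arbitrary residue characteristic to \emph{define} the zeta integral when the residue characteristic is even.

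The paper's finiteness argument is simpler and avoids this issue entirely: once you know that the restriction of $f$ to $N^\circ\iota(G\times 1)$ has compact support, the locally constant function $(g,u)\mapsto f(u\iota(g,1))$ takes only finitely many values on that compact set, and each value is a single element of $\theta\otimes_B B[X^{\pm 1}]$; hence the set of $X$-exponents occurring is finite. No decomposition $u\iota(g,1)=sk$ is needed. Likewise, the compactness of $P\backslash\operatorname{supp}(f)$ follows directly from $\operatorname{supp}(f)$ being open and closed in $G^{\Box,k}$ together with the compactness of the flag variety $P\backslash G^{\Box,k}$, so Iwasawa is not required there either.
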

    \begin{proposition}\label{annihipoly} Let $(\pi,V)$ be a smooth $A[G]$-module and assume that the contragredient $(\widetilde{\pi},\widetilde{V})$ is admissible and $G$-finite. Moreover, let $\theta$ be a $B[\GL_{kn}(F)]$-module of type $(k,n)$. Then for any integer $1\leq i\leq r_0$ there is a polynomial in $S_{A\otimes B}$ that annihilates 
  $$M(\pi)^\kappa\otimes_{B[G\times G]}\left(J^{(i)}(X,\theta)/J^{(i-1)}(X,\theta)\right).$$
  \end{proposition}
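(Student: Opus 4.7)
The plan is to analyse each successive quotient $J^{(i)}(X,\theta)/J^{(i-1)}(X,\theta)$ via the $\iota(G\times G)N^\bullet$-orbit structure on $P\backslash P\cdot P^\bullet$, and then to produce an annihilating polynomial from the action of a well-chosen central element on the associated Jacquet module, in the spirit of Dat's arguments in \cite{dat2005nu}. First I would identify the stabilizer in $\iota(G\times G)N^\bullet$ of the coset $P\varepsilon_i$. Following Section~6.2 of \cite{cai2021twisted}, this stabilizer projects (modulo its unipotent piece lying in $N^\bullet$) onto $\iota(P_i\times P_i)$, where $P_i=M_iU_i$ is the parabolic of $G$ stabilizing a chosen totally isotropic subspace of rank $i$ in $W$, with Levi $M_i\cong\GL_i(D)\times G_{n-2i}$. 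Using the sheaf-theoretic description of support filtrations (Proposition~\ref{propbump}) together with the Hecke-algebra identities of Propositions~\ref{assocheck} and~\ref{comphecke}, the tensor product
$$M(\pi)^\kappa\otimes_{B[G\times G]}\bigl(J^{(i)}(X,\theta)/J^{(i-1)}(X,\theta)\bigr)$$
is rewritten as a tensor product over $(A\otimes B)[M_i\times M_i]$ of the unnormalized Jacquet modules $J_{U_i}(V)\otimes_A J_{U_i}(\widetilde V)$ (appropriately twisted by $\kappa$ and modulus characters) against a representation $\Theta_i$ of $M_i\times M_i$ built from $\theta\otimes\nu_X$ by pull-back through $\varepsilon_i$ and by passing to a twisted Jacquet module along the unipotent piece of the stabilizer.

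Next I would choose a central element $z\in Z(M_i)$ coming from the $\GL_i(D)$-factor of $M_i$, concretely the diagonal cocharacter $z=\operatorname{diag}(\varpi_E 1_i,\,1_{n-2i},\,\varpi_E^{-1}1_i)$ under the usual block-matrix identification. A direct computation at the orbit representative $\varepsilon_i$, in the style of the matrix analysis carried out in the proof of Proposition~\ref{compactness}, shows that $(z,z)$ acts on $\Theta_i$ by $c\cdot X^m$ for some unit $c\in B^\times$ and a \emph{nonzero} integer $m$ depending only on $i$; the exponent is nonzero precisely because $\iota(z,z)$ has nontrivial $\operatorname{val}_E\!\circ\nu$ on $P$ for every $i\geq 1$, and this is exactly what distinguishes the non-open orbits from the open one handled in Proposition~\ref{bottompoly1}. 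On the other factor, Theorem~\ref{dathelm} and Proposition~\ref{jacfin} ensure that $J_{U_i}(\widetilde V)$ is an admissible and $M_i$-finite $A[M_i]$-module, so Proposition~\ref{endfin}(2) supplies a polynomial $Q(Y)\in S_A$ with $Q(\widetilde\pi_{U_i}(z))$ annihilating it. In the tensor product over $(A\otimes B)[M_i\times M_i]$ the two actions of $(z,z)$ are identified, hence $\mathcal{P}(X)\coloneqq Q(cX^m)$ annihilates the tensor product. Since $c\in B^\times$ and $m\neq 0$, the leading and trailing coefficients of $\mathcal{P}$ remain units in $A\otimes B$, so $\mathcal{P}\in S_{A\otimes B}$.

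The main obstacle will be making the orbit--stabilizer reduction fully precise: one needs to carry out the Mackey/Hecke-algebra manipulations while correctly accounting for the modulus characters of both $P$ and $P_i$ interacting with the normalization $\nu_X$, and for the character $\psi^\bullet$ becoming a character of the unipotent part of the stabilizer after conjugation by $\varepsilon_i$ (the latter, of type $(k,n)$, is what ensures that only a single orbit contributes a nonzero twisted Jacquet module, much as in the computation underlying Proposition~\ref{firstorbred}). Once the reduction is in place, the hardest residual step is the explicit verification that the integer $m$ attached to the chosen cocharacter is indeed nonzero for every $i\geq 1$; this is a concrete matrix computation at $\varepsilon_i$ that follows the pattern of the proof of Proposition~\ref{compactness}.
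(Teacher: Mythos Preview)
Your proposal has a genuine gap at its central step. You claim that the diagonal element $(z,z)$ acts on $\Theta_i$ by $c\cdot X^m$ with $c\in B^\times$. This is not correct: the action of $\varepsilon_i\iota(z,z)\varepsilon_i^{-1}$ on the $\theta$-piece of $\Theta_i$---namely on the twisted Jacquet module $J_{\varepsilon_i^{-1}P\varepsilon_i\cap N^\bullet,{\psi^\bullet}^{-1}}(\varepsilon_i^{-1}\cdot\theta)$---is \emph{not} scalar in general. That Jacquet module is not the rank-one module of Proposition~\ref{invstab}; under the identification in Lemma~\ref{heckecomp1} it is $J_{\overline{N}_i,{\psi^\bullet}^{-1}}(\varepsilon_i^{-1}\cdot\theta)$, taken along the flag $\mathcal Y_i$ of length $k+1$, and the projection of $\varepsilon_i\iota(z,z)\varepsilon_i^{-1}$ to the Levi is not central there. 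Admissibility of $\theta$ only gives a polynomial relation $Q(\overline z_i)=0$ on a further Jacquet module, not a scalar action. Consequently your substitution $\mathcal P(X)=Q(cX^m)$ is not available.

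There is a second structural issue: the stabilizer analysis and the embedding $\iota$ are asymmetric in the two $G$-factors, and the paper exploits this. Only $\iota(1\times N_i)$ is shown to act trivially on $\Theta_i$ (because its $M$-component lands in $\mathfrak a_i$), so the reduction goes to $J_{1\times N_i}(M(\pi)^\kappa)$, not to Jacquet modules on both factors. Correspondingly the paper uses the asymmetric element $z_i=\iota(1,m(z_i'))$ with $z_i'=\varpi_E\cdot 1_i\in\GL_D(U_i)$, which has $\nu_X(\varepsilon_iz_i\varepsilon_i^{-1})=X^{ri}$ with $r=\dim_E D$. Your diagonal choice $(z,z)$ would, even if the scalar claim were repaired to a polynomial relation, require controlling the action on the $V$-side as well; but only $\widetilde V$ is assumed admissible and $G$-finite, so no polynomial relation is available on $J_{U_i}(V)$.

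The paper's actual mechanism is therefore more elaborate than a single substitution: from admissibility of $\theta$ one obtains $Q'(Y)\in S_B[X^{\pm1}][Y]$ with $Q'(z_i)$ annihilating the induced module, and from admissibility of $\widetilde V$ a polynomial $T\in S_{A\otimes B}$ with $T(z_i^{-1})$ annihilating $J_{1\times N_i}(M(\pi)^\kappa)$. These two relations are then combined by passing to a faithfully flat extension $B\hookrightarrow B'$ splitting $Q'$, using the identity in the tensor product to transfer each linear factor $(z_i-\beta_jX^{ri})$ into the action of $z_i^{-1}$ on the left, and finally observing that $\prod_j T(\beta_jX^{ri})$ has coefficients symmetric in the $\beta_j$, hence descends to $S_{A\otimes B}$. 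This two-polynomial splitting argument is the missing idea in your sketch.
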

  By assuming the above Propositions \ref{firstorbred}, \ref{bottompoly1} and \ref{annihipoly} we are able to prove Theorem \ref{rattheo}.
\begin{proof}[Proof of Theorem \ref{rattheo}]

 Proposition \ref{firstorbred} states that $$J^{(r_0)}(X,\theta)=J_{N^\bullet,{\psi^\bullet}^{-1}}(I^{(0)}(X,\theta))\cong J_{N^\bullet,{\psi^\bullet}^{-1}}(I(X,\theta)).$$
This together with Proposition \ref{annihipoly} implies that there is a polynomial $\mathcal P\in S_{A\otimes B}$ that annihilates 
 $$M(\pi)^\kappa\otimes_{B[G\times G]}(J_{N^\bullet,{\psi^\bullet}^{-1}}(I(X,\theta))/J^{(0)}(X,\theta)).$$
 Hence for any $f\in I(X,\theta),v\in v,\lambda\in\widetilde{V}$ there exists an $x_0\in M(\pi)^\kappa\otimes_{B[G\times G]}J^{(0)}(X,\theta)$ such that 
 $$\mathcal P(v\otimes\lambda\otimes f)=x_0$$
 in $M(\pi)^\kappa\otimes_{B[G\times G]}J_{N^\bullet,{\psi^\bullet}^{-1}}(I(X,\theta))$.

 However, Proposition \ref{bottompoly1} implies that the twisted doubling zeta integral has values in 
 $(A\otimes B)[X^{\pm1}]$ on $M(\pi)^\kappa\otimes_{B[G\times G]}J^{(0)}(I(X,\theta))$, i.e.\ it is an element of
 $$\Hom_{(A\otimes B)[X^{\pm1}]}(M(\pi)^\kappa\otimes_{B[G\times G]}J^{(0)}(I(X,\theta)),(A\otimes B)[X^{\pm1}]).$$
 Hence
 $$\mathcal P Z(X,v,\lambda,f)=Z(X,\mathcal P(v\otimes\lambda\otimes f))=Z(X,x_0)\in (A\otimes B)[X^{\pm1}],$$
 which finishes the proof.

\end{proof}

\begin{remark}\label{defeven}
As we will see below, the proofs of Propositions \ref{firstorbred}, part 1) of \ref{bottompoly1} and \ref{annihipoly} are independent of the definition of the twisted doubling zeta integral. By following the above proof, we can use them to \emph{define} the twisted doubling zeta integral directly as an element of $S_{A\otimes B}^{-1}\cdot(A\otimes B)[X^{\pm1}]$. This circumvents the definition as a Laurent series and the results needed to achieve that (specifically Proposition \ref{compactness}). In particular, this allows us to define the twisted doubling zeta integral when $F$ has even residue characteristic. 

We will write $\mathcal J$ for the $B[X^{\pm1}]$-submodule of $I(X,\theta)$ which consists of functions supported in $P\iota(G\times G)N^\bullet$. If $f$ is an element of $\mathcal J$, part 1) of Proposition \ref{bottompoly1} implies that the $(A\otimes B)[X^{\pm1}]$-valued function
$$(g,u)\mapsto \lambda(\pi(g)v)\operatorname{ev}_1f(u\iota(g,1))\psi^\bullet(u)$$
on $G\times N^\circ$ has compact support for any $v\in V,\lambda\in\widetilde{V}$. Hence the integral 
$$\int_{G\times N^\circ}\lambda(\pi(g)v)\operatorname{ev}_1f(u\iota(g,1))\psi^\bullet(u)d(g,u)\in (A\otimes B)[X^{\pm1}]$$
is well-defined. The same proof as Proposition \ref{inttransf} (note here that $\mathcal J$ is stable under the action of $\iota(G\times G)$ and $N^\bullet$) shows that this integral gives rise to an element of
$$\Hom_{(A\otimes B)[X^{\pm1}]}(M(\pi)^\kappa\otimes_{B[G\times G]}J^{(0)}(X,\theta),(A\otimes B)[X^{\pm1}]),$$
which we denote by $\mathcal Z$. Now by Proposition \ref{annihipoly} there is a polynomial $\mathcal P\in S_{A\otimes B}$ such that multiplication by $\mathcal P$ yields a map $$M(\pi)^\kappa\otimes_{B[G\times G]}J_{N^\bullet,{\psi^\bullet}^{-1}}(I(X,\theta))\to M(\pi)^\kappa\otimes_{B[G\times G]}J^{(0)}(X,\theta).$$
For any $f\in I(X,\theta),v\in V,\lambda\in\widetilde{V}$ we can consider the element $v\otimes\lambda\otimes f\in M(\pi)^\kappa\otimes_{B[G\times G]}J_{N^\bullet,{\psi^\bullet}^{-1}}(I(X,\theta))$ and define the twisted doubling zeta integral associated to $v,\lambda$ and $f$ as
$$Z(X,v,\lambda,f)\coloneqq\mathcal P^{-1}\mathcal Z(\mathcal P(v\otimes\lambda\otimes f))\in S_{A\otimes B}^{-1}\cdot (A\otimes B)[X^{\pm1}].$$
It is straightforward to see that if $F$ has odd residue characteristic, this definition agrees with the one in Section \ref{secintegrals} when viewed as an element of $S_{A\otimes B}^{-1}\cdot (A\otimes B)[X^{\pm1}]$.
\end{remark}

 \subsection{Proofs of auxiliary results}\label{auxil}
 We now give the proofs of Propositions \ref{firstorbred}, \ref{bottompoly1} and \ref{annihipoly}.
	\begin{proof}[Proof of Lemma \ref{firstorbred}] For $w\in {}^PW^{P^\bullet}$ let $I_w$ be the smooth $(B[X^{\pm1}])[P^\bullet]$-module which consists of locally constant functions $f\colon PwP^\bullet\to \theta\otimes_B (B[X^{\pm1}])$ such that $$f(xg)=\delta_P(x)^{1/2}\nu_X(x)\theta(x)f(g),$$ where $x\in P,g\in PwP^\bullet$ and $P\backslash\operatorname{supp}(f)$ is compact. Each successive quotient in the filtration (\ref{eq:fil}) is a finite direct sum of spaces $I_w$. To show the isomorphism in Equation (\ref{firstiso}) it is, by the exactness of the twisted Jacquet functor, enough to show that $J_{N^\bullet,{\psi^\bullet}^{-1}}(I_w)=0$ for $w\notin P\cdot P^\bullet$.
 
		To prove this we will utilize Proposition \ref{propbump}. Consider the topological spaces
		$$\mathbb X=P\backslash PwP^\bullet$$
		and
		$$\mathbb Y=\mathbb X/N^\bullet$$
		and the continuous projection $\varrho\colon \mathbb X\to\mathbb Y$. Note that $\mathbb X\cong (w^{-1}Pw\cap P^\bullet)\backslash P^\bullet$. Let $\mathcal F$ be the sheaf of locally constant $\theta\otimes_BB[X^{\pm1}]$-valued functions on $\mathbb X$. Then $I_w$ can be identified with the associated cosmooth $\mathcal C_c^\infty(\mathbb X)$-module $\mathcal F_c$.
		By Proposition \ref{propbump} we obtain that $(\mathcal F_c)_{N^\bullet,{\psi^\bullet}^{-1}}$ is a cosmooth $\mathcal C_c^\infty(\mathbb Y)$-module and let $\mathcal G$ be the associated $\mathcal C^\infty_c$-sheaf. To prove that $J_{N^\bullet,{\psi^\bullet}^{-1}}(I_w)=0$ it is by Proposition \ref{propbump} enough to show that $\mathcal G_{\overline{y}}\cong(\mathcal F_{\varrho^{-1}(\overline{y})})_c/(\mathcal F_{\varrho^{-1}(\overline{y})})_c(\psi^\bullet)$ is zero for all $\overline{y}\in\mathbb Y$. Let $y\in\mathbb X$ be an element in the preimage of $\overline{y}\in\mathbb Y$ under $\varrho$. Note that 
		$$\varrho^{-1}(\overline{y})\cong (w^{-1}Pw\cap N^\bullet)\backslash y N^\bullet\cong((wy)^{-1}Pwy\cap N^\bullet)\backslash N^\bullet$$
		and hence by Proposition 7.8 of \cite{cai2021twisted} we have that 
  $$(\mathcal F_{\varrho^{-1}(\overline{y})})_c\cong\cInd_{(wy)^{-1}Pwy\cap N^\bullet}^{N^\bullet}((wy)^{-1}(\theta\otimes_B\nu_X)).$$
		Since $P\cap wyN^\bullet(wy)^{-1}$ consists of compact elements, $\nu_X$ is trivial on this group which implies that it is enough to show that 
		$$J_{N^\bullet,{\psi^\bullet}^{-1}}(\operatorname{c-Ind}_{N^\bullet\cap y^{-1}w^{-1}P wy}^{N^\bullet}((wy)^{-1}(\theta)))\otimes_BB[X^{\pm1}]=0.$$
        By abuse of notation we write $\psi^\bullet$ for the left $R[N^\bullet]$-module, which is free of rank one as an $A$-module, and where $x\in N^\bullet$ acts via $x\cdot 1=\psi^\bullet(x)\cdot 1$. To have the same setup as in Lemma \ref{tensprod} we write $(\psi^\bullet)^\dagger$ for the associated right $R[N^\bullet]$-module, i.e.\ where $x\in N^\bullet$ acts via $1\cdot x=\psi^{\bullet}(x^{-1})\cdot 1$.
		We have that 
		$$J_{N^\bullet,{\psi^\bullet}^{-1}}(\operatorname{c-Ind}_{N^\bullet\cap (wy)^{-1}P wy}^{N^\bullet}((wy)^{-1}(\theta)))\cong{(\psi^\bullet)^\dagger}\otimes_{R[N^\bullet]}\left(\operatorname{c-Ind}_{N^\bullet\cap (wy)^{-1}P wy}^{N^\bullet}((wy)^{-1}(\theta))\right)$$
  and write $\mathcal N$ for this $R$-module.
	Proposition \ref{comphecke} and Lemma \ref{tensprod} imply that
		\begin{align*}
			\mathcal N &\cong {\psi^\bullet}\otimes_{\mathcal H(N^\bullet,R)}\left(\operatorname{c-Ind}_{N^\bullet\cap (wy)^{-1}P wy}^{N^\bullet}((wy)^{-1}(\theta))\right)\\
			&\cong {\psi^\bullet}\otimes_{\mathcal H(N^\bullet, R)}\left(\mathcal H(N^\bullet, R)\otimes_{\mathcal H(N^\bullet\cap (wy)^{-1}P wy,R)}((wy)^{-1}(\theta))\right)\\
			&\cong\left({\psi^\bullet}\otimes_{\mathcal H(N^\bullet, R)}\mathcal H(N^\bullet, R)\right)\otimes_{\mathcal H(N^\bullet\cap (wy)^{-1}P wy,R)}((wy)^{-1}(\theta))\\
			&\cong{\psi^\bullet}\otimes_{\mathcal H(N^\bullet\cap (wy)^{-1}P wy,R)}((wy)^{-1}(\theta))\\
			&\cong ({\psi^\bullet})^\dagger\otimes_{R[N^\bullet\cap (wy)^{-1}P wy]}((wy)^{-1}(\theta)).
		\end{align*}
		Firstly, assume there is $x\in N^\bullet\cap (wy)^{-1}N wy$ such that $\psi^\bullet(x)\not=1$. Then for any $c\otimes v\in (\psi^\bullet)^\dagger\otimes_{N^\bullet\cap (wy)^{-1}P wy}((wy)^{-1}(\theta))$ we have that 
		$${\psi^{\bullet}}(x^{-1})(c\otimes v)=c\otimes (\theta(wyn(wy)^{-1})v)=c\otimes v.$$
       Note that $\psi^\bullet(x^{-1})$ equals a nontrival $p$-th power root of unity $\zeta$, i.e.\ there is some positive integer $\alpha$ such that $$\zeta^{p^\alpha-1}+\zeta^{p^\alpha-2}+\dotsc+1=0,$$
       which implies that
       $$(1-\zeta^{p^\alpha-1})+(1-\zeta^{p^\alpha-2})+\dotsc+(1-\zeta)=p^\alpha.$$
		Hence $1-{\psi^{\bullet}}(x^{-1})\in\mathbb Z[1/p,\mu_{p^\infty}]^\times$ and we see that
		$$({\psi^\bullet})^\dagger\otimes_{R[N^\bullet\cap (wy)^{-1}P wy]}((wy)^{-1}(\theta))=0.$$
		If $\psi^{\bullet}$ restricted to $N^\bullet\cap (wy)^{-1}N wy$ is trivial, then by Lemma 6.1 of \cite{cai2021twisted}, the pair $(N^\bullet\cap (wy)^{-1}Mwy,{\psi^\bullet}^{-1})$ lies in an orbit higher than $\lambda_{k,n}$ (c.f. Section \ref{secdegwhit}), which shows that 
		\begin{align*}
			({\psi^\bullet})^\dagger\otimes_{R[N^\bullet\cap (wy)^{-1}P wy]}(wy)^{-1}(\theta)&\cong({\psi^\bullet})^\dagger\otimes_{R[N^\bullet\cap (wy)^{-1}M wy]}\left((wy)^{-1}(\theta)\right)
			\\
			&\cong J_{N^\bullet\cap (wy)^{-1}M wy,{\psi^\bullet}^{-1}}(\theta)\\&\cong 0.
		\end{align*}
		Hence the stalks $\mathcal G_{\overline{y}}$ are zero for all $\overline{y}\in\mathbb Y$ which implies that $\mathcal G$ is the zero sheaf and hence $J_{N^\bullet,{\psi^\bullet}^{-1}}(I_w)=0$ for any $w\not\in P\cdot P^\bullet$.
	\end{proof}
Next we prove Proposition \ref{bottompoly1}.
\begin{proof}[Proof of Proposition \ref{bottompoly1}]
 Note that $\operatorname{supp}(f)\subseteq G^{\Box,k}$ is open and closed and since the map $G^{\Box,k}\to P\backslash G^{\Box,k}$ is open one obtains that $P\backslash\operatorname{supp}(f)$ is closed in $P\backslash G^{\Box,k}$. Since $P\backslash G^{\Box,k}$ is compact, we see that $P\backslash\operatorname{supp}(f)$ is compact. By assumption $P\backslash\operatorname{supp}(f)$ is contained in $P\backslash P\iota(G\times G)N^\bullet$ and by Lemma \ref{homeobottom} the map 
    \begin{align*}
             \Phi\colon N^\circ\iota(G\times 1)&\to P\backslash P\iota(G\times G)N^\bullet\\
             x&\mapsto Px
    \end{align*}
    is a homeomorphism. We have that
    $$\Phi^{-1}(P\backslash\operatorname{supp}(f))=\operatorname{supp}(f|_{N^\circ\iota(G\times 1)})$$
    and hence 
  $f$ restricted to $N^\circ\iota(G\times 1)$ has compact support. This immediately implies that the sets $\mathcal C_i$ are compact and nonempty for only finitely many integers $i$.
  
  Part 2) then immediately follows from the definition of the twisted doubling zeta integral since there is an integer $M_0$ such that $\operatorname{supp}(\operatorname{Coeff}_{X^m}(f|_{N^\circ\iota(G\times 1)}))=\emptyset$ for all $m\geq M_0$.
	\end{proof}

To prove Proposition \ref{annihipoly} we will first introduce certain representatives for the $\iota(G\times G)N^\bullet$-orbits of $P\cdot P^\bullet$. One can define an injective group homomorphism $$\xi_k\colon G^{\Box,1}\hookrightarrow G^{\Box,k},$$ by letting an element $g\in G^{\Box,1}$ act as $g$ on $W_1^{\Box}$ and as the identity on $W_2^\Box\oplus\dotsc\oplus W_k^\Box$. Recall that $r_0$ is the Witt index of $(W,h)$ and let $0=U_0\subseteq U_1\subseteq\dotsc\subseteq U_{r_0}$ be a collection of totally isotropic subspaces of $W$ such that $U_i$ has rank $i$ for $0\leq i\leq r_0$. Set $\tilde{L_i}$ to be $$\{(w_1,w_2)_1\in (U_i^\perp, U_i^\perp)_1\mid w_1-w_2\in U_i\}\subseteq W_1^\Box.$$ Then for any integer $0\leq i\leq r_0$ the subspace $$L_i\coloneqq\tilde{L_i}\oplus W_2^\bigtriangleup\oplus\dotsc\oplus W_k^\bigtriangleup$$ is an element of $\tilde{\Omega}$ which satisfies that $\varkappa(L_i)=i$. By Witt's Theorem there is an isometry $\tilde{\varepsilon}_i\in G^{\Box,1}$ such that $\tilde{\varepsilon}_i(\tilde{L}_i)=W^{\bigtriangleup}_1$. We choose $\tilde{\varepsilon}_0$ to be the identity in $G^{\Box,1}$. We define elements $\varepsilon_i=\xi_k(\tilde{\varepsilon_i})\in P\cdot P^\bullet$ which then satisfy that $P\varepsilon_i$ corresponds to $L_i$, i.e.\ $\varepsilon_i^{-1}(W^{\bigtriangleup,k})=L_i$.

We also need to introduce further subgroups of $G^{\Box,k}$. Set $\tilde{L}_i^-=\tilde{\varepsilon}_i^{-1}(W_1^\bigtriangledown)$ and note that $W_1^\Box=\tilde{L}_i\oplus\tilde{L}_i^-$. Let $P_i'$ be the parabolic subgroup of $G^{\Box,1}$ that stabilizes the flag of totally isotropic subspaces $0\subseteq(0,U_i)_1\subseteq\tilde{L_i}$. We set $Q_i$ to be the parabolic subgroup of $G^{\Box,1}$ that stabilizes the flag $0\subseteq\tilde{L}_i^-$. Let $N_i'$ be the unipotent radical of $P_i'$ and define $\mathfrak a_i=\xi_k(Q_i\cap N_i')$. We also have that $\mathfrak a_i=\xi_k(N_i')\cap\varepsilon^{-1}_iM\varepsilon_i$. The map $u\mapsto u-\id$ yields a group isomorphism between $\mathfrak a_i$ and $\Hom_D(\tilde{L}_i/(0,U_i)_1,(0,U_i)_1)$. In particular $\mathfrak a_i\cong D^{(2n-i)i}$.  For $0\leq i\leq r_0$ consider the flag $$\mathcal Y_i\colon0\subseteq(0,U_i)_1\subseteq \tilde{L_i}\subseteq\tilde{L_i}\oplus W_2^\bigtriangleup\subseteq\tilde{L_i}\oplus W_2^\bigtriangleup\oplus W_3^\bigtriangleup\subseteq\dotsc\subseteq L_i$$ and the associated parabolic subgroup $P(\mathcal Y_i)=M(\mathcal Y_i)N(\mathcal Y_i)$ of $G^{\Box,k}$. We set $\overline{N}_i=N(\mathcal Y_i)\cap\varepsilon_i^{-1}M\varepsilon_i$.

As mentioned before the action of $\iota(G\times G)N^\bullet$ on $P\cdot P^\bullet$ induces a filtration by support of $J_{N^\bullet,{\psi^\bullet}^{-1}}(I^{(0)}(X,\theta))$:
	$$0\subseteq J^{(0)}(X,\theta)\subseteq J^{(1)}(X,\theta)\subseteq\dotsc\subseteq J^{(r_0)}(X,\theta)=J_{N^\bullet,{\psi^\bullet}^{-1}}(I^{(0)}(X,\theta)),$$
	where $J^{(i)}(X,\theta)$ consists of the image under the twisted Jacquet functor of those functions in $I(X,\theta)$, whose support is contained in $\bigcup_{0\leq j\leq i}P\varepsilon_j\iota(G\times G)N^\bullet$. Note that $\varepsilon_0$ is the identity in $G^{\Box,k}$.

 We set $$Q^{(i)}(X,\theta)\coloneqq J^{(i)}(X,\theta)/J^{(i-1)}(X,\theta)$$
 for $0\leq i\leq r_0$. As an $B[X^{\pm1}][P^\bullet]$-module, $I^{(0)}(X,\theta)$ can be identified with $\cInd_{P\cap P^\bullet}^{P^\bullet}(\delta_P^{1/2}\otimes \theta\otimes\nu_X)$ by sending a function in $I^{(0)}(X,\theta)$ to its restriction to $P^\bullet$.
        By the geometric Lemma (c.f. Theorem 5.2 of \cite{bernstein1977induced} for complex representations and Appendix A of \cite{cui2020modulo} for general coefficient rings) we see that 
		\begin{multline}\label{eq:geom}
		   	    Q^{(i)}(X,\theta)\cong\\\cInd_{(G\times G)\cap \varepsilon_i^{-1}P\varepsilon_i}^{G\times G}\left(\delta_{N^\bullet\cap\varepsilon_i^{-1}P\varepsilon_i}^{-1}\otimes(\varepsilon_i^{-1}\cdot J_{P\cap\varepsilon_iN^\bullet\varepsilon_i^{-1},\varepsilon_i\cdot{\psi^\bullet}^{-1}}(\delta_P^{1/2}\otimes\theta\otimes\nu_X))\right)  
		\end{multline}
	for $0\leq i\leq r_0$. For convenience we will sometimes write $(G\times G)\cap \varepsilon_i^{-1}P\varepsilon_i$ for the subgroup $\iota^{-1}(\iota(G\times G)\cap\varepsilon^{-1}P\varepsilon_i)$ of $G\times G$.

We now describe the stabiliser $\iota(G\times G)\cap\varepsilon_i^{-1}P\varepsilon_i$ of $L_i$ in $\iota(G\times G)$ in more detail. Let $P_i$ be the parabolic subgroup of $G$ that stabilizes the flag $0\subseteq U_i\subseteq W$ and choose a Levi decomposition $P_i=M_iN_i$. Note that $M_i$ is isomorphic to the product $\GL_D(U_i)\times\operatorname{Isom}(U_i^\perp/U_i,h)$ by sending $m$ to $(m|_{U_i},m|_{U_i^\perp/U_i})$. For $x\in\GL_D(U_i)$ let $m(x)$ be the element in $M_i$ that corresponds to $(x,\id)$ under this isomorphism. We then have the following result. 
\begin{proposition}\label{stabiliser} For $0\leq i\leq r_0$ we have
\begin{enumerate}
    \item $\iota(G\times G)\cap\varepsilon_i^{-1}P\varepsilon_i\subseteq\iota(P_i\times P_i)$,
    \item $\iota(N_i\times N_i)\subseteq\iota(G\times G)\cap\varepsilon_i^{-1}P\varepsilon_i$,
    \item for any $x,y\in\GL_D(U_i)$ the element $\iota(m(x),m(y))$ lies in $\iota(G\times G)\cap\varepsilon_i^{-1}P\varepsilon_i$.
\end{enumerate}
	\end{proposition}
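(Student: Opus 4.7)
The plan is to reduce each of the three claims to a computation with the defining condition of $\tilde{L}_i$. Since $\iota(g_1,g_2)$ acts as $g_1$ on every $W_j^\Box$ for $j\geq 2$, it automatically preserves each $W_j^\bigtriangleup$ for $j\geq 2$. Because $\varepsilon_i$ fixes $W_j^\Box$ for $j\geq 2$ and $\varepsilon_i^{-1}(W_1^\bigtriangleup)=\tilde L_i$, the coset $\varepsilon_i^{-1}P\varepsilon_i$ is precisely the stabilizer of $L_i=\tilde L_i\oplus W_2^\bigtriangleup\oplus\dotsc\oplus W_k^\bigtriangleup$. Thus $\iota(g_1,g_2)\in\varepsilon_i^{-1}P\varepsilon_i$ if and only if $\iota(g_1,g_2)(\tilde L_i)\subseteq\tilde L_i$. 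Recall that $(w_1,w_2)_1\in\tilde L_i$ means $w_1,w_2\in U_i^\perp$ and $w_1-w_2\in U_i$, and that $\iota(g_1,g_2)(w_1,w_2)_1=(g_1w_1,g_2w_2)_1$.

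For (1), I would apply the stability condition to the specific element $(u,0)_1$ for $u\in U_i$: since $U_i\subseteq U_i^\perp$ this lies in $\tilde L_i$, and its image $(g_1u,0)_1$ lies in $\tilde L_i$ only if $g_1u\in U_i^\perp$ and $g_1u-0=g_1u\in U_i$. Hence $g_1(U_i)\subseteq U_i$, i.e.\ $g_1\in P_i$. Applying the same argument to $(0,u)_1$ yields $g_2\in P_i$. For (2) and (3) the key observation is that both $N_i$ and any Levi element of the form $m(x)$ satisfy $(g-\id)(U_i^\perp)\subseteq U_i$: for $g\in N_i$ this follows from the definition of the unipotent radical of $P_i$, and for $g=m(x)$ it follows because $m(x)$ acts trivially on the graded piece $U_i^\perp/U_i$. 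Given this, for $(w_1,w_2)_1\in\tilde L_i$ one writes
\begin{equation*}
g_1w_1-g_2w_2=(g_1-\id)w_1+(w_1-w_2)-(g_2-\id)w_2\in U_i+U_i+U_i=U_i,
\end{equation*}
while $g_iw_i\in U_i^\perp$ because $N_i$ and $M_i$ both preserve $U_i^\perp$. Hence $\iota(g_1,g_2)(\tilde L_i)\subseteq\tilde L_i$, establishing (2) and (3).

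There is no real obstacle here; the proof is essentially a bookkeeping exercise once one realizes that (a) the $W_j^\bigtriangleup$ for $j\geq 2$ are always preserved by $\iota(G\times G)$, so only the behaviour on $\tilde L_i$ matters, and (b) both the unipotent radical $N_i$ and the Levi elements $m(x)$ act on $U_i^\perp$ through a perturbation landing inside $U_i$. The mild care required is to keep track of the three independent conditions defining $\tilde L_i$ (namely $w_1\in U_i^\perp$, $w_2\in U_i^\perp$, and $w_1-w_2\in U_i$) and to verify each separately under the action of $\iota(g_1,g_2)$ in (2) and (3).
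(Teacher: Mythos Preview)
Your proof is correct and follows the same reduction as the paper: both observe that $\iota(G\times G)$ preserves each $W_j^\bigtriangleup$ for $j\geq 2$, so membership in $\varepsilon_i^{-1}P\varepsilon_i$ reduces to stabilizing $\tilde L_i\subseteq W_1^\Box$. For part (3) your argument is exactly the paper's (acting trivially on $U_i^\perp/U_i$); for parts (1) and (2) the paper simply cites \cite[Prop.~2.1]{piatetski1987functions}, whereas you supply the direct verification by testing $(u,0)_1,(0,u)_1\in\tilde L_i$ and by the displayed identity $g_1w_1-g_2w_2=(g_1-\id)w_1+(w_1-w_2)-(g_2-\id)w_2$, which is a clean self-contained alternative to the citation.
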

	\begin{proof}
		The first and second statement follow from \cite[Prop 2.1]{piatetski1987functions}. For the third statement note that $\iota(G\times G)$ stabilizes $W_2^\bigtriangleup\oplus\dotsc\oplus W_k^\bigtriangleup$. Hence it is enough to show that $\iota(m(x),m(y))$ stabilizes $$\tilde{L}_i=\{(w_1,w_2)\in U_i^\perp\times U_i^\perp\mid w_1-w_2\in U_i\}\subseteq W_1^\Box$$
  for any $x,y\in\GL_D(U_i)$. However, this follows immediately since $m(x)$ and $m(y)$ act as the identity on $U_i^\perp/U_i$.
	\end{proof}

 We now describe the successive quotients of the induced filtration on $M(\pi)^\kappa\otimes_{B[G\times G]}J_{N^\bullet,{\psi^\bullet}^{-1}}(I^{(0)}(X,\theta))$ in more detail.
\begin{lemma}\label{heckecomp1}
    For $1\leq i\leq r_0$ we have an isomorphism of $(A\otimes B)[X^{\pm1}]$-modules
    $$M(\pi)^\kappa\otimes_{B[G\times G]}Q^{(i)}(X,\theta)\cong J_{1\times N_i}(M(\pi)^\kappa)\otimes_{B[P_i\times P_i]}\left(\cInd_{(G\times G)\cap \varepsilon_i^{-1}P\varepsilon_i}^{P_i\times P_i}(\Theta_i)\otimes\delta_{P_i\times P_i}^{-1}\right),$$
    where $\Theta_i$ is the smooth $B[X^{\pm1}]$-representation $$\delta_{N^\bullet\cap\varepsilon_i^{-1}P\varepsilon_i}^{-1}\otimes\left(\varepsilon_i^{-1}\cdot J_{P\cap\varepsilon_iN^\bullet\varepsilon_i^{-1},\varepsilon_i\cdot{\psi^\bullet}^{-1}}(\delta_P^{1/2}\otimes\theta\otimes\nu_X)\right)$$ of $(G\times G)\cap \varepsilon_i^{-1}P\varepsilon_i$.
    \end{lemma}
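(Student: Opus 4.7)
The plan is to reduce the two sides by combining the geometric-lemma description of $Q^{(i)}(X,\theta)$ with transitivity of compact induction and the Hecke-algebra identities from Section~3. Set $L \coloneqq (G \times G) \cap \varepsilon_i^{-1} P \varepsilon_i$. By Proposition~\ref{stabiliser}(1) we have $L \subseteq \iota(P_i \times P_i)$, so transitivity of compact induction together with Equation~\eqref{eq:geom} gives
\[
Q^{(i)}(X, \theta) \;\cong\; \cInd_{P_i \times P_i}^{G \times G}\bigl(\cInd_L^{P_i \times P_i}(\Theta_i)\bigr).
\]

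Next I would apply the Hecke-algebra tensor identity that results from combining Propositions~\ref{comphecke}, \ref{assocheck} and Lemma~\ref{tensprod}: for any closed subgroup $H$ of $G$, any smooth $B[G]$-module $V$, and any smooth $B[H]$-module $W$, one has
\[
V \otimes_{B[G]} \cInd_H^G(W) \;\cong\; V|_H \otimes_{B[H]} \bigl(W \otimes \delta_H^{-1}\bigr).
\]
Applying this with $H = P_i \times P_i$ and $W = \cInd_L^{P_i \times P_i}(\Theta_i)$ yields
\[
M(\pi)^\kappa \otimes_{B[G \times G]} Q^{(i)}(X, \theta) \;\cong\; M(\pi)^\kappa \otimes_{B[P_i \times P_i]} \bigl(\cInd_L^{P_i \times P_i}(\Theta_i) \otimes \delta_{P_i \times P_i}^{-1}\bigr).
\]

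It then remains to replace $M(\pi)^\kappa$ on the right by its coinvariants $J_{1 \times N_i}(M(\pi)^\kappa)$. For this I would argue that the quotient map $M(\pi)^\kappa \to J_{1 \times N_i}(M(\pi)^\kappa)$ induces an isomorphism after tensoring with $\cInd_L^{P_i \times P_i}(\Theta_i) \otimes \delta_{P_i \times P_i}^{-1}$ over $B[P_i \times P_i]$; this holds provided $1 \times N_i$ acts trivially on that tensor factor. The inclusion $1 \times N_i \subseteq L$ follows from Proposition~\ref{stabiliser}(2), and $\delta_{P_i \times P_i}^{-1}$ is trivial on the unipotent $1 \times N_i$. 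Using normality of $1 \times N_i$ in $L$ (coming from $N_i \triangleleft P_i$) and the left $L$-equivariance of functions in the compact induction, right translation by $(1, n)$ is transported to the action of $(1, g_2 n g_2^{-1})$ on the value $f(g) \in \Theta_i$, reducing the triviality of the action on the compact induction to the statement that $1 \times N_i$ acts trivially on $\Theta_i$ itself.

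This last verification is the main obstacle. By definition of $\Theta_i$ it amounts to showing that for every $n \in N_i$ the element $\varepsilon_i \iota(1, n) \varepsilon_i^{-1} \in P$ acts as the identity on the twisted Jacquet module $J_{P \cap \varepsilon_i N^\bullet \varepsilon_i^{-1}, \varepsilon_i \cdot {\psi^\bullet}^{-1}}(\delta_P^{1/2} \otimes \theta \otimes \nu_X)$. I expect this to follow from an explicit matrix computation using the descriptions of $\tilde\varepsilon_i$, $\xi_k$, and $\iota$ from Section~\ref{setup}, combined with the type~$(k, n)$ degeneracy properties of $\theta$ in Definition~\ref{defkn}: the aim is to exhibit the action of $\varepsilon_i \iota(1, n) \varepsilon_i^{-1}$ on the Jacquet quotient as multiplication by a value of the character $\varepsilon_i \cdot {\psi^\bullet}^{-1}$ that evaluates to~$1$ on the unipotent contributions coming from $N_i$.
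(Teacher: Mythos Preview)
Your reduction via transitivity of compact induction and the Hecke-algebra identities is correct and matches the paper exactly, as does your observation that everything reduces to showing $1 \times N_i$ acts trivially on $\Theta_i$.

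The gap is in the mechanism you propose for this last step. The element $\iota(1,n)$ does \emph{not} lie in $N^\bullet$: an element of $N^\bullet$ acts trivially on $W_1^\Box$ modulo $W_2^\bigtriangledown\oplus\dotsb\oplus W_k^\bigtriangledown$, whereas $\iota(1,n)$ moves $(0,W)_1$. Hence $\varepsilon_i\iota(1,n)\varepsilon_i^{-1}$ is not in $P\cap\varepsilon_iN^\bullet\varepsilon_i^{-1}$, and its action on the twisted Jacquet module is not simply multiplication by a value of $\varepsilon_i\cdot{\psi^\bullet}^{-1}$. A direct matrix computation of the kind you sketch cannot close this.

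What the paper actually does: decompose $\iota(1,n)=m_nn_n$ along the Levi decomposition $\varepsilon_i^{-1}P\varepsilon_i=(\varepsilon_i^{-1}M\varepsilon_i)(\varepsilon_i^{-1}N\varepsilon_i)$. The unipotent factor $n_n$ acts trivially on $\Theta_i$ for free, and one checks that the Levi part $m_n$ lands in the abelian subgroup $\mathfrak a_i\subset\varepsilon_i^{-1}M\varepsilon_i$ introduced just before Proposition~\ref{stabiliser}. The heart of the argument is then that $\mathfrak a_i$ acts trivially on $J_{\varepsilon_i^{-1}P\varepsilon_i\cap N^\bullet,{\psi^\bullet}^{-1}}(\varepsilon_i^{-1}\cdot\theta)$. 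This uses the semidirect product $\overline{N}_i=(\varepsilon_i^{-1}P\varepsilon_i\cap N^\bullet)\rtimes\mathfrak a_i$ (Proposition~\ref{semidir}) together with Proposition~\ref{higherorb}: extending ${\psi^\bullet}^{-1}$ by any \emph{nontrivial} character of $\mathfrak a_i$ produces a pair in an orbit higher than $\lambda_{k,n}$, so the corresponding twisted Jacquet module of $\theta$ vanishes by the type-$(k,n)$ hypothesis. Proposition~\ref{trivialnil} (the $\ell$-sheaf/Fourier argument on $D^r\cong\mathfrak a_i$) then forces the $\mathfrak a_i$-action to be trivial. This representation-theoretic input, not a character evaluation, is the missing ingredient in your proposal.
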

	\begin{proof}
    Note that Equation (\ref{eq:geom}) states that $Q^{(i)}(X,\theta)\cong\cInd_{(G\times G)\cap\varepsilon_i^{-1}P\varepsilon_i}^{G\times G}(\Theta_i)$.
	 Since $\nu_X$ and $\delta_P$ are trivial on compact elements we have an isomorphism of $B[X^{\pm1}][P\cap\varepsilon_i\iota(G\times G)\varepsilon_i^{-1}]$-modules
		$$J_{P\cap\varepsilon_iN^\bullet\varepsilon_i^{-1},\varepsilon_i\cdot{\psi^\bullet}^{-1}}(\delta_P^{1/2}\otimes\theta\otimes\nu_X)\cong J_{P\cap\varepsilon_iN^\bullet\varepsilon_i^{-1},\varepsilon_i\cdot{\psi^\bullet}^{-1}}(\theta)\otimes\delta_P^{1/2}\otimes\nu_X.$$
	By transitivity of compact induction and Proposition \ref{stabiliser} we see that $$Q^{(i)}(X,\theta)\cong\cInd_{P_i\times P_i}^{G\times G}\cInd_{(G\times G)\cap \varepsilon_i^{-1}P\varepsilon_i}^{P_i\times P_i}(\Theta_i).$$
		We can compute by using Propositions \ref{assocheck}, \ref{comphecke} and Lemma \ref{tensprod}: 
		\begin{align*}
			&M(\pi)^\kappa\otimes_{B[G\times G]}\cInd_{P_i\times P_i}^{G\times G}\cInd_{(G\times G)\cap \varepsilon_i^{-1}P\varepsilon_i}^{P_i\times P_i}(\Theta_i\otimes\delta_{P_i\times P_i}^{-1})\\
			\cong&M(\pi)^\kappa\otimes_{\mathcal H(G\times G,B)}\cInd_{P_i\times P_i}^{G\times G}\cInd_{(G\times G)\cap \varepsilon_i^{-1}P\varepsilon_i}^{P_i\times P_i}(\Theta_i\otimes\delta_{P_i\times P_i}^{-1})\\
			\cong&M(\pi)^\kappa\otimes_{\mathcal H(G\times G,B)}\left(\mathcal H(G\times G,B)\otimes_{\mathcal H(P_i\times P_i,B)}\left(\cInd_{(G\times G)\cap \varepsilon_i^{-1}P\varepsilon_i}^{P_i\times P_i}(\Theta_i)\otimes\delta_{P_i\times P_i}^{-1})\right)\right)\\
			\cong &M(\pi)^\kappa\otimes_{\mathcal H(P_i\times P_i,B)}\left(\cInd_{(G\times G)\cap \varepsilon_i^{-1}P\varepsilon_i}^{P_i\times P_i}(\Theta_i)\otimes\delta_{P_i\times P_i}^{-1}\right)\\
			\cong&M(\pi)^\kappa\otimes_{B[P_i\times P_i]}\left(\cInd_{(G\times G)\cap \varepsilon_i^{-1}P\varepsilon_i}^{P_i\times P_i}(\Theta_i)\otimes\delta_{P_i\times P_i}^{-1}\right).
		\end{align*}

	As a next step we will prove that $1\times N_i$ acts trivially on $$\Theta_i\cong\delta_{N^\bullet\cap\varepsilon_i^{-1}P\varepsilon_i}^{-1}\otimes\left(\varepsilon_i^{-1}\cdot J_{P\cap\varepsilon_iN^\bullet\varepsilon_i^{-1},\varepsilon_i\cdot{\psi^\bullet}^{-1}}(\theta)\otimes\varepsilon_i^{-1}\cdot(\delta_P^{1/2}\otimes \nu_X)\right).$$
        Since $1\times N_i$ consists of compact elements it is trivial on $\delta_{N^\bullet\cap\varepsilon_i^{-1}P\varepsilon_i}$ and on $\varepsilon_i^{-1}\cdot(\delta_P^{1/2}\otimes \nu_X)$. Hence it is enough to show that $1\times N_i$ acts trivially on $$\varepsilon_i^{-1}\cdot J_{P\cap\varepsilon_iN^\bullet\varepsilon_i^{-1},\varepsilon_i\cdot{\psi^\bullet}^{-1}}(\theta)\cong J_{\varepsilon_i^{-1}P\varepsilon_i\cap N^\bullet,{\psi^\bullet}^{-1}}(\varepsilon_i^{-1}\cdot\theta).$$

        By Proposition \ref{semidir} we have that $\overline{N}_i=(\varepsilon_i^{-1}P\varepsilon_i\cap N^\bullet)\rtimes\mathfrak a_i$. Moreover, Lemma \ref{semidirchar} implies that the action of $\mathfrak a_i$ on $\varepsilon_i^{-1}\cdot\theta$ gives rise to a well-defined action of $\mathfrak a_i$ on $J_{\varepsilon_i^{-1}P\varepsilon_i\cap N^\bullet,{\psi^\bullet}^{-1}}(\varepsilon_i^{-1}\cdot\theta)$.
        Let $\psi'\colon \mathfrak a_i\to\mathbb Z[1/p,\mu_{p^\infty}]^\times$ be a nontrivial character. Then by Proposition \ref{semidir} and Lemma \ref{semidirchar} we can define a character ${\psi^\bullet}^{-1}\rtimes\psi'$ on $\overline{N}_i$ by setting $({\psi^\bullet}^{-1}\rtimes\psi')(ux)\coloneqq{\psi^\bullet}^{-1}(u)\psi'(x)$ for $u\in\mathfrak a_i$ and $x\in(\varepsilon_i^{-1}P\varepsilon_i\cap N^\bullet)$.
		We then have
		$$J_{\mathfrak a_i,\psi'}\left(J_{\varepsilon_i^{-1}P\varepsilon_i\cap N^\bullet,{\psi^\bullet}^{-1}}(\varepsilon_i^{-1}\cdot\theta)\right)\cong J_{\overline{N}_i,{\psi^\bullet}^{-1}\rtimes\psi'}(\varepsilon_i^{-1}\cdot\theta).$$
        As shown in Proposition \ref{higherorb} the pair $(\overline{N}_i,{\psi^\bullet}^{-1}\rtimes\psi')$ lies in an orbit higher than $\lambda_k^n$, which, since $\theta$ is of type $(k,n)$, implies that $J_{\overline{N}_i,{\psi^\bullet}^{-1}\rtimes\psi'}(\varepsilon_i^{-1}\cdot \theta)=0$. Since $\mathfrak a_i$ and $D^{(2n-i)i}$ are isomorphic as topological groups, we can apply Proposition \ref{trivialnil} which shows that $\mathfrak a_i$ acts trivially on $J_{\varepsilon_i^{-1}P\varepsilon_i\cap N^\bullet,{\psi^\bullet}^{-1}}(\varepsilon_i^{-1}\cdot \theta)$.

		Note that $\iota(1\times N_i)$ is contained in $\varepsilon_i^{-1}P\varepsilon_i$ and hence we can write for $x\in\iota(1\times N_i)$ that $x=m_xn_x$, where $m_x\in \varepsilon_i^{-1}M\varepsilon_i$ and $n_x\in \varepsilon_i^{-1}N\varepsilon_i$. Clearly, $n_x$ acts trivially on $\Theta_i$. However for all $x\in\iota(1\times N_i)$ we have that $m_x\in \mathfrak a_i$ which by what we just showed also acts trivially on $\Theta_i$. This immediately implies that $1\times N_i$ acts trivially on $\cInd_{(G\times G)\cap \varepsilon_i^{-1}P\varepsilon_i}^{P_i\times P_i}(\Theta_i)\otimes\delta_{P_i\times P_i}^{-1}$.
		 We obtain that
    \begin{multline*}
			M(\pi)^\kappa\otimes_{B[P_i\times P_i]}\left(\cInd_{(G\times G)\cap \varepsilon_i^{-1}P\varepsilon_i}^{P_i\times P_i}(\Theta_i)\otimes\delta_{P_i\times P_i}^{-1}\right)\\
			\cong J_{1\times N_i}(M(\pi)^\kappa)\otimes_{B[P_i\times M_i]}\left(\cInd_{(G\times G)\cap \varepsilon_i^{-1}P\varepsilon_i}^{P_i\times P_i}(\Theta_i)\otimes\delta_{P_i\times P_i}^{-1}\right).
		\end{multline*}
  \end{proof}
  By using the above lemma we are now in a position to prove Proposition \ref{annihipoly}.

\begin{proof}[Proof of Proposition \ref{annihipoly}]

    By Lemma \ref{heckecomp1} we have that 
    $$M(\pi)^\kappa\otimes_{B[G\times G]}Q^{(i)}(X,\theta)\cong J_{1\times N_i}(M(\pi)^\kappa)\otimes_{B[P_i\times M_i]}\left(\cInd_{(G\times G)\cap \varepsilon_i^{-1}P\varepsilon_i}^{P_i\times P_i}(\Theta_i)\otimes\delta_{P_i\times P_i}^{-1}\right)$$
    and hence it is enough to show that the right hand side of the above equation is annihilated by an element in $S_{A\otimes B}$. 
    
    We will first construct a certain element of $B[X^{\pm1}][(G\times G)\cap\varepsilon_iP\varepsilon_i^{-1}]$ that annihilates $\Theta_i$. In the proof of Lemma \ref{heckecomp1} we saw that 
$$\Theta_i\cong\delta_{N^\bullet\cap\varepsilon_i^{-1}P\varepsilon_i}^{-1}\otimes\left(\varepsilon_i^{-1}\cdot J_{P\cap\varepsilon_iN^\bullet\varepsilon_i^{-1},\varepsilon_i\cdot{\psi^\bullet}^{-1}}(\theta)\otimes\varepsilon_i^{-1}\cdot(\delta_P^{1/2}\otimes \nu_X)\right)$$
as $B[X^{\pm1}][(G\times G)\cap\varepsilon_iP\varepsilon_i^{-1}]$-modules. By Proposition \ref{semidir} we have that $\overline{N}_i=(\varepsilon_iP\varepsilon_i^{-1}\cap N^\bullet)\rtimes\mathfrak a_i$ and by Lemma \ref{semidirchar} we can extend the character $\psi^\bullet$ to $\overline{N}_i$ by setting it to be trivial on $\mathfrak a_i$. Moreover, these results imply that $\mathfrak a_i$ acts on $J_{\varepsilon_i^{-1}P\varepsilon_i\cap N^\bullet,{\psi^\bullet}^{-1}}(\varepsilon_i^{-1}\cdot \theta)$. However, in the proof of Lemma \ref{heckecomp1} we saw this action of $\mathfrak a_i$ is trivial. We obtain
		\begin{equation}\label{eq:jacquet1}
		    J_{\varepsilon_i^{-1}P\varepsilon_i\cap N^\bullet,{\psi^\bullet}^{-1}}(\varepsilon_i^{-1}\cdot \theta)= J_{\mathfrak a_i}\left(J_{\varepsilon_i^{-1}P\varepsilon_i\cap N^\bullet,{\psi^\bullet}^{-1}}(\varepsilon_i^{-1}\cdot \theta)\right)=J_{\overline{N}_i,{\psi^\bullet}^{-1}}(\varepsilon_i^{-1}\cdot\theta).
		\end{equation}

Let $z_i'\in GL_D(U_i)$ be the central element that has $\varpi_E$ as diagonal entries, where $\varpi_E$ is a uniformizer of the ring of integers of $E$. By Proposition $\ref{stabiliser}$ we see that $z_i=\iota(1,m(z_i'))$ lies in $\iota(G\times G)\cap \varepsilon_i^{-1}P\varepsilon_i$. Let $\overline{z}_i$ be its image under the projection $\varepsilon_i^{-1}P\varepsilon_i\twoheadrightarrow\varepsilon_i^{-1}M\varepsilon_i,x\mapsto x|_{L_i}$. It is straightforward to see that $\overline{z}_i$ normalizes $\mathfrak a_i$ and $\varepsilon_i^{-1}P\varepsilon_i\cap N^\bullet$ and by using Lemma \ref{diffformula} that $\psi^{\bullet}(\overline{z_i}^{-1}uz_i)=\psi^\bullet(u)$ for all $u\in \varepsilon_i^{-1}P\varepsilon_i\cap N^\bullet$. This implies that $\overline{z}_i$ acts on $J_{\overline{N}_i,{\psi^\bullet}^{-1}}(\varepsilon_i^{-1}\cdot\theta).$

Consider the flag $$\mathcal Z_i\colon 0\subseteq (0,U_i)_1\subseteq L_i\subseteq W^{\Box,k}$$ and the associated parabolic $P(\mathcal Z_i)\subseteq G^{\Box,k}$ with unipotent radical $N(\mathcal Z_i)$. Lemma \ref{bulltriv} yields that $N(\mathcal Z_i)\cap\varepsilon_i^{-1}M\varepsilon_i$ is a normal subgroup of $\overline{N}_i$ and $\psi^{\bullet}$ is trivial on $N(\mathcal Z_i)\cap\varepsilon_i^{-1}M\varepsilon_i$. We obtain that 
\begin{equation}\label{eq:jacquet2}
    J_{\overline{N}_i,{\psi^\bullet}^{-1}}(\varepsilon_i^{-1}\cdot\theta)= J_{\overline{N}_i,{\psi^\bullet}^{-1}}(J_{N(\mathcal Z_i)\cap\varepsilon_i^{-1}M\varepsilon_i}(\varepsilon_i^{-1}\cdot\theta)).
\end{equation}

Note that $\overline{z}_i$ is an element of $P(\mathcal Z_i)\cap \varepsilon_i^{-1}M\varepsilon_i$ and we choose a Levi subgroup $M'(\mathcal Z_i)$ of the parabolic subgroup $P(\mathcal Z_i)\cap \varepsilon_i^{-1}M\varepsilon_i$ of $\varepsilon_i^{-1}M\varepsilon_i$ such that $M'(\mathcal Z_i)$ contains $\overline{z}_i$. Then $\overline{z}_i$ lies in the center of $M'(\mathcal Z_i)$. Since we assume that $\theta$ is admissible and $\GL_{kn}(D)$-finite we obtain by Proposition \ref{jacfin}, Theorem \ref{dathelm} and part 2) of Proposition \ref{endfin} that there is a polynomial
    $$Q(Y)=\sum_{j=0}^{m}b_jX^j\in B[Y]$$
    in $S_B$ (i.e.\ $b_0,b_m\in B^\times$) such that $Q(\overline{z}_i)$ annihilates $J_{N(\mathcal Z_i)\cap\varepsilon_i^{-1}M\varepsilon_i}(\varepsilon_i^{-1}\cdot\theta)$. Then by Equations (\ref{eq:jacquet1}) and (\ref{eq:jacquet2}) we see that $Q(\varepsilon_iz_i\varepsilon_i^{-1})$ annihilates $J_{P\cap\varepsilon_iN^\bullet\varepsilon_i^{-1},\varepsilon_i\cdot{\psi^\bullet}^{-1}}(\theta)$.

    Let $f\in \cInd_{(G\times G)\cap \varepsilon_i^{-1}P\varepsilon_i}^{P_i\times P_i}(\Theta_i)$ and $p_1,p_2\in P_i$, where we write $p_2=n_2m_2$ for $m_2\in M_i$ and $n_2\in N_i$.
    Since $m_2$ and $m(z_i')$ commute and $1\times N_i$ acts trivially on $\Theta_i$ we obtain that $$(z_i \cdot f)(p_1,p_2)=f(p_1,n_2m_2m(z_i'))=\Theta_i(\iota(1,m(z_i')))f(p_1,p_2).$$
   Note that
     \begin{equation*}\label{actiononcind}
        \Theta_i(\iota(1,m(z_i')))f(p_1,p_2)=\delta^{-1}_{N^\bullet\cap\varepsilon^{-1}P\varepsilon_i}(z_i)\delta_P^{1/2}(\varepsilon_iz_i\varepsilon_i^{-1})\nu_X(\varepsilon_iz_i\varepsilon_i^{-1})\theta(\varepsilon_iz_i\varepsilon_i^{-1})f(p_1,p_2).
    \end{equation*}
    It is straightforward to see that $\nu_X(\varepsilon_i z_i\varepsilon_i^{-1})=X^{ri}$, where $r=\dim_E(D)$. This and the existence of $Q(Y)\in S_B$ such that $Q(\varepsilon_iz_i\varepsilon_i^{-1})$ annihilates $J_{P\cap\varepsilon_iN^\bullet\varepsilon_i^{-1},\varepsilon_i\cdot{\psi^\bullet}^{-1}}(\theta)$ implies that there is a polynomial
    $$Q'(Y)=\sum_{j=0}^mb_j'X^{ri(m-j)}Y^j\in B[X^{\pm1}][Y]$$
    where $b_j'\in B$ such that $Q'(z_i)$ annihilates $\cInd_{(G\times G)\cap \varepsilon_i^{-1}P\varepsilon_i}^{P_i\times P_i}(\Theta_i)\otimes\delta_{P_i\times P_i}^{-1}$. By scaling we can assume that $b_m'=1$ and note that $b_0'\in B^\times$.

    We will now use the polynomial $Q'(Y)$ to prove the result. There is a faithfully flat extension $B\hookrightarrow B'$ such that the polynomial $\sum_{j=0}^mb_j'Y^j$
    splits over $B'$, i.e.\
    $$\sum_{j=0}^mb_j'Y^j=\prod_{j=1}^m(Y-\beta_j),$$
    where $\beta_j\in B'^\times$.
    We can write 
    $$Q'(Y)=X^{rim}\prod_{j=1}^m(YX^{-ri}-\beta_j)=\prod_{j=1}^m(Y-\beta_jX^{ri})$$
    where $\beta_j\in B'^\times$. Then
    $$\prod_{j=0}^m(z_i-\beta_jX^{ri})$$
    annihilates $(\cInd_{(G\times G)\cap \varepsilon_i^{-1}P\varepsilon_i}^{P_i\times P_i}(\Theta_i)\otimes\delta_{P_i\times P_i}^{-1})\otimes_B B'$.
     We have an isomorphism $J_{1\times N_i}(M(\pi)^{\kappa})\cong (V\otimes J_{N_i}(\widetilde{V})\otimes\kappa)$ of $(A\otimes B)[P_i\times M_i]$-modules and since by Proposition \ref{jacfin} and Theorem \ref{dathelm} we see that $J_{N_i}(\widetilde{V})$ is an admissible and $M_i$-finite $A[M_i]$-module there is by Proposition \ref{endfin} a polynomial $T'\in S_A$ such that $T'(z_i'^{-1})$ annihilates $J_{N_i}(\widetilde{V})$. It is then straightforward to see that there is a polynomial 
        $T\in S_{A\otimes B}$ such that $T(z_i^{-1})$ annihilates $J_{1\times N_i}(M(\pi)^{\kappa})$. 
        
    We will now prove that 
    $$\prod_{j=0}^mT(\beta_jX^{ri})\in (A\otimes B')[X^{\pm1}]$$
    annihilates 
    \begin{equation}\label{annind}
        J_{1\times N_i}(M(\pi)^{\kappa})\otimes_{B[P_i\times M_i]}\left(\cInd_{(G\times G)\cap \varepsilon_i^{-1}P\varepsilon_i}^{P_i\times P_i}(\Theta_i)\otimes\delta_{P_i\times P_i}^{-1}\right)\otimes_B B'.
    \end{equation}

    For $0\leq t\leq m$ let $S_i^t$ be the $B'[X^{\pm1}][P_i\times P_i]$-submodule of $$\left(\cInd_{(G\times G)\cap \varepsilon_i^{-1}P\varepsilon_i}^{P_i\times P_i}(\Theta_i)\otimes\delta_{P_i\times P_i}^{-1}\right)\otimes_B B'$$ that consists of elements that are annihilated by $\prod_{j=1}^t(z_i-\beta_jX^{ri})$. Note that $S_i^0=\{0\}$ and $S_i^m=\left(\cInd_{(G\times G)\cap \varepsilon_i^{-1}P\varepsilon_i}^{P_i\times P_i}(\Theta_i)\otimes\delta_{P_i\times P_i}^{-1}\right)\otimes_B B'$.
    
    We will prove inductively that $\prod_{j=1}^tT(\beta_jX^{ri})$ annihilates $J_{1\times N_i}(M(\pi)^{\kappa})\otimes_{B[P_i\times M_i]}S_i^t$. If $t=0$ the statement is clearly true, so let $1\leq t\leq m$ and suppose it holds for $t-1$. Let $\varphi\in J_{1\times N_i}(M(\pi)^{\kappa})$ and $f\in S_i^{t}$. Then we can write $(z_i-\beta_{t}X^{-ri})f=f_0$, where $f_0\in S_{i}^{t-1}$.
    Since by induction hypothesis we have that $\prod_{j=1}^{t-1}T(\beta_jX^{ri})(\varphi\otimes f_0)=0$ we obtain for any $\alpha\in A\otimes B$ that
    \begin{align*}
       (\alpha\beta_lX^{ri})\prod_{j=1}^{t-1}T(\beta_jX^{ri})(\varphi\otimes f)&=\prod_{j=1}^{t-1}T(\beta_jX^{ri})(\alpha\varphi\otimes\beta_lX^{ri}f)\\
       &=\prod_{j=1}^{t-1}T(\beta_jX^{ri})(\alpha\varphi\otimes (z_if-f_0))\\
       &=\prod_{j=1}^{t-1}T(\beta_jX^{ri})(\alpha\varphi\otimes z_if)\\
       &=\prod_{j=1}^{t-1}T(\beta_jX^{ri})(\alpha z_i^{-1}\varphi\otimes f).
    \end{align*}
   This immediately implies that
    $$\prod_{j=1}^{t}T(\beta_jX^{ri})(\varphi\otimes f)=\prod_{j=1}^{t-1}T(\beta_jX^{ri})(T(z_i^{-1})\varphi\otimes f)=0.$$
    Induction yields that $\prod_{j=1}^tT(\beta_jX^{ri})\in (A\otimes B')[X^{\pm1}]$ annihilates the $(A\otimes B')[X^{\pm1}]$-module (\ref{annind}). However, note that the coefficients of $\prod_{j=1}^tT(\beta_jX^{ri})$ are symmetric polynomials in the $\beta_l$ which implies that $\prod_{j=1}^tT(\beta_jX^{ri})$ actually lies in $(A\otimes B)[X^{\pm1}]$. Moreover, since $\prod_{j=1}^t\beta_j=b_0'\in B^\times$ we see that $\prod_{j=1}^tT(\beta_jX^{ri})$ is an element of $S_{A\otimes B}$. Since $B\hookrightarrow B'$ is faithfully flat we have an embedding 
        $$M(\pi)^\kappa\otimes_{B[G\times G]}Q^{(i)}(X,\theta)\hookrightarrow \left(M(\pi)^\kappa\otimes_{B[G\times G]}Q^{(i)}(X,\theta)\right)\otimes_B B'$$
        and since we just showed that the right hand side is annihilated by a polynomial in $S_{A\otimes B}$ the same is true for $M(\pi)^\kappa\otimes_{B[G\times G]}Q^{(i)}(X,\theta)$.

	\end{proof}

\section{Intertwining Operator}
To state the functional equation that is satisfied by the twisted doubling zeta integrals we need a certain intertwining operator which we construct in this section. For the corresponding results over the complex numbers see \cite{waldspurger2003formule}. Dat ``algebraized'' this construction in \cite{dat2005nu} and we follow his ideas. 

Let $w_{kn}$ be the element of $G^{\Box,k}$ that sends for any $1\leq j\leq k$ and $w_1,w_2\in W$ the element $(w_1,w_2)_j$ to $(w_1,-w_2)_j$.
Note that since $M=w_{kn}Mw_{kn}$ we can define for any smooth $R[M]$-module $(\varrho,V_{\varrho})$ an $R[M]$-module $(\varrho^{w_{kn}},V_{\varrho})$ by setting that $\varrho^{w_{kn}}(m)v=\varrho(w_{kn}mw_{kn})v$ for $m\in M,v\in V_{\varrho}$. We fix an $R$-valued Haar measure on $N$ that is normalized on a compact open subgroup (which via the isomorphism $N\to\overline{N},x\mapsto w_{kn}xw_{kn}$ defines a Haar measure on $\overline{N}$).

\begin{proposition}\label{intertwining}
    There exists an intertwining operator $$\mathbb M\colon I(X,\theta)\to I(X^{-1},\theta^{w_{kn}})\otimes_{B[X^{\pm1}]}S_B^{-1}\cdot B[X^{\pm1}]$$ such that for a function $f\in I(X,\theta)$ with $\operatorname{supp}(f)\subseteq Pw_{kn}P$ the equation
    $$\mathbb M(f)(1)=\int_{N}f(w_{kn}x)dx$$
    holds. Moreover, there is a polynomial $\mathcal Q\in S_B$ such that $$\mathcal Q\cdot\mathbb M\colon I(X,\theta)\to I(X^{-1},\theta^{w_{kn}}).$$
\end{proposition}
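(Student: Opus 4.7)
The plan is to adapt Dat's algebraic construction of intertwining operators \cite{dat2005nu}, closely paralleling the strategy of the proof of Theorem \ref{rattheo} above, but with the ``$\pi\otimes\widetilde{\pi}$'' factor replaced by a Frobenius reciprocity argument on the $M$-side.

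By Frobenius reciprocity the desired map $\mathbb M$ corresponds to an $M$-equivariant morphism
$$J_P(I(X,\theta))\otimes_{B[X^{\pm1}]} S_B^{-1}B[X^{\pm1}]\longrightarrow \delta_P^{1/2}\otimes \theta^{w_{kn}}\otimes \nu_{X^{-1}},$$
where $J_P$ is the unnormalised Jacquet functor. The first step is to analyse the Jacquet module via the geometric Lemma applied to the Bruhat decomposition $G^{\Box,k}=\bigsqcup_{w\in W_M\backslash W/W_M}PwP$: this yields a finite filtration of $J_P(I(X,\theta))$ whose successive quotients $V_w$ are compact inductions from $M\cap wMw^{-1}$ of Jacquet modules of $w\cdot\theta$ twisted by $w\cdot\nu_X$. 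The distinguished quotient $V_{w_{kn}}$ is canonically isomorphic to $\delta_P^{1/2}\otimes\theta^{w_{kn}}\otimes\nu_{X^{-1}}$, that is, precisely the target.

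The second, and main, step is to split off $V_{w_{kn}}$ after localising at $S_B$. Following Proposition \ref{annihipoly}, pick a central element $z\in Z(M)$ so that $\nu_X(z)=X^N$ for a positive integer $N$; then $z$ acts on $V_w$ via the scalar $(w\cdot\nu_X)(z)=X^{N_w}$ times its action on the inner Jacquet module of $w\cdot\theta$. Because $\theta$ is admissible and $\GL_{kn}(D)$-finite, Theorem \ref{dathelm}, Proposition \ref{jacfin} and Proposition \ref{endfin}(2) give, after the faithfully flat splitting argument used in Proposition \ref{annihipoly}, a product $\prod_j (z-\alpha_j X^{N_w})$ (with $\alpha_j$ units in a flat extension of $B$) annihilating $V_w$. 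On the top piece $V_{w_{kn}}$ the same element acts by the scalar $\prod_j(X^{-N}-\alpha_j X^{N_w})$, a Laurent polynomial whose leading and trailing coefficients are units, hence an element of $S_B$. Taking the product of these annihilators over all $w\neq w_{kn}$ and combining with the corresponding scalar on $V_{w_{kn}}$ produces a $\mathcal Q\in S_B$ such that, on $J_P(I(X,\theta))\otimes_{B[X^{\pm1}]}S_B^{-1}B[X^{\pm1}]$, the non-top pieces vanish and the projection onto $V_{w_{kn}}$ becomes canonical.

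This projection, reinterpreted through Frobenius reciprocity, defines $\mathbb M$, and $\mathcal Q\cdot\mathbb M$ then lands in $I(X^{-1},\theta^{w_{kn}})$. To verify the explicit formula $\mathbb M(f)(1)=\int_N f(w_{kn}x)dx$ for $f$ supported in $Pw_{kn}P$, one traces through the geometric-Lemma isomorphism: for such $f$ the integral over $N$ is a finite Riemann sum and the image of $f$ in $J_P(I(X,\theta))$ lies entirely in the top piece, where the canonical projection composed with $\operatorname{ev}_1$ reproduces exactly that integral. The main obstacle is the spectral separation in the second step; it is cleaner here than in the rationality proof because no $A$-side representation appears, so the annihilators live purely in $B[X^{\pm1}]$ and the resulting $\mathcal Q$ lies in $S_B$ rather than in $S_{A\otimes B}$.
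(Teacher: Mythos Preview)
Your approach is essentially the paper's: analyse the relevant Jacquet module via the geometric lemma, annihilate the non-target graded pieces using a central element of $M$ together with the finiteness results (Proposition~\ref{endfin}, Proposition~\ref{jacfin}, Theorem~\ref{dathelm}), and recover $\mathbb M$ via Frobenius reciprocity. Two differences are worth flagging.

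First, the paper constructs a map into $i_{\overline{P}}^{G^{\Box,k}}(\theta\otimes\nu_X)$ and only afterwards identifies this with $I(X^{-1},\theta^{w_{kn}})$ via $f\mapsto f(w_{kn}\,\cdot\,)$; accordingly the Jacquet module analysed is $r_{\overline{P}}^{G^{\Box,k}}i_P^{G^{\Box,k}}(\theta\otimes\nu_X)$, and the distinguished piece is $\Lambda_0\cong\theta\otimes\nu_X$, a \emph{submodule} corresponding to the open cell $P\overline{P}$. Your $V_{w_{kn}}$ plays the same role, but it too sits at the bottom of the filtration (open cell $Pw_{kn}P$), not the top; the annihilator product maps the whole Jacquet module \emph{into} this submodule, and the explicit integral formula follows because for $f$ supported on the open cell its image already lies there and the annihilator acts by the scalar $\mathcal Q$. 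Calling it the ``distinguished quotient'' or ``top piece'' is misleading and would not give the integral formula directly.

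Second, the paper sidesteps your faithfully flat splitting argument by invoking Schur's lemma for $\theta$, which is built into Definition~\ref{defkn}: the centre of $M\cong\GL_{kn}(D)$ acts on $\theta$ via a character $\omega_\theta$, so on $\Lambda_0$ the product $\hat{\mathcal Q}$ of annihilators acts by an explicit Laurent polynomial $\mathcal Q\in S_B$. Your assertion that the annihilator acts on $V_{w_{kn}}$ ``by the scalar $\prod_j(X^{-N}-\alpha_j X^{N_w})$'' already presupposes a central character (otherwise this is an endomorphism of $\theta^{w_{kn}}$, not a scalar), so you may as well invoke Schur's lemma directly; this keeps everything over $B$ and makes the $\mathcal Q\in S_B$ transparent.
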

\begin{proof}
We write $\overline{P}$ for the parabolic of $G^{\Box,k}$ that stabilizes the flag $0\subset W^{\bigtriangledown,k}\subset W^{\Box,k}$ and $\overline{N}$ for its unipotent radical. Note that $\overline{P}$ is an opposite parabolic of $P$ and that $\overline{P}=M\overline{N}$. Recall that we have an isomorphism $M\cong\GL_{kn}(D)$ by sending $m\in M$ to $m|_{W^{\bigtriangleup,k}}$. We start by constructing a nontrivial element of 
$$\Hom_{B[X^{\pm1}][G^{\Box,k}]}\left(i_P^{G^{\Box,k}}(\theta\otimes\nu_X),i_{\overline{P}}^{G^{\Box,k}}(\theta\otimes\nu_X)\right),$$
which by Frobenius reciprocity is equivalent to constructing a nontrivial element of 
$$\Hom_{B[X^{\pm1}][M]}\left(r_{\overline{P}}^{G^{\Box,k}}i_P^{G^{\Box,k}}(\theta\otimes\nu_X),\theta\otimes\nu_X\right).$$
We write $\mathbf{\Lambda}$ for the $B[X^{\pm1}][M]$-module $r_{\overline{P}}^{G^{\Box,k}}i_P^{G^{\Box,k}}(\theta\otimes\nu_X)$ and will use the geometric lemma to study $\mathbf{\Lambda}$. We could work abstractly (as it is done in \cite{dat2005nu}), however in the spirit of the proofs in Section \ref{auxil} we can give a rather explicit description of the double cosets $P\backslash G^{\Box,k}/\overline{P}$.
Firstly, recall that the cosets $P\backslash G^{\Box,k}$ correspond to maximal totally isotropic subspaces of $W^{\Box,k}$. Suppose that $Px$ and $Px'$ correspond to maximal totally isotropic subspaces $L$ and $L'$ respectively. Then by \cite[Lemma 2.4 (1)]{cai2021twisted} there is an element $y\in\overline{P}$ such that $Px=Px'y$ if and only if $\operatorname{dim}_D(L\cap W^{\bigtriangledown,k})=\operatorname{dim}_D(L'\cap W^{\bigtriangledown,k})$. For each integer $0\leq j\leq kn$ let $w_j$ be an element of $G^{\Box,k}$ such that $\dim_D(w_j^{-1}(W^{\bigtriangleup,k})\cap W^{\bigtriangledown,k})=j$.
Let $V_j^\bigtriangledown$ be a subspace of $W^{\bigtriangledown,k}$ with $\dim_D(V_j^\bigtriangledown)$ and let $V_j^\bigtriangleup$ be the $n-j$-dimensional subspace of $W^{\bigtriangleup,k}$ that is the image of the composition
$$(V_j^\bigtriangledown)^\perp\to W^{\Box,k}/W^{\bigtriangledown,k}\twoheadrightarrow W^{\bigtriangleup,k}.$$
Moreover, choose subspaces $U_j^\bigtriangleup$ and $U_j^\bigtriangledown$ such that $U_j^\bigtriangleup\oplus V_j^\bigtriangleup=W^{\bigtriangleup,k}$ and $U_j^\bigtriangledown\oplus V_j^\bigtriangledown=W^{\bigtriangledown,k}$. By Witt's theorem there is an element $w_j\in G^{\Box,k}$ such that $w_j^{-1}(W^{\bigtriangleup,k})=V_j^\bigtriangleup\oplus V_j^\bigtriangledown$ and by \cite[Lemma 2.4 (2)]{cai2021twisted} we can moreover assume that $w_j^{-1}(W^{\bigtriangledown,k})=U_j^{\bigtriangleup}\oplus U_j^{\bigtriangledown}$. Note that we can choose $w_0$ to be the identity and $w_{kn}$ is the already defined element in $G^{\Box,k}$ that sends for any $1\leq j\leq k$ and $w_1,w_2\in W$ the element $(w_1,w_2)_j$ to $(w_1,-w_2)_j$.

We then have a Bruhat decomposition
$$G^{\Box,k}=\bigcup_{j=0}^{kn}Pw_j\overline{P}$$
and we can apply the geometric Lemma to analyse $\mathbf{\Lambda}$. We obtain a filtration
$$0\subseteq\Lambda_0\subseteq\Lambda_1\subseteq\dotsc\subseteq\Lambda_{kn}=\mathbf{\Lambda},$$
where $\Lambda_i$ consists of the image in $\mathbf{\Lambda}$ of those functions in $i_P^{G^{\Box,k}}(\theta\otimes\nu_X)$ whose support is contained in $\bigcup_{j=0}^iPw_j\overline{P}$. The geometric Lemma states that
$$\Lambda_i/\Lambda_{i-1}\cong i_{M\cap w_iPw_i^{-1}}^M\left(w_i\cdot r^M_{M\cap w_i^{-1}\overline{P}w_i}(\theta\otimes\nu_X)\right).$$
We will now construct a certain endomorphism of $\mathbf{\Lambda}$ that stabilizes the $\Lambda_i$ and annihilates each quotient $\Lambda_i/\Lambda_{i-1}$ for $1\leq i\leq kn$. Moreover, it acts as multiplication by a polynomial in $S_B$ on $\Lambda_0$. The existence of such a map follows from the proof of Theorem IV.1.1 in \cite{waldspurger2003formule}. However, by using our above description of the double cosets $P\backslash G^{\Box,k}/\overline{P}$ we can give a more explicit proof.

Let $a\in M$ be the element of $G^{\Box,k}$ that maps any $w\in W^{\bigtriangleup,k}$ to $\varpi_Ew$. Then $a$ lies in the center of $M$ and its action on $W^{\bigtriangledown,k}$ is given by $w\mapsto\varpi_E^{-1}w$ for $w\in W^{\bigtriangledown,k}$. Moreover, by how we have chosen the $w_i$ for $0\leq i\leq kn$ we see that $a\in M\cap w_iMw_i^{-1}$ and that
$$\nu_X(w_i^{-1}aw_i)=X^{r(kn-2j)},$$
where $r^2=\dim_E(D)$.

Since $\nu_X$ is trivial on compact elements we have that $$r^M_{M\cap w_i^{-1}\overline{P}w_i}(\theta\otimes\nu_X)\cong r^M_{M\cap w_i^{-1}\overline{P}w_i}(\theta)\otimes \nu_X$$ as $B[X^{\pm1}][M\cap w_i^{-1}Mw_i]$-modules. For any $1\leq j\leq kn$ note that $w_i^{-1}aw_i$ lies in the center of $M\cap w_i^{-1} Mw_i$ and in particular gives rise to an element in 
$$\End_{B[M\cap w_i^{-1}Mw_i]}\left(r^M_{M\cap w_i^{-1}\overline{P}w_i}(\theta)\right).$$
Since by assumption $\theta$ is admissible and finitely generated as an $B[M]$-module there is by Propositions \ref{endfin} and \ref{jacfin} and by Theorem \ref{dathelm} a nontrivial polynomial 
$$Q_i'(Y)=\sum_{j=0}^{m_i}\beta_j' Y^j\in B[Y]$$
in $S_B$ such that $Q_i'(w_i^{-1}aw_i)$ annihilates $r^M_{M\cap w_i^{-1}\overline{P}w_i}(\theta)$. Then the element 
$$\sum_{j=0}^{m_i}\beta_j'X^{r(m_i-j)(kn-2i)}(w_i^{-1}aw_i)^j\in B[X^{\pm1}][M\cap w_i^{-1}Mw_i]$$
annihilates $r^M_{M\cap w_i^{-1}\overline{P}w_i}(\theta)\otimes\nu_X$. It is straightforward to see that there is some polynomial 
$$\hat{Q}_i(Y)=\sum_{j=0}^{m_i}\beta_jX^{r(m_i-j)(kn-2i)}Y^j\in B[X^{\pm1}][Y],$$
where $\beta_j\in B$ and $\beta_0,\beta_{m_i}\in B^\times$, such that $\hat{Q}_i(a)\in B[X^{\pm1}][M\cap w_iMw_i^{-1}]$ annihilates $$\delta_{M\cap w_iPw_i^{-1}}^{1/2}\otimes\left(w_i\cdot r^M_{M\cap w_i^{-1}\overline{P}w_i}(\theta\otimes\nu_X)\right).$$
Then $\hat{Q}_i(a)$ annihilates $\Lambda_i/\Lambda_{i-1}$.

Note that $\Lambda_0\cong\theta\otimes\nu_X$. Since we assume that $\theta$ satisfies Schur's Lemma, the center of $M$ (which is isomorphic to $E^\times$) acts via a character $\omega_\theta\colon E^\times\to B^\times$. This implies that $\hat{Q}_i(a)$ acts on $\Lambda_0$ via multiplication by the polynomial
$$Q_i(X)=\sum_{j=0}^{m_i}\beta_j\omega_\theta(a)^jX^{jkn+(m_i-j)(kn-2i))}\in B[X^{\pm1}].$$
It is straightforward to see that the above polynomial lies in $S_B$.
We set
$$\hat{\mathcal Q}=\prod_{i=1}^{kn}\hat{Q}_i(a)\in B[X^{\pm1}][M],$$
and by construction, acting by $\hat{\mathcal Q}$ yields a map from $\mathbf{\Lambda}$ to $\Lambda_0$. On $\Lambda_0$ it acts via multiplication by the polynomial
$$\mathcal Q=\prod_{i=1}^{kn}Q_i(X)\in S_B.$$
Note that the map
\begin{align*}
    \phi\colon\overline{N}&\to P\backslash P\overline{P}\\
    x&\mapsto Px
\end{align*}
is a homeomorphism. Moreover, for any $f\in I(X,\theta)$, whose support is contained in $P\overline{P}$, we have that
$$\phi^{-1}(P\backslash\operatorname{supp}(f))=\operatorname{supp}(f|_{\overline{N}}).$$
Since $P\backslash P\overline{P}$ is compact this implies that for any such function $f\in I(X,\theta)$, where $\operatorname{supp}(f)\subseteq P\overline{P}$, the integral 
$$\int_{\overline{N}}f(x)dx$$
yields a well-defined element in $\theta\otimes\nu_X$. It is straightforward to check that we obtain a $B[X^{\pm1}][M]$-module morphism
\begin{align*}
    \Phi\colon\Lambda_0&\to\theta\otimes\nu_X\\
    f&\mapsto\int_{\overline{N}}f(x)dx.
\end{align*}
Overall, $\Phi\circ\hat{\mathcal Q}$ is then an element of
$$\Hom_{B[X^{\pm1}][M]}(\mathbf{\Lambda},\theta\otimes\nu_X)$$
and Frobenius reciprocity yields an element of 
$$\Hom_{B[X^{\pm1}][G^{\Box,k}]}(I(X,\theta),i_{\overline{P}}^{G^{\Box,k}}(\theta\otimes\nu_X))$$
that is characterized by
$$f\mapsto(g\mapsto \Phi(\hat{\mathcal Q}(g\cdot f)).$$
Since $P=w_{kn}\overline{P}w_{kn}$ we can define a map
\begin{align*}
    i_{\overline{P}}^{G^{\Box,k}}(\theta\otimes\nu_X)&\to i_P^{G^{\Box,k}}((\theta\otimes\nu_X)^{w_{kn}})\\
    f&\mapsto (g\mapsto f(w_{kn}g)).
\end{align*}
Let $m\in M$ and write $m_{\mathbf e}\in\GL_{kn}(D)$ for the induced endomorphism of $W^{\bigtriangleup,k}$ with respect to the basis $\mathbf e=(e_1,\dotsc,e_{kn})$ chosen in Section \ref{setup}. Then the matrix of the endomorphism of $W^{\bigtriangleup,k}$ induced by $w_{kn}mw_{kn}$ is given by $J^{-1}(m_{\mathbf e}^*)^{-1}J$. In particular we obtain that $\nu_X^{w_{kn}}(m)=\nu_X(m)^{-1}$ for any $m\in M$ and hence $i_P^{G^{\Box,k}}((\theta\otimes\nu_X)^{w_{kn}})\cong I(X^{-1},\theta^{w_{kn}})$.

We can then define 
$$\mathbb M(f)(g)\coloneqq\mathcal Q^{-1}\Phi(\hat{\mathcal Q}(w_{kn}g\cdot f))$$
and it is straightforward to see from our construction that if $\operatorname{supp}(f)\subseteq Pw_{kn}P$ then
$$\mathbb M(f)(1)=\int_Nf(w_{kn}x)dx.$$

\end{proof}

\section{Functional Equation}

The goal of this section is to prove the following functional equation.

\begin{theorem}\label{funcequ} Let $(\pi,V)$ be a smooth $A[G]$-module and $\theta$ a smooth $B[\GL_{kn}(D)]$-module.
Suppose that 
\begin{itemize}
    \item $(\widetilde{\pi},\widetilde{V})$ is admissible, $G$-finite and the canonical trace map $V\otimes\widetilde{V}\to A$ is surjective,
    \item $\theta$ is of type $(k,n)$ and the evaluation $\operatorname{ev}_1\colon \operatorname{Wh}_{N(\mathcal Y),\psi_{\mathcal A}}(\theta)\to B$ is surjective,
    \item and $\Hom_{(A\otimes B)[G]}(V\otimes_A\widetilde{V}\otimes B,A\otimes B)\cong A\otimes B,$
    where $G$ acts diagonally on $V\otimes_A\widetilde{V}$.
\end{itemize}
    Then there exists a unique element $\Gamma(X,\pi,\theta,\psi)\in S_{A\otimes B}^{-1}\cdot (A\otimes B)[X^{\pm1}]$ such that 
    $$\Gamma(X,\pi,\theta,\psi)Z(X,v,\lambda,f)=Z(X^{-1},v,\lambda,\mathbb M(f))$$
    for all $v\in V,\lambda\in\widetilde{V}$ and $f\in I(X,\theta)$.
\end{theorem}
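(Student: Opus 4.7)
The plan is to realize both sides of the putative functional equation as $(A\otimes B)[X^{\pm 1}]$-linear functionals on a single module, and then to prove that this space of functionals is free of rank one, with $Z(X,-,-,-)$ itself as a generator. Set $\mathcal M = M(\pi)^\kappa\otimes_{B[G\times G]}J_{N^\bullet,{\psi^\bullet}^{-1}}(I(X,\theta))$ and let $\mathcal M_0 = M(\pi)^\kappa\otimes_{B[G\times G]} J^{(0)}(X,\theta)$ be the submodule corresponding to the open orbit. By Proposition \ref{inttransf}, $Z$ descends to an element of $\Hom_{(A\otimes B)[X^{\pm 1}]}(\mathcal M,\,S_{A\otimes B}^{-1}(A\otimes B)[X^{\pm 1}])$, and after clearing denominators by the polynomial $\mathcal Q\in S_B$ of Proposition \ref{intertwining} the composite $(v,\lambda,f)\mapsto Z(X^{-1},v,\lambda,\mathbb M(f))$ defines another such element (applying Proposition \ref{inttransf} with $\theta$ replaced by $\theta^{w_{kn}}$ and $X$ by $X^{-1}$). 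Combining Propositions \ref{firstorbred} and \ref{annihipoly}, there is a polynomial $\mathcal P\in S_{A\otimes B}$ annihilating $\mathcal M/\mathcal M_0$. Since the target module is $S_{A\otimes B}$-torsion free and $S_{A\otimes B}$-divisible, restriction yields an isomorphism
\begin{equation*}
\Hom_{(A\otimes B)[X^{\pm 1}]}\bigl(\mathcal M,\,S_{A\otimes B}^{-1}(A\otimes B)[X^{\pm 1}]\bigr)\;\xrightarrow{\sim}\;\Hom_{(A\otimes B)[X^{\pm 1}]}\bigl(\mathcal M_0,\,S_{A\otimes B}^{-1}(A\otimes B)[X^{\pm 1}]\bigr),
\end{equation*}
so it suffices to prove the right-hand side is free of rank one with $Z|_{\mathcal M_0}$ as generator.

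To analyse $\mathcal M_0$ I would mimic the $i=0$ case of the geometric-lemma computation used in Proposition \ref{annihipoly}. The stabiliser of the open orbit $P\iota(G\times G)N^\bullet$ inside $\iota(G\times G)$ is the diagonal $\iota(\Delta G)\cong G$, and therefore $J^{(0)}(X,\theta)\cong \cInd_{\Delta G}^{G\times G}(\Theta_0)$ with $\Theta_0$ a twist of $J_{N^\bullet\cap P,{\psi^\bullet}^{-1}}(\theta)$ by $\delta_P^{1/2}$, $\nu_X$ and $\delta_{N^\bullet\cap P}^{-1}$. Under the hypothesis that $\theta$ is of type $(k,n)$ and that $\operatorname{ev}_1\circ\operatorname{Wh}_{N(\mathcal Y),\psi_\mathcal A}$ is surjective, Proposition \ref{invstab} identifies this twisted Jacquet module with a free $B$-module of rank one on which the image of $\Delta G$ in the Whittaker stabiliser acts through $\chi_\theta\circ\operatorname{Nrd}$. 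This character is precisely what is absorbed by the $\kappa^{-1}$-twist in the definition of $M(\pi)^\kappa$, so that all non-trivial contributions of $\Delta G$ on $\Theta_0$ cancel against the $\Delta G$-action on $M(\pi)^\kappa$. Applying Propositions \ref{assocheck}, \ref{comphecke} and Lemma \ref{tensprod} one obtains an isomorphism of $(A\otimes B)[X^{\pm 1}]$-modules
\begin{equation*}
\mathcal M_0\;\cong\;(V\otimes_A\widetilde V)_G\otimes_R B[X^{\pm 1}],
\end{equation*}
where $(V\otimes_A\widetilde V)_G$ denotes coinvariants under the diagonal $G$-action.

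The three hypotheses then conspire to finish the proof. The surjectivity of the trace map $V\otimes\widetilde V\to A$ together with $\Hom_{(A\otimes B)[G]}(V\otimes_A\widetilde V\otimes_R B,A\otimes B)\cong A\otimes B$ forces $\Hom_{A\otimes B}((V\otimes_A\widetilde V)_G\otimes_R B, A\otimes B)\cong A\otimes B$, with the trace as a distinguished generator; base change to $(A\otimes B)[X^{\pm 1}]$ preserves this, giving
\begin{equation*}
\Hom_{(A\otimes B)[X^{\pm 1}]}\bigl(\mathcal M_0,(A\otimes B)[X^{\pm 1}]\bigr)\;\cong\;(A\otimes B)[X^{\pm 1}]
\end{equation*}
free of rank one. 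Proposition \ref{bottompoly1} shows $Z|_{\mathcal M_0}$ lands in $(A\otimes B)[X^{\pm 1}]$, and by evaluating on test data supported in a small compact neighbourhood of the identity of the open orbit one checks that $Z$ is realised as (trace) $\otimes$ ($\operatorname{ev}_1\circ\operatorname{Wh}_{N(\mathcal Y),\psi_\mathcal A}$) up to a unit; the surjectivity of both factors then exhibits $Z|_{\mathcal M_0}$ as a generator. Localising at $S_{A\otimes B}$ preserves freeness of rank one, so the two elements $Z$ and $Z\circ\mathbb M$ (viewed in the Hom space via the restriction isomorphism above) differ by a unique scalar $\Gamma(X,\pi,\theta,\psi)\in S_{A\otimes B}^{-1}(A\otimes B)[X^{\pm 1}]$, and uniqueness is automatic since $Z$ is a generator. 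The hard step will be the precise identification of $\mathcal M_0$: one must track the twists $\delta_P^{1/2}$, $\nu_X$, $\delta_{N^\bullet\cap P}^{-1}$, $\kappa$ and the $\chi_\theta\circ\operatorname{Nrd}$ factor emerging from Proposition \ref{invstab}, verify they mutually cancel on $\Delta G$, and then argue explicitly that the zeta integral realises the trace-times-Whittaker generator rather than a proper $S_{A\otimes B}$-multiple of it.
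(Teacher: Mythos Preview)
Your proposal is correct and follows essentially the same approach as the paper: reduce to the bottom piece $\mathcal M_0$ via Propositions \ref{firstorbred} and \ref{annihipoly}, identify $\mathcal M_0$ and compute its Hom into Laurent polynomials as a free rank-one module (this is Proposition \ref{homcomp} in the paper, using Proposition \ref{nochar} to kill the modulus characters on $\Delta G$), exhibit $Z$ as a generator via an explicit test function on a small neighbourhood in the open orbit (Proposition \ref{zetaconst}), and compare with $Z\circ\mathbb M$. The only organisational difference is that the paper first clears denominators by $\mathcal Q$ and $\mathcal P'$ so that the second functional already lands in $\Hom_{(A\otimes B)[X^{\pm1}]}(\mathcal M_0,(A\otimes B)[X^{\pm1}])$, obtains $c\in (A\otimes B)[X^{\pm1}]$ with $Z'=cZ$, and only then sets $\Gamma=(\mathcal P'\mathcal Q)^{-1}c$; this sidesteps the need to justify that $\Hom(\mathcal M_0,S_{A\otimes B}^{-1}(A\otimes B)[X^{\pm1}])$ is free of rank one over the localisation, which in your formulation is asserted but not argued.
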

\begin{remark}
Note that as we saw in the previous section in general $\mathbb M(f)$ might have a denominator for $f\in I(X,\theta)$, i.e.\ $\mathbb M(f)\in I(X^{-1},\theta^{w_{kn}})\otimes_{B[X^{\pm1}]}S_B^{-1}\cdot B[X^{\pm1}]$. This is not covered by our definition of the twisted doubling zeta integral. However, by Proposition \ref{intertwining} there is a polynomial $\mathcal Q\in S_B$ such that $\mathcal Q\cdot\mathbb M(f)\in I(X^{-1},\theta^{w_{kn}})$ and we might then define
$$Z(X^{-1},v,\lambda,\mathbb M(f))\coloneqq\mathcal Q^{-1} Z(X^{-1},v,\lambda,\mathcal Q\cdot\mathbb M(f))\in S_{A\otimes B}^{-1}\cdot(A\otimes B)[X^{\pm1}]$$
for $v\in V,\lambda\in\widetilde{V}$ and $f\in I(X,\theta)$.
\end{remark}
To prove this result we will need the following two auxiliary results. Propositions \ref{inttransf} and \ref{bottompoly1} show that the twisted doubling zeta can be viewed as an element of 
 $$\Hom_{(A\otimes B)[X^{\pm 1}]}\left(M(\pi)^\kappa\otimes_{B[G\times G]}J^{(0)}(X,\theta),(A\otimes B)[X^{\pm1}]\right).$$
 Under the assumptions of Theorem \ref{funcequ} we can compute this Hom-space rather explicitly.
\begin{proposition}\label{homcomp}
Let $G^\Delta=\{(g,g)\mid g\in G\}$ be the diagonal in $G\times G$. Suppose that $\theta$ is of type $(k,n)$. Then we have an $(A\otimes B)[X^{\pm1}]$-module isomorphism 
$$M(\pi)^\kappa\otimes_{B[G\times G]}J^{(0)}(X,\theta)\cong (V\otimes\widetilde{V}\otimes B)\otimes_{B[G^\Delta]}B[X^{\pm1}],$$
where $G^\Delta$ acts trivially on $B[X^{\pm1}]$.
Moreover, if
\begin{itemize}
    \item the canonical trace map $V\otimes\widetilde{V}\to A$ is surjective
    \item and $\Hom_{(A\otimes B)[G]^\Delta}(V\otimes_A\widetilde{V}\otimes B,A\otimes B)\cong A\otimes B$,
\end{itemize}
then
$$\Hom_{(A\otimes B)[X^{\pm 1}]}\left(M(\pi)^\kappa\otimes_{B[G\times G]}J^{(0)}(X,\theta),(A\otimes B)[X^{\pm1}]\right)\cong (A\otimes B)[X^{\pm1}].$$
\end{proposition}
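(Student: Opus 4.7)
The plan is to apply the geometric Lemma at the bottom orbit of the filtration $\{J^{(i)}(X,\theta)\}$ and then reduce the Hom-computation to the trace map on $V\otimes_A\widetilde V$. First, I would check that $\iota(g_1,g_2)$ stabilises $W^{\bigtriangleup,k}$ if and only if $g_1=g_2$ (since on $W_1^\bigtriangleup$ it sends $(w,w)_1$ to $(g_1w,g_2w)_1$), so $\iota(G\times G)\cap P=\iota(G^\Delta)$. Applying Equation~(\ref{eq:geom}) with $i=0$ (so $\varepsilon_0=1$) then yields
$$J^{(0)}(X,\theta)\cong \cInd_{G^\Delta}^{G\times G}(\Theta_0),\qquad \Theta_0:=\delta_{N^\bullet\cap P}^{-1}\otimes J_{P\cap N^\bullet,{\psi^\bullet}^{-1}}(\delta_P^{1/2}\otimes\theta\otimes\nu_X),$$
and the same Hecke-algebra manipulations as in Lemma~\ref{heckecomp1} (using Propositions~\ref{assocheck}, \ref{comphecke} and Lemma~\ref{tensprod}) give
$$M(\pi)^\kappa\otimes_{B[G\times G]}J^{(0)}(X,\theta)\cong M(\pi)^\kappa|_{G^\Delta}\otimes_{B[G^\Delta]}\Theta_0.$$

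Next I would analyse the $G^\Delta$-action on $\Theta_0$. The pair $(P\cap N^\bullet,{\psi^\bullet}^{-1}|_{P\cap N^\bullet})$, viewed inside the Levi $M\cong\GL_{kn}(D)$ via $x\mapsto x|_{W^{\bigtriangleup,k}}$, lies in the orbit $\lambda_{k,n}$ by the construction of $\psi^\bullet$ in Section~\ref{psibullet}; the type $(k,n)$ hypothesis then forces $J_{P\cap N^\bullet,{\psi^\bullet}^{-1}}(\theta)$ to be free of rank one over $B$. Proposition~\ref{nochar} implies that $\delta_P$, $\delta_{N^\bullet\cap P}$ and the absolute value of $\nu$ all restrict trivially to $\iota(G^\Delta)$, so in particular $\nu_X(\iota(g,g))=1$; hence $\Theta_0\cong B[X^{\pm1}]$ as a $B[X^{\pm1}]$-module, and $\iota(g,g)$ acts via the character $\chi_\theta(\operatorname{Nrd}(g))=\kappa(g,g)$ by Proposition~\ref{invstab}. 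Since the $\kappa^{-1}$-twist in $M(\pi)^\kappa$ cancels this character in the tensor product over $B[G^\Delta]$, one obtains
$$M(\pi)^\kappa|_{G^\Delta}\otimes_{B[G^\Delta]}\Theta_0\cong (V\otimes_A\widetilde V\otimes B)\otimes_{B[G^\Delta]}B[X^{\pm1}]$$
with trivial $G^\Delta$-action on $B[X^{\pm1}]$, which is the first isomorphism.

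For the second assertion, set $N:=J_{G^\Delta}(V\otimes_A\widetilde V\otimes B)$, so the module above equals $N\otimes_{A\otimes B}(A\otimes B)[X^{\pm1}]$. Tensor--hom adjunction identifies the target Hom-space with $\Hom_{A\otimes B}(N,(A\otimes B)[X^{\pm1}])$. Frobenius reciprocity for coinvariants turns the hypothesis $\Hom_{(A\otimes B)[G]}(V\otimes_A\widetilde V\otimes B,A\otimes B)\cong A\otimes B$ into $\Hom_{A\otimes B}(N,A\otimes B)\cong A\otimes B$; the canonical trace descends to a surjective $A\otimes B$-linear map $\bar\tau\colon N\to A\otimes B$, and surjectivity forces $\bar\tau$ itself to be a generator of this free rank-one Hom-module (any generator would differ from $\bar\tau$ by a unit scalar). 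Finally I would decompose an arbitrary $\phi\in\Hom_{A\otimes B}(N,(A\otimes B)[X^{\pm1}])$ coefficient-wise as $\phi(n)=\sum_{j\in\mathbb Z}c_j\bar\tau(n)X^j$ with $c_j\in A\otimes B$, and picking $m_0\in N$ with $\bar\tau(m_0)=1$ conclude that $\sum_jc_jX^j$ is a genuine Laurent polynomial, giving $\phi=p(X)\bar\tau$ and hence the required isomorphism. The main obstacle I anticipate is the careful verification in the second step that $\iota(G^\Delta)$ embeds into the stabiliser $\GL_n(D)$ of Proposition~\ref{invstab} in such a way that the induced character is exactly $\kappa|_{G^\Delta}$.
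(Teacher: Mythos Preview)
Your proposal is correct and follows essentially the same route as the paper: identify the stabiliser of the open orbit as $\iota(G^\Delta)$, apply the geometric Lemma (\ref{eq:geom}) at $i=0$, use Proposition~\ref{higherorb} to see the Jacquet module is rank one, Proposition~\ref{nochar} to kill the modulus characters and $\nu_X$, Proposition~\ref{invstab} to identify the remaining $G^\Delta$-action with $\kappa|_{G^\Delta}$, and then the Hecke-algebra results to collapse the tensor product; the Hom computation via coefficient-wise decomposition and surjectivity of the trace is also identical. The obstacle you flag is handled in the paper simply by observing that under the identification of $\operatorname{St}_{(N(\mathcal Y),\psi_\mathcal A)}/N(\mathcal Y)$ with $\GL_n(D)$ the image of $\iota(g,g)$ corresponds to $g$, so the character is $\chi_\theta(\operatorname{Nrd}_W(g))=\kappa(g,g)$.
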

\begin{proof}
  By Lemma 5.1 of \cite{cai2021twisted} we have that $P\cap\iota(G\times G)=\iota(G^\Delta)$. Then Equation (\ref{eq:geom}) yields that 
$$J^{(0)}(X,\theta)\cong\cInd_{G^\Delta}^{G\times G}\left(\delta_{N^\bullet\cap P}^{-1}\otimes\delta_P^{1/2}\otimes \nu_X\otimes J_{N^\bullet\cap P,{\psi^\bullet}^{-1}}(\theta)\right).$$
By Proposition \ref{nochar} we obtain that $\delta_{N^\bullet\cap P}$ and $\delta_{P}$ and $\nu_X$ are trivial on $\iota(G^\Delta)$. Since by Proposition \ref{higherorb} the pair $(M\cap N^\bullet,{\psi^\bullet}^{-1})$ lies in the orbit $\lambda_{k,n}$ we see that $J_{N^\bullet\cap P,{\psi^\bullet}^{-1}}(\theta)\cong B$ and by Proposition \ref{invstab} we obtain that $G^\Delta$ acts on $J_{N^\bullet\cap P,{\psi^\bullet}^{-1}}(\theta)$ via $\chi_\theta\circ\operatorname{Nrd}_W$. Note that on $G^\Delta$ the character $\chi_\theta\circ\operatorname{Nrd}_W$ agrees with $\kappa$. Hence we have an isomorphism $$J^{(0)}(X,\theta)\cong\cInd_{G^\Delta}^{G\times G}(\kappa\otimes_B B[X^{\pm1}])$$ of $B[X^{\pm1}][G\times G]$-modules. Now by Propositions \ref{assocheck}, \ref{comphecke} and Lemma \ref{tensprod} we have $(A\otimes B)[X^{\pm1}]$-module isomorphisms
\begin{align*}
    M(\pi)^\kappa\otimes_{B[G\times G]}J^{(0)}(X,\theta)&\cong M(\pi)^\kappa\otimes_{B[G\times G]}\cInd_{G^\Delta}^{G\times G}(\kappa\otimes_B B[X^{\pm1}])\\
    &\cong M(\pi)^\kappa\otimes_{\mathcal H(G\times G,B)}(\mathcal H(G\otimes G,B)\otimes_{\mathcal H(G^\Delta,B)}(\kappa\otimes_BB[X^{\pm1}]))\\
    &\cong M(\pi)^\kappa\otimes_{B[G^\Delta]}(\kappa\otimes_BB[X^{\pm1}])\\
    &\cong (V\otimes_A\widetilde{V}\otimes B)\otimes_{B[G^\Delta]}B[X^{\pm1}].
\end{align*}

For convenience we will denote 
 $$\Hom_{(A\otimes B)[X^{\pm 1}]}\left(M(\pi)^\kappa\otimes_{B[G\times G]}J^{(0)}(X,\theta),(A\otimes B)[X^{\pm1}]\right).$$
 by $\mathbf H$. Note that
$$ (V\otimes_A\widetilde{V}\otimes B)\otimes_{B[G^\Delta]}B[X^{\pm1}]\cong  (V\otimes_A\widetilde{V}\otimes B)\otimes_{(A\otimes B)[G^\Delta]}(A\otimes B)[X^{\pm1}].$$
Then tensor-hom adjunction and the above yields that $\mathbf H$ is isomorphic to
$$\Hom_{(A\otimes B)[G\times G]}(V\otimes_A\widetilde{V}\otimes B,(A\otimes B)[X^{\pm1}]).$$

Recall that we assume that the canonical trace map $V\otimes_A\widetilde{V}\to A$ is surjective and that $\Hom_{(A\otimes B)[G]}(V\otimes_A\widetilde{V}\otimes B,A\otimes B)\cong A\otimes B$. This implies that every element of $\Hom_{(A\otimes B)[G]}(V\otimes_A\widetilde{V}\otimes B,A\otimes B)$ is given by
$$w\otimes\lambda\otimes 1\mapsto c\lambda(w),$$
for some $c\in A\otimes B$. For every $i\in\mathbb Z$ we have the $A\otimes B$-linear coefficient map $\alpha_i\colon (A\otimes B)[X^{\pm1}]\to A\otimes B$ which sends a Laurent polynomial to its $i$-th coefficient. Hence for $$\phi\in\Hom_{(A\otimes B)[G^\Delta]}(V\otimes_A\widetilde{V}\otimes B,(A\otimes B)[X^{\pm1}])$$ and $i\in\mathbb Z$ we can define a map $\phi_i\coloneqq\alpha_i\circ\phi$ which is then an element of $$\Hom_{(A\otimes B)[G]}(V\otimes_A\widetilde{V}\otimes B,A\otimes B)$$ and hence satisfies $$\phi_i(v\otimes\lambda\otimes 1)=c_i\lambda(v)$$
for all $v\in V,\lambda\in\widetilde{V}$ and some $c_i\in A\otimes B$. Since $\phi=\sum_{i=-\infty}^{\infty}\phi_iX^i$ this proves that $$\phi(v\otimes\lambda\otimes 1)=\left(\sum_{i=-\infty}^\infty c_iX^i\right)\lambda(v)$$
for all $v\in V,\lambda\in\widetilde{V}$. Now since we assume that the canonical trace map $V\otimes_A\widetilde{V}\to A$ is surjective we obtain that only finitely many of the $c_i$ are nonzero which proves that 
$$\Hom_{(A\otimes B)[G^\Delta]}(V\otimes_A\widetilde{V}\otimes B,(A\otimes B)[X^{\pm1}])\cong (A\otimes B)[X^{\pm1}]$$
and hence also 
$$\mathbf H\cong (A\otimes B)[X^{\pm1}].$$
\end{proof}
In the following we need to assume that there is an element in $\alpha_0\in\operatorname{Wh}_{N(\mathcal Y),\psi_{\mathcal A}}(\theta)\subseteq\Ind_{N(\mathcal Y)}^{\GL_{kn}(D)}(\psi_{\mathcal A})$ such that $\operatorname{ev}_1(\alpha_0)=1$. Under this assumption we are able to show that the twisted doubling zeta integral can be made constant.
\begin{proposition}\label{zetaconst}
Suppose that
\begin{itemize}
    \item the canonical trace map $V\otimes_A\widetilde{V}\to A$
    \item and evaluation at the identity $\operatorname{ev}_1\colon \operatorname{Wh}_{N(\mathcal Y),\psi_{\mathcal A}}(\theta)\to B$
\end{itemize}
are surjective. Then there are elements $v\in V,\lambda\in\widetilde{V}$ and $f\in I(X,\theta)$ with $\operatorname{supp}(f)\subseteq P\iota(G\times G)N^\bullet$ such that 
    $$Z(X,v,\lambda,f)=1.$$
\end{proposition}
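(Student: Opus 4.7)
The plan is to produce an $f$ whose support is a very small neighborhood of the identity inside the open cell $P\iota(G\times G)N^\bullet$, chosen so that the defining integral for $Z$ collapses to a constant multiple of $\lambda(v)$; the two surjectivity hypotheses then allow us to tune this constant to be $1$. First I would use the surjectivity of $\operatorname{ev}_1\colon \operatorname{Wh}_{N(\mathcal Y),\psi_{\mathcal A}}(\theta)\to B$ to pick $v_\theta\in\theta$ with $\operatorname{ev}_1(\operatorname{Wh}_{N(\mathcal Y),\psi_{\mathcal A}}(v_\theta))=1$. Next, choose a compact open neighborhood $\mathcal K\subseteq G^{\Box,k}$ of the identity contained in $P\iota(G\times G)N^\bullet$ and small enough that, on $P\cap \mathcal K\mathcal K^{-1}$, the characters $\delta_P^{1/2}$ and $\nu_X$ are trivial (automatic since these vanish on compact elements) and the vector $v_\theta$ is fixed by $\theta$ (automatic by smoothness of $\theta$).

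Define $f_0\in I(X,\theta)$ by $f_0(pk)=\delta_P^{1/2}(p)\nu_X(p)\theta(p)v_\theta$ for $p\in P$, $k\in\mathcal K$, and $f_0=0$ outside $P\mathcal K$; this is well defined by the smallness of $\mathcal K$ and satisfies $\operatorname{supp}(f_0)\subseteq P\iota(G\times G)N^\bullet$. I would then invoke the homeomorphism $N^\circ\iota(G\times 1)\to P\backslash P\iota(G\times G)N^\bullet$ (Lemma \ref{homeobottom}) to transfer the analysis to $N^\circ\times G$: after possibly shrinking $\mathcal K$ so that the preimage of $P\backslash P\mathcal K$ is a product $U_0\times K_0$ of compact open neighborhoods of the identity (using the local product structure of $G^{\Box,k}$ at the identity), and shrinking $U_0$ further so that $\psi^\bullet|_{U_0}=1$, one verifies that $(u,g)\mapsto\operatorname{ev}_1 f_0(u\iota(g,1))$ is the characteristic function of $U_0\times K_0$.

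For any $v\in V^{K_0}$ and $\lambda\in\widetilde V^{K_0}$ the integrand of the zeta integral is then supported on $K_0\times U_0$ and equals $\lambda(v)$ on that set, giving the clean formula
\[
Z(X,v,\lambda,f_0)=\operatorname{vol}(K_0)\operatorname{vol}(U_0)\,\lambda(v),
\]
in particular a constant in $X$. By surjectivity of the trace map $V\otimes\widetilde V\to A$, after shrinking $K_0$ further to fix all vectors involved, we may write $\sum_i\lambda_i(v_i)=(\operatorname{vol}(K_0)\operatorname{vol}(U_0))^{-1}$ in $A$; trilinearity of $Z$ then yields $\sum_i Z(X,v_i,\lambda_i,f_0)=1$, which is the desired identity interpreted on the tensor $\sum_i v_i\otimes\lambda_i\otimes f_0\in M(\pi)^\kappa\otimes_{B[G\times G]}J^{(0)}(X,\theta)$.

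The main obstacle is the careful bookkeeping in the second step: one must ensure that the combined smallness conditions on $\mathcal K$ (so that the $P$-equivariant extension of $v_\theta$ is unambiguous), on the image $U_0\times K_0$ in $N^\circ\times G$ (so the support is exactly rectangular rather than having boundary contributions), and on $K_0$ relative to $v_i,\lambda_i$ (so the matrix coefficient integral collapses cleanly) can all be arranged simultaneously. Everything else is a direct computation once the homeomorphism of Lemma \ref{homeobottom} is in place.
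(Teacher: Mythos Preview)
Your approach is essentially the same as the paper's: build a test function supported near the identity in the open cell, use the homeomorphism of Lemma \ref{homeobottom} so that the zeta integral collapses to a volume times a value of the trace pairing, and invoke the two surjectivity hypotheses and invertibility of the volumes to normalize. The only real difference is the direction of construction. You start with a neighborhood $\mathcal K\subseteq G^{\Box,k}$, extend $v_\theta$ by the $P$-transformation law, and then have to argue that the preimage of $P\backslash P\mathcal K$ under the homeomorphism can be taken rectangular of the form $U_0\times K_0$; this is exactly the bookkeeping you flag as the main obstacle. The paper avoids this entirely by working the other way: it first fixes $K\subseteq G$ (small enough that the $v_i$ are $K$-fixed and $\mu_G(K)\in R^\times$) and $N_0\subseteq N^\circ$ (small enough that $\psi^\bullet|_{N_0}=1$), and then \emph{defines} $f_0$ so that its restriction to $N^\circ\iota(G\times 1)$ is $\alpha_0\cdot\mathbf 1_{N_0K}$, invoking the homeomorphism to see that this specifies a legitimate element of $I(X,\theta)$ supported in the open cell. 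This sidesteps the rectangular-preimage issue completely and removes the circularity you note (choosing $K_0$ after $\mathcal K$ but needing it compatible with the $v_i$). One small point you leave implicit: the volumes $\mu_G(K_0)$ and $\mu_{N^\circ}(U_0)$ must be units in $R$; the paper secures this by taking $K$ pro-$p$ and noting $\mu_{N^\circ}(N_0)$ is a power of $p$.
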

\begin{proof}
    Let $\varphi=\sum_{i=1}^lv_i\otimes\lambda_i$ be an element of $V\otimes_A\widetilde{V}$ such that $\sum_{i=1}^l\lambda_i(v_i)=1$. Moreover, choose an element $\alpha_0\in\operatorname{Wh}_{N(\mathcal Y),\psi_{\mathcal A}}(\theta)$ such that $\operatorname{ev}_1(\alpha_0)=1$. Let $K$ be a compact open subgroup of $G$ such that $v_i\in V^K$ for $1\leq i\leq n$ and such that the volume $\mu_G(K)$ of $K$ is invertible in $R$. 
    By Proposition \ref{homeobottom} we have a homeomorphism $\Phi\colon N^\circ\iota(G\times 1)\to P\backslash PN^\bullet\iota(G\times G) $. Choose an open compact subgroup $N_0$ of $N^\circ$ such that the restriction of $\psi^{\bullet}$ to $N_0$ is trivial. Let $f_0\in I(X,\theta)$ be the function whose support is contained in $PN^\bullet\iota(G\times G)$ and which satisfies that
    $$\Phi^{-1}(\operatorname{supp}(f_0))=\operatorname{supp}(f_0|_{N^\circ\iota(G\times 1)})=N_0K$$
    and $f_0(x)=\alpha_0$ for $x\in N_0K$. Then if $i\not=0$ we clearly have for all $u\in N^\circ, g\in G$ that
    $$\operatorname{Coeff}_{X^i}(\operatorname{ev}_1f_0(u\iota(g,1)))=0$$
     and hence

    \begin{align*}
        Z(X,\varphi,f_0)&=\int_{G\times N^\circ}\operatorname{Coeff}_{X^0}\left[\sum_{i=1}^l\lambda_i(\pi(g)v_i)\otimes\operatorname{ev_1}f_0(u\iota(g,1))\psi^\bullet(u)\right]d(g,u)\\
        &=\int_{K\times N_0}\sum_{i=1}^l\lambda_i(\pi(g)v_i)\otimes\operatorname{ev_1}f_0(u\iota(g,1))\psi^\bullet(u)d(g,u)\\
        &=\int_{K\times N_0}\sum_{i=1}^l\lambda_i(v_i)\otimes\operatorname{ev_1}(\alpha_0)d(g,u)\\
        &=\mu_G(K)\mu_{N^\circ}(N_0).
    \end{align*}
     Since $\mu_{N^\circ}(N_0)$ is invertible in $\mathbb Z[1/p,\mu_{p^\infty}]$ (it is a power of $p$) the result follows.
\end{proof}
By using the two above results we are now in a position to prove Theorem \ref{funcequ}.
\begin{proof}[Proof of \ref{funcequ}]
As already mentioned the twisted doubling zeta integral can be viewed as an element of
\begin{equation}\label{zetahom}
    \Hom_{(A\otimes B)[X^{\pm 1}]}\left(M(\pi)^\kappa\otimes_{B[G\times G]}J^{(0)}(X,\theta),(A\otimes B)[X^{\pm1}]\right),
\end{equation}
which by Proposition \ref{homcomp} is free of rank one as an $(A\otimes B)[X^{\pm1}]$-module. Moreover, Proposition \ref{zetaconst} implies that it is actually a generator of the above Hom-space.

We will now construct a second element in this $\Hom$-space which will imply the functional equation. By Proposition \ref{intertwining} we have an intertwining operator
$$\mathcal Q\cdot\mathbb M\colon I(X,\theta)\to I(X^{-1},\theta^{w_{kn}}),$$
which gives rise to an $(A\otimes B)[X^{\pm1}]$-linear map
\begin{equation}\label{inducedmorp}
    \mathcal Q\cdot\mathbb M\colon J^{(0)}(X,\theta)\to J_{N^{\bullet},{\psi^\bullet}^{-1}}(I(X^{-1},\theta^{w_{kn}})).
\end{equation}
Note that $\theta^{w_{kn}}$ is also of type $(k,n)$. Now given a pair $(N(\mathcal Y),\psi_{\mathcal A})$ in the orbit $\lambda_{k,n}$, recall that we chose a generator $\operatorname{Wh}_{N(\mathcal Y),\psi_{\mathcal A}}$ of $\Hom_{\GL_{kn}(D)}(\theta,\Ind_{N(\mathcal Y)}^{\GL_{kn}(D)}(\psi_{\mathcal A}))\cong B$. This gives rise to a nontrivial map in $\Hom_{\GL_{kn}(D)}(\theta^{w_{kn}},\Ind_{N(\mathcal Y)}^{\GL_{kn}(D)}(\psi_{\mathcal A}^{-1}))$ by sending $v\in\theta$ to the map $g\mapsto\operatorname{Wh}_{N(\mathcal Y),\psi_{\mathcal A}}(v)(J^{-1}(g^*)^{-1}J)$. Note that we use here that $\psi_{\mathcal A}(J^{-1}(u^*)^{-1}J)=\psi_{\mathcal A}(u)^{-1}$ for $u\in N(\mathcal Y)$. 

We can now use the results of Section \ref{secratio} with respect to $(\pi,V)$, $I(X^{-1},\theta^{w_{kn}})$ and the map in $\Hom_{\GL_{kn}(D)}(\theta^{w_{kn}},\Ind_{N(\mathcal Y)}^{\GL_{kn}(D)}(\psi_{\mathcal A}^{-1}))$ described above. In particular Proposition \ref{annihipoly} implies that there is a polynomial $\mathcal P'\in S_{A\otimes B}$ such that $\mathcal P'$ annihilates 
$$M(\pi)^{\kappa}\otimes\left(J_{N^{\bullet},{\psi^\bullet}^{-1}}(I(X^{-1},\theta^{w_{kn}}))/J^{(0)}(X^{-1},\theta^{w_{kn}})\right).$$
By this and Equation (\ref{inducedmorp}) we obtain an $(A\otimes B)[X^{\pm1}]$-linear map
$$\mathcal P'(\operatorname{id}\otimes\mathcal Q\cdot\mathbb M)\colon M(\pi)^\kappa\otimes_{B[G\times G]}J^{(0)}(X,\theta)\to M(\pi)^\kappa\otimes_{B[G\times G]}J^{(0)}(X^{-1},\theta^{w_{kn}}).$$
Now Proposition \ref{bottompoly1} yields an $(A\otimes B)[X^{\pm1}]$-linear map
$$ M(\pi)^\kappa\otimes_{B[G\times G]}J^{(0)}(X^{-1},\theta^{w_{kn}})\to (A\otimes B)[X^{\pm1}]$$
and overall we obtain another element in (\ref{zetahom}) which we denote by $Z'$. More concretely, we have that $$Z'(v\otimes\lambda\otimes f)=Z(X^{-1},\mathcal P'\cdot(\operatorname{id}\otimes\mathcal Q\cdot\mathbb M)(v\otimes\lambda\otimes f))$$ for all $v\in V,\lambda\in\widetilde{V}$ and $f\in J^{(0)}(X,\theta)$.

Since the twisted doubling zeta integral is a generator of (\ref{zetahom}) we obtain that there is a unique element $c\in (A\otimes B)[X^{\pm1}]$ such that 
$$Z'(\varphi\otimes f)=cZ(X,\varphi\otimes f)$$
for all $\varphi\otimes f\in M(\pi)^\kappa\otimes_{B[G\times G]}J^{(0)}(X,\theta)$. To extend this result to functions in $I(X,\theta)$ note that by Proposition \ref{annihipoly} there is a polynomial $\mathcal P\in S_{A\otimes B}$ such that multiplication by $\mathcal P$ yields a map
$$M(\pi)^{\kappa}\otimes_{B[G\times G]}J_{N^\bullet,{\psi^\bullet}^{-1}}(I(X,\theta))\to M(\pi)^\kappa\otimes_{B[G\times G]}J^{(0)}(X,\theta).$$
Hence for any $f\in I(X,\theta)$ and $v\in V,\lambda\in\widetilde{V}$ we have that

\begin{align*}
   \mathcal PcZ(X,v,\lambda,f)&=cZ(X,\mathcal P(v\otimes\lambda\otimes f))\\
   &=Z'(\mathcal P(v\otimes\lambda\otimes f))\\
   &=Z(X^{-1},\mathcal P'\cdot(\operatorname{id}\otimes\mathcal Q\cdot\mathbb M)\mathcal P(v\otimes\lambda\otimes f))\\
   &=\mathcal P\mathcal P'Z(X^{-1},v,\lambda,\mathcal Q\cdot\mathbb M(f)).
\end{align*}
Hence if we set 
$$\Gamma(X,\pi,\theta,\psi)\coloneqq {\mathcal P'}^{-1}\mathcal Q^{-1}c\in S_{A\otimes B}^{-1}\cdot (A\otimes B)[X^{\pm1}]$$ we have that 
\begin{equation*}
Z(X^{-1},v,\lambda,\mathbb M(f))=\Gamma(X,\pi,\theta,\psi)Z(X,v,\lambda,f)  
\end{equation*}
for all $v\in V,\lambda\in\widetilde{V}$ and $f\in I(X,\theta)$. 
\end{proof}
\begin{remark}\label{resultoverc}
We will now explain how a multiplicity one theorem of Gourevitch-Kaplan in \cite{gourevitch2022multiplicity} over the complex numbers can be recovered from our results above. Note that when specializing our proofs to the complex numbers they basically reduce to the arguments of \cite{gourevitch2022multiplicity}, however as we are in the setting of \cite{cai2021twisted} we also cover the case of quaternionic unitary groups.

We set $R=A=B=\mathbb C$ and moreover fix an irreducible complex representation $(\pi,V)$ of $G$ and a complex representation $\theta$ of type $(k,n)$ as in Section 1.4 of \cite{gourevitch2022multiplicity}. Recall from Remark \ref{evatqs} that for any complex number $s$ we have that $I(X,\theta)\otimes_{\mathbb C[X^{\pm1}]}\mathbb C_s\cong I(q_E^{-s},\theta)$. The result we want to show (and which Gourevitch and Kaplan show in \cite{gourevitch2022multiplicity} for a class of classical groups) is that outside of a discrete set of values of $s$ we have that
\begin{equation}\label{dimone}
\dim_{\mathbb C}\Hom_{\mathbb C[G\times G]}\left(J_{N^\bullet,{\psi^\bullet}^{-1}}(I(q_E^{-s},\theta)),\widetilde{V}\otimes V\otimes\kappa^{-1}\right)=1.\end{equation}
Note that by the first part of Proposition \ref{homcomp} we have an isomorphism of $\mathbb C[X^{\pm1}]$-modules
$$M(\pi)^\kappa\otimes_{\mathbb C[G\times G]}J^{(0)}(X,\theta)\cong (V\otimes_{\mathbb C}\widetilde{V})\otimes_{\mathbb C[G^\Delta]}\mathbb C[X^{\pm1}]$$
and hence in particular an isomorphism
$$M(\pi)^\kappa\otimes_{\mathbb C[G\times G]}J^{(0)}(X,\theta)\otimes_{\mathbb C[X^{\pm1}]}\mathbb C_s\cong (V\otimes_\mathbb C\widetilde{V})\otimes_{\mathbb C[G^\Delta]}\mathbb C$$
of $\mathbb C$-vector spaces. Hence we can compute for any $s\in\mathbb C$ that 
\begin{align*}
    \Hom_{\mathbb C}(M(\pi)^\kappa\otimes_{\mathbb C[G\times G]}J^{(0)}(X,\theta)\otimes_{\mathbb C[X^{\pm1}]}\mathbb C_s,\mathbb C)&\cong\Hom_{\mathbb C}((V\otimes_\mathbb C\widetilde{V})\otimes_{\mathbb C[G^\Delta]}\mathbb C,\mathbb C)\\
    &\cong\Hom_{\mathbb C[G^\Delta]}(V\otimes\widetilde{V},\mathbb C)\\
    &\cong\Hom_{\mathbb C[G]}(V,\widetilde{\widetilde{V}})\\
    &\cong\Hom_{\mathbb C[G]}(V,V)\\
    &\cong\mathbb C,
\end{align*}
where the last step follows by Schur's lemma. By Propositions \ref{firstorbred} and \ref{annihipoly} there is a polynomial $\mathcal P$ in $S_\mathbb C$ such that multiplication by $\mathcal P$ yields a map
$$M(\pi)^{\kappa}\otimes_{\mathbb C[G\times G]}J_{N^\bullet,{\psi^\bullet}^{-1}}(I(X,\theta))\to M(\pi)^{\kappa}\otimes_{\mathbb C[G\times G]}J^{(0)}(X,\theta).$$
However, if $s\in\mathbb C$ satisfies that $\mathcal P(q_E^{-s})\not=0$ (which is clearly true for $s$ outside a discrete set), this gives rise to a $\mathbb C$-vector space isomorphism
$$M(\pi)^{\kappa}\otimes_{\mathbb C[G\times G]}J_{N^\bullet,{\psi^\bullet}^{-1}}(I(X,\theta))\otimes_{\mathbb C[X^{\pm1}]}\mathbb C_s\cong M(\pi)^{\kappa}\otimes_{\mathbb C[G\times G]}J^{(0)}(X,\theta)\otimes_{\mathbb C[X^{\pm1}]}\mathbb C_s.$$
Since $J_{N^\bullet,{\psi^\bullet}^{-1}}(I(X,\theta))\otimes_{\mathbb C[X^{\pm1}]}\mathbb C_s\cong J_{N^\bullet,{\psi^\bullet}^{-1}}(I(q_E^{-s},\theta))$ we obtain that 
$$\Hom_{\mathbb C}(M(\pi)^{\kappa}\otimes_{\mathbb C[G\times G]}J_{N^\bullet,{\psi^\bullet}^{-1}}(I(q_E^{-s},\theta)),\mathbb C)\cong\mathbb C,$$
outside of a discrete set of values of $s$. Then Equation (\ref{dimone}) follows since we have an isomorphism of $\mathbb C$-vector spaces
$$M(\pi)^{\kappa}\otimes_{\mathbb C[G\times G]}J_{N^\bullet,{\psi^\bullet}^{-1}}(I(q_E^{-s},\theta))\cong J_{N^\bullet,{\psi^\bullet}^{-1}}(I(q_E^{-s},\theta))\otimes_{\mathbb C[G\times G]}M(\pi)^{\kappa},$$
and tensor-hom adjunction.
\end{remark}

\appendix
\section{Results on $G^{\Box,k}$}\label{appendix}
In this appendix we collect some results purely on the structure of the group $G^{\Box,k}$ that are needed throughout this article.

Recall that $P$ is the parabolic subgroup of $G^{\Box,k}$ that stabilizes the maximal totally isotropic subspace $W^{\bigtriangleup,k}$. With respect to the basis $e_1,\dotsc,e_{2kn}$ of $W^{\Box,k}$ one has that 
            $$P=\begin{pmatrix}
                *&*\\0_{kn}&*
            \end{pmatrix}\cap G^{\Box,k}.$$
    Let $K$ be the compact open subgroup
		$$\left\{g\in G^{\Box,k}\mid \begin{pmatrix}
			1_{kn}&\\&J 
		\end{pmatrix}g\begin{pmatrix}
			1_{kn}&\\&J^{-1} 
		\end{pmatrix}\in\GL_{2kn}(\mathcal O_D)\right\}$$
    of $G^{\Box,k}$. We then have the following explicit Iwasawa decomposition and we would like to thank Shaun Stevens for suggesting this proof. 
    \begin{proposition}\label{iwasawadec} Suppose that the residue characteristic of $F$ is odd. Then we have that $P\cdot K=G^{\Box,k}$.
    \end{proposition}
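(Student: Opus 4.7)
The plan is to reduce the Iwasawa identity to a transitivity statement about $K$. Since $P$ is the stabiliser of $W^{\bigtriangleup,k}$, Witt's theorem identifies the coset space $P\backslash G^{\Box,k}$ with the set $\mathcal M$ of maximal totally isotropic $D$-subspaces of $W^{\Box,k}$. Hence $P \cdot K = G^{\Box,k}$ is equivalent to $K$ acting transitively on $\mathcal M$: given $g \in G^{\Box,k}$, setting $L = g^{-1}(W^{\bigtriangleup,k}) \in \mathcal M$ and producing $k \in K$ with $k^{-1}(W^{\bigtriangleup,k}) = L$ yields $gk^{-1} \in P$ and hence $g \in PK$.

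The first key observation I would establish is that $\mathcal L'$ is self-dual with respect to $h^{\Box,k}$. A direct computation in the ordered $\mathcal O_D$-basis $(e_1,\dotsc,e_{kn}, J^{-1}e_{kn+1},\dotsc,J^{-1}e_{2kn})$ of $\mathcal L'$ shows that the Gram matrix of $h^{\Box,k}$ on $\mathcal L'$ takes the hyperbolic shape $\begin{pmatrix} 0 & 1_{kn} \\ \epsilon\cdot 1_{kn} & 0 \end{pmatrix}$, so that $h^{\Box,k}$ restricts to a perfect $\mathcal O_D$-pairing on $\mathcal L'$ and $K$ is exactly the full isometry group of this hyperbolic lattice.

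Next I would prove transitivity of $K$ on $\mathcal M$ by a lattice completion argument. Fix $L \in \mathcal M$ and set $L_0 \coloneqq L \cap \mathcal L'$; this is a saturated free right $\mathcal O_D$-submodule of $\mathcal L'$ of rank $kn$, totally isotropic, and with $L_0 \otimes_{\mathcal O_D} D = L$. Self-duality of $\mathcal L'$ identifies the quotient $\mathcal L'/L_0^{\perp}$ with $\operatorname{Hom}_{\mathcal O_D}(L_0,\mathcal O_D)$, which is free of rank $kn$; choosing an $\mathcal O_D$-basis $l_1,\dotsc,l_{kn}$ of $L_0$ one lifts the corresponding dual basis to vectors $\tilde l_1^*,\dotsc,\tilde l_{kn}^* \in \mathcal L'$ satisfying $h^{\Box,k}(l_i,\tilde l_j^*) = \delta_{ij}$. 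By subtracting suitable $\mathcal O_D$-combinations of the $l_i$ from the $\tilde l_j^*$ one adjusts them into isotropic vectors $l_1^*,\dotsc,l_{kn}^*$ with $h^{\Box,k}(l_i^*, l_j^*) = 0$ and still $h^{\Box,k}(l_i, l_j^*) = \delta_{ij}$. The unique $D$-linear endomorphism of $W^{\Box,k}$ sending the basis $(e_1,\dotsc,e_{kn}, J^{-1}e_{kn+1},\dotsc,J^{-1}e_{2kn})$ to $(l_1,\dotsc,l_{kn}, l_1^*,\dotsc,l_{kn}^*)$ is then an element of $G^{\Box,k}$ preserving $\mathcal L'$, hence an element of $K$, and by construction it sends $W^{\bigtriangleup,k}$ onto $L$.

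The main obstacle is the isotropic-adjustment step, and this is precisely where the hypothesis on odd residue characteristic enters. In the orthogonal and (skew-)hermitian cases the Gram–Schmidt-type replacement $\tilde l_j^* \mapsto \tilde l_j^* - \tfrac12 h^{\Box,k}(\tilde l_j^*, \tilde l_j^*)\,l_j$, iterated so as to also kill the mutual pairings $h^{\Box,k}(l_i^*, l_j^*)$ for $i \neq j$, requires $\tfrac12 \in \mathcal O_D$, i.e.\ $2 \in \mathcal O_D^{\times}$; in the symplectic case this step is vacuous because $h^{\Box,k}(v,v) = 0$ automatically. The odd residue characteristic assumption is exactly what secures the invertibility of $2$ and hence the lattice-Witt extension underlying the transitivity of $K$ on $\mathcal M$.
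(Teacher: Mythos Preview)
Your proposal is correct and follows essentially the same approach as the paper. Both arguments observe that $\mathcal L'$ is self-dual with hyperbolic Gram matrix $\begin{pmatrix} 0 & 1_{kn}\\ \epsilon\cdot 1_{kn} & 0\end{pmatrix}$, intersect the lattice with a maximal isotropic subspace, extend to a full $\mathcal O_D$-basis, and then perform a Gram--Schmidt correction requiring $\tfrac12\in\mathcal O_D$; the only cosmetic difference is that the paper works with $g(\mathcal L')\cap W^{\bigtriangleup,k}$ while you work with $\mathcal L'\cap g^{-1}(W^{\bigtriangleup,k})$, which are $g$-translates of one another.
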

    \begin{proof}
          Let $\mathcal L'$ be the free $\mathcal O_D$-module in $W^{\Box,k}$ spanned by 
            $$e_j'=\begin{pmatrix}
                1_{kn}&\\&J^{-1}
            \end{pmatrix}e_j,$$
            where $1\leq j\leq 2kn$. Note that $K$ is the stabiliser of $\mathcal L'$ in $G^{\Box,k}$. The Gram matrix of $e_1',\dotsc,e_{2kn}'$ equals
            $$\begin{pmatrix}
                &1_{kn}\\
                \epsilon&
            \end{pmatrix}$$
            and in particular $\mathcal L'$ is self-dual.
            
            Now for any element $g\in G^{\Box,k}$ we consider the free $\mathcal O_D$-module $g(\mathcal L')$. Note that $g(\mathcal L')$ is also self-dual. Since $\mathcal O_D$ is hereditary $g(\mathcal L')\cap W^{\bigtriangleup,k}$ is a free $\mathcal O_D$-module and we can find a basis $f_1,\dotsc,f_{kn}$ of $g(\mathcal L')\cap W^{\bigtriangleup,k}$. Since $g(\mathcal L')/g(\mathcal L')\cap W^{\bigtriangleup,k}$ is also free we can find elements $f_{kn+1},\dotsc,f_{2kn}$ such that $\mathcal F=\{f_1,\dotsc,f_{2kn}\}$ is an $\mathcal O_D$-basis of $g(\mathcal L')$. The Gram matrix of $\mathcal F$ has the form
            $$\begin{pmatrix}
                &Y\\
                \epsilon Y^*&Z
            \end{pmatrix},$$
            where $Y\in\GL_{kn}(\mathcal O_D), Z\in M_{kn}(\mathcal O_D)$ and $Z=\epsilon Z^*$. Now 
            $$\mathcal F'=\begin{pmatrix}
                1_{kn}&\\
                &Y^{-1}
            \end{pmatrix}\mathcal F=\left(\begin{pmatrix}
                1_{kn}&\\
                &Y^{-1}
            \end{pmatrix}f_1,\dotsc,\begin{pmatrix}
                1_{kn}&\\
                &Y^{-1}
            \end{pmatrix}f_{2kn}\right)$$
            is an $\mathcal O_D$-basis of $g(\mathcal L')$ with Gram matrix
            $$\begin{pmatrix}
                &1_{kn}\\
                \epsilon& {Y^*}^{-1}ZY^{-1}
            \end{pmatrix}.$$
            Let $Z'=-\epsilon/2\cdot {Y^*}^{-1}ZY^{-1}$. Since we assume that the residue characteristic of $F$ is odd we have that $Z'\in M_n(\mathcal O_D)$. Then
            $$\mathcal F''=\begin{pmatrix}
                1_{kn}&Z'\\
                &1_{kn}
            \end{pmatrix}\mathcal F'=(f_1'',\dotsc,f_{2kn}'')$$
            is an $\mathcal O_D$-basis of $g(\mathcal L')$ with Gram matrix
            $$\begin{pmatrix}
                &1_{kn}\\
                \epsilon&
            \end{pmatrix}.$$
            This implies that the automorphism $k_g$ of $W^{\Box,k}$ that maps $f_i''$ to $ge_i$ is an element of $G^{\Box,k}$ and since it stabilizes $g(\mathcal L')$ an element of $gKg^{-1}$. Now note that
            $p_g=k_g^{-1}g$ is an element of $G^{\Box,k}$ that sends $e_i$ to $f_i''$ and hence stabilizes $W^{\bigtriangleup,k}$, which implies that $p_g\in P$. Since $g=p_g(g^{-1}k_gg)$ and $g^{-1}k_gg\in K$ the result follows.
    \end{proof}
  
Recall the definition of the character $\psi^\bullet$ in Section \ref{psibullet}. We will now prove a different formula for the character $\psi^\bullet$. Note that for any $i=1,\dotsc,k-1$ and $u\in N^\bullet$ the endomorphism $u-\id$ maps $(\mathcal F_{i-1}^\bullet)^\perp$ to $(\mathcal F_{i}^\bullet)^\perp$ and hence gives rise to an element in $$\Hom_D((\mathcal F_{i-1}^\bullet)^\perp/(\mathcal F_{i}^\bullet)^\perp,(\mathcal F_{i}^\bullet)^\perp/(\mathcal F_{i+1}^\bullet)^\perp)$$ which we denote by $u_i'$.
For $i=1,\dotsc,k-2$ we define the elements $$A_i'\in\Hom_D((\mathcal F_i^\bullet)^\perp/(\mathcal F_{i+1}^\bullet)^\perp,(\mathcal F_{i-1}^\bullet)^\perp/(\mathcal F_{i}^\bullet)^\perp)\cong\Hom_D(W^\bigtriangleup_{k-i},W^\bigtriangleup_{k+1-i})$$ to be $(w,w)_{k-i}\mapsto (-w,-w)_{k-i+1}$. Moreover, we set $$A_{k-1}'\in\Hom_{D}(\mathcal (\mathcal F^\bullet_{k-1})^\perp/\mathcal F^\bullet_{k-1},(\mathcal F^\bullet_{k-2})^\perp/(\mathcal F^\bullet_{k-1})^\perp)\cong\Hom_D(W^\Box_1,W^\bigtriangleup_2)$$
    to be the map $(w_1,w_2)_1\mapsto (-w_1,-w_1)_2$. We then have the following.
\begin{lemma}\label{diffformula}
    For any $u\in N^\bullet$ we have that
    $$\psi^{\bullet}(u)=\psi\left(\sum_{i=1}^{k-1}\operatorname{Trd}(u_i'\circ A_i')\right).$$
\end{lemma}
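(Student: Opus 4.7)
The plan is to exploit the natural dualities induced by the form $h^{\Box,k}$ between the quotients of the flag $\mathcal F^\bullet$ and the quotients of the dual flag $0\subseteq(\mathcal F_{k-1}^\bullet)^\perp\subseteq\dotsc\subseteq(\mathcal F_0^\bullet)^\perp=W^{\Box,k}$. Concretely, for each $i=1,\dotsc,k$, the form $h^{\Box,k}$ descends to a nondegenerate $D$-valued pairing
$$\mathcal F_i^\bullet/\mathcal F_{i-1}^\bullet\ \times\ (\mathcal F_{i-1}^\bullet)^\perp/(\mathcal F_i^\bullet)^\perp\ \to\ D,$$
identifying each quotient with the $D$-dual of the other. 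I will show that under these dualities, $u_i$ and $A_i$ are mapped to $-u_i'$ and $-A_i'$ respectively; the two minus signs then cancel, producing the same reduced trace.

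The first sign identity $u_i^{\,*}=-u_i'$ follows from the isometry property of $u\in N^\bullet$. Writing $u=\id+(u-\id)$ in the identity $h^{\Box,k}(uv_1,uv_2)=h^{\Box,k}(v_1,v_2)$ for $v_1\in\mathcal F_{i+1}^\bullet$, $v_2\in(\mathcal F_{i-1}^\bullet)^\perp$, one obtains
$$h^{\Box,k}((u-\id)v_1,v_2)+h^{\Box,k}(v_1,(u-\id)v_2)+h^{\Box,k}((u-\id)v_1,(u-\id)v_2)=0,$$
and the last term vanishes since $(u-\id)v_1\in\mathcal F_i^\bullet$ is orthogonal to $(u-\id)v_2\in(\mathcal F_i^\bullet)^\perp$. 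Passing to the quotients, this becomes exactly the statement that $u_i'$ is the negative transpose of $u_i$.

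The second sign identity $A_i^{\,*}=-A_i'$ is a direct, case-free verification from the explicit definitions. For $1\leq i\leq k-2$, one takes $\xi=(w,-w)_{k+1-i}\in W_{k+1-i}^\bigtriangledown$ and $\eta=(w',w')_{k-i}\in W_{k-i}^\bigtriangleup$, computes
$$h^{\Box,k}(\eta,A_i\xi)=h^{\Box,k}\bigl((w',w')_{k-i},(w,-w)_{k-i}\bigr)=2h(w',w),$$
and reads off $A_i^{\,*}\eta=(w',w')_{k+1-i}$, whereas by definition $A_i'\eta=(-w',-w')_{k+1-i}$. The case $i=k-1$ is analogous, using that $A_{k-1}$ sends $(w,-w)_2$ to $(2w,0)_1$ and $A_{k-1}'$ sends $(w_1,w_2)_1$ to $(-w_1,-w_1)_2$; here the self-duality of $W_1^\Box$ replaces the hyperbolic duality used before, but the same calculation yields $A_{k-1}^{\,*}=-A_{k-1}'$.

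Combining these two identities gives
$$(u_i\circ A_i)^{\,*}=A_i^{\,*}\circ u_i^{\,*}=(-A_i')\circ(-u_i')=A_i'\circ u_i'.$$
Invariance of the reduced trace under transpose and under cyclic permutation then yields
$$\operatorname{Trd}(u_i\circ A_i)=\operatorname{Trd}\bigl((u_i\circ A_i)^{\,*}\bigr)=\operatorname{Trd}(A_i'\circ u_i')=\operatorname{Trd}(u_i'\circ A_i')$$
for each $i=1,\dotsc,k-1$. Summing over $i$ and applying $\psi$ yields the formula. The only real obstacle is the careful bookkeeping of the dualities and sign conventions, especially for the terminal index $i=k-1$ where $\mathcal F_k^\bullet/\mathcal F_{k-1}^\bullet\cong W_1^\Box$ is self-dual and $A_{k-1}$, $A_{k-1}'$ have different ranks than in the generic case; the cyclicity of the trace reduces this to an endomorphism of $W_2^\bigtriangleup\cong(W_2^\bigtriangledown)^*$, where the same adjointness argument goes through.
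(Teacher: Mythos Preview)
Your proposal is correct and follows essentially the same route as the paper: both arguments use the nondegenerate pairings that $h^{\Box,k}$ induces between $\mathcal F_i^\bullet/\mathcal F_{i-1}^\bullet$ and $(\mathcal F_{i-1}^\bullet)^\perp/(\mathcal F_i^\bullet)^\perp$, show that $u_i$ and $A_i$ correspond to $-u_i'$ and $-A_i'$ under these dualities, and conclude via invariance of the reduced trace under passing to the dual map. The only cosmetic differences are that the paper phrases the duality via explicit isomorphisms $\Phi_i$ to $\prescript{\rho}{}{(\mathcal F_i^\bullet/\mathcal F_{i-1}^\bullet)}^\vee$ and derives the sign for $u_i$ using $u^{-1}-\id$ rather than your expansion $u=\id+(u-\id)$, while leaving the verification for $A_i$ as ``straightforward to check'' where you spell it out.
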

\begin{proof}
For a right $D$-module $M$ we denote the left $D$-module $\Hom_D(M,D)$ by $M^\vee$. We can define a right $D$-module structure on $M^\vee$ by setting that $(\varphi\cdot a)(w)=\rho(a)\varphi(w)$ for $a\in D,\varphi\in M^\vee$ and $w\in M$. We denote this right $D$-module as $\prescript{\rho}{}{M}^\vee$. Moreover, for a $D$-linear morphism $\alpha\colon M\to M'$ between to right $D$-modules $M$ and $M'$ we define a $D$-linear map $\alpha^\vee\colon M'^\vee\to M^\vee$ by setting that $\alpha^\vee(f)= f\circ\alpha$ for $f\in M'^\vee$.
For any integer $1\leq i\leq k$ the morphism of right $D$-modules
    \begin{align*}
    \Phi_i\colon(\mathcal F_{i-1}^\bullet)^\perp/(\mathcal F_{i}^\bullet)^\perp&\to\prescript{\rho}{}{(\mathcal F_i^\bullet/\mathcal F_{i-1}^\bullet)}^\vee\\
    x+(\mathcal F_{i}^\bullet)^\perp&\mapsto\lambda_x,
    \end{align*}
    where $\lambda_x(w)\coloneqq 
h^{\Box,k}(x,w)$ for $w\in\mathcal F_i^\bullet/\mathcal F_{i-1}^\bullet$, is an isomorophism by dimension reasons (where we again set $\mathcal F_k^\bullet=(\mathcal F_{k-1}^\bullet)^\perp$). 

Fix an integer $1\leq i\leq k-1$. We claim that for any $u\in N^\bullet$ we have
\begin{equation}\label{commdiag}
    (-u_i)^\vee\circ\Phi_i=\Phi_{i+1}\circ u_i'.
\end{equation} Let $x+(\mathcal F_{i}^\bullet)^\perp\in(\mathcal F_{i-1}^\bullet)^\perp/(\mathcal F_{i}^\bullet)^\perp$ and $w+\mathcal F_{i}^\bullet\in\mathcal F_{i+1}^\bullet/\mathcal F_{i}^\bullet$. We have that $(\Phi_{i+1}\circ u_i')(x)=\lambda_{(u-\id)x}$ and since $(u^{-1}-\id)w-(-u_i)w\in\mathcal F_i^\bullet$ we obtain
$$(\Phi_{i+1}\circ u_i')(x)(w)=\lambda_{(u-\id)x}(w)=h^{\Box,k}(ux-x,w)=h^{\Box,k}(x,(u^{-1}-\id)w)=h^{\Box,k}(x,-u_iw).$$
However, note that
$$((-u_i)^\vee\circ\Phi_i)(x)(w)=\lambda_x(-u_iw)=h^{\Box,k}(x,-u_iw),$$
which implies Equation \ref{commdiag}. It is straightforward to check that 
$$\Phi_{i}\circ A_i'=(-A_i)^\vee\circ\Phi_{i+1}$$
and hence we obtain a commutative diagram
    $$\begin{tikzcd}
       (\mathcal F_{i}^\bullet)^\perp/(\mathcal F_{i+1}^\bullet)^\perp\arrow[r,"A_i'"]\arrow[d]& (\mathcal F_{i-1}^\bullet)^\perp/(\mathcal F_i^\bullet)^\perp\arrow[r,"u_i'"]\arrow[d]&(\mathcal F_{i}^\bullet)^\perp/(\mathcal F_{i+1}^\bullet)^\perp\arrow[d]\\
        \prescript{\rho}{}{(\mathcal F_{i+1}^\bullet/\mathcal F_{i}^\bullet)}^\vee\arrow[r,"(-A_i)^\vee"]&\prescript{\rho}{}{(\mathcal F_i^\bullet/\mathcal F_{i-1}^\bullet)}^\vee\arrow[r,"-u_i^\vee"]&\prescript{\rho}{}{(\mathcal F_{i+1}^\bullet/\mathcal F_{i}^\bullet)}^\vee.
    \end{tikzcd}$$
  In particular this implies that $$\operatorname{Trd}(u_i'\circ A_i')=\operatorname{Trd}((-u_i)^\vee\circ(-A_i)^\vee).$$ Note that by Theorem 4.2 of \cite{knus1998book}, when choosing a basis the matrix associated to $A_i^\vee$ equals $A_i^*$, which is given by taking the transpose and applying the involution $\rho$ to all entries of $A_i$. We obtain that $$\operatorname{Trd}((-u_i)^\vee\circ(-A_i)^\vee)=\operatorname{Trd}((A_i\circ u_i)^\vee)=\operatorname{Trd}(A_i\circ u_i)=\operatorname{Trd}(u_i\circ A_i).$$   
Since by definition $$\psi^\bullet(u)=\psi\left(\sum_{i=1}^{k-1}\operatorname{Trd}(u_i\circ A_i)\right)$$
the result follows.
\end{proof}

Recall the notation from Section \ref{auxil}, in particular the subgroups $\overline{N}_i$ and $\mathfrak a_i$ of $\varepsilon_i^{-1}M\varepsilon_i$, for which we have the following result. 
\begin{proposition}\label{semidir}
The equality $\overline{N}_i=(\varepsilon_i^{-1}P\varepsilon_i\cap N^\bullet)\rtimes\mathfrak a_i$ holds.
\end{proposition}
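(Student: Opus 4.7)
The plan is to exhibit $\overline{N}_i$ as the unipotent radical of a parabolic of $\varepsilon_i^{-1}M\varepsilon_i\cong\GL(L_i)$ adapted to the flag $\mathcal Y_i$, and to single out the component $V_1\to V_0$ where $V_0=(0,U_i)_1$ and $V_1=\tilde L_i/V_0$. The subgroup $\mathfrak a_i$ will correspond to elements with only this block nonzero, while $\varepsilon_i^{-1}P\varepsilon_i\cap N^\bullet$ will correspond to elements with this block zero.

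First I would verify both subgroups are contained in $\overline{N}_i$. The inclusion $\mathfrak a_i\subseteq\overline{N}_i$ is immediate from the definitions. For $\varepsilon_i^{-1}P\varepsilon_i\cap N^\bullet\subseteq\overline{N}_i$, the key observation is that any $u\in N^\bullet$ acts trivially on $(\mathcal F_{k-1}^\bullet)^\perp/\mathcal F_{k-1}^\bullet\cong W_1^\Box$, so $u(\tilde L_i^-)\subseteq\tilde L_i^-\oplus\mathcal F_{k-1}^\bullet=\varepsilon_i^{-1}(W^{\bigtriangledown,k})$; combined with $u\in\varepsilon_i^{-1}P\varepsilon_i$ this gives $u\in\varepsilon_i^{-1}M\varepsilon_i$. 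The same type of analysis applied to the images $(u-\id)(W_j^\bigtriangleup)\subseteq L_i\cap(\mathcal F_{k-j+1}^\bullet)^\perp$ yields the flag conditions of $\mathcal Y_i$; in particular $(u-\id)(\tilde L_i)\subseteq L_i\cap\mathcal F_{k-1}^\bullet=0$, so $u$ fixes $\tilde L_i$ pointwise, i.e.\ the $V_1\to V_0$ block of $u-\id$ vanishes.

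The main technical point, and the one I expect to require the most care, is the converse: any $u\in\overline{N}_i$ with $u|_{\tilde L_i}=\id$ already lies in $N^\bullet$. This follows from isometry duality. Relative to the decomposition $L_i=\tilde L_i\oplus\bigoplus_{j\geq 2}W_j^\bigtriangleup$, $u|_{L_i}$ has block form with identity in the upper-left and some upper-triangular unipotent $D$ in the lower-right, plus an off-diagonal block $B$. The induced action on $\varepsilon_i^{-1}(W^{\bigtriangledown,k})$ is the inverse-$\rho$-transpose; concretely it is identity on $\tilde L_i^-$ modulo $\mathcal F_{k-1}^\bullet$, and acts on $\mathcal F_{k-1}^\bullet=W_k^\bigtriangledown\oplus W_{k-1}^\bigtriangledown\oplus\dotsc\oplus W_2^\bigtriangledown$ as lower-triangular unipotent with respect to the $\mathcal F^\bullet$-ordering. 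These are exactly the defining conditions for $N^\bullet$, and the remaining extended-flag requirements reduce to the $\mathcal Y_i$-conditions already satisfied on $L_i$.

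With the characterisations in place, the semidirect product decomposition is direct. Given $u\in\overline{N}_i$, let $\alpha\in\mathfrak a_i$ realise the $V_1\to V_0$ block of $u-\id$; cross-terms in $u\alpha^{-1}-\id$ at this position vanish because any path through an intermediate index sits in $\{0,1\}$ where one of the factors is trivial, so $u\alpha^{-1}\in\varepsilon_i^{-1}P\varepsilon_i\cap N^\bullet$. Triviality of the intersection $\mathfrak a_i\cap(\varepsilon_i^{-1}P\varepsilon_i\cap N^\bullet)$ is immediate, and normality of $\varepsilon_i^{-1}P\varepsilon_i\cap N^\bullet$ in $\overline{N}_i$ follows from an analogous block computation for $uu_0u^{-1}-\id=u\,N_{u_0}\,u^{-1}$, whose $V_1\to V_0$ entry is forced to zero by the vanishing of $N_{u_0,10}$.
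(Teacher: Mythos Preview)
Your proof is correct and follows essentially the same strategy as the paper's: both show the two subgroups lie in $\overline{N}_i$, both characterise $\varepsilon_i^{-1}P\varepsilon_i\cap N^\bullet$ inside $\overline{N}_i$ as the elements acting trivially on $\tilde L_i$, and both produce a splitting onto $\mathfrak a_i$. The only cosmetic difference is that the paper packages the splitting as a group homomorphism $\overline{N}_i\to\mathfrak a_i$, $x\mapsto\xi_k(\varrho_x)$, obtained by projecting to $W_1^\Box$ and checking the result is an isometry, whereas you read off the $V_1\to V_0$ block directly and invoke the known isomorphism $\mathfrak a_i\cong\Hom_D(\tilde L_i/(0,U_i)_1,(0,U_i)_1)$; these are the same map.
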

\begin{proof} We start by showing that both $\varepsilon_i^{-1}P\varepsilon_i\cap N^\bullet$ and $\mathfrak a_i$ are subgroups of $\overline{N}_i$. It is straightforward to see that $\xi_k(N_i')$ is a subgroup of $N(\mathcal Y_i)$ and hence $\mathfrak a_i\subseteq\overline{N}_i$. Moreover, if $g\in \varepsilon_i^{-1}M\varepsilon_i\cap N^\bullet$ we have that $g(L_i)=L_i$ but since $g\in N^\bullet$ we also have $(g-\id)(W_1^\Box)\subseteq W_2^\bigtriangledown\oplus\dotsc\oplus W_k^\bigtriangledown$ and hence $g|_{\tilde{L}_i}=\id$. Since $g\in N^\bullet$ we have for $2\leq j\leq k$ that $$(g-\id)(W_j^\bigtriangleup)\subseteq W_1^{\Box}\oplus\dotsc\oplus W_{j-1}^{\Box}\oplus W_{j}^{\bigtriangledown}\oplus\dotsc\oplus W_k^\bigtriangledown,$$
but since $g(L_i)=L_i$ we obtain that $g\in N(\mathcal Y_i)$.

We proceed by constructing a map $\overline{N}_i\to\mathfrak a_i$ that is the identity on $\mathfrak a_i$. Note that any element $x$ in $P(\mathcal Y_i)\cap\varepsilon_i^{-1}M\varepsilon_i$ induces an endomorphism $\varrho_x$ of $W_1^\Box$ by composing $x$ with the projection $W^{\Box,k}\twoheadrightarrow W^{\Box,k}/(W_2^\Box\oplus\dotsc\oplus W_k^\Box)$. We will now prove that $\varrho_x$ is an element of $G^{\Box,1}$.

Clearly, for $w\in\tilde{L}_i$ we have that $\varrho_x(w)\in\tilde{L}_i$ and $x(w)=\varrho_x(w)$. For $w^-\in\tilde{L}_i^-$ we have since $x\in\varepsilon_i^{-1}M\varepsilon_i$ that 
$x(w^-)=\varrho_x(w^-)+s$ where $s\in W_2^\bigtriangledown\oplus\dotsc\oplus W_k^\bigtriangledown$. Hence for any $w_1,w_2\in W_1^{\Box}$ we can write
   $$x(w_j)=\varrho_x(w_j)+s_j,$$ where 
   $s_j\in W_2^\bigtriangledown\oplus\dotsc\oplus W_k^\bigtriangledown$, for $j=1,2$. Since $$(W_2^\bigtriangledown\oplus\dotsc\oplus W_k^\bigtriangledown)^\perp=W_1^\Box\oplus W_2^\bigtriangledown\oplus\dotsc\oplus W_k^\bigtriangledown,$$ we obtain 
\begin{align*}
    h^{\Box,k}(w_1,w_2)&=h^{\Box,k}(x(w_1),x(w_2))\\
    &=h^{\Box,k}(\varrho_x(w_1)+s_1,\varrho_x(w_2)+s_2)\\
    &=h^{\Box,k}(\varrho_x(w_1),\varrho_x(w_2)).
\end{align*}
For $w_1,w_2\in W_1^\Box$ we clearly have $h^{\Box,k}(w_1,w_2)=h^{\Box,1}(w_1,w_2)$ and since $\varrho_{x^{-1}}=\varrho_x^{-1}$ we see that $\varrho_x$ yields an element of $G^{\Box,1}$.

If $x\in N(\mathcal Y_i)\cap\varepsilon_i^{-1}M\varepsilon_i$, then $x$ acts as the identity on $(0,U_i)_1$ and on $\tilde{L}_i/(0,U_i)_1$, which immediately implies that $\varrho_x$ is in $N_i'$. Moreover, $\varrho_x(\tilde{L}_i^-)=\tilde{L}_i^-$ which shows that $\xi_k(\varrho_x)\in\mathfrak a_i$. Hence we have group homomorphism
$\overline{N}_i\to\mathfrak a_i$ which is clearly the identity on $\mathfrak a_i$.

Since $N^\bullet$ acts as the identity on $(W_1^\Box\oplus W_2^\bigtriangledown\oplus\dotsc\oplus W_k^\bigtriangledown)/(W_2^\bigtriangledown\oplus\dotsc\oplus W_k^\bigtriangledown)$ we see that $\varepsilon_i^{-1}M\varepsilon_i\cap N^\bullet$ is in the kernel of the above morphism. If we have an element $g\in\overline{N}_i$ in this kernel then since $g\in\varepsilon_i^{-1}M\varepsilon_i$ we see that $g$ stabilizes $\varepsilon_i^{-1}(W^{\bigtriangledown}_2\oplus\dotsc\oplus W_k^\bigtriangledown)$ and this together with $g\in N(\mathcal Y_i)$ implies that
$$(g-\id)(W_j^\bigtriangledown)\subseteq W_{j+1}^\bigtriangledown\oplus\dotsc\oplus W_k^\bigtriangledown$$
for $j\geq 2$. Moreover, since we assume that $\overline{g}=\id_{W_1^\Box}$ and $g(\tilde{L}_i)=\tilde{L}_i$ we obtain that
$g|_{\tilde{L}_i}=\id$. Now as $g\in\varepsilon_i^{-1}M\varepsilon_i\cap N(\mathcal Y_i)$ we obtain that $(g-\id)\tilde{\varepsilon}_i^{-1}(W_1^\bigtriangledown)\subseteq W_2^\bigtriangledown\oplus\dotsc\oplus W_k^\bigtriangledown$ which implies that $g\in N^\bullet$.
\end{proof}

\begin{lemma}\label{semidirchar}
    For $x\in(\varepsilon_i^{-1}P\varepsilon_i\cap N^\bullet)$ and $u\in\mathfrak a_i$ we have that 
    $$\psi^\bullet(u^{-1}xu)=\psi^\bullet(x).$$
\end{lemma}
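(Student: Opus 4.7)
The strategy is to use the alternative formula for $\psi^\bullet$ from Lemma \ref{diffformula}, because it is much better adapted to conjugation by $\mathfrak{a}_i$ than the original definition. Note first that by Proposition \ref{semidir}, $\mathfrak{a}_i$ normalizes $\varepsilon_i^{-1}P\varepsilon_i \cap N^\bullet$, so $u^{-1}xu \in N^\bullet$ and the statement makes sense. I plan to compute, for each $j$, the induced map $(u^{-1}xu - \id)_j' = u^{-1}(x-\id)u$ on $(\mathcal F_{j-1}^\bullet)^\perp/(\mathcal F_j^\bullet)^\perp$, and show that after pairing with $A_j'$ its reduced trace agrees with that of $x_j' \circ A_j'$.

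The key computation of these quotients gives $(\mathcal F_{j-1}^\bullet)^\perp/(\mathcal F_j^\bullet)^\perp \cong W_{k+1-j}^\bigtriangleup$ and $(\mathcal F_j^\bullet)^\perp/(\mathcal F_{j+1}^\bullet)^\perp \cong W_{k-j}^\bigtriangleup$ for $j \leq k-2$, and $(\mathcal F_{k-2}^\bullet)^\perp/(\mathcal F_{k-1}^\bullet)^\perp \cong W_2^\bigtriangleup$ while $(\mathcal F_{k-1}^\bullet)^\perp/(\mathcal F_k^\bullet)^\perp \cong W_1^\Box$. Since $u \in \mathfrak a_i = \xi_k(Q_i \cap N_i')$ acts as the identity on every $W_l^\Box$ with $l \geq 2$, it acts trivially on both the source and the target of $x_j'$ for all $j \leq k-2$, so $(u^{-1}xu)_j' = x_j'$ in those cases. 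Only the term $j = k-1$ requires further work: here $u$ still acts trivially on the source $W_2^\bigtriangleup$ but acts on the target $W_1^\Box$ via $v \coloneqq u^{-1}|_{W_1^\Box}$, yielding $(u^{-1}xu)_{k-1}' = v \circ x_{k-1}'$.

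Therefore it suffices to prove that $\operatorname{Trd}(v \circ x_{k-1}' \circ A_{k-1}') = \operatorname{Trd}(x_{k-1}' \circ A_{k-1}')$, equivalently that $\operatorname{Trd}((v - \id) \circ x_{k-1}' \circ A_{k-1}') = 0$. Cyclicity of the reduced trace rewrites this as $\operatorname{Trd}(A_{k-1}' \circ (v - \id) \circ x_{k-1}')$, so the proof is reduced to showing that $A_{k-1}' \circ (v - \id) = 0$ on $W_1^\Box$. This is where the structure of $\mathfrak{a}_i$ plays in: $u$ stabilises $\tilde L_i^-$ and is trivial on $(0, U_i)_1$ and on $\tilde L_i / (0, U_i)_1$, which forces $(v - \id)(W_1^\Box) \subseteq (0, U_i)_1$. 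Since by definition $A_{k-1}'$ sends $(w_1, w_2)_1 \mapsto (-w_1, -w_1)_2$, it annihilates all of $(0, W)_1$ and in particular $(0, U_i)_1$, so the composition vanishes.

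The main thing to be careful about is identifying the action of $u$ on each quotient correctly, in particular verifying that $u$ preserves every $(\mathcal F_j^\bullet)^\perp$ (which it does because these are direct sums of $W_l^\Box$'s and $W_l^\bigtriangledown$'s, all stable under $u$) and that the only place where the nontrivial action of $u$ leaks into the formula is the single term indexed by $j = k-1$. Once these pieces are in place, the identity $\psi^\bullet(u^{-1}xu) = \psi^\bullet(x)$ follows directly from the formula in Lemma \ref{diffformula}.
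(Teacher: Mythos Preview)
Your proof is correct. The overall architecture is the same as the paper's: both arguments observe that conjugation by $u\in\mathfrak a_i$ leaves the terms for $j\le k-2$ unchanged (since $u$ acts trivially outside $W_1^\Box$) and then handle the single remaining term $j=k-1$. The difference lies in which formula for $\psi^\bullet$ is used and how the $j=k-1$ term is disposed of. The paper works with the original definition (the maps $x_j$ on the $W_l^\bigtriangledown$ side), restricts to the image $(W,0)_1$ of $A_{k-1}$, and shows via a direct computation that $u^{-1}xu$ and $x$ literally agree on $(W,0)_1$; this computation uses the fact (extracted from Proposition~\ref{semidir}) that $x$ fixes $\tilde L_i$ pointwise. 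You instead use the dual formula of Lemma~\ref{diffformula} (the maps $x_j'$ on the $W_l^\bigtriangleup$ side), identify the discrepancy as the single factor $v=u^{-1}|_{W_1^\Box}$ on the target, and kill it by showing $A_{k-1}'\circ(v-\id)=0$ because $(v-\id)(W_1^\Box)\subseteq(0,U_i)_1\subseteq\ker A_{k-1}'$. Your route avoids any appeal to how $x$ acts on $\tilde L_i$ and uses only the structure of $\mathfrak a_i$, which makes the endgame slightly cleaner; the paper's route is a shade more hands-on but needs no trace cyclicity.
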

\begin{proof}
In the proof of the above Proposition \ref{semidir} we already saw that $x$ acts as the identity on $\tilde{L}_i$.
Since $u$ acts as the identity on $W_2^\Box\oplus\dotsc\oplus W_k^\Box$ we obtain that $u^{-1}xu$ and $x$ agree on $W_2^\bigtriangledown\oplus\dotsc\oplus W_k^\bigtriangledown$. Hence to prove the result it is enough to show that
$$\operatorname{Trd}\left(A_{k-1}\circ (u^{-1}xu)_{k-1}\right)=\operatorname{Trd}\left(A_{k-1}\circ x_{k-1}\right).$$
Recall that we write $(u^{-1}xu)_{k-1}$, respectively $x_{k-1}$, for the linear morphisms in $\Hom_D(W_1^\Box,W_2^\bigtriangledown)$ that are induced by $u^{-1}xu-\id$ respectively $x-\id$. As the image of $A_{k-1}$ is $(W,0)_1$ we see that 
$$\operatorname{Trd}(A_{k-1}\circ(u^{-1}xu)_{k-1})=\operatorname{Trd}(A_{k-1}\circ(u^{-1}xu)_{k-1}|_{(W,0)_1})$$
and 
$$\operatorname{Trd}(A_{k-1}\circ x_{k-1})=\operatorname{Trd}(A_{k-1}\circ x_{k-1}|_{(W,0)_1}).$$
We will finish the proof by showing that $x_{k-1}|_{(W,0)_1}=(u^{-1}xu)_{k-1}|_{(W,0)_1}$. Note that $(u-\id)((0,U_i)_1^\perp)\subseteq\tilde{L_i}$ and since $(W,0)_1\subseteq (0,U_i)_1^\perp$ we can write for any $w\in (W,0)_1$ that $u(w)=w+l$ for some $l\in\tilde{L_i}$. Now $x(w)=w+w_2+r$ where $w_2\in W_2^\bigtriangledown$ and $r\in W_3^\bigtriangledown\oplus\dotsc\oplus W_k^\bigtriangledown$. Since $x(l)=l$ we see that $xu(w)=w+l+w_2+r$. Now as $u^{-1}(w)=w-u^{-1}(l)$ we see that 
    $$u^{-1}xu(w)=w+w_2+r=x(w),$$
    which finishes proof.
\end{proof}
Let $\psi'$ be a character of $\mathfrak a_i$ and define a map $\psi^\bullet\rtimes\psi'\colon\overline{N}_i\to\mathbb Z[1/p,\mu_{p^\infty}]^\times$ by setting for $u\in (\varepsilon_i^{-1}P\varepsilon_i\cap N^\bullet)$ and $u'\in\mathfrak a_i$ that $(\psi^\bullet\rtimes\psi')(uu')=\psi^\bullet(u)\psi'(u')$. The above lemma implies that this yields a well-defined character of $\overline{N}_i$. By abuse of notation we will write $\psi^\bullet$ for the extension of $\psi^\bullet$ to $\overline{N}_i$ which is trivial on $\mathfrak a_i$.
Note that the map $x\mapsto x|_{L_i}$ yields a group isomorphism between $\varepsilon_i^{-1}M\varepsilon_i$ and $\GL_D(L_i)\cong\GL_{kn}(D)$. We have the following result. 
\begin{proposition}\label{higherorb}
\begin{enumerate}
    \item The pair $(M\cap N^\bullet,{\psi^{\bullet}}^{-1})$ lies in the orbit $\lambda_{k,n}$.
    \item For a nontrivial character $\psi'$ of $\mathfrak a_i$ the pair $(\overline{N}_i,\psi^\bullet\rtimes\psi')$ lies in an orbit higher than $\lambda_{k,n}$.

\end{enumerate}

\end{proposition}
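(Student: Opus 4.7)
The plan is to translate both pairs into the language of Definition \ref{orbdef} by identifying $M\cap N^\bullet$ and $\overline{N}_i$ with unipotent radicals $N(\mathcal Y)$ of parabolics of $\GL_{kn}(D)$ attached to explicit flags, and then transporting the characters $\psi^\bullet$ and $\psi^\bullet\rtimes\psi'$ to characters of the form $\psi_{\tilde{\mathcal A}}$ in this language. The orbit conditions then reduce to a bookkeeping check on the ranks of the successive quotients together with the non-vanishing/isomorphism properties of the transported tuple $\tilde{\mathcal A}$.

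For part (1): Under the isomorphism $M\xrightarrow{\sim}\GL_{kn}(D)$ given by $m\mapsto m|_{W^{\bigtriangleup,k}}$, the action of $m\in M$ on $W^{\bigtriangledown,k}$ is fully determined (by the formula $m|_{W^{\bigtriangledown,k}}=J^{-1}(m|_{W^{\bigtriangleup,k}}^{\ast})^{-1}J$), and stabilising $\mathcal F^\bullet$ with the unipotency condition translates into $m|_{W^{\bigtriangleup,k}}$ lying in $N(\mathcal Y)$ for an explicit flag $\mathcal Y$ of $D^{kn}$ with $\dim_D Y_j=jn$ for $j=1,\dots,k-1$. This already gives $m=k-1$ and the correct ranks. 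The key subtlety is the last map $A_{k-1}\colon W_2^\bigtriangledown\to W_1^\Box$, which sends $(w,-w)_2$ to $(2w,0)_1\in W_{1,+}$, a subspace that is neither $W_1^\bigtriangleup$ nor $W_1^\bigtriangledown$. The observation which fixes this is: for $u\in M\cap N^\bullet$, the fact that $u\in M$ preserves $W^{\bigtriangleup,k}$ combined with the unipotency condition $(u-\id)(\mathcal F_k^\bullet)\subseteq\mathcal F_{k-1}^\bullet\subseteq W^{\bigtriangledown,k}$ forces $(u-\id)|_{W_1^\bigtriangleup}=0$. Hence $u_{k-1}\in\Hom_D(W_1^\Box,W_2^\bigtriangledown)$ factors through the projection $W_1^\Box\twoheadrightarrow W_1^\bigtriangledown$, and the restriction of $u_{k-1}\circ A_{k-1}$ to $W_2^\bigtriangledown$ equals $\tilde u_{k-1}\circ\tilde A_{k-1}$ with $\tilde A_{k-1}\colon W_2^\bigtriangledown\to W_1^\bigtriangledown$, $(w,-w)_2\mapsto(w,-w)_1$, which is an isomorphism. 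Thus the transported tuple $\tilde{\mathcal A}=(A_1,\dots,A_{k-2},\tilde A_{k-1})$ consists entirely of isomorphisms, and $(M\cap N^\bullet,{\psi^\bullet}^{-1})$ satisfies every clause of the orbit $\lambda_{k,n}$ condition.

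For part (2): Under the isomorphism $\varepsilon_i^{-1}M\varepsilon_i\xrightarrow{\sim}\GL(L_i)\cong\GL_{kn}(D)$ (restriction to $L_i$), the group $\overline N_i=N(\mathcal Y_i)\cap\varepsilon_i^{-1}M\varepsilon_i$ becomes $N(\mathcal Y'_i)$ for the transported flag $\mathcal Y'_i$ with nontrivial terms of ranks $i,n,2n,\dots,(k-1)n$, so $m=k$ and the first clause $m\geq k$ of the higher-orbit condition is satisfied. The semi-direct product description $\overline N_i=(\varepsilon_i^{-1}P\varepsilon_i\cap N^\bullet)\rtimes\mathfrak a_i$ from Proposition \ref{semidir} makes $\mathfrak a_i\cong\Hom_D(\tilde L_i/(0,U_i)_1,(0,U_i)_1)$ the piece of $N(\mathcal Y'_i)$ parametrising $u_1\in\Hom_D(Y_2/Y_1,Y_1)$. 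A nontrivial character $\psi'$ on $\mathfrak a_i$ therefore corresponds to a nonzero element $\tilde A_1\in\Hom_D(Y_1,Y_2/Y_1)$, while the remaining $\tilde A_j$ for $j=2,\dots,k$ arise from $\psi^\bullet$ restricted to $\varepsilon_i^{-1}P\varepsilon_i\cap N^\bullet$ and are (by the same conjugation analysis as in part (1)) nonzero, with $\tilde A_j$ an isomorphism of rank-$n$ spaces for $j\geq 3$ and $\tilde A_2\colon Y_2/Y_1\to Y_3/Y_2=W_2^\bigtriangleup$ injective. One then verifies the composition condition: the chain $\tilde A_k\circ\dots\circ\tilde A_1$ is nonzero because its leftmost factors are isomorphisms, $\tilde A_2$ is injective, and $\tilde A_1\neq 0$ by nontriviality of $\psi'$. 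This witnesses that $(\overline N_i,\psi^\bullet\rtimes\psi')$ lies in an orbit strictly higher than $\lambda_{k,n}$.

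The main obstacle I anticipate is the explicit transport of the character $\psi^\bullet$ under conjugation by $\varepsilon_i$ followed by restriction to $\varepsilon_i^{-1}M\varepsilon_i$: one must carefully use the explicit description of $\varepsilon_i$ (via Witt's theorem as an element acting only on $W_1^\Box$) together with Lemma \ref{diffformula} to rewrite $\psi^\bullet$ in a form adapted to the flag $\mathcal Y'_i$, and verify that the resulting tuple $\tilde{\mathcal A}$ factors compatibly through the semi-direct decomposition $\overline N_i=(\varepsilon_i^{-1}P\varepsilon_i\cap N^\bullet)\rtimes\mathfrak a_i$ so that the $\psi'$-contribution produces exactly the top piece $\tilde A_1$. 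The part (1) subtlety that $A_{k-1}$ lands in $W_{1,+}$ rather than $W_1^\bigtriangledown$ reappears here in a slightly different guise, and resolving it cleanly is where most of the bookkeeping lives.
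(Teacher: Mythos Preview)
Your proposal is essentially the same argument as the paper's, with one small but genuine variation in part~(1) worth noting.

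For part~(1), the paper identifies $M\cong\GL_D(W^{\bigtriangleup,k})$ via restriction to $W^{\bigtriangleup,k}$, and therefore must first invoke Lemma~\ref{diffformula} to rewrite $\psi^\bullet$ in terms of the maps $A_j'$ between the $W_j^{\bigtriangleup}$; then it observes that $u_{k-1}'$ lands in $W_1^{\bigtriangleup}$ (since $u\in M$), so only $A_{k-1}'|_{W_1^{\bigtriangleup}}$ matters, which is an isomorphism. Your approach instead keeps the original $A_j$ on the $W^{\bigtriangledown}$ side and observes directly that $(u-\id)|_{W_1^{\bigtriangleup}}=0$, so $u_{k-1}$ factors through the projection to $W_1^{\bigtriangledown}$ and one may replace $A_{k-1}$ by the isomorphism $\tilde A_{k-1}\colon W_2^{\bigtriangledown}\to W_1^{\bigtriangledown}$. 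Both are valid and dual to one another. However, be careful: your write-up says you use the isomorphism $m\mapsto m|_{W^{\bigtriangleup,k}}$, but your flag and your $\tilde A_j$ live on the $W^{\bigtriangledown,k}$ side, so you really want the isomorphism $m\mapsto m|_{W^{\bigtriangledown,k}}$ (which is equally good). Fix this inconsistency.

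For part~(2), your argument coincides with the paper's: both identify $\overline N_i$ with $N(\mathcal Y_i')$ for the length-$k$ flag $0\subset(0,U_i)_1\subset\tilde L_i\subset\tilde L_i\oplus W_2^{\bigtriangleup}\subset\dots\subset L_i$, place $\psi'$ at the bottom step as a nonzero $\tilde A_1$, and verify the composite $\tilde A_k\circ\dots\circ\tilde A_1\neq 0$. Your claim that $\tilde A_2\colon\tilde L_i/(0,U_i)_1\to W_2^{\bigtriangleup}$ is injective is exactly the computation the paper does pointwise (if $(x_1,x_2)+(0,U_i)_1\mapsto(-x_1,-x_1)_2=0$ then $x_1=0$, forcing $(x_1,x_2)\in(0,U_i)_1$). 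Note that in part~(2) you cannot avoid Lemma~\ref{diffformula} as you did in part~(1): the identification $\varepsilon_i^{-1}M\varepsilon_i\cong\GL_D(L_i)$ is via restriction to $L_i=\tilde L_i\oplus W_2^{\bigtriangleup}\oplus\dots\oplus W_k^{\bigtriangleup}$, so the flag lives on the $\bigtriangleup$ side and you must transport $\psi^\bullet$ there. You correctly anticipate this in your closing paragraph.
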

\begin{proof}
\underline{ad 1):} As a subgroup of $M\cong\GL_D(W^{\bigtriangleup,k})$ one has that $M\cap N^\bullet$ stabilizes the flag
$$0\subseteq W^\bigtriangleup_1\subseteq W^\bigtriangleup_1\oplus W^\bigtriangleup_2\subseteq\dotsc\subseteq W^{\bigtriangleup,k}.$$
Note that $\dim_D(W_1^\bigtriangleup\oplus\dotsc\oplus W_j^\bigtriangleup)=nj$.
Recall that for $j=1,\dotsc,k-1$ and $u\in M\cap N^\bullet$ we write $u_{j}'$ for the map in $\Hom_D(W^\bigtriangleup_{k-j+1},W^\bigtriangleup_{k-j})$ that is induced by $u-\id$ (note that since $u\in M$ the image of $u_{k-1}'$ is contained in $W_1^\bigtriangleup$). By Lemma \ref{diffformula} we have that
$$\psi^\bullet(u)=\sum_{j=1}^{k-1}\psi\left(\operatorname{Trd}(u_j'\circ A'_j)\right)$$
for $u\in M\cap N^\bullet$. The maps $A_{j}'\in\Hom_{D}(W_{k-j}^\bigtriangleup,W_{k-j+1}^\bigtriangleup)$ are clearly isomorphisms for $j=1,\dotsc,k-2$. Since the image of $u_{k-1}'$ is contained in $W_1^\bigtriangleup$ we have that 
$$\operatorname{Trd}(u_{k-1}'\circ A'_{k-1})=\operatorname{Trd}(u_{k-1}'\circ (A'_{k-1}|_{W^{\bigtriangleup}_1})).$$
Now $A'_{k-1}|_{W^{\bigtriangleup}_1}\in\Hom_{D}(W_1^\bigtriangleup,W_2^\bigtriangleup)$ is clearly an isomorphism and the result follows.

\underline{ad 2):} When viewed as a subset of $\GL_D(L_i)$ the group $\overline{N}_i$ stabilizes the flag 
$$\mathcal L\colon 0\subseteq (0,U_i)_1\subseteq\tilde{L_i}\subseteq\tilde{L_i}\oplus W_2^\bigtriangleup\subseteq\dotsc\subseteq L_i=D^{kn}.$$
For $j=0,\dotsc,k+1$ we will write $\mathcal L^j$ for the $j$-th term in this flag. Moreover, for $j=1,\dotsc,k$ and $u\in\overline{N}_i$ we will write $u_{k+1-j}'$ for the map in $\Hom_D(\mathcal L^{j+1}/\mathcal L^{j},\mathcal L^j/\mathcal L^{j-1})$ that is induced by $u-\id$. Note that $\psi'$ corresponds to a nontrivial choice of an element $A_k'\in\Hom_D((0,U_i)_1,\tilde{L}_i/(0,U_i)_1)$ such that for $u\in\mathfrak a_i$ we have that $\psi'(u)=\psi(\operatorname{Trd}(u_k'\circ A_k'))$. By Lemma \ref{diffformula} we have for any $u\in\varepsilon_i^{-1}M\varepsilon_i\cap N^\bullet$ that
$$\psi^\bullet(u)=\psi\left(\sum_{j=1}^{k-1}\operatorname{Trd}(u_j'\circ A_j')\right).$$
This shows for $u\in\overline{N}_i$ that
$$\psi^\bullet\rtimes\psi'(u)=\psi\left(\sum_{j=1}^{k}\operatorname{Trd}(u_j'\circ A_j')\right).$$
Since $A_k'$ is nontrivial there is an element $x\in (0,U_i)_1$ such that $A_k'(x)=(x_1,x_2)+(0,U_i)_1$ is nonzero, which immediately implies that $x_1$ is nonzero. Then $(A_k-1'\circ A_k')(x)=-(x_1,x_1)$ is nonzero. Since for $j\leq 2\leq k-1$ the maps $A_j'$ are all isomorphisms we see that
$$A_{1}'\circ\dotsc\circ A_k'\not=0.$$
According to Definition \ref{orbdef} this shows that $(\overline{N}_i,\psi^\bullet\rtimes\psi')$ lies in an orbit higher than $\lambda_{k,n}$.
\end{proof}

  For $0\leq i\leq r_0$ consider the flag $$\mathcal Z_i\colon 0\subseteq (0,U_i)_1\subseteq L_i$$ of totally isotropic subspaces in $(W^{\Box,k},h^{\Box,k})$ and the associated parabolic subgroup $P(\mathcal Z_i)$ of $G^{\Box,k}$ with unipotent radical $N(\mathcal Z_i)$.

  \begin{lemma}\label{bulltriv}
      We have that $N(\mathcal Z_i)\cap\varepsilon_i^{-1}M\varepsilon_i$ is a normal subgroup of $\overline{N}_i$ and 
      $$\psi^\bullet(u)=1$$
      for all $u\in N(\mathcal Z_i)$.
  \end{lemma}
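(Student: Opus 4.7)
The plan is to establish the two assertions separately, exploiting the isomorphism $\varepsilon_i^{-1}M\varepsilon_i \cong \GL_D(L_i)$ (restriction to $L_i$) for normality, and combining the decomposition of Proposition \ref{semidir} with the formula of Lemma \ref{diffformula} for triviality of $\psi^\bullet$.

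For the first assertion I would note that both flags $\mathcal Y_i$ and $\mathcal Z_i$ consist of subspaces of $L_i$ and that $\mathcal Y_i$ is a refinement of $\mathcal Z_i$. For an element of $\varepsilon_i^{-1}M\varepsilon_i$, which preserves the decomposition $W^{\Box,k} = L_i \oplus L_i^-$, the isometry condition determines its restriction to $L_i^-$ from its restriction to $L_i$; this implies that $\overline N_i$ and $N(\mathcal Z_i) \cap \varepsilon_i^{-1}M\varepsilon_i$ correspond under the above isomorphism to the unipotent radicals of the parabolics of $\GL_D(L_i)$ stabilizing the flags $\mathcal Y_i$ and $\mathcal Z_i$. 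Since the refinement gives $P^{\mathcal Y_i}_{\GL} \subseteq P^{\mathcal Z_i}_{\GL}$, we have $\overline N_i \subseteq P^{\mathcal Z_i}_{\GL}$, and the unipotent radical $N(\mathcal Z_i) \cap \varepsilon_i^{-1}M\varepsilon_i$ of $P^{\mathcal Z_i}_{\GL}$ is then normal in $\overline N_i$.

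For the second assertion I would first verify that $\mathfrak a_i \subseteq N(\mathcal Z_i)$: elements of $\mathfrak a_i \cong \Hom_D(\tilde L_i/(0,U_i)_1,(0,U_i)_1)$ satisfy $(u-\id)(W^{\Box,k}) \subseteq (0,U_i)_1 \subseteq L_i$, so they act trivially on every successive quotient of $\mathcal Z_i$. Given $u \in N(\mathcal Z_i) \cap \varepsilon_i^{-1}M\varepsilon_i \subseteq \overline N_i$, Proposition \ref{semidir} provides a decomposition $u = u_0 a$ with $u_0 \in \varepsilon_i^{-1}M\varepsilon_i \cap N^\bullet$ and $a \in \mathfrak a_i$; since $a \in N(\mathcal Z_i)$ we also have $u_0 \in N(\mathcal Z_i)$. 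By definition of the extension of $\psi^\bullet$ to $\overline N_i$, $\psi^\bullet(u) = \psi^\bullet(u_0)$, so the problem reduces to showing $\psi^\bullet(u_0) = 1$ for $u_0 \in N(\mathcal Z_i) \cap \varepsilon_i^{-1}M\varepsilon_i \cap N^\bullet$.

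Since $u_0 \in N^\bullet$, Lemma \ref{diffformula} applies and yields $\psi^\bullet(u_0) = \psi\bigl(\sum_{j=1}^{k-1} \operatorname{Trd}((u_0)_j' \circ A_j')\bigr)$. The key input is that $u_0 \in N(\mathcal Z_i) \cap \varepsilon_i^{-1}M\varepsilon_i$ forces $(u_0 - \id)(L_i) \subseteq (0,U_i)_1 \subseteq W_1^\Box$. For $1 \leq j \leq k-2$, the domain of $(u_0)_j'$ is $W_{k-j+1}^\bigtriangleup \subseteq L_i$ and the codomain $W_{k-j}^\bigtriangleup = (\mathcal F_j^\bullet)^\perp/(\mathcal F_{j+1}^\bullet)^\perp$ has already modded out $W_1^\Box$; hence $(u_0)_j' = 0$ and the corresponding trace vanishes.

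The main obstacle is the case $j = k-1$, where the codomain of $u_{k-1}'$ is $(\mathcal F_{k-1}^\bullet)^\perp/\mathcal F_{k-1}^\bullet = W_1^\Box$, so that $(0,U_i)_1$ is \emph{not} killed and one cannot hope for $(u_0)_{k-1}'$ to vanish. The resolution comes from the explicit formula $A_{k-1}' : (w_1,w_2)_1 \mapsto -(w_1,w_1)_2$, which kills $(0,U_i)_1 = \{(0,u)_1\}$. Combined with the observation that the image of $(u_0)_{k-1}'$ lies in $(0,U_i)_1$, this yields $(u_{k-1}' \circ A_{k-1}')^2 = 0$, so the composition is a nilpotent element of the central simple $E$-algebra $\End_D(W_1^\Box)$, and its reduced trace vanishes. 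Summing gives $\psi^\bullet(u_0) = \psi(0) = 1$, concluding the proof.
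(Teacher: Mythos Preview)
Your argument is correct and follows a route close to, but not identical with, the paper's. For normality the paper does a one-line direct verification that $x^{-1}ux$ still sends $L_i$ into $(0,U_i)_1$; your parabolic-subgroup formulation (the unipotent radical of the coarser flag is normal in the stabiliser of the finer flag) is the conceptual version of the same computation.

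For the vanishing of $\psi^\bullet$ the two proofs diverge in their choice of formula. The paper uses the original definition via the maps $A_j$ and shows that $u_0=uu'^{-1}$ acts trivially on $W_2^\bigtriangledown\oplus\dotsb\oplus W_k^\bigtriangledown$ (whence the terms $j\le k-2$ vanish) and then, by a separate argument combining the $N^\bullet$-condition with $(u_0-\id)(W,0)_1\subseteq L_i$, that $u_0$ is the identity on $(W,0)_1=\operatorname{im}A_{k-1}$, so $u_{k-1}\circ A_{k-1}=0$. You instead use the dual formula of Lemma~\ref{diffformula} with the maps $A_j'$; the single fact $(u_0-\id)(L_i)\subseteq (0,U_i)_1$ then kills the terms $j\le k-2$ immediately, and for $j=k-1$ you observe that $\operatorname{im}(u_0)_{k-1}'\subseteq (0,U_i)_1\subseteq\ker A_{k-1}'$, which makes $(u_0)_{k-1}'\circ A_{k-1}'$ nilpotent and hence of reduced trace zero. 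Your approach is a bit more economical for the low-$j$ terms; the paper's approach yields the slightly stronger statement that the relevant composition is actually zero (not just nilpotent) at $j=k-1$, at the cost of an extra paragraph.
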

  \begin{proof} Let $x\in\overline{N}_i=N(\mathcal Y_i)\cap\varepsilon_i^{-1}M\varepsilon_i$ and $u\in N(\mathcal Z_i)$. The first statement follows if we show that $(x^{-1}nx-\id)(v)\subseteq (0,U_i)_1$ for all $v\in L_i$. If $v\in L_i$ we have that $u(x(v))=x(v)+r$, where $r\in (0,U_i)_1$, and hence $$x^{-1}nx(v)-v=x^{-1}(r)=r.$$ 
 Now for $u\in N(\mathcal Z_i)$ let $u'$ be its image $\mathfrak a_i$ under the surjection $\overline{N}_i\twoheadrightarrow\mathfrak a_i$ constructed in Proposition \ref{semidir}. We have that $uu'^{-1}\in \varepsilon_i^{-1}P\varepsilon_i\cap N^\bullet$ and since $\psi^\bullet$ is trivial on $\mathfrak a_i$ we have $\psi^\bullet(u)=\psi^\bullet(uu'^{-1})$. Now $u\in N(\mathcal Z_i)$ implies that $(u-\id)(W_2^\bigtriangledown\oplus\dotsc\oplus W_k^\bigtriangledown)\subseteq L_i$. However, since $L_i\cap (W_2^\bigtriangledown\oplus\dotsc\oplus W_k^\bigtriangledown)=\{0\}$ and $u\in\varepsilon_i^{-1}M\varepsilon_i$ we see that $u$ acts as the identity on $W_2^\bigtriangledown\oplus\dotsc\oplus W_k^\bigtriangledown$ and the same is true for $uu'^{-1}$. In particular this shows that 
 $$\operatorname{Trd}((uu'^{-1})_j\circ A_j)=0$$
 for $j=1,\dotsc,k-2$.
      
We proceed by showing that $\operatorname{Trd}((uu'^{-1})_{k-1}\circ A_{k-1})=0$, which, since the image of $A_{k-1}$ is contained in $(W,0)_1$, follows from proving that $uu'^{-1}$ acts as the identity on $(W,0)_1$. Firstly, since $uu'^{-1}$ is in $N^\bullet$ we obtain that $(uu'^{-1}-\id)(W_1^\Box)\subseteq W_2^\bigtriangledown\oplus\dotsc\oplus W_k^\bigtriangledown$. Moreover, note that $(u'^{-1}-\id)(W,0)_1\subseteq\tilde{L}_i$ and $(u-\id)(W,0)_1\subseteq L_i$, which implies that $(uu'^{-1}-\id)(W,0)_1\subseteq L_i$. Since $L_i\cap (W_2^\bigtriangledown\oplus\dotsc\oplus W_k^\bigtriangledown)=\{0\}$ we obtain that $uu'^{-1}$ acts as the identity on $(W,0)_1$ and the claim follows.
      
  \end{proof}
\begin{lemma}\label{homeobottom}
    The map $x\mapsto Px$ yields a homeomorphism
    $$N^\circ\iota(G\times 1)\cong P\backslash PN^\bullet\iota(G\times G).$$
\end{lemma}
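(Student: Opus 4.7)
The plan is to show the continuous map $\Phi: N^\circ\iota(G\times 1) \to P\backslash PN^\bullet\iota(G\times G)$, $x\mapsto Px$, is a bijection, and then upgrade this to a homeomorphism.

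For surjectivity, I would use the Levi decomposition $N^\bullet = (M\cap N^\bullet)\cdot N^\circ$ (so that $PN^\bullet = PN^\circ$ since $M\cap N^\bullet\subseteq M\subseteq P$), combined with the factorization $(g_1,g_2) = (g_1g_2^{-1},1)(g_2,g_2)$. Since $\iota(G^\Delta)\subseteq M\subseteq P$ and since $\iota(G^\Delta)$ (being in the Levi $M$) normalizes $N^\circ$ (invoking Lemma 5.2 of \cite{cai2021twisted}), one chains equalities
$$PN^\bullet\iota(G\times G) = PN^\circ\iota(G^\Delta)\iota(G\times 1) = P\iota(G^\Delta)N^\circ\iota(G\times 1) = PN^\circ\iota(G\times 1),$$
giving surjectivity.

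For injectivity, using again that $\iota(G\times 1)$ normalizes $N^\circ$, the problem reduces to showing that whenever $w\in N^\circ$ and $h\in G$ satisfy $w\iota(h,1)\in P$, one must have $w=1$ and $h=1$. Writing $a^\bigtriangleup = (a,a)_1$ and $a^\bigtriangledown=(a,-a)_1$, I would compute
$$\iota(h,1)(v,v)_1 = (hv,v)_1 = \bigl(\tfrac{(h+1)v}{2}\bigr)^\bigtriangleup + \bigl(\tfrac{(h-1)v}{2}\bigr)^\bigtriangledown.$$
Since $w\in\bar N$, $w$ fixes $W^{\bigtriangledown,k}$ pointwise and $(w-1)$ maps $W^{\bigtriangleup,k}$ into $W^{\bigtriangledown,k}$. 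The key observation is that since $w\in N^\bullet$ and $W_1^\bigtriangleup\subseteq W_1^\Box\subseteq\mathcal F_k^\bullet$, one has $(w-1)(W_1^\bigtriangleup)\subseteq\mathcal F_{k-1}^\bullet = W_2^\bigtriangledown\oplus\cdots\oplus W_k^\bigtriangledown$, which contains \emph{no} $W_1^\bigtriangledown$ component. Thus in the $W^{\bigtriangledown,k}$-part of $w\iota(h,1)(v,v)_1$, the $W_1^\bigtriangledown$-summand $\bigl(\tfrac{(h-1)v}{2}\bigr)^\bigtriangledown$ stands alone; its vanishing for all $v$ forces $h=1$, after which $(w-1)(v,v)_1=0$ for all $v$. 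A parallel (easier) analysis on each $W_i^\bigtriangleup$ for $i\geq 2$ yields $(w-1)|_{W_i^\bigtriangleup}=0$, so $(w-1)|_{W^{\bigtriangleup,k}}=0$ and combined with the $\bar N$ property gives $w=1$.

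For the homeomorphism step, both sides are $p$-adic analytic spaces of the same dimension $\dim G + \dim N^\circ$ (equal to the dimension of the open orbit $\iota(G\times G)N^\bullet / [\iota(G^\Delta)(M\cap N^\bullet)]$); injectivity ensures the differential of $\Phi$ is injective, and by dimension count bijective, so $\Phi$ is a local homeomorphism by the $p$-adic inverse function theorem, and globally a homeomorphism since it is already a continuous bijection onto an open subset of $P\backslash G^{\Box,k}$. Alternatively, one may write down the inverse explicitly: given $Px$ in the image, decompose $x = p\cdot u_0\cdot\iota(g,1)$ via the surjectivity argument above; the assignment $Px\mapsto u_0\iota(g,1)$ is well-defined by injectivity and continuous by openness of the multiplication map $P\times N^\circ\times\iota(G\times 1)\to G^{\Box,k}$. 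The main obstacle is the injectivity step, where the precise interplay between the $\bar N$-condition and the $N^\bullet$-condition on $w\in N^\circ$ must be exploited to isolate the $W_1^\bigtriangledown$-obstruction that forces $h=1$.
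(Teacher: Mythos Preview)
Your bijectivity arguments are correct. Surjectivity is exactly the paper's implicit set equality $PN^\bullet\iota(G\times G)=PN^\circ\iota(G\times 1)$, and your injectivity argument---isolating the $W_1^\bigtriangledown$-component of $w\iota(h,1)(v,v)_1$ to force $h=1$, then killing $w$---is a clean direct computation that the paper does not carry out separately. The paper instead proves bijectivity and continuity of the inverse in one stroke: writing $H=N^\bullet\iota(G\times G)$, it builds continuous maps $\tau^\pm\colon H\to G$ by reading off the action on $(W,0)_1$ and $(0,W)_1$ (so $\tau^+(n\iota(g_1,g_2))=g_1$, $\tau^-=g_2$), uses these to strip off the $\iota(G\times G)$-part, then projects the remaining $N^\bullet$-element to $N^\circ$ via the Levi decomposition. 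The resulting continuous map $H\to N^\circ\iota(G\times 1)$ factors through $(P\cap H)\backslash H\cong P\backslash PH$ and is checked to invert $x\mapsto Px$.

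Your homeomorphism step, however, has a genuine gap. In approach~1 you write ``injectivity ensures the differential of $\Phi$ is injective''---this is not a valid deduction for $p$-adic analytic maps (think of $x\mapsto x^p$ on $\mathbb Z_p$). You would need to verify directly that $(\mathfrak n^\circ\oplus\iota(\mathfrak g\times 0))\cap\mathfrak p=0$, which is the infinitesimal analogue of your injectivity computation and can be done the same way, but it is not automatic. Approach~2 is closer to the paper's method but is too vague as stated: ``openness of the multiplication map'' does not by itself produce a continuous section. What makes it work is that the projections $h\mapsto g_1$, $h\mapsto g_2$, and the Levi projection $N^\bullet\to N^\circ$ are each visibly continuous, so the inverse can be written as a composite of continuous maps---which is precisely what the paper does. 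Either patch approach~1 with the Lie-algebra check, or make approach~2 explicit along the lines of the paper's $\tau^\pm$ construction.
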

\begin{proof} We write $H$ for $N^\bullet\iota(G\times G)$ and note that $\iota(G\times G)\subseteq P^\bullet$ which implies that $\iota(G\times G)$ normalizes $N^\bullet$. Hence $H$ is actually a subgroup of $G^{\Box,k}$ and $H=\iota(G\times G)N^\bullet$. Moreover, note that $N^\bullet\cap\iota(G\times G)=1$ and recall that $N^\circ=N(W^{\bigtriangledown,k})\cap N^\bullet$. We have a homeomorphism $P\backslash PH\cong (P\cap H)\backslash H$.

Every element $h\in H$ gives rise to $\tau^+(h)\in\End_D(W)$ by setting it to be the composition
    $$(W,0)_1\xrightarrow[]{h}W_1^\Box\oplus W_{2}^\bigtriangledown\oplus\dotsc\oplus W_k^\bigtriangledown\twoheadrightarrow (W,0)_1$$
    and similarly to $\tau^-(h)\in\End_D(W)$ defined to be the composition
        $$(0,W)_1\xrightarrow[]{h}W_1^\Box\oplus W_{2}^\bigtriangledown\oplus\dotsc\oplus W_k^\bigtriangledown\twoheadrightarrow (0,W)_1.$$
          Note that for an element $h=n\iota(g_1,g_2)\in H$, where $n\in N^\bullet$ and $g_1,g_2\in G$, we have for any $w_1,w_2\in W$ that $h(w_1,w_2)_1-(g_1w_1,g_2w_2)_1\in W_2^\bigtriangledown\oplus\dotsc\oplus W_k^\bigtriangledown$ and hence $\tau^+(h)=g_1$ and $\tau^-(h)=g_2$. We obtain a continuous surjection $\tau\colon H\twoheadrightarrow G\times G,h\mapsto (\tau^+(h),\tau^-(h))$.
         
          For any $h=u\iota(g_1,g_2)\in H$, where $u\in N^\bullet$ and $g_1,g_2\in G$, we have that
         $$\iota(\tau^-(h),\tau^-(h))^{-1}\cdot h\cdot\iota(\tau^+(h)^{-1}\tau^-(h),1)=\iota(g_2^{-1},g_2^{-1})u\iota(g_2,g_2)$$ lies in $N^\bullet$, which yields a surjective continuous map $H\twoheadrightarrow N^\bullet$. Note that $N^\bullet\subseteq P(W^{\bigtriangledown,k})$ which implies that the Levi decompositon $P(W^{\bigtriangledown,k})=MN(W^{\bigtriangledown,k})$ gives rise to a continuous surjection $\tau'\colon N^\bullet\to M\cap N^\bullet$ (note that $\tau'$ is characterized by $\tau'(u)|_{W^{\bigtriangledown,k}}=u|_{W^{\bigtriangledown,k}}$). Then $u\mapsto \tau'(u)^{-1}n$ is a continuous surjection $N^\bullet\twoheadrightarrow N^\circ$. Overall, we get a continuous surjective map $\tau''\colon H\twoheadrightarrow N^\circ$.
         
         The map $H\twoheadrightarrow\iota(G\times 1),h\mapsto \iota(\tau^-(h)^{-1}\tau^+(h),1)$ is continuous and hence $$h\mapsto\tau''(h)\iota(\tau^-(h)^{-1}\tau^+(h),1)$$ is a continuous surjection $H\twoheadrightarrow N^\circ\iota(G\times 1)$. It is straightforward to check that this map factors through $(P\cap H)\backslash H$ and is an inverse to $x\mapsto Px$.

\end{proof}

	\bibliographystyle{abbrv}
	\bibliography{references}
\end{document}